\documentclass[12pt]{article}
\usepackage{amsmath,amsfonts,amssymb,amsthm}
\topmargin -1.3cm \textwidth 15cm \textheight 22cm \oddsidemargin
0.5cm \pagestyle{plain} \pagenumbering{arabic}
\begin{document}

\newtheorem{thm}{Theorem}[section]
\newtheorem{prop}[thm]{Proposition}
\newtheorem{coro}[thm]{Corollary}
\newtheorem{conj}[thm]{Conjecture}
\newtheorem{example}[thm]{Example}
\newtheorem{lem}[thm]{Lemma}
\newtheorem{rem}[thm]{Remark}
\newtheorem{hy}[thm]{Hypothesis}
\newtheorem*{acks}{Acknowledgements}

\theoremstyle{definition}
\newtheorem{de}[thm]{Definition}

\newcommand{\C}{{\mathbb{C}}}%Complex field
\newcommand{\Z}{{\mathbb{Z}}}
\newcommand{\N}{{\mathbb{N}}}%non-negative integers
\newcommand{\Zr}{\Z^r}
\newcommand{\Zall}{\Z\times \Z^{r}}

\newcommand{\End}{{{\rm End} }}
\newcommand{\Hom}{{\rm Hom}}
\newcommand{\Image}{{\rm Im}}
\newcommand{\Ker}{{{\rm Ker}}}
\newcommand{\Mod}{{{\rm Mod}}}
\newcommand{\Res}{{\rm Res}}

\newcommand{\g}{{\mathfrak{g}}}
\newcommand{\gc}{{\hat{\g}}}%\g\otimes \C[t_0^\pm]+\c
\newcommand{\ga}{{\tilde{\g}}}%\g\otimes \C[t_0^\pm]+\c+\d
\newcommand{\borel}{{\mathfrak{b}}}%borel subalgebra of f.d. Lie algebra \g
\newcommand{\Borel}{{\mathfrak{B}}}%\borel\otimes \C[t_1^{\pm},\dots,\t_r^{\pm}]
\newcommand{\nil}{{\mathfrak{n}}}
\newcommand{\Nil}{{\mathfrak{N}}}
\newcommand{\h}{{\mathfrak{h}}}
\newcommand{\hc}{{\hat{\h}}}%\h+\c
\newcommand{\ha}{{\tilde{\h}}}%\h+\c+\d
\newcommand{\Hc}{{\mathfrak{H}}}

\newcommand{\cent}{{\mathfrak{c}}}%the center of affine Lie algebra \hat{\g}

\newcommand{\degall}[2]{{{#1}_0^{-{#2}_0-1}\rsymbol{{#1}}^{-\rsymbol{{#2}}}}}%{#1}_0^{-{#2}_0-1}{#1}^{-#2}
\newcommand{\degder}[2]{{{#1}_{#2}\pder{{#1}_{#2}}}}%#1_#2\partial/\partial #1_#2
\newcommand{\degr}[2]{{\rsymbol{{#1}}^{-\rsymbol{{#2}}}}}%#1^-#2
\newcommand{\degrp}[2]{{\rsymbol{{#1}}^{\rsymbol{{#2}}}}}%#1^#2
\newcommand{\degz}[2]{{#1}_0^{-{#2}_0-1}}%#1_0^{-#2-1}
\newcommand{\Der}{\mathcal{D}}
\newcommand{\der}{D}
\newcommand{\dfunc}[3]{{{#1}^{-1}\delta\left(\frac{{#2}}{{#3}}\right)}}

\newcommand{\eltAbs}[3]{{{#1}({#2},{\rsymbol{#3}})}}%a(x_0,x)
\newcommand{\eltLie}[3]{{{#1}\otimes t_0^{{#2}}\rsymbol{t}^{{\rsymbol{#3}}}}}%a\otimes t_0^m_0 t^m
\newcommand{\eltzHomo}[2]{{{#1}_{-1,\rsymbol{#2}}\vac}}

\newcommand{\Espacez}[1]{{\mathcal{E}({#1})}}%E(W)
\newcommand{\Espaceall}[1]{{\mathcal{E}({#1},r)}}%E(W,r)

\newcommand{\evalueall}[4]{|_{{#1}_0={#2},\rsymbol{{#3}}={#4}}}
\newcommand{\evaluer}[2]{|_{\rsymbol{{#1}}={#2}}}
\newcommand{\expop}[2]{e^{{#1}{\pder{{#2}}}}}

\newcommand{\ky}[1]{K_Y\left({#1}\right)}

\newcommand{\lr}[1]{{L_r\left( {#1} \right)}}
\newcommand{\Lr}{{L_r}}
\newcommand{\lie}[1]{{\mathcal{L}\left( {#1} \right)}}

\newcommand{\pder}[1]{{\frac{\partial}{\partial {#1}}}}

\newcommand{\rangeall}[1]{{({#1}_0,{\rsymbol{{#1}}})\in \Zall}}
\newcommand{\ranger}[1]{\rsymbol{{#1}}\in \Zr}

\newcommand{\rsymbol}[1]{{\bf {#1}}}

\newcommand{\s}{{\mathfrak{s}}}

\newcommand{\Sc}{{S_\cent}}

\newcommand{\spaceall}[1]{\Hom \big({#1},{#1}[[\varr{x}]]((x_0))\big)}
\newcommand{\spacez}[1]{\Hom \big({#1},{#1}((x_0))\big)}
\newcommand{\spacepoly}[1]{\Hom \big({#1},{#1}((x_0)) \big)[\varr{x}]}

\newcommand{\suball}[1]{{{#1}_0,{\rsymbol{#1}}}}
\newcommand{\suballtwo}[2]{{{#1},\rsymbol{#2}}}

\newcommand{\Stva}[1]{L({#1},0)}
\newcommand{\Szero}[1]{(L({#1},0))^0}

\newcommand{\T}[1]{V\left({#1},0\right)}
\newcommand{\Tzero}[1]{\left(V\left({#1},0\right)\right)^0}
\newcommand{\Toro}{\tau}
\newcommand{\ToroD}{\tilde{\tau}}

\newcommand{\Vzero}[1]{V^0\left({#1},0\right)}
\newcommand{\VSzero}[1]{L^0\left({#1},0\right)}

\newcommand{\uea}[1]{{\mathcal{U}\left( {#1} \right)}}

\newcommand{\vac}{\rsymbol{1}}

\newcommand{\varz}[1]{{{#1}_0^{\pm 1}}}
\newcommand{\varr}[1]{{{#1}_1^{\pm 1},\dots,{#1}_r^{\pm 1}}}
\newcommand{\varvac}[1]{{[[{#1}_0,{#1}_1^{\pm 1},\dots,{#1}_r^{\pm 1}]]}}

\newcommand{\vmap}[2]{\phi^{{#1}}_{{#2}_0,\rsymbol{{#2}}}}
\newcommand{\vendo}[1]{\phi_{{#1}_0,\rsymbol{{#1}}}}

\newcommand{\Ye}[3]{{Y_{\mathcal{E}}\left({#1};{#2},{#3}\right)}}
\newcommand{\Yva}[2]{{Y^0\left({#1},{#2}\right)}}
\newcommand{\Yvamod}[3]{{Y_{{#1}}^0\left({#2},{#3}\right)}}
\newcommand{\Ylabel}[3]{{Y_{{#1}}^0\left({#2},{#3}\right)}}
\newcommand{\Ylabelnz}[3]{{Y_{{#1}}\left({#2},{#3}\right)}}
\newcommand{\Yt}[3]{{Y\left({#1};{#2},{#3}\right)}}
\newcommand{\Ytmod}[4]{{Y_{#1}\left({#2};{#3},{#4}\right)}}
\newcommand{\Ytmodlabel}[5]{{Y_{#2}^{#1}\left( {#3};{#4},{#5} \right)}}

\newcommand{\idealG}{\mathcal{G}}
\newcommand{\idealR}{\mathcal{R}}

\newcommand{\funcInd}[1]{{\rm Ind}^{#1}}
\newcommand{\funcF}[1]{{\mathcal{F}\left( {#1} \right)}}
\newcommand{\funcRes}[1]{{\rm Res}_{#1}}

\newcommand{\Alg}[1]{Algebra}
\newcommand{\alg}[1]{algebra}

\newcommand{\Subalg}[1]{Subalgebra}
\newcommand{\subalg}[1]{subalgebra}
\newcommand{\qalg}[1]{quotient algebra}
\newcommand{\Qalg}[1]{Quotient Algebra}

\newcommand{\etver}[1]{extended \tver{}}
\newcommand{\ertver}[1]{extended \rtver{}}

\newcommand{\TVer}[1]{Toroidal Vertex}
\newcommand{\Tver}[1]{Toroidal vertex}
\newcommand{\tver}[1]{toroidal vertex}
\newcommand{\rtver}[1]{$(r+1)$-\tver{}}
\newcommand{\Ver}[1]{Vertex}
\newcommand{\ver}[1]{vertex}

\newcommand{\TVA}[1]{\TVer{} \Alg{}}
\newcommand{\Tva}[1]{\Tver{} \alg{}}
\newcommand{\tva}[1]{\tver{} \alg{}}
\newcommand{\rtva}[1]{\rtver{} \alg{}}
\newcommand{\rtvsa}[1]{\rtver{} \subalg{}}
\newcommand{\rtvqa}[1]{quotient \rtver{} \alg{}}
\newcommand{\etva}[1]{\ertver{} \alg{}}
\newcommand{\va}[1]{\ver{} \alg{}}
\newcommand{\vqa}[1]{quotient \ver{} \alg{}}
\newcommand{\vs}[1]{vector space}
\newcommand{\vsa}[1]{\ver{} \subalg{}}

\newcommand{\bda}{\rsymbol{a}}
\newcommand{\x}{\rsymbol{x}}
\newcommand{\y}{\rsymbol{y}}
\newcommand{\z}{\rsymbol{z}}
\newcommand{\m}{\rsymbol{m}}
\newcommand{\n}{\rsymbol{n}}
\newcommand{\bdt}{\rsymbol{t}}

\def \<{\langle}
\def \>{\rangle}

\renewcommand{\arraystretch}{1.5}

\newcommand{\stct}[1]{satisfying the condition that}
\newcommand{\andtext}{\text{and}}
\newcommand{\fortext}{\text{for }}

\makeatletter
\@addtoreset{equation}{section}
\def\theequation{\thesection.\arabic{equation}}
\makeatother
\makeatletter

\begin{center}
{\Large \bf Simple \TVA{}s and Their Irreducible Modules}
\end{center}

\begin{center}
{Fei Kong$^{a}$, Haisheng Li$^{b}$\footnote{Partially supported
by NSA grant H98230-11-1-0161 and China NSF grant (No.11128103).},
Shaobin Tan$^{a}$\footnote{Partially supported by China NSF grant.} and Qing Wang$^{a}$\footnote{Partially supported by
 China NSF grant (No.11371024), Natural Science Foundation of Fujian Province
(No.2013J01018) and Fundamental Research Funds for the Central
University (No.2013121001).}\\
$\mbox{}^{a}$School of Mathematical Sciences, Xiamen University,
Xiamen 361005, China\\
$\mbox{}^{b}$ Department of Mathematical Sciences,\\
Rutgers University, Camden, NJ 08102, USA\\}
\end{center}

\begin{abstract} 
In this paper, we continue the study on toroidal vertex algebras initiated in \cite{LTW}, to study
concrete toroidal vertex algebras associated to toroidal Lie algebra $L_{r}(\hat{\g})=\hat{\g}\otimes L_r$,
where $\hat{\g}$ is an untwisted affine Lie algebra  and $L_r=\C\left[\varr{t}\right]$.
We first construct an \rtva{} $V(T,0)$ and show that the category of restricted $L_{r}(\hat{\g})$-modules
is canonically isomorphic to that of $V(T,0)$-modules.
Let $\cent$ denote the standard central element of $\hat{\g}$ and set $\Sc=U(L_r(\C \cent))$.
We furthermore study a distinguished  subalgebra of $V(T,0)$, denoted by $\T{\Sc}$.
We show that (graded) simple quotient toroidal vertex algebras of $V(\Sc,0)$ are parametrized
by a $\Zr$-graded ring homomorphism $\psi:\Sc\rightarrow L_r$ such that $\Image\psi$ is a $\Zr$-graded simple $\Sc$-module. Denote by $\Stva{\psi}$ the simple \rtvqa{} of $\T{\Sc}$ associated to $\psi$.
We determine for which $\psi$,  $L(\psi,0)$ is an integrable $L_{r}(\hat{\g})$-module and we then classify
irreducible $\Stva{\psi}$-modules for such a $\psi$. For our need, we also obtain various general results.
\end{abstract}

\section{Introduction}
Let $\g$ be a finite-dimensional simple Lie algebra equipped with the normalized Killing form $\<\cdot,\cdot\>$.
Let $\hat{\g}=\g\otimes \C\left[t_0^{\pm 1} \right]\oplus \C\cent$ be the untwisted affine Lie algebra.
It is well-known (see \cite{FZ}, \cite{Li1}) that there exists a canonical \va{} $V_{\hat{\g}}(\ell,0)$
associated to $\hat{\g}$ for each $\ell\in \C$
and the category of $V_{\hat{\g}}(\ell,0)$-modules is canonically isomorphic to
 the category of restricted $\hat{\g}$-modules of level $\ell$.
 Denote by $L_{\hat{\g}}(\ell,0)$ the unique graded simple quotient vertex algebra of $V_{\hat{\g}}(\ell,0)$.
 It was known (see \cite{K}) that $L_{\hat{\g}}(\ell,0)$ is an integrable $\hat{\g}$-module if and only if $\ell$
 is a non-negative integer. Furthermore,  it was known
(see \cite{FZ}, \cite{DL}, \cite{Li1}, \cite{MP1}, \cite{MP2}, \cite{DLM}) that
if $\ell$ is a non-negative integer, the category of $L_{\hat{\g}}(\ell,0)$-modules is naturally isomorphic to
the category of restricted integrable $\hat{\g}$-modules of level $\ell$ .

Toroidal Lie algebras, which are essentially central extensions of multi-loop Lie algebras,
generalizing affine Kac-Moody Lie algebras,
form a special family of infinite dimensional Lie algebras closely related to extended affine Lie algebras (see \cite{AABGP}).
A natural connection of toroidal Lie algebras with vertex algebras has also been known (see
\cite{BBS}), which uses one-variable generating functions for toroidal Lie algebras.
By considering multi-variable generating functions (cf. \cite{IKU}, \cite{IKUX}),
\tva{}s were introduced in \cite{LTW}, which generalize \va{}s in a certain natural way.

The essence of an $(r+1)$-toroidal vertex algebra $V$ is that to each vector $v\in V$,
a multi-variable vertex operator $Y(v;x_0,\x)$ is associated, which satisfies a Jacobi identity.
It is important to note that for a vertex algebra $(V,Y,\vac)$, the so-called creation property states that
$$Y(v,x)\vac\in V[[x]]\   \  \mbox{ and }\  (Y(v,x)\vac)|_{x=0}=v\   \   \   \mbox{ for }v\in V,$$
which implies that $V$ as a $V$-module is cyclic on the vacuum vector $\vac$ and
 the vertex operator map $Y(\cdot,x)$ is always injective.
However, this is {\em not} the case for an \rtva{} in general.
For an \rtva{} $V$, denote by $V^0$ the submodule of the adjoint module $V$ generated by $\vac$,
which is an \rtvsa{}.
It was proved in \cite{LTW} that $V^0$ has a canonical vertex algebra structure.
To a certain extent, $V^0$ to $V$ is the same as the core subalgebra to an extended affine Lie algebra (see \cite{AABGP}).
In this paper, we explore $V^0$ more in various directions. In particular, we show that $V^0$ is a vertex $\Z^r$-graded algebra
in a certain sense (see Section 3 for the definition).
 It is proved that if $V$ is a simple \rtva{}, then $V^0$ is also a simple \rtva{}.
Let $L$ be any quotient \rtva{} of $V$.
It is proved  (see Proposition \ref{STvaModCateProp}) that a $V$-module $W$ is naturally an $L$-module if and only if $W$ is naturally  an $L^0$-module.

In this paper, we also study $(r+1)$-toroidal vertex algebras naturally arisen from toroidal Lie algebras.
Specifically, we consider Lie algebra
\begin{eqnarray}
\Toro=\hat{\g}\otimes \C\left[ \varr{t} \right],
\end{eqnarray}
the $r$-loop algebra of an untwisted affine Lie algebra $\hat{\g}$.
We here study all possible \rtva{}s associated to $\Toro$.
The most general \rtva{} we get is $V(T,0)$, whose underlying space is
the induced $\tau$-module
\begin{eqnarray}
V(T,0)=U\big(\Toro\big)\otimes_{U(L_r(\g\otimes \C[t_0]))}T,
\end{eqnarray}
where
$T=\g\oplus\C\cent\oplus\C$ is a certain $L_r(\g \otimes \C[t_0])$-module
(see Lemma \ref{econstruction-T} and Theorem \ref{TvaDefThm} for details).
We establish an equivalence  between the category of $V(T,0)$-modules and  the category of restricted $\Toro$-modules (see Theorem \ref{TvaModThm}).

An important $(r+1)$-toroidal vertex algebra is the subalgebra $V(T,0)^0$ of $V(T,0)$, alternatively denoted by $V(\Sc,0)$,
where
$\Sc=S\left(\mathop{\bigoplus}\limits_{\ranger{m}} \C(\cent\otimes \bdt^\m)\right)$, a subalgebra of $U(\tau)$.
We furthermore study simple \rtvqa{}s of $\T{\Sc}$.
Let $\psi:\Sc\rightarrow \Lr$ be a $\Zr$-graded algebra homomorphism such that $\Image\psi$ is a $\Zr$-graded simple $\Sc$-module (following Rao \cite{R1}),
and let $I(\psi)$ be the ideal of $\T{\Sc}$ generated by $\Ker \psi$.
Denote by $\T{\psi}$ the \rtvqa{} of $\T{\Sc}$ modulo $I(\psi)$.
It is proved that $\T{\psi}$ has a unique graded simple \rtvqa{}, which is denoted by $\Stva{\psi}$,
and these toroidal vertex algebras $\Stva{\psi}$ exhaust all graded simple \rtvqa{}s of $\T{\Sc}$.
Among the main results, we prove that $L(\psi,0)$ is an integrable $\tau$-module if and only if $\psi$ is given by
\begin{eqnarray}\label{IntroEq1}
\psi(\cent\otimes \bdt^\m)=\left(\sum\limits_{i=1}^s \ell_i \bda_i^\m\right) \bdt^{\m} \  \  \  \  (\m\in \Z^r)
\end{eqnarray}
for some finitely many positive integers $\ell_1,\dots,\ell_s$
and some distinct elements $\bda_1,\dots,\bda_s\in (\C^\times)^r$ ($\C^\times = \C\setminus \{0\}$).
Furthermore, assuming that $L(\psi,0)$ is an integrable $\tau$-module,
we find a necessary and sufficient condition that a $\Toro$-module is an $\Stva{\psi}$-module,
and moreover, we classify all irreducible $\Stva{\psi}$-modules.

 For this paper, determining the integrability of  $L(\psi,0)$ and its irreducible modules is the core.
  One of the difficult problems is to find the explicit characterization of $L(\psi,0)$
 in terms of $V(\psi,0)$. (Compared with the case for the ordinary affine vertex algebras, this problem is much harder.)
To achieve these goals, we extensively use a graded simple quotient
vertex algebra $L^{0}(\psi,0)$ of $L(\psi,0)$ and we show that $L(\psi,0)$ can be  embedded  canonically into
the tensor product vertex algebra $L^{0}(\psi,0)\otimes L_r$, which is naturally a toroidal vertex algebra.
As an important step, by using a result of Rao we show that for a $\psi$ given by (\ref{IntroEq1}),
$L^{0}(\psi,0)$ is isomorphic to the tensor product of simple vertex algebras $L_{\hat{\g}}(\ell_i,0)$ for $i=1,\dots,s$.
Eventually, we achieve a classification of irreducible  $L(\psi,0)$-modules in terms of irreducible
$L_{\hat{\g}}(\ell_i,0)$-modules.

We mention that the present work is closely related to some of Rao's.
Let $\ToroD$ be the semi-direct product of $\Toro$ and the degree derivations $d_0,\dots,d_r$.
 In \cite{R2}, Rao classified  irreducible integrable $\ToroD$-modules with finite dimensional weight spaces.
It was implicitly proved therein that any irreducible integrable restricted $\Toro\oplus \C d_0$-module
with finite dimensional weight spaces is isomorphic to a concrete module associated to
some finitely many integrable highest weight $\hat{\g}$-modules
and some distinct vectors in $(\C^\times)^r$ of the same number.
When $L(\psi,0)$ is an integrable $\tau$-module,  irreducible $L(\psi,0)$-modules are closely related to these $\tau$-modules.
Rao's results are very helpful at certain stages of this work.

This paper is organized as follows:
In Section 2, after recall the basics about a general \rtva{} $V$ and  its subalgebra $V^0$,
we study the connection between the module category of $V$ and that of $V^0$
viewed as  an \rtva{} and as a vertex algebra.
In Section 3,  we study \rtva{} $\lr{A}=A\otimes \C\left[\varr{t}\right]$ associated to a \va{} $A$ and
we relate an \rtva{} $V$ to  $\lr{A}$ for some vertex algebra $A$.
In Section 4, by using toroidal Lie algebra $\Toro$ we construct and study \rtva{}s $\T{T}$, $V(\Sc,0)$, $V(\psi,0)$, and $L(\psi,0)$.
In Section 5, we determine when $\Stva{\psi}$ viewed as a $\Toro$-module is integrable.
In Section 6, we classify irreducible modules for $\Stva{\psi}$ with $\Stva{\psi}$ an integrable $\Toro$-module.

\section{ \TVA{}s and Vertex Algebras}
In this section, we first recall some basic definitions and then study ideals of a \tva{},
simple \tva{}s and their module categories.

We begin by recalling the notion of \tva{}.
Let $r$ be a positive integer.
For a \vs{} $W$, set
\[
\Espaceall{W}=\spaceall{W}
\]
and
\[
\Espacez{W}=\spacez{W}.
\]
Set $\x=(x_1,\dots,x_r)$. For $\m\in \Z^{r}$, set
$$\x^\m=x_1^{m_1}\cdots x_r^{m_r}.$$

The following notion was introduced in \cite{LTW}:

\begin{de}
An {\em $(r+1)$-\tva{}} is a \vs{} $V$, equipped with a linear map
\begin{eqnarray*}
\Yt{\cdot}{x_0}{\x}:&& V\rightarrow \Espaceall{V},\\
					&& v\mapsto \Yt{v}{x_0}{\x}=\sum\limits_{\rangeall{m}}v_\suball{m}\degall{x}{m}
\end{eqnarray*}
and equipped with a vector $\vac\in V$, satisfying the conditions that
\[
\begin{array}{llll}
\Yt{\vac}{x_0}{\x}v=v & \andtext &\Yt{v}{x_0}{\x}\vac\in V\varvac{x} & \fortext v\in V,
\end{array}
\]
and that for $u,v\in V$,
\begin{eqnarray}\label{jacobi-tva}
&& \dfunc{z_0}{x_0-y_0}{z_0}\Yt{u}{x_0}{\z\y}\Yt{v}{y_0}{\y}
		-\dfunc{z_0}{y_0-x_0}{-z_0}\Yt{v}{y_0}{\y}\Yt{u}{x_0}{\z\y}\ \ \ \nonumber\\
&&		\hspace{3cm}=\dfunc{y_0}{x_0-z_0}{y_0}\Yt{\Yt{u}{z_0}{\z}v}{y_0}{\y},
\end{eqnarray}
where
\[
\Yt{u}{x_0}{\z\y}=\sum\limits_{\rangeall{m}}u_\suball{m}\degz{x}{m}\degr{z}{m}\degr{y}{m}.
\]
\end{de}

For a given  $(r+1)$-\tva{} $V$, the notion of \rtvsa{} and that of ideal are defined in the obvious way.
On the other hand,  the notion of $V$-module is also defined in the obvious way;
a {\em $V$-module} is a vector space $W$ equipped with a linear map
$Y_{W}(\cdot; x_0,\x)$ from $V$ to $\Espaceall{W} $ such that
$$Y_{W}({\bf 1};x_0,\x)=1_{W}\   \  (\mbox{the identity operator on }W)$$
and such that for any $u,v\in V$, the Jacobi identity (\ref{jacobi-tva}) with $Y$ replaced by $Y_{W}$
in five obvious places holds.

{\em From now on, we fix the positive integer $r$ and we simply call an $(r+1)$-\tva{} a \tva..}

\begin{de}
Let $V$ be a toroidal vertex algebra. A {\em derivation} of $V$ is a linear operator $D$ on $V$, \stct{}
\begin{equation}
\begin{array}{llll}
D(\vac)=0& \andtext &\left[D,\Yt{v}{x_0}{\x}\right]=\Yt{D(v)}{x_0}{\x} &\fortext v\in V.
\end{array}
\end{equation}
\end{de}

\begin{de}
An {\em extended \tva{}} is a \tva{} $V$ equipped with derivations $\Der_0,\Der_1,\dots,\Der_r$ such that
\begin{eqnarray}
&&\Yt{\Der_0(v)}{x_0}{\x}=\pder{x_0}\Yt{v}{x_0}{\x},\nonumber\\
&&\Yt{\Der_j(v)}{x_0}{\x}=\left(\degder{x}{j}\right)\Yt{v}{x_0}{\x}
\end{eqnarray}
for $v\in V$, $1\leq j\leq r$.
\end{de}

\begin{rem} \label{difference}
{\em Note that in the definition of a vertex algebra $V$,
the so-called creation property states that for every $v\in V$,
$$Y(v,x){\bf 1}\in V[[x]]\   \  \mbox{and } \  \lim_{x\rightarrow 0}Y(v,x){\bf 1}=v.$$
It follows that the adjoint module $V$ is cyclic on the vacuum vector ${\bf 1}$ and
the vertex operator map $Y(\cdot,x)$ of the vertex algebra is {\em always} injective.
In contrast to this, for a  \tva{} $V$, while such a creation property is missing,
$V$ as a $V$-module may be not cyclic on ${\bf 1}$, and the vertex operator map $\Yt{\cdot}{x_0}{\x}$ may be not injective. }
\end{rem}

The following result was obtained in \cite{LTW}:

\begin{prop}\label{TvaStructProp}
Let $V$ be a \tva{}. Set
\begin{eqnarray}
V^0={\rm span}\{ v_\suball{m}\vac\ |\ v\in V,\ \rangeall{m} \}.
\end{eqnarray}
Then $V^0$ is a toroidal vertex subalgebra of $V$,
\begin{eqnarray}\label{V0-property1}
\Yt{v}{x_0}{\x}\in \Espacez{V}[\varr{x}]\  \  \  \mbox{ for }v\in V^0,
\end{eqnarray}
and $\Yt{\cdot}{x_0}{\x}$ is injective on $V^0$.
Define a linear map $\Yva{\cdot}{x_0}:V^0\rightarrow \Espacez{V}$ by
\begin{eqnarray}
\Yva{v}{x_0}=\Yt{v}{x_0}{\x}\evaluer{\x}{1} \  \  \  \fortext \,v\in V^0.
\end{eqnarray}
Then
$\left( V^0,Y^0,\vac\right)$ carries the structure of a \va{} and $(V,Y^0)$ carries the structure of a $V^0$-module.
Furthermore,  for $v\in V$, $\rangeall{m}$,
\begin{eqnarray}
&&\Yt{v_\suball{m}\vac}{x_0}{\x}=Y^0(v_{m_0,\m}{\bf 1},x_0)\degr{x}{m}\in \degr{x}{m}\Espacez{V},\label{etva-m}\\
&&\Yt{v}{x_0}{\x}=\sum\limits_{\ranger{m}}\Yt{\eltzHomo{v}{m}}{x_0}{\x}.\label{V0-sum}
\end{eqnarray}
\end{prop}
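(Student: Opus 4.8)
The plan is to extract all the structure from the Jacobi identity (\ref{jacobi-tva}) by specializing the formal variables, exactly as one does in the theory of vertex algebras, but keeping careful track of the extra $\x$-variables. First I would establish (\ref{V0-property1}) and the factorization (\ref{etva-m}). The key observation is that for $v\in V$ and $u\in V$, applying the Jacobi identity to $\vac$ and taking $\Res_{z_0}$ (or equivalently using the iterate formula that follows from (\ref{jacobi-tva})) shows that $\Yt{u}{z_0}{\z}\,\bigl(v_\suball{m}\vac\bigr)$ can be written in terms of the action on $\vac$ and on $u$; more directly, one applies (\ref{jacobi-tva}) with the second slot filled by $\vac$, uses $\Yt{v}{y_0}{\y}\vac\in V\varvac{y}$, and takes $\Res_{y_0}$ to obtain an expression showing $v_\suball{m}\vac = \bigl(\text{coefficient of }\x^{-\m}\text{ in }\Yt{v}{x_0}{\x}\vac\bigr)$-type data, and that the $\x$-dependence of $\Yt{v_\suball{m}\vac}{x_0}{\x}$ is exactly the monomial $\degr{x}{m}$ times something in $\Espacez{V}$. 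This gives both (\ref{etva-m}) and, by summing over $m_0$, the statement that $\Yt{w}{x_0}{\x}\in\Espacez{V}[\varr{x}]$ for $w\in V^0$, and then (\ref{V0-sum}) is just the decomposition of $\Yt{v}{x_0}{\x}\vac$ into its $\x$-homogeneous pieces pushed through the operator $Y$.

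Next I would verify that $V^0$ is a toroidal vertex subalgebra, i.e.\ closed under all the operators $v_\suball{m}$ with $v\in V^0$. This follows from the iterate/associativity consequence of (\ref{jacobi-tva}): $\bigl(u_\suballtwo{k}{k}w\bigr)_\suball{m}$ applied to $\vac$ can be expanded, via the Jacobi identity applied to the triple $(u,w,\vac)$, as a (formal, but locally finite) linear combination of expressions $u_{\cdots}\,w_{\cdots}\,\vac$ and $w_{\cdots}\,u_{\cdots}\,\vac$, each of which lies in $V^0$ by definition. That $\vac\in V^0$ is immediate ($\vac = \vac_{-1,\mathbf{0}}\vac$ after identifying the component). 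Injectivity of $\Yt{\cdot}{x_0}{\x}$ on $V^0$ comes from (\ref{etva-m}): if $\Yt{w}{x_0}{\x}=0$ for $w=v_\suball{m}\vac$, then in particular its value applied to $\vac$ vanishes, and the creation-type identity just derived recovers $w$ from that value, so $w=0$; for general $w\in V^0$ one uses the $\x$-graded decomposition.

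Then I would define $\Yva{v}{x_0}=\Yt{v}{x_0}{\x}\evaluer{\x}{1}$ and check the vertex algebra axioms for $(V^0,Y^0,\vac)$. The vacuum axioms are inherited from those of $V$; the truncation condition is built into $\Espacez{V}$ thanks to (\ref{V0-property1}). The Jacobi identity for $Y^0$ is obtained by setting $\z=\y=\mathbf{1}$ in (\ref{jacobi-tva}): on the left the two $\delta$-functions in $z_0$ and the operators become exactly the vertex-algebra expressions, and on the right $\Yt{\Yt{u}{z_0}{\z}v}{y_0}{\y}$ specializes to $\Yva{\Yva{u}{z_0}v}{y_0}$ provided $\Yva{u}{z_0}v$ is well defined — which it is, since $\Yt{u}{z_0}{\z}v\in V^0\varvac{z}$-type data by the subalgebra property, so the specialization $\z=\mathbf{1}$ is legitimate on each coefficient. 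One must check that the specialization $\x,\y,\z\mapsto\mathbf{1}$ does not destroy the cancellation of the formal delta-function identities; this is where a little care is needed, and it is the step I expect to be the main (though routine) obstacle: one has to confirm that each coefficient, as an element of $\Espacez{V}$ after specialization, is still well defined (no infinite sums appear) and that the algebraic identity among delta functions survives setting the auxiliary variables to $1$. Given (\ref{V0-property1}) this is straightforward. Finally, $(V,Y^0)$ being a $V^0$-module is the same computation: the module Jacobi identity is (\ref{jacobi-tva}) restricted to $u,v\in V^0$ acting on all of $V$, specialized at $\z=\y=\mathbf{1}$, and $Y^0(\vac;x_0)=1_V$ is part of the $V$-axioms.
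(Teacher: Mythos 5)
Your overall strategy is the correct one. The paper itself states this proposition without proof (it is quoted from [LTW1]), but its proof of the module analogue, Proposition \ref{TvaModStructProp}, is exactly your first step: apply the Jacobi identity to the pair $(v,\vac)$, use $\Yt{\vac}{y_0}{\y}=1_V$ to collapse the two delta functions on the left, take $\Res_{x_0}$ (not $\Res_{y_0}$ --- a harmless slip in your write-up, as is the attribution to the creation-type property of what is really the vacuum property), and read off
$\Yt{v_{-k-1,\m}\vac}{y_0}{\y}=\frac{1}{k!}(\partial/\partial y_0)^{k}Y(v;y_0,\m)\,\y^{-\m}$.
This yields (\ref{etva-m}), (\ref{V0-property1}), (\ref{V0-sum}), the injectivity (by applying to $\vac$, separating the $\x$-homogeneous pieces and evaluating at $y_0=0$, as you indicate), and the legitimacy of the specialization $\x=\rsymbol{1}$ in the Jacobi identity. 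Those parts of your proposal are fine.

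The genuine gap is your argument that $V^0$ is closed under the products, i.e.\ that $u'_{\suball{m}}w'\in V^0$ for $u',w'\in V^0$. You expand $(u_{\suballtwo{k}{k}}w)_{\suball{m}}\vac$ into double products $u_{\cdots}w_{\cdots}\vac$ and $w_{\cdots}u_{\cdots}\vac$ and assert that these ``lie in $V^0$ by definition.'' They do not: $V^0$ is by definition the span of \emph{single} modes applied to $\vac$, and $u_{a}w_{b}\vac$ is not a priori of that form. Moreover your implication points the wrong way: $(u_{\suballtwo{k}{k}}w)_{\suball{m}}\vac$ is already in $V^0$ by definition, and what must be shown is the converse, that double products land back in $V^0$. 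The standard repair: by (\ref{2.15}) every mode of $\Yt{u'}{x_0}{\x}$ with $u'=u_{-k-1,\m}\vac$ is a scalar multiple of a single mode $u_{q_0,\m}$, so it suffices to show $u_{q_0,\m}\big(v_{-l-1,\n}\vac\big)\in V^0$. The iterate formula applied to $\vac$ gives
$(u_{q_0,\m}v)_{-l-1,\n'}\vac=\sum_{i\ge 0}(-1)^{i}\binom{q_0}{i}\,u_{q_0-i,\m}\,v_{-l-1+i,\n}\vac$,
the second family of terms $v_{\cdots}u_{i,\m}\vac$ $(i\ge 0)$ vanishing because $u_{i,\m}\vac=0$ for $i\ge 0$; since $v_{-l-1+i,\n}\vac=0$ for $i>l$, the sum is finite and its $i\ge 1$ terms have strictly smaller $l$, so induction on $l$ gives the closure. (Equivalently, one first uses (\ref{2.15}) to reduce to modes with $m_0=-1$ as in Corollary \ref{rV0fact} and then invokes the identity (\ref{tempEq1}).) Without this step the subalgebra claim, and hence the well-definedness of the right-hand side of the Jacobi identity for $Y^0$ on $V^0$, is not established.
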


We have the following analogue for $V$-modules:

\begin{prop}\label{TvaModStructProp}
Let $V$ be a \tva{} and let $\left(W, Y_W\right)$ be a $V$-module.
Then for $v\in V$, we have
\begin{eqnarray}\label{eV0-module}
\Ytmod{W}{v_\suball{m}\vac}{x_0}{\x}\in \degr{x}{m}\Espacez{W} & \fortext \rangeall{m}
\end{eqnarray}
and
\begin{equation}\label{ev1sum}
\Ytmod{W}{v}{x_0}{\x}=\sum\limits_{\ranger{m}}\Ytmod{W}{\eltzHomo{v}{m}}{x_0}{\x}.
\end{equation}
For $u\in V^0$, set
\begin{eqnarray}
Y_{W}^0(u,x_{0})=Y_{W}(u;x_{0},\x)|_{\x=1}\in \Espacez{W}.
\end{eqnarray}
Then $(W,Y_{W}^0)$ carries the structure of a module for $V^0$ viewed as a vertex algebra.
Furthermore,  if $(W,Y_{W})$ is irreducible,  $(W,Y_{W}^0)$ is  irreducible.
\end{prop}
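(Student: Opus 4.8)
The plan is to establish each of the three assertions --- the degree-shift relation \eqref{eV0-module}, the sum decomposition \eqref{ev1sum}, the vertex-algebra module axioms for $(W,Y_W^0)$, and preservation of irreducibility --- by the same devices used for the adjoint module in Proposition \ref{TvaStructProp}, transplanted to the module $W$. The key point throughout is that the Jacobi identity \eqref{jacobi-tva} holds for $Y_W$ in place of $Y$ (in the five obvious places), so all the formal calculus manipulations available inside $V$ are equally available for the action on $W$.

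\medskip

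\noindent\textbf{Step 1: the degree-shift relation.}
First I would extract from the Jacobi identity \eqref{jacobi-tva} (with $Y_W$ on the right) the iterate formula for $\Ytmod{W}{u_\suball{m}v}{y_0}{\y}$: taking $\Res_{z_0}$ of the appropriate coefficient in the $\x$-variables reduces the three-term identity to an expression for $Y_W(u_\suball{m}v;\,\cdot\,)$ in terms of $Y_W(u;\,\cdot\,)$ and $Y_W(v;\,\cdot\,)$. Specialising $v=\vac$ and using $\Yt{u}{x_0}{\x}\vac\in V\varvac{x}$ together with $\Ytmod{W}{\vac}{x_0}{\x}=1_W$ forces the $\z$-dependence on the left of \eqref{jacobi-tva} to collapse, and comparing powers of $\rsymbol{z}$ yields that $\Ytmod{W}{u_\suball{m}\vac}{x_0}{\x}$ carries exactly the monomial factor $\degr{x}{m}$ in front of a series in $x_0$ alone; this is \eqref{eV0-module}. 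This is essentially the module analogue of \eqref{etva-m}, and no new idea beyond careful bookkeeping of the $\delta$-function identities is needed.

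\medskip

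\noindent\textbf{Step 2: the sum decomposition.}
Since every vector of $V$ is a (finite) linear combination of vectors $v_\suball{m}\vac$ by definition of $V^0$ --- wait, that is only true on $V^0$; for general $v\in V$ one instead writes, as in \eqref{V0-sum}, $\Yt{v}{x_0}{\x}=\sum_{\ranger{m}}\Yt{\eltzHomo{v}{m}}{x_0}{\x}$ where $\eltzHomo{v}{m}=v_{-1,\m}\vac\in V^0$. The same decomposition of the operator $\Yt{v}{x_0}{\x}$ inside $V$ can be fed into the module: applying $Y_W$ to the identity in $V$ and using that $Y_W$ respects the Jacobi identity (hence respects the expansion of $\Yt{v}{x_0}{\x}$ as a sum of the $V^0$-components obtained in Proposition \ref{TvaStructProp}), I obtain \eqref{ev1sum}. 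Concretely, \eqref{ev1sum} follows by taking $\Res$ in $\z$ of the Jacobi identity with $u=v$, $v=\vac$ and reading off each graded piece, exactly parallel to how \eqref{V0-sum} is derived.

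\medskip

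\noindent\textbf{Step 3: $(W,Y_W^0)$ is a $V^0$-module.}
Setting $Y_W^0(u,x_0)=\Ytmod{W}{u}{x_0}{\x}|_{\x=1}$ for $u\in V^0$ makes sense by \eqref{eV0-module} (the restriction of $\Espacez{W}[\varr{x}]$ to $\x=1$ lands in $\Espacez{W}$). The vacuum axiom $Y_W^0(\vac,x_0)=1_W$ is immediate. For the Jacobi identity of $(W,Y_W^0)$, I would start from \eqref{jacobi-tva} with $Y_W$ in place of $Y$, restrict all the $\rsymbol{z},\rsymbol{y}$ variables to $1$, and observe that on $V^0$ the left-hand side no longer depends on $\z$ (by \eqref{V0-property1} applied to $u\in V^0$ and the fact that $\Yt{u}{z_0}{\z}v\in\Espacez{V}[\varr{z}]$ when $u,v\in V^0$, which keeps $u_{z_0}v$ polynomial in $\z$). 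Specialising $\z\to\y\to 1$ then collapses the three $\delta$-functions in the $\z,\y$ arguments into the single classical $\delta$-function identity, leaving exactly the vertex-algebra Jacobi identity for $Y_W^0$; the closure of $V^0$ under $u_\suball{m}(\cdot)$ --- already part of Proposition \ref{TvaStructProp} --- ensures $\Yt{u}{z_0}{\z}v$ is again (a polynomial in $\z$ over) $V^0$ so the right-hand side makes sense after specialisation.

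\medskip

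\noindent\textbf{Step 4: irreducibility.}
Suppose $(W,Y_W)$ is irreducible and let $0\neq W'\subseteq W$ be a $V^0$-submodule for $Y_W^0$. I claim $W'$ is stable under all operators $v_\suball{m}$ with $v\in V$. By \eqref{ev1sum} it suffices to treat $v\in V^0$, and by \eqref{eV0-module} the operator $\Ytmod{W}{v}{x_0}{\x}$ for $v\in V^0$ is obtained from $Y_W^0(v,x_0)$ merely by inserting monomials $\degr{x}{m}$ --- the actual endomorphisms $v_\suball{m}$ of $W$ are, up to relabelling of indices, precisely the modes $v_{m_0}$ of $Y_W^0(v,x_0)$, hence preserve $W'$. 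Therefore $W'$ is a nonzero $V$-submodule of $W$, so $W'=W$ by irreducibility of $(W,Y_W)$; thus $(W,Y_W^0)$ is irreducible. The main obstacle I anticipate is purely the careful tracking in Steps 1--3 of how the multivariable $\delta$-functions in \eqref{jacobi-tva} degenerate under the specialisations $\z\to\y\to 1$ and under taking residues --- ensuring that each intermediate expression is a well-defined element of the relevant space of formal series and that no cancellations or illegal products are introduced; once the formal-calculus bookkeeping is in place, the structural content is a direct transcription of the already-established Proposition \ref{TvaStructProp}.
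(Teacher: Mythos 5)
Your proposal follows essentially the same route as the paper: the central step is to put $\vac$ in the second slot of the module Jacobi identity, which collapses the left-hand side to $\Ytmod{W}{v}{y_0+z_0}{\z\y}=\expop{z_0}{y_0}\Ytmod{W}{v}{y_0}{\z\y}$ and yields both the degree-shift relation \eqref{eV0-module} and the sum decomposition \eqref{ev1sum} by reading off coefficients. The paper dismisses the remaining assertions as straightforward, and your specialisation-at-$\x=1$ argument for the $V^0$-module Jacobi identity and the mode-relabelling argument for irreducibility are exactly the intended routine verifications (your momentary misstep at the start of Step 2 is self-corrected, and the correct derivation via the modes of $Y_{W}(v_{-1,\m}\vac;y_0,\y)$ is the one the paper uses).
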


\begin{proof}
For $v\in V$, with $Y_{W}({\bf 1};x_0,\x)=1_{W}$, from the Jacobi identity we have
\begin{eqnarray}\label{2.3}
&&\Ytmod{W}{\Yt{v}{z_0}{\z}\vac}{y_0}{\y}\nonumber\\
&=&\Res_{x_0}\dfunc{z_0}{x_0-y_0}{z_0}\Ytmod{W}{v}{x_0}{\z\y}
		-\Res_{x_0}\dfunc{z_0}{y_0-x_0}{-z_0}\Ytmod{W}{v}{x_0}{\z\y} \nonumber\\
&=&\Res_{x_0}	\dfunc{x_0}{y_0+z_0}{x_0}\Ytmod{W}{v}{x_0}{\z\y}\nonumber\\	
&=&\Ytmod{W}{v}{y_0+z_0}{\z\y} \nonumber\\
&=&\expop{z_0}{y_0}\Ytmod{W}{v}{y_0}{\z\y}.
\end{eqnarray}
Let $(m_0,\m)\in \Z\times \Z^{r}$. We know $v_{m_0,\m}{\bf 1}=0$ if $m_0\ge 0$. As for $m_0<0$, from (\ref{2.3})  we get
\begin{eqnarray}\label{2.4}
\Ytmod{W}{v_{-k-1,\m}{\bf 1}}{y_0}{\y}&=& \frac{1}{k!}\left(\frac{\partial}{\partial y_0}\right)^k \sum_{m_0\in \Z}v_{m_0,\m}y_0^{-m_0-1}\degr{y}{m}
\nonumber\\
&=&\sum_{m_0\in \Z}\binom{-m_0-1}{k}v_{m_0,\m}y_0^{-m_0-k-1}\degr{y}{m}
\end{eqnarray}
for $k\geq 0$. This proves the first part.
Furthermore, using (\ref{2.4}) we obtain
\[
\Ytmod{W}{v}{y_0}{\y}=\sum\limits_{\rangeall{m}}v_{m_0,\m}y_0^{-m_0-1}\y^{-\m}
= \sum\limits_{\ranger{m}} Y_{W}(v_{-1,\m}{\bf 1};y_0,\y),
\]
which, though an infinite sum,  exists in $\Espaceall{V}$ due to (\ref{eV0-module}).
The proof for the other assertions is straightforward.
\end{proof}

The following basic facts immediately follow
from Proposition \ref{TvaModStructProp} and its proof:

\begin{lem}\label{simple-facts}
Let $V$ be a \tva{}. For $v\in V$, $(m_0,\m),\ (n_0,\n)\in \Z\times \Z^r$, we have
\begin{eqnarray}\label{e2.14}
(v_{m_0,\m}{\bf 1})_{n_0,\n}=0\   \   \  \mbox{whenever }m_0\ge 0\   \mbox{or }\m\ne \n
\end{eqnarray}
and
\begin{eqnarray}\label{2.15}
(v_{-k-1,\m}{\bf 1})_{n_0,\m}=\binom{k-n_0-1}{k}v_{n_0-k,\m}
\end{eqnarray}
on any $V$-module for $k\ge 0$.
In particular, we have
\begin{eqnarray}\label{e2.16}
(v_{-1,\m}{\bf 1})_{-1,\m}{\bf 1}=v_{-1,\m}{\bf 1}.
\end{eqnarray}
\end{lem}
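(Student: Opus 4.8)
The plan is to derive everything from the identity \eqref{2.3} established in the proof of Proposition \ref{TvaModStructProp}, applied in the special case where the module $W$ is $V$ itself acting on the vacuum, together with the ``spreading'' formula \eqref{2.4}. First I would observe that \eqref{e2.14} is immediate: the vanishing when $m_0 \ge 0$ is just the statement $v_{m_0,\m}\vac = 0$ for $m_0 \ge 0$ (which follows from $\Yt{v}{x_0}{\x}\vac \in V\varvac{x}$), and once $v_{m_0,\m}\vac = 0$ the operator $(v_{m_0,\m}\vac)_{n_0,\n}$ is zero; the vanishing when $\m \ne \n$ follows from \eqref{2.4}, which exhibits $\Ytmod{W}{v_{-k-1,\m}\vac}{y_0}{\y}$ as supported only on the single monomial $\degr{y}{m}$ in the $\y$-variables, so all its coefficients $(v_{-k-1,\m}\vac)_{n_0,\n}$ with $\n \ne \m$ vanish.

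Next, for \eqref{2.15}, I would read off the coefficient of $y_0^{-n_0-1}\degr{y}{m}$ on both sides of \eqref{2.4}. Writing $v_{-k-1,\m}\vac$ in place of $v$ there (or rather reading \eqref{2.4} directly), the right-hand side is $\sum_{m_0 \in \Z}\binom{-m_0-1}{k}v_{m_0,\m}y_0^{-m_0-k-1}\degr{y}{m}$, so the coefficient of $y_0^{-n_0-1}$ is obtained by setting $-m_0-k-1 = -n_0-1$, i.e. $m_0 = n_0 - k$, giving $\binom{-(n_0-k)-1}{k}v_{n_0-k,\m} = \binom{k-n_0-1}{k}v_{n_0-k,\m}$. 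That is exactly \eqref{2.15}. One should note that although \eqref{2.4} was written with $Y_W$ the module action and $v \in V$, the formula $(v_{m_0,\m}\vac)_{n_0,\n} = \cdots$ is an identity of operators on any $V$-module: indeed \eqref{2.3} holds on any $V$-module $W$ with the same proof, so \eqref{2.4} does too, and hence so does \eqref{2.15}.

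Finally, \eqref{e2.16} is the special case $k = 0$, $n_0 = -1$ of \eqref{2.15}: there the binomial coefficient is $\binom{0-(-1)-1}{0} = \binom{0}{0} = 1$ and $v_{n_0-k,\m} = v_{-1,\m}$, so $(v_{-1,\m}\vac)_{-1,\m} = v_{-1,\m}$ as operators; applying both sides to $\vac$ gives $(v_{-1,\m}\vac)_{-1,\m}\vac = v_{-1,\m}\vac$.

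I do not anticipate a genuine obstacle here, since this lemma is essentially a bookkeeping consequence of the displayed computations in the preceding proof; the only point requiring a word of care is making explicit that \eqref{2.3} and \eqref{2.4}, though stated for a fixed module $W$, are valid on \emph{every} $V$-module, so that \eqref{2.15} and \eqref{e2.16} hold as claimed ``on any $V$-module.'' The mildly delicate bit is the convergence remark already flagged in the proof of Proposition \ref{TvaModStructProp}: the sum $\sum_{m_0 \in \Z}$ in \eqref{2.4} is infinite, but by the first part of that proposition (equation \eqref{eV0-module}) the relevant expressions lie in $\degr{x}{m}\Espacez{W}$, so extracting a single coefficient is legitimate and \eqref{2.15} is well-defined.
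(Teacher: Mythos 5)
Your proof is correct and follows exactly the route the paper intends: the paper states that Lemma \ref{simple-facts} "immediately follows from Proposition \ref{TvaModStructProp} and its proof," i.e.\ from reading off coefficients in \eqref{2.3} and \eqref{2.4}, which is precisely what you do. Your coefficient extraction ($m_0=n_0-k$, giving $\binom{k-n_0-1}{k}$) and the specialization $k=0$, $n_0=-1$ applied to $\vac$ are all accurate, and your remark that \eqref{2.3}--\eqref{2.4} hold on every $V$-module with the same proof is the right (and only) point of care.
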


As immediate consequences we have:

\begin{coro}\label{rV0fact}
Let $V$ be a toroidal vertex algebra. Then
\begin{equation}\label{ZeroSpanEq}
V^0 = {\rm span}\{ \eltzHomo{v}{m}\,|\,v\in V,\  \m\in \Z^{r} \},
\end{equation}
which also equals ${\rm span}\{ \eltzHomo{v}{m}\,|\,v\in V^0,\  \m\in \Z^{r} \}$.
\end{coro}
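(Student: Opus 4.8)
The plan is to read this off directly from Lemma~\ref{simple-facts}. Recall from Proposition~\ref{TvaStructProp} that $V^0={\rm span}\{ v_\suball{m}\vac\mid v\in V,\ \rangeall{m}\}$. First I would trim this spanning set: by the creation-type property $\Yt{v}{x_0}{\x}\vac\in V\varvac{x}$, the coefficient $v_{m_0,\m}\vac$ vanishes whenever $m_0\ge 0$, so in fact
$$V^0={\rm span}\{ v_{-k-1,\m}\vac\mid v\in V,\ k\ge 0,\ \m\in\Z^r\}.$$

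Next, fix $v\in V$, $k\ge 0$, $\m\in\Z^r$ and set $w=v_{-k-1,\m}\vac$, which lies in $V^0$ by definition. Applying (\ref{2.15}) with $n_0=-1$ gives
$$w_{-1,\m}\vac=(v_{-k-1,\m}\vac)_{-1,\m}\vac=\binom{k}{k}\,v_{-k-1,\m}\vac=v_{-k-1,\m}\vac=w,$$
so each generator $v_{-k-1,\m}\vac$ of $V^0$ is of the form $\eltzHomo{w}{m}$ with $w\in V^0$. (The case $k=0$ is exactly (\ref{e2.16}).) This produces the chain of inclusions
$$V^0\subseteq {\rm span}\{\eltzHomo{w}{m}\mid w\in V^0,\ \m\in\Z^r\}\subseteq {\rm span}\{\eltzHomo{v}{m}\mid v\in V,\ \m\in\Z^r\}\subseteq V^0,$$
where the last inclusion holds because every $v_{-1,\m}\vac$ is one of the defining spanning vectors of $V^0$. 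Hence all three spaces coincide, which is precisely (\ref{ZeroSpanEq}) together with the stated refinement to $v\in V^0$.

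The argument is entirely formal once Lemma~\ref{simple-facts} is available, so there is no real obstacle; the only two points worth flagging are that one must first discard the terms with $m_0\ge 0$ before invoking (\ref{2.15}), and that the binomial coefficient $\binom{k-n_0-1}{k}$ in (\ref{2.15}) specializes at $n_0=-1$ to $\binom{k}{k}=1$, so the identity $\eltzHomo{w}{m}=w$ is an exact equality rather than merely a proportionality. In short, this corollary is a bookkeeping consequence of the computation (\ref{2.4})--(\ref{e2.16}).
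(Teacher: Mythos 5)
Your proof is correct and follows exactly the route the paper intends: the corollary is stated as an immediate consequence of Lemma~\ref{simple-facts}, and your argument — discard the $m_0\ge 0$ terms via the creation-type axiom, then use (\ref{2.15}) at $n_0=-1$ (i.e.\ (\ref{e2.16}) and its $k>0$ analogues) to see that each generator $v_{-k-1,\m}\vac$ equals $w_{-1,\m}\vac$ for $w=v_{-k-1,\m}\vac\in V^0$ — is precisely that derivation. The chain of inclusions you give closes the argument cleanly.
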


\begin{coro}\label{V0-characterization}
Let $V$ be a toroidal vertex algebra and let $v\in V$. Then $v\in V^0$ if and only if
\begin{eqnarray*}
&&v_{m_0,\m}=0\  \  \  \mbox{ for all }m_{0}\in \Z  \mbox{ and for all but finitely many }\m\in \Z^{r},\\
&&v=\sum_{\m\in \Z^{r}}v_{-1,\m}{\bf 1}.
\end{eqnarray*}
\end{coro}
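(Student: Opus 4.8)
The plan is to establish the two implications separately; both are short and rest entirely on Proposition~\ref{TvaStructProp}, Corollary~\ref{rV0fact}, and Lemma~\ref{simple-facts}.

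For the ``if'' direction, suppose $v\in V$ satisfies the two stated conditions. The first condition forces $v_{-1,\m}\vac=0$ for all but finitely many $\m\in\Z^r$, so $\sum_{\m\in\Z^r}v_{-1,\m}\vac$ is a finite sum; since each summand $v_{-1,\m}\vac$ lies in $V^0$ by the definition $V^0={\rm span}\{v_{m_0,\m}\vac\}$, the second condition gives $v=\sum_{\m}v_{-1,\m}\vac\in V^0$.

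For the ``only if'' direction, let $v\in V^0$. By (\ref{V0-property1}) the series $\Yt{v}{x_0}{\x}$ lies in $\Espacez{V}[\varr{x}]$, hence is a Laurent polynomial in $\x=(x_1,\dots,x_r)$; in particular only finitely many of its $\x$-coefficients are nonzero, which is exactly the statement that $v_{m_0,\m}=0$ for all $m_0\in\Z$ and for all but finitely many $\m\in\Z^r$. For the second condition, use Corollary~\ref{rV0fact} to write $v=\sum_{i=1}^{k}c_i\,(u^{(i)})_{-1,\n^{(i)}}\vac$ with $u^{(i)}\in V$, $\n^{(i)}\in\Z^r$, $c_i\in\C$. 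For each $\m\in\Z^r$, (\ref{e2.14}) gives $\big((u^{(i)})_{-1,\n^{(i)}}\vac\big)_{-1,\m}\vac=0$ unless $\m=\n^{(i)}$, while (\ref{e2.16}) gives $\big((u^{(i)})_{-1,\n^{(i)}}\vac\big)_{-1,\n^{(i)}}\vac=(u^{(i)})_{-1,\n^{(i)}}\vac$; hence $v_{-1,\m}\vac=\sum_{i:\,\n^{(i)}=\m}c_i\,(u^{(i)})_{-1,\n^{(i)}}\vac$, and summing over $\m$ recovers $\sum_{\m}v_{-1,\m}\vac=\sum_{i=1}^{k}c_i\,(u^{(i)})_{-1,\n^{(i)}}\vac=v$.

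An alternative route to the second condition, once the first is known: both $v$ and the now-finite sum $u:=\sum_{\m}v_{-1,\m}\vac$ lie in $V^0$, and by (\ref{V0-sum}) together with linearity of $Y$ they have the same vertex operator $\Yt{v}{x_0}{\x}=\sum_{\m}\Yt{v_{-1,\m}\vac}{x_0}{\x}=\Yt{u}{x_0}{\x}$; since $\Yt{\cdot}{x_0}{\x}$ is injective on $V^0$ by Proposition~\ref{TvaStructProp}, $v=u$. I do not expect a genuine obstacle here; the corollary is essentially a repackaging of Proposition~\ref{TvaStructProp} and Lemma~\ref{simple-facts}, and the only point requiring a little attention is the bookkeeping that collapses $\sum_{\m}v_{-1,\m}\vac$ back to $v$.
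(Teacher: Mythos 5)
Your proof is correct and follows exactly the route the paper intends: the corollary is stated there as an immediate consequence of Proposition~\ref{TvaStructProp} and Lemma~\ref{simple-facts} (via Corollary~\ref{rV0fact}), and your argument is precisely the unpacking of that claim, with the finiteness coming from (\ref{V0-property1}) and the identity $v=\sum_{\m}v_{-1,\m}\vac$ from (\ref{e2.14}) and (\ref{e2.16}) (or, equivalently, from (\ref{V0-sum}) plus injectivity of $Y$ on $V^0$). No gaps.
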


%\begin{proof} The ``if'' part is clear from the definition of $V^0$. Assume $v\in V^0$. By (\ref{V0-property1}) we have
%$v_{m_0,\m}=0$ for all $m_{0}\in \Z$  and for all but finitely many $\m\in \Z^{r}$.
%Furthermore, using (\ref{V0-sum}) and the vertex algebra structure on $V^0$ we get
%$$v=\left(Y(v;x_0,\x){\bf 1}\right)|_{x_{0}=0,\x=1}=\sum_{\m\in \Z^{r}}\left(Y(v_{-1,\m}{\bf 1};x_{0},\x){\bf 1}\right)|_{x_{0}=0,\x=1}
%=\sum_{\m\in \Z^{r}}v_{-1,\m}{\bf 1},$$
%as desired.
%\end{proof}

For $v\in V,\ \m\in \Z^{r}$, we set
\begin{eqnarray}
Y(v;x_0,\m)=\sum_{m_0\in \Z}v_{m_0,\m}x_0^{-m_0-1}.
\end{eqnarray}

The following is a technical result which we shall need:

\begin{lem} For $a,b\in V$, $m_0\in \Z,\ \m,\n\in \Z^{r}$, we have
\begin{eqnarray}\label{tempEq1}
(a_{m_0,\m}b)_{-1,\m+\n}{\bf 1}=a_{m_0,\m}b_{-1,\n}{\bf 1}.
\end{eqnarray}
\end{lem}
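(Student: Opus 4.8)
The plan is to derive (\ref{tempEq1}) by specializing the iterate-formula (\ref{2.3}) that was established in the proof of Proposition \ref{TvaModStructProp}, applied to the adjoint module $W=V$. First I would recall that for $b\in V$, equation (\ref{2.3}) reads
\begin{eqnarray*}
\Yt{\Yt{b}{z_0}{\z}\vac}{y_0}{\y}=\expop{z_0}{y_0}\Yt{b}{y_0}{\z\y},
\end{eqnarray*}
and taking the coefficient of $\degr{z}{n}z_0^{0}$ (equivalently, extracting $b_{-1,\n}\vac$ via (\ref{2.4}) with $k=0$) on the left, and matching the corresponding coefficient on the right, yields $\Yt{b_{-1,\n}\vac}{y_0}{\y}=Y(b;y_0,\n)\degr{y}{n}$ as a special case of (\ref{etva-m}). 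Thus the element $b_{-1,\n}\vac$ has the transparent property that $(b_{-1,\n}\vac)_{m_0,\n+\bdt'}=\delta_{\bdt',\bf 0}\,b_{m_0,\n}$ for all $m_0$; in particular its vertex operator is completely controlled by the operators $b_{m_0,\n}$.

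Next I would apply the Jacobi identity (\ref{jacobi-tva}) — or more directly the already-extracted iterate identity — to the pair $a, b_{-1,\n}\vac$. Taking $\Res_{z_0}\Res_{y_0}$ against appropriate powers to isolate $(a_{m_0,\m}(b_{-1,\n}\vac))_{-1,\ast}\vac$, and using the creation-type formula (\ref{etva-m})/(\ref{2.15}) with $k=0$, one gets
\begin{eqnarray*}
(a_{m_0,\m}(b_{-1,\n}\vac))_{-1,\m+\n}\vac=a_{m_0,\m}(b_{-1,\n}\vac)_{-1,\n}\vac=a_{m_0,\m}b_{-1,\n}\vac,
\end{eqnarray*}
where the last equality is exactly (\ref{e2.16}). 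Since the left-hand side is $(a_{m_0,\m}b)_{-1,\m+\n}\vac$ once we observe that $b$ and $b_{-1,\n}\vac$ produce the same operator $a_{m_0,\m}$ acting in the relevant $\n$-graded slot — this is where Lemma \ref{simple-facts}, specifically (\ref{e2.14}) and the grading it encodes, does the bookkeeping — we are done. Concretely, writing $a_{m_0,\m}$ as a coefficient and using that $Y(\cdot;x_0,\x)$ respects the $\Z^r$-grading established in (\ref{etva-m})–(\ref{V0-sum}), the component $(a_{m_0,\m}b)_{-1,\m+\n}\vac$ depends on $b$ only through its $\n$-graded part's $-1$-mode acting on $\vac$, i.e. through $b_{-1,\n}\vac$.

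The main obstacle I anticipate is purely the residue/coefficient bookkeeping: one must be careful that the iterate side of (\ref{jacobi-tva}) is applied with the $\z$-variables substituted consistently (the shift $\z\y$ in the arguments), and that extracting the $\degr{z}{n}$-coefficient commutes properly with the $e^{z_0\partial_{y_0}}$ operator so that no spurious $z_0$-dependence survives after setting the appropriate modes. A clean way to sidestep the heaviest computation is to avoid the full Jacobi identity and instead invoke (\ref{2.3}) twice: once to rewrite $\Yt{b_{-1,\n}\vac}{y_0}{\y}$, and once (or rather (\ref{etva-m}) and (\ref{2.15})) to read off the $(-1,\m+\n)$-coefficient of the iterate, reducing everything to the already-proved identities (\ref{etva-m}), (\ref{2.15}) and (\ref{e2.16}) together with the grading statement (\ref{e2.14}). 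I expect the argument to be short once the right special cases are quoted.
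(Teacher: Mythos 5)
Your proposal assembles the right ingredients (Jacobi identity evaluated on the vacuum, the creation-type facts (\ref{etva-m})--(\ref{V0-sum}), the identity (\ref{e2.16})), but the step that carries all the weight is left unjustified, and as stated it is circular. The chain you derive for the pair $(a,\,b_{-1,\n}\vac)$ is fine: applying the Jacobi identity to that pair on the vacuum does give $(a_{m_0,\m}(b_{-1,\n}\vac))_{-1,\m+\n}\vac=a_{m_0,\m}b_{-1,\n}\vac$. The problem is the final identification $(a_{m_0,\m}(b_{-1,\n}\vac))_{-1,\m+\n}\vac=(a_{m_0,\m}b)_{-1,\m+\n}\vac$, which you justify by claiming that $(a_{m_0,\m}b)_{-1,\m+\n}\vac$ ``depends on $b$ only through $b_{-1,\n}\vac$,'' citing (\ref{e2.14}), (\ref{etva-m}) and (\ref{V0-sum}). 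Those facts describe the $\Z^r$-decomposition of a single vertex operator $Y(v;x_0,\x)$ and the modes of elements $v_{m_0,\m}\vac\in V^0$; they say nothing about how the iterate $a_{m_0,\m}b$ depends on $b$, because there $b$ sits in the second slot as a \emph{vector}, not as an operator, and a general $b\in V$ is not recovered from the elements $b_{-1,\n}\vac$. In fact, granting your first chain, the assertion you need is \emph{equivalent} to (\ref{tempEq1}) itself (via (\ref{e2.16})), so you are assuming the statement in the course of proving it. The only way to establish that reduction is to carry out the Jacobi-identity-on-vacuum computation for the pair $(a,b)$ --- at which point the detour through $b_{-1,\n}\vac$ buys nothing.

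The repair is immediate and is exactly the paper's proof: run the very computation you perform for $(a,\,b_{-1,\n}\vac)$ on the pair $(a,b)$ itself. In the Jacobi identity applied to $\vac$, the term $\Res_{x_0}\dfunc{z_0}{y_0-x_0}{-z_0}\Yt{b}{y_0}{\n}\Yt{a}{x_0}{\m}\vac$ vanishes because both $\dfunc{z_0}{y_0-x_0}{-z_0}$ and $\Yt{a}{x_0}{\m}\vac$ involve only nonnegative powers of $x_0$; the surviving term collapses by the $\delta$-function substitution to
$$\Yt{\Yt{a}{z_0}{\m}b}{y_0}{\m+\n}\vac=\Yt{a}{z_0+y_0}{\m}\Yt{b}{y_0}{\n}\vac.$$
Now set $y_0=0$ (legitimate since $\Yt{b}{y_0}{\n}\vac\in V[[y_0]]$, and this kills everything on the right except the $b_{-1,\n}\vac$ term) and apply $\Res_{z_0}z_0^{m_0}$ to both sides. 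This yields (\ref{tempEq1}) directly, with no reduction step left to justify.
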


\begin{proof} From the Jacobi identity for the pair $(a,b)$ we get
\begin{eqnarray*}
&&\Yt{\Yt{a}{z_0}{\m}b}{y_0}{\m+\n}\vac \nonumber\\
&=&\Res_{x_0}\dfunc{z_0}{x_0-y_0}{z_0}\Yt{a}{x_0}{\m}\Yt{b}{y_0}{\n}\vac\nonumber \\
&&-\Res_{x_0}\dfunc{z_0}{y_0-x_0}{-z_0}\Yt{b}{y_0}{\n}\Yt{a}{x_0}{\m}\vac\nonumber \\
&=&\Res_{x_0}\dfunc{z_0}{x_0-y_0}{z_0}\Yt{a}{x_0}{\m}\Yt{b}{y_0}{\n}\vac\nonumber \\
&=&\Yt{a}{z_0+y_0}{\m}\Yt{b}{y_0}{\n}\vac.%\nonumber \\
%&=&\expop{y_0}{z_0} \Yt{a}{z_0}{\m}\Yt{b}{y_0}{\n}\vac.
\end{eqnarray*}
Setting $y_0=0$ and then applying $\Res_{z_0}z_0^{m_0}$ we obtain (\ref{tempEq1}).
\end{proof}

Recall that for a \tva{} $V$, $V^0$ is a \tva{} and  a \va{} as well.
Unless noted otherwise, {\em  a $V^0$-module always stands for a module for $V^0$ viewed as a toroidal vertex algebra.}
Denote by $\Mod V^0$ and $\Mod^0 V^0$ the module categories of $V^0$ viewed as a \tva{} and as a vertex algebra, respectively.

The following is a connection between $V$-modules and $V^{0}$-modules:

\begin{prop}\label{prop:Vmod-V0mod}
Let $V$ be a  \tva{}. If  $(W,Y_{W})$ is  a $V$-module, then $(W,Y_{W}')$ is
 a $V^0$-module  where $Y_W'$ is the restriction of $Y_W$,
 satisfying the condition that
 for every $v\in V,\ w\in W$, there exists an integer $k$ such that
\begin{eqnarray}\label{Y0-toroidal-truncation}
x_0^{k}Y_{W}'\left(v_{-1,\m}{\bf 1};x_0,\x\right)w\in \x^{-\m}W[[x_0]]\  \  \  \mbox{ for all }\m\in \Z^r.
\end{eqnarray}
 Conversely, if
$(W,Y_{W}')$ is a $V^0$-module  satisfying the condition above, then $(W,Y_W)$ is a $V$-module where
\begin{eqnarray}\label{toroidal-connection}
Y_{W}(v;x_0,\x)=\sum_{\m\in \Z^r}Y_{W}'(v_{-1,\m}{\bf 1};x_0,\x)\  \  \   \mbox{ for }v\in V.
\end{eqnarray}
\end{prop}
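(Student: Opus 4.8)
The plan is to prove both directions of the equivalence by unpacking the Jacobi identity and invoking the structural results of Propositions \ref{TvaStructProp} and \ref{TvaModStructProp} together with the formulas of Lemma \ref{simple-facts}.

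\textbf{The forward direction.} Suppose $(W,Y_W)$ is a $V$-module and let $Y_W'$ be the restriction of $Y_W$ to $V^0$. Since $V^0$ is a toroidal vertex subalgebra of $V$ containing $\vac$ (Proposition \ref{TvaStructProp}), $(W,Y_W')$ is automatically a $V^0$-module: the Jacobi identity for pairs $(u,v)\in V^0\times V^0$ and the vacuum normalization are inherited directly from the $V$-module axioms. The only thing requiring work is the truncation condition (\ref{Y0-toroidal-truncation}). For this I would apply (\ref{eV0-module}) of Proposition \ref{TvaModStructProp}: for each fixed $\m$, $Y_W(v_{-1,\m}\vac;x_0,\x)$ already lies in $\x^{-\m}\Espacez{W}=\x^{-\m}\Hom(W,W((x_0)))$, so for a fixed $w\in W$ there is an integer $k_\m$ with $x_0^{k_\m}Y_W'(v_{-1,\m}\vac;x_0,\x)w\in\x^{-\m}W[[x_0]]$. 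The point is that $k_\m$ can be chosen uniformly in $\m$. This follows because $Y_W(v;x_0,\x)w\in W[[\varr{x}]]((x_0))$ by the definition of $\Espaceall{W}$, so there is a single $k$ with $x_0^k Y_W(v;x_0,\x)w\in W[[\varr{x}]][[x_0]]$; combining this with the expansion (\ref{ev1sum}), $Y_W(v;x_0,\x)=\sum_{\m}Y_W(v_{-1,\m}\vac;x_0,\x)$, and extracting the $\x^{-\m}$-component gives exactly (\ref{Y0-toroidal-truncation}) with this uniform $k$.

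\textbf{The converse direction.} Suppose $(W,Y_W')$ is a $V^0$-module satisfying (\ref{Y0-toroidal-truncation}), and define $Y_W$ by (\ref{toroidal-connection}). First I would check that $Y_W(v;x_0,\x)$ genuinely lands in $\Espaceall{W}$: applied to any $w$, (\ref{Y0-toroidal-truncation}) with a uniform $k$ guarantees that the (a priori infinite) sum over $\m$ makes sense and produces an element of $W[[\varr{x}]]((x_0))$. Next, $Y_W(\vac;x_0,\x)=1_W$ because $\vac_{-1,\m}\vac=\delta_{\m,0}\vac$ by (\ref{e2.14})--(\ref{e2.16}) (the case $\m=0$), so only the $\m=0$ term survives and equals $Y_W'(\vac;x_0,\x)=1_W$. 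The substantive step is the Jacobi identity for an arbitrary pair $(u,v)\in V\times V$. Writing $u=\sum_\m u_{-1,\m}\vac$ and $v=\sum_\n v_{-1,\n}\vac$ and using bilinearity, it suffices to verify the identity when $u$ and $v$ are each of the form $a_{-1,\m}\vac$, $b_{-1,\n}\vac$ with $a,b\in V$. For such elements I would use the key formula (\ref{tempEq1}): $(a_{m_0,\m}b)_{-1,\m+\n}\vac=a_{m_0,\m}b_{-1,\n}\vac$, together with (\ref{2.15}) relating $(v_{-k-1,\m}\vac)_{n_0,\m}$ to $v_{n_0-k,\m}$, to rewrite the iterate $Y_W(Y_W(u;z_0,\z)v;y_0,\y)$ in terms of $Y_W'$-operators applied to elements of $V^0$ of the form $(a_{m_0,\m+\n}b)_{-1,\ast}\vac$, and then invoke the Jacobi identity already available for the $V^0$-module $(W,Y_W')$. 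Tracking how the formal-variable substitution $\x\mapsto\z\y$ in (\ref{jacobi-tva}) interacts with the $\x^{-\m}$-grading of the components is bookkeeping, but (\ref{etva-m}) and (\ref{eV0-module}) — which say the $\m$-component of a vertex operator carries exactly the monomial $\x^{-\m}$ — make this transparent.

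\textbf{Main obstacle.} The genuine difficulty is the Jacobi identity in the converse direction: one must reconstruct the full multivariable identity (\ref{jacobi-tva}) from its $\x=1$ specialization, and a priori $Y_W'$ only knows about $V^0$. The mechanism that rescues this is the rigidity of the $\x$-grading — each $v_{-1,\m}\vac$ contributes a vertex operator supported purely on $\x^{-\m}$, by (\ref{etva-m}) and (\ref{eV0-module}) — so the multivariable structure is uniquely determined by the single-variable data together with the bookkeeping of which $\m$ each piece belongs to. I expect the proof to proceed by fixing the $\z$- and $\y$-gradings on both sides, reducing (\ref{jacobi-tva}) to a family of identities among $Y_W'$-operators indexed by elements $\m,\n\in\Z^r$, and then matching these one grading at a time against the $V^0$-module Jacobi identity via (\ref{tempEq1}) and (\ref{2.15}). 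The truncation hypothesis (\ref{Y0-toroidal-truncation}) is precisely what is needed to guarantee that all the infinite sums over gradings that appear are well defined in $\Espaceall{W}$, so it enters both in making sense of (\ref{toroidal-connection}) and in justifying the rearrangements in the Jacobi computation.
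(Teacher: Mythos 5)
Your proposal is correct and follows essentially the same route as the paper: the forward direction extracts the uniform $k$ from $Y_W(v;x_0,\x)w\in W[[\varr{x}]]((x_0))$ together with the component decomposition of Proposition \ref{TvaModStructProp}, and the converse reassembles the multivariable Jacobi identity from the $V^0$-module one by summing over $\m,\n$ and converting the iterate via (\ref{tempEq1}) and Lemma \ref{simple-facts}, exactly as in the paper (whose identity (\ref{etva-fact2}) is the precise form of the rewriting you describe). The only cosmetic slip is the phrase ``$u=\sum_\m u_{-1,\m}\vac$'' for general $u\in V$, which is false as an identity of elements (it holds only modulo $K_Y(V)$); the reduction is really at the level of vertex operators, $Y_W(u;\cdot)=\sum_\m Y_W'(u_{-1,\m}\vac;\cdot)$, which is what your computation actually uses.
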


\begin{proof} Suppose that $(W,Y_{W})$ is a $V$-module.
 Then $(W,Y_{W}')$ is  automatically a $V^0$-module. For $v\in V,\ w\in W$, from Proposition \ref{TvaModStructProp},
 we have $Y_{W}(v_{-1,\m}{\bf 1};x_0,\x)w\in \x^{-\m}W((x_0))$ for $\m\in \Z^r$ and
 $$Y_{W}(v;x_0,\x)w=\sum_{\m\in \Z^r}Y_{W}(v_{-1,\m}{\bf 1};x_0,\x)w. $$
 Let $k$ be an integer such that $x_0^kY_{W}(v;x_0,\x)w\in W[[\x^{\pm 1}]][[x_{0}]]$.
Then (\ref{Y0-toroidal-truncation}) follows.

Conversely, assume that $(W,Y_W')$ is a $V^0$-module satisfying the very condition.
Define a vertex operator map $Y_{W}$ on $V$ by using (\ref{toroidal-connection}). Note that
for $v\in V$, we have $Y_{W}(v;x_0,\x)\in \Espaceall{W}$ due to condition (\ref{Y0-toroidal-truncation}).
It is clear that $Y_{W}({\bf 1};x_0,\x)=1_{W}$.
Let $u,v\in V,\ m_0\in \Z,\ \m,\n, {\bf p}\in \Z^r$. Using Lemma \ref{simple-facts} and (\ref{tempEq1}) we get
\begin{eqnarray}\label{tempEq2}
(u_{-1,\m}{\bf 1})_{m_0,{\bf p}}(v_{-1,\n}{\bf 1})=\delta_{\m,{\bf p}}u_{m_0,\m}v_{-1,\n}{\bf 1}
=\delta_{\m,{\bf p}}(u_{m_0,\m}v)_{-1,\m+\n}{\bf 1}.
\end{eqnarray}
That is,
\begin{eqnarray}\label{etva-fact2}
Y(u_{-1,\m}{\bf 1};z_0,\z)(v_{-1,\n}{\bf 1})=\Res_{x_0}x_0^{-1}\z^{-\m}Y(Y(u;z_0,\m)v;x_{0},\m+\n){\bf 1}.
\end{eqnarray}
Using the Jacobi identity for $Y_{W}'$ and this relation, we obtain
\begin{eqnarray*}
&&\dfunc{z_0}{x_0-y_0}{z_0}\Ytmod{W}{u}{x_0}{\z\y}\Ytmod{W}{v}{y_0}{\y}		\\
&&\ \ \ \  -\dfunc{z_0}{y_0-x_0}{-z_0}\Ytmod{W}{v}{y_0}{\y}\Ytmod{W}{u}{x_0}{\z\y} \\
&=&\dfunc{z_0}{x_0-y_0}{z_0}\sum\limits_{\ranger{m}}\sum\limits_{\ranger{n}}
			Y_{W}'(u_{-1,\m}{\bf 1};x_0,\z\y)Y_{W}'(v_{-1,\n}{\bf 1};y_0,\y)\\
&& -\dfunc{z_0}{y_0-x_0}{-z_0}\sum\limits_{\ranger{m}}\sum\limits_{\ranger{n}}
		Y_{W}'(v_{-1,\n}{\bf 1};y_0,\y)Y_{W}'(u_{-1,\m}{\bf 1};x_0,\z\y)	 \\
&=&\dfunc{y_0}{x_0-z_0}{y_0}\sum\limits_{\ranger{m}}\sum\limits_{\ranger{n}}
			Y_{W}'\left(Y(u_{-1,\m}{\bf 1};z_0,\z)(v_{-1,\n}{\bf 1});y_0,\y\right) \\
&=&\dfunc{y_0}{x_0-z_0}{y_0}\sum\limits_{\ranger{m}}\sum\limits_{\ranger{n}}
			\Res_{x_0}x_0^{-1}\z^{-\m}Y_{W}'\left( Y(Y(u;z_0,\m)v;x_0,\m+\n)\vac;y_0,\y\right)\\
&=&\dfunc{y_0}{x_0-z_0}{y_0}\Ytmod{W}{\Yt{u}{z_0}{\z}v}{y_0}{\y}.		
\end{eqnarray*}
This proves that $(W,Y_{W})$ is a $V$-module.
\end{proof}

As one of the main results of this section, we have:

\begin{thm}\label{ZeroModCateProp}
Let $V$ be a  \tva{}. If  $(W,Y_{W})$ is  a $V^0$-module, then $(W,Y_{W}^0)$ is
 a module for $V^0$ viewed as a vertex algebra, where
 \begin{eqnarray*}
Y_{W}^0(u,x_0)= \Ytmod{W}{u}{x_0}{\x}|_{\x=1}\   \  \   \mbox{ for }u\in V^0.
\end{eqnarray*}
Conversely, if
$(W,Y_{W}^0)$ is a module for $V^0$ viewed as a vertex algebra, then $(W,Y_W)$ is a $V^0$-module where
\begin{eqnarray}\label{connection}
Y_{W}(u;x_0,\x)=\sum_{\m\in \Z^r}Y_{W}^0(u_{-1,\m}{\bf 1},x_0)\x^{-\m}\  \  \   \mbox{ for }u\in V^0.
\end{eqnarray}
Furthermore, category $\Mod V^0$ is naturally isomorphic to category $\Mod^0 V^0$.
\end{thm}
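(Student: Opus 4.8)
The plan is to produce mutually inverse functors $F\colon\Mod V^{0}\to\Mod^{0}V^{0}$ and $G\colon\Mod^{0}V^{0}\to\Mod V^{0}$, each acting as the identity on underlying vector spaces and on linear maps, so that the last assertion follows at once. Throughout I use that $V^{0}$ is itself a \tva{} (Proposition~\ref{TvaStructProp}) and that $(V^{0})^{0}=V^{0}$, the latter because by Corollary~\ref{rV0fact} every element of $V^{0}$ is a linear combination of elements $v_{-1,\m}\vac$ with $v\in V^{0}$. For $F$: given a $V^{0}$-module $(W,Y_{W})$, I would apply Proposition~\ref{TvaModStructProp} with the \tva{} $V^{0}$ in the role of $V$; since $(V^{0})^{0}=V^{0}$, the formula $Y_{W}^{0}(u,x_{0})=Y_{W}(u;x_{0},\x)|_{\x=1}$ is defined for every $u\in V^{0}$ and makes $(W,Y_{W}^{0})$ a module for $V^{0}$ viewed as a \va{}. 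Because $Y_{W}^{0}$ is just $Y_{W}$ specialized at $\x=1$, any morphism of $V^{0}$-modules is automatically a morphism of the corresponding objects of $\Mod^{0}V^{0}$; this defines $F$.

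For $G$: given $(W,Y_{W}^{0})\in\Mod^{0}V^{0}$, I would define $Y_{W}$ by (\ref{connection}). For $u\in V^{0}$ Corollary~\ref{V0-characterization} gives $u=\sum_{\m}u_{-1,\m}\vac$ with all but finitely many summands zero, so (\ref{connection}) is a finite sum lying in $\Espaceall{W}$; and $Y(\vac;x_{0},\x)=1$ forces $\vac_{-1,\m}\vac=\delta_{\m,\rsymbol{0}}\vac$, hence $Y_{W}(\vac;x_{0},\x)=1_{W}$. The heart of the matter is the Jacobi identity for $Y_{W}$. Substituting (\ref{connection}) into the left-hand side produces $\sum_{\m,\n}\z^{-\m}\y^{-\m-\n}$ times the vertex-algebra Jacobi expression for the pair $\big(u_{-1,\m}\vac,\;v_{-1,\n}\vac\big)$ in $(W,Y_{W}^{0})$; applying that identity and pulling out the $\delta$-function, it remains to check
\[
\sum_{\m,\n}\z^{-\m}\y^{-\m-\n}\,Y_{W}^{0}\big(Y^{0}(u_{-1,\m}\vac,z_{0})(v_{-1,\n}\vac),\,y_{0}\big)=Y_{W}\big(Y(u;z_{0},\z)v;\,y_{0},\,\y\big).
\]
To identify the right-hand side, I would expand $Y(u;z_{0},\z)v=\sum_{k_{0},\mathbf{k}}u_{k_{0},\mathbf{k}}v\,z_{0}^{-k_{0}-1}\z^{-\mathbf{k}}$ (finite in $\mathbf{k}$ by (\ref{V0-property1})), apply (\ref{connection}) again, rewrite $(u_{k_{0},\mathbf{k}}v)_{-1,\mathbf{p}}\vac=u_{k_{0},\mathbf{k}}v_{-1,\mathbf{p}-\mathbf{k}}\vac$ using (\ref{tempEq1}), and reindex with $\n=\mathbf{p}-\mathbf{k}$; this reduces the right-hand side to the left once one knows the internal identity $Y^{0}(u_{-1,\m}\vac,z_{0})(v_{-1,\n}\vac)=Y(u;z_{0},\m)(v_{-1,\n}\vac)$ in $V^{0}$, which itself follows from (\ref{etva-fact2}) and (\ref{etva-m}) by applying $\Res_{x_{0}}x_{0}^{-1}$ and invoking (\ref{tempEq1}) once more. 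This shows $(W,Y_{W})\in\Mod V^{0}$, and as for $F$ the construction is functorial and the identity on linear maps.

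Finally, for mutual inverseness: for $(W,Y_{W})\in\Mod V^{0}$, equations (\ref{ev1sum}) and (\ref{eV0-module}) of Proposition~\ref{TvaModStructProp} (applied to the \tva{} $V^{0}$) give $Y_{W}(u;x_{0},\x)=\sum_{\m}Y_{W}(u_{-1,\m}\vac;x_{0},\x)$ and $Y_{W}(u_{-1,\m}\vac;x_{0},\x)=Y_{W}^{0}(u_{-1,\m}\vac,x_{0})\x^{-\m}$, so $GF=\mathrm{id}$; for $(W,Y_{W}^{0})\in\Mod^{0}V^{0}$, linearity of $Y_{W}^{0}$ together with $u=\sum_{\m}u_{-1,\m}\vac$ gives $Y_{W}(u;x_{0},\x)|_{\x=1}=Y_{W}^{0}(u,x_{0})$, so $FG=\mathrm{id}$. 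As both functors are the identity on morphisms, $\Mod V^{0}$ and $\Mod^{0}V^{0}$ are isomorphic categories. I expect the Jacobi-identity verification in the construction of $G$ to be the only genuine obstacle: it is formal and parallels the proof of Proposition~\ref{prop:Vmod-V0mod}, but it requires care in handling the infinite expansions in $x_{0},y_{0},z_{0}$ against the $\delta$-function identities and in tracking the $\z$- and $\y$-exponents through the two uses of (\ref{tempEq1}).
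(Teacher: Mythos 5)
Your proposal is correct and follows essentially the same route as the paper: one direction via Proposition \ref{TvaModStructProp} applied to the toroidal vertex algebra $V^0$ (using $(V^0)^0=V^0$), the other via the finite sum (\ref{connection}) with the Jacobi identity verified through (\ref{etva-fact2})/(\ref{tempEq1}), and mutual inverseness from (\ref{ev1sum}) and (\ref{eV0-module}). Your treatment of the inverse-functor check is in fact slightly more explicit than the paper's, which leaves that step as "straightforward."
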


\begin{proof} Let $(W,Y_W)$ be a $V^0$-module.
By Proposition \ref{TvaModStructProp}, for $u\in V^0$, we have
$$Y_{W}(u;x_0,\x)\in \Espacez{W}[x_{1}^{\pm 1},\dots,x_{r}^{\pm 1}],$$
so that $Y_{W}(u;x_0,\x)|_{\x=1}$ exists in $\Espacez{W}$. In view of this,
$\Yvamod{W}{\cdot}{x_0}$  is a well defined linear map from $V^0$ to $\Espacez{W}$.
It can be readily seen that  $(W,Y_W^0)$ is a module for $V^0$ viewed as a \va{}.
Let $\alpha:W_{1}\rightarrow W_{2}$ be a  homomorphism of $V_0$-modules.
For  $v\in V^0$, $w\in W_{1}$, we have
\begin{eqnarray*}
\alpha(\Yvamod{W_{1}}{v}{x_0}w)=\alpha\left( \Ytmod{W_{1}}{v}{x_0}{\x}w|_{\x=1} \right)=\Ytmod{W_{2}}{v}{x_0}{\x}\alpha\left( w \right)|_{\x=1}=\Yvamod{W_{2}}{v}{x_0}\alpha(w).
\end{eqnarray*}
This shows that $\alpha$ is also a homomorphism of modules for $V^0$ viewed as a \va{}.
In this way, we get a functor $\mathcal{A}$ from $\Mod V^0$ to $\Mod^0 V^0$.

On the other hand, let $(W,Y_W^0)$ be a module for $V^0$ viewed as a \va{}. Define a linear map
$\Ytmod{W}{\cdot}{x_0}{\x}: \  \ V^0 \rightarrow \Espaceall{W}$ by
\begin{eqnarray}\label{tempDefEq1}
Y_{W}(u;x_0,\x)= \sum\limits_{\ranger{m}}\Yvamod{W}{\eltzHomo{u}{m}}{x_0}\degr{x}{m}
\  \  \  \mbox{ for }u\in V^0.
\end{eqnarray}
Note that  $u_{-1,\m}{\bf 1}=0$ for all but finitely many $\m\in \Z^{r}$ by Corollary \ref{V0-characterization}, so that
such defined $Y_{W}(u;x_0,\x)$ indeed lies in $\Espaceall{W}$.
%In fact, for any $a\in V,\ \m\in \Z^r$,  using Lemma \ref{simple-facts} we have
%$$Y_{W}(a_{-1,\m}{\bf 1};x_0,\x)=Y_{W}^{0}(a_{-1,\m}{\bf 1},x_0)\x^{-\m}.$$
%Taking $a=\eltzHomo{u}{m}$ and $b=v$ in (\ref{tempEq1}), we get
%\begin{eqnarray}\label{tempEq2}
%\left(u_{-1,\m}{\bf 1}\right)_{m_0,{\bf p}}(v_{-1,\n}{\bf 1})=\delta_{{\bf p},\m}
%\big( \eltzHomo{u}{m} \big)_\suball{m}\big( \eltzHomo{v}{n} \big)
%=\Big( \big( \eltzHomo{u}{m} \big)_\suball{m}v \Big)_{-1,\m+\n}\vac
%\end{eqnarray}
%for $m_0\in \Z$.
Let $u,v\in V^0$. Using the Jacobi identity for $Y_{W}^0$ and (\ref{etva-fact2}),  we obtain
\begin{eqnarray*}
&&\dfunc{z_0}{x_0-y_0}{z_0}\Ytmod{W}{u}{x_0}{\z\y}\Ytmod{W}{v}{y_0}{\y}		\\
&&\ \ \ \  -\dfunc{z_0}{y_0-x_0}{-z_0}\Ytmod{W}{v}{y_0}{\y}\Ytmod{W}{u}{x_0}{\z\y} \\
&=&\dfunc{z_0}{x_0-y_0}{z_0}\sum_{\m,\n\in \Z^r}
			\Yvamod{W}{\eltzHomo{u}{m}}{x_0}\Yvamod{W}{\eltzHomo{v}{n}}{y_0}\degr{z}{m}\y^{-\m-\n} \\
&& -\dfunc{z_0}{y_0-x_0}{-z_0}\sum_{\m,\n\in \Z^r}
			\Yvamod{W}{\eltzHomo{v}{n}}{y_0}\Yvamod{W}{\eltzHomo{u}{m}}{x_0}\degr{z}{m}\y^{-\m-\n} \\
&=&\dfunc{y_0}{x_0-z_0}{y_0}\sum_{\m,\n\in \Z^r}
			\Yvamod{W}{\Yva{\eltzHomo{u}{m}}{z_0}\big(\eltzHomo{v}{n}\big)}{y_0}\degr{z}{m}\y^{-\m-\n} \\
	&=&\dfunc{y_0}{x_0-z_0}{y_0}\sum_{\m,\n\in \Z^r}
			\Yvamod{W}{Y(u_{-1,\m}{\bf 1};z_0,\z)\big(\eltzHomo{v}{n}\big)}{y_0}\y^{-\m-\n} \\		
&=&\dfunc{y_0}{x_0-z_0}{y_0}\Res_{x_0}x_0^{-1}\sum_{\m,\n\in \Z^r}
			\Yvamod{W}{Y\Big( Y(u;z_0,\m)v;x_0, \m+\n\Big)\vac}{y_0}\z^{-\m}\y^{-\m-\n} \\
&=&\dfunc{y_0}{x_0-z_0}{y_0}\sum_{\m\in \Z^r}
			Y_{W}\left(Y(u;z_0,\m)v;y_0,\y\right)\z^{-\m} \\			
&=&\dfunc{y_0}{x_0-z_0}{y_0}Y_{W}\left(Y(u;z_0,\z)v;y_0,\y\right).		
\end{eqnarray*}
This shows that  that $(W,Y_W)$ is a $V^0$-module.

Let $\alpha:W\rightarrow W'$ be a homomorphism of modules for $V^0$ viewed as a \va{}.
For any $v\in V,\ \m\in \Z^r$, from (\ref{tempDefEq1}), using (\ref{eV0-module}) and (\ref{ev1sum})  we have
\begin{eqnarray*}
&&\Ytmod{W}{\eltzHomo{v}{m}}{x_0}{\x}=\Yvamod{W}{\eltzHomo{v}{m}}{x_0}\degr{x}{m},\\
&&\Ytmod{W'}{\eltzHomo{v}{m}}{x_0}{\x}=\Yvamod{W'}{\eltzHomo{v}{m}}{x_0}\degr{x}{m}.
\end{eqnarray*}
It then follows that for any $v\in V$, $w\in W,$ and $\m\in \Z^r$,
\begin{eqnarray*}
\alpha\left(\Ytmod{W}{\eltzHomo{v}{m}}{x_0}{\x}w\right)=\Ytmod{W'}{\eltzHomo{v}{m}}{x_0}{\x}\alpha(w).
\end{eqnarray*}
This together with Corollary \ref{rV0fact} implies that $\alpha$ is also a $V^0$-module homomorphism.
In this way, we get a functor $\mathcal{B}$ from $\Mod^0 V^0$ to $\Mod V^0$.
It is straightforward to see that $\mathcal{A}$ is an isomorphism of categories
with $\mathcal{B}$ as its inverse.
\end{proof}

As an immediate consequence of Proposition \ref{TvaStructProp} we have:

\begin{coro}\label{V=V0}
Let $V$ be a \tva{}. Then $V^0=V$ if and only if $Y(\cdot;x_0,\x)$ on $V$ is injective and
\begin{eqnarray}\label{efinite}
Y(v;x_0,\x){\bf 1}\in V[[x_{0}]][x_{1}^{\pm 1},\dots,x_{r}^{\pm 1}]\  \  \   \mbox{ for all }v\in V.
\end{eqnarray}
\end{coro}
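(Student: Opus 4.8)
The plan is to obtain both implications as bookkeeping consequences of Proposition \ref{TvaStructProp} and its corollaries, with no fresh computation.

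For the ``only if'' direction I would assume $V^0=V$. Then Proposition \ref{TvaStructProp} asserts directly that $\Yt{\cdot}{x_0}{\x}$ is injective on $V^0=V$, which is half of what is wanted. For the other half, I would invoke (\ref{V0-property1}): since every $v\in V$ lies in $V^0$, we have $\Yt{v}{x_0}{\x}\in \spacepoly{V}$, so $\Yt{v}{x_0}{\x}\vac$ is a Laurent polynomial in $x_1,\dots,x_r$ with coefficients in $V((x_0))$; combining this with the vacuum axiom $\Yt{v}{x_0}{\x}\vac\in V\varvac{x}$ forces each such coefficient into $V[[x_0]]$, giving $\Yt{v}{x_0}{\x}\vac\in V[[x_0]][\varr{x}]$, which is exactly (\ref{efinite}).

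For the ``if'' direction I would assume $\Yt{\cdot}{x_0}{\x}$ is injective on $V$ and that (\ref{efinite}) holds, and show each $v\in V$ lies in $V^0$. Since the coefficient of $x_0^0\x^{-\m}$ in $\Yt{v}{x_0}{\x}\vac$ is $\eltzHomo{v}{m}$, condition (\ref{efinite}), which limits the monomials $\x^{-\m}$ that occur to a finite set, shows $\eltzHomo{v}{m}=0$ for all but finitely many $\m\in\Z^r$. Hence $v^0:=\sum_{\m\in\Z^r}\eltzHomo{v}{m}$ is a finite sum and lies in $V^0$, as each summand does. Applying (\ref{V0-sum}) to $v$ and using linearity then gives $\Yt{v}{x_0}{\x}=\sum_{\m\in\Z^r}\Yt{\eltzHomo{v}{m}}{x_0}{\x}=\Yt{v^0}{x_0}{\x}$, so injectivity of $\Yt{\cdot}{x_0}{\x}$ on $V$ yields $v=v^0\in V^0$. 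As $v$ was arbitrary, $V^0=V$.

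I do not expect a serious obstacle here; the two places that call for care are, in the ``only if'' direction, combining (\ref{V0-property1}) with the vacuum axiom so as to land in $V[[x_0]][\varr{x}]$ rather than merely in $V((x_0))[\varr{x}]$; and, in the ``if'' direction, recognizing that it is injectivity of $\Yt{\cdot}{x_0}{\x}$ on all of $V$ --- not the injectivity on $V^0$ that is automatic from Proposition \ref{TvaStructProp} --- that upgrades $\Yt{v}{x_0}{\x}=\Yt{v^0}{x_0}{\x}$ to $v=v^0$. Without that hypothesis the statement genuinely fails, since any nonzero $v$ with $\Yt{v}{x_0}{\x}=0$ satisfies (\ref{efinite}) yet fails the characterization of $V^0$ in Corollary \ref{V0-characterization}, having $v\ne 0=\sum_{\m\in\Z^r}\eltzHomo{v}{m}$.
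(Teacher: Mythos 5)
Your proof is correct and follows exactly the route the paper intends: the corollary is stated there as an immediate consequence of Proposition \ref{TvaStructProp}, and your argument simply makes explicit the two readings of that proposition (injectivity on $V^0$ together with (\ref{V0-property1}) for the ``only if'' direction, and the expansion (\ref{V0-sum}) plus injectivity on all of $V$ for the ``if'' direction). Your closing remark about why injectivity on $V$, rather than the automatic injectivity on $V^0$, is the essential hypothesis is also accurate.
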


Furthermore,  using Corollary \ref{V=V0} and Proposition \ref{TvaStructProp} we immediately have:

\begin{coro}
Let $V$ be a \tva{}. Then $(V^0)^0=V^0$.
\end{coro}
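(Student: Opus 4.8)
The plan is to apply Corollary~\ref{V=V0} to the toroidal vertex algebra $V^{0}$ itself, in place of $V$. By Proposition~\ref{TvaStructProp}, $V^{0}$ is a toroidal vertex subalgebra of $V$, so $(V^{0},Y|_{V^{0}},\vac)$ is a toroidal vertex algebra in its own right, and its associated degree-zero subalgebra is exactly $(V^{0})^{0}$ (the component operators $v_{m_{0},\m}$ for $v\in V^{0}$ agree with those of $V$). Corollary~\ref{V=V0}, read with $V^{0}$ in place of $V$, then reduces the claim $(V^{0})^{0}=V^{0}$ to two statements: that $Y(\cdot;x_{0},\x)$ is injective on $V^{0}$, and that $Y(v;x_{0},\x)\vac\in V^{0}[[x_{0}]][x_{1}^{\pm 1},\dots,x_{r}^{\pm 1}]$ for every $v\in V^{0}$.

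The first of these is supplied verbatim by Proposition~\ref{TvaStructProp}. For the second, I would fix $v\in V^{0}$ and intersect two inclusions: the creation-type axiom in the definition of a toroidal vertex algebra gives $Y(v;x_{0},\x)\vac\in V\varvac{x}$, so no negative powers of $x_{0}$ occur; and (\ref{V0-property1}) gives $Y(v;x_{0},\x)\in\Espacez{V}[\varr{x}]$, so that $Y(v;x_{0},\x)\vac$ is a \emph{finite} Laurent polynomial in $x_{1},\dots,x_{r}$ with coefficients in $V((x_{0}))$. Together these force $Y(v;x_{0},\x)\vac\in V[[x_{0}]][x_{1}^{\pm 1},\dots,x_{r}^{\pm 1}]$, which is precisely the condition (\ref{efinite}) for $V^{0}$ once one notes that the coefficients land in $V^{0}$ rather than merely in $V$ — and that is immediate since $V^{0}$, being a toroidal vertex subalgebra, is closed under all the operators $v_{m_{0},\m}$ with $v\in V^{0}$.

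I do not expect a genuine obstacle here; the only point needing a second's thought is the bookkeeping just mentioned, namely that the power series $Y(v;x_{0},\x)\vac$ for $v\in V^{0}$ has its coefficients inside the subalgebra $V^{0}$. As a sanity check (and an alternative route bypassing Corollary~\ref{V=V0}), one can argue directly: $(V^{0})^{0}\subseteq V^{0}$ because $\vac\in V^{0}$ and $V^{0}$ is closed under the operators $v_{m_{0},\m}$ for $v\in V^{0}$, while $(V^{0})^{0}\supseteq V^{0}$ because Corollary~\ref{rV0fact} expresses every element of $V^{0}$ as a linear combination of vectors $u_{-1,\m}\vac$ with $u\in V^{0}$, each of which is manifestly of the form required to lie in $(V^{0})^{0}$.
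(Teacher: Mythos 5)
Your proposal is correct and follows the paper's own route: the paper derives this corollary precisely by applying Corollary~\ref{V=V0} to $V^{0}$, with the injectivity and the polynomial dependence on $x_{1},\dots,x_{r}$ supplied by Proposition~\ref{TvaStructProp}. The details you fill in (combining the creation axiom with (\ref{V0-property1}) to get condition (\ref{efinite}), and noting that the coefficients of $Y(v;x_{0},\x)\vac$ lie in $V^{0}$) are exactly the bookkeeping the paper leaves implicit.
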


The following is straightforward to prove:

\begin{lem}\label{Kylemma}
Let $V$ be a \tva{}. Set
\begin{equation}
\ky{V}=\{ v\in V\,|\, \Yt{v}{x_0}{\x}=0 \}.
\end{equation}
Then
\begin{equation*}
\ky{V}=\{ v\in V\,|\,\Yt{v}{x_0}{\x}\vac=0 \},
\end{equation*}
$\ky{V}$ is an ideal of $V$, and  $\der\left(\ky{V}\right)\subset \ky{V}$ for any derivation $\der$ of $V$.
On the other hand, for any $V$-module $(W,Y_W)$, we have
\[
\Ytmod{W}{v}{x_0}{\x}=0\  \  \   \mbox{ for }v\in \ky{V}.
\]
\end{lem}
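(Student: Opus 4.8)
The plan is to peel the statement apart into four essentially independent assertions and handle each in turn, with the first one doing most of the work. First I would prove the equality
$\ky{V}=\{v\in V\mid \Yt{v}{x_0}{\x}\vac=0\}$. The inclusion $\subseteq$ is trivial. For $\supseteq$, suppose $\Yt{v}{x_0}{\x}\vac=0$; I want to deduce $\Yt{v}{x_0}{\x}=0$ as an operator on all of $V$. The natural tool is the Jacobi identity \eqref{jacobi-tva}: apply it to the pair $(v,u)$ for arbitrary $u\in V$, act on $\vac$, and extract the iterate term. Concretely, in the computation appearing in \eqref{2.3} (with $W=V$, $Y_W=Y$), one gets
$\Yt{\Yt{v}{z_0}{\z}\vac}{y_0}{\y}=\expop{z_0}{y_0}\Yt{v}{y_0}{\z\y}$,
so if $\Yt{v}{z_0}{\z}\vac=0$ then the right side vanishes, hence $\Yt{v}{y_0}{\z\y}=0$, hence $\Yt{v}{x_0}{\x}=0$ after renaming. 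This is the main point, and I expect it to be the only step requiring genuine care — everything else is formal.

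Next, $\ky{V}$ is an ideal. That it is a subspace is clear from linearity of $Y$. For the ideal property I would again use the Jacobi identity: if $v\in\ky{V}$ and $u\in V$, then (i) $\Yt{u}{x_0}{\x}v=0$ since $Y(u;x_0,\x)$ is a well-defined operator and $v$ lies in every ``annihilated by all vertex operators'' subspace once we know $\ky V$ consists of such $v$ — more precisely, from the first part, $v\in\ky V$ means $Y(v;\cdot)=0$ on all of $V$, so in particular $v_{m_0,\m}w=0$ for all $w$; but I want $u_{n_0,\n}v=0$, which instead follows by looking at the iterate side of the Jacobi identity: $\Yt{\Yt{u}{z_0}{\z}v}{y_0}{\y}$ can be written via the left side in terms of $\Yt{u}{x_0}{\z\y}\Yt{v}{y_0}{\y}$ and $\Yt{v}{y_0}{\y}\Yt{u}{x_0}{\z\y}$, both of which vanish because $\Yt{v}{y_0}{\y}=0$; hence $\Yt{\Yt{u}{z_0}{\z}v}{y_0}{\y}=0$, i.e. $\Yt{u}{z_0}{\z}v\in\ky V$. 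This shows $Y(u;z_0,\z)v\subseteq\ky V[[\ldots]]$, i.e. each $u_{n_0,\n}v\in\ky V$, which is exactly the ideal condition.

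For the derivation invariance: if $\der$ is a derivation of $V$ and $v\in\ky V$, then $\Yt{\der(v)}{x_0}{\x}=[\der,\Yt{v}{x_0}{\x}]=[\der,0]=0$, so $\der(v)\in\ky V$. Finally, for a $V$-module $(W,Y_W)$ and $v\in\ky V$: run the same extraction of the iterate term from the module Jacobi identity as in the proof of Proposition \ref{TvaModStructProp}, getting (cf. \eqref{2.3}) $\Ytmod{W}{\Yt{v}{z_0}{\z}\vac}{y_0}{\y}=\expop{z_0}{y_0}\Ytmod{W}{v}{y_0}{\z\y}$; since $\Yt{v}{z_0}{\z}\vac=0$ (as $v\in\ky V$, using the first part), the left side is $0$, forcing $\Ytmod{W}{v}{y_0}{\z\y}=0$ and hence $\Ytmod{W}{v}{x_0}{\x}=0$. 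I would close by remarking that all of these are routine manipulations of the Jacobi identity entirely parallel to the $W=V$ case, the substance being concentrated in the first equality.
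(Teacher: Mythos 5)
Your proof is correct and is the argument the authors evidently have in mind (the paper omits it as ``straightforward''): the identity (\ref{2.3}), namely $Y_{W}\big(Y(v;z_0,\z)\vac;y_0,\y\big)=\expop{z_0}{y_0}Y_{W}(v;y_0,\z\y)$, yields both the equality of the two descriptions of $\ky{V}$ (taking $W=V$) and the vanishing of $Y_{W}(v;x_0,\x)$ on an arbitrary module, while extracting the iterate term of the Jacobi identity gives $u_{n_0,\n}v\in \ky{V}$. The only blemish is the false start ``$Y(u;x_0,\x)v=0$'' in the ideal paragraph, which is neither true nor needed; the argument you actually complete --- both product terms vanish because $Y(v;y_0,\y)=0$, hence the iterate $Y(Y(u;z_0,\z)v;y_0,\y)$ vanishes, and $v_{n_0,\n}u=0\in\ky{V}$ trivially --- is the right one.
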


\begin{de}
For a \tva{} $V$, we define
\begin{eqnarray}\label{BarDefEq123}
\overline{V}=V/\ky{V},
\end{eqnarray}
a quotient toroidal vertex algebra.
\end{de}

\begin{rem}\label{eVVbar}
{\em Note that for any \tva{} $V$, the quotient map from $V$ to $\overline{V}$ naturally gives rise to a functor $\eta$ from
the category of $\overline{V}$-modules to the category of $V$-modules. In view of Lemma \ref {Kylemma},
$\eta$ is an isomorphism.}
\end{rem}

\begin{lem} \label{barV=V}
Suppose that $V$ is a \tva{} such that
\begin{eqnarray}\label{efinite}
Y(v;x_0,\x){\bf 1}\in V[[x_{0}]][x_{1}^{\pm 1},\dots,x_{r}^{\pm 1}]\  \  \   \mbox{ for all }v\in V.
\end{eqnarray}
Then $\overline{V}^{0}=\overline{V}$.
\end{lem}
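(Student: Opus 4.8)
The plan is to apply Corollary \ref{V=V0} to the quotient toroidal vertex algebra $\overline{V}=V/\ky{V}$: by that corollary it suffices to check that the vertex operator map $\Yt{\cdot}{x_0}{\x}$ of $\overline{V}$ is injective and that $\Yt{\bar v}{x_0}{\x}{\bf 1}\in\overline{V}[[x_0]][x_1^{\pm1},\dots,x_r^{\pm1}]$ for every $\bar v\in\overline{V}$. Recall from Lemma \ref{Kylemma} and the definition following it that $\overline{V}$ is indeed a toroidal vertex algebra, with $\Yt{\bar u}{x_0}{\x}\bar v=\overline{\Yt{u}{x_0}{\x}v}$.

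First I would prove that $\ky{\overline{V}}=0$, which gives the injectivity; in fact this step uses nothing about $V$ beyond the axioms. Let $\bar v\in\ky{\overline{V}}$, so that $\Yt{\bar v}{x_0}{\x}{\bf 1}=0$, i.e. $\overline{\Yt{v}{x_0}{\x}{\bf 1}}=0$; equivalently, every coefficient $v_{m_0,\m}{\bf 1}$ of $\Yt{v}{x_0}{\x}{\bf 1}$ lies in $\ky{V}$. Each such coefficient belongs to $V^0$ by the definition of $V^0$, and $\Yt{\cdot}{x_0}{\x}$ is injective on $V^0$ by Proposition \ref{TvaStructProp}; hence $V^0\cap\ky{V}=0$, so $v_{m_0,\m}{\bf 1}=0$ for all $(m_0,\m)$. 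Thus $\Yt{v}{x_0}{\x}{\bf 1}=0$, so $v\in\ky{V}$ by Lemma \ref{Kylemma}, i.e. $\bar v=0$. Consequently $\Yt{\cdot}{x_0}{\x}$ is injective on $\overline{V}$.

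For the remaining condition I would invoke the standing hypothesis that $\Yt{v}{x_0}{\x}{\bf 1}\in V[[x_0]][x_1^{\pm1},\dots,x_r^{\pm1}]$ for all $v\in V$: for $\bar v\in\overline{V}$ we have $\Yt{\bar v}{x_0}{\x}{\bf 1}=\overline{\Yt{v}{x_0}{\x}{\bf 1}}\in\overline{V}[[x_0]][x_1^{\pm1},\dots,x_r^{\pm1}]$, since the quotient map preserves this shape. Both hypotheses of Corollary \ref{V=V0} are now satisfied for $\overline{V}$, so $\overline{V}^0=\overline{V}$. I do not expect any genuine obstacle; the only delicate point is the identification of the coefficients of $\Yt{v}{x_0}{\x}{\bf 1}$ with vectors of $V^0$ together with $V^0\cap\ky{V}=0$, which is exactly the injectivity of $\Yt{\cdot}{x_0}{\x}$ on $V^0$ from Proposition \ref{TvaStructProp}; everything else is routine bookkeeping with the definitions of $\ky{-}$ and of the quotient toroidal vertex algebra.
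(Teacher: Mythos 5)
Your proof is correct, but it takes a genuinely different (and slightly longer) route than the paper's. The paper argues directly: by the hypothesis, for each $v\in V$ only finitely many $v_{-1,\m}{\bf 1}$ are nonzero, and by (\ref{V0-sum}) of Proposition \ref{TvaStructProp} one has $Y(v;x_0,\x)=\sum_{\m}Y(v_{-1,\m}{\bf 1};x_0,\x)$ on $V$, so $v-\sum_{\m}v_{-1,\m}{\bf 1}\in K_Y(V)$ and hence every element of $\overline{V}$ already lies in $\overline{V}^{0}$. You instead route through Corollary \ref{V=V0} applied to $\overline{V}$, which obliges you to prove in addition that the vertex operator map of $\overline{V}$ is injective, i.e.\ $K_Y(\overline{V})=0$. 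Your argument for that point --- the coefficients $v_{m_0,\m}{\bf 1}$ of $Y(v;x_0,\x){\bf 1}$ land in $V^0\cap K_Y(V)$, which vanishes by the injectivity of $Y$ on $V^0$ from Proposition \ref{TvaStructProp}, whence $v\in K_Y(V)$ by Lemma \ref{Kylemma} --- is valid, and it is in fact a general statement about an arbitrary toroidal vertex algebra (equivalently, $\overline{\overline{V}}=\overline{V}$) that the paper's proof neither needs nor records. The trade-off: the paper's proof is two lines and uses only the decomposition (\ref{V0-sum}); yours is longer but isolates the reusable observation $V^0\cap K_Y(V)=0$. Both ultimately rest on the same identity $Y(v;x_0,\x)=\sum_{\m}Y(v_{-1,\m}{\bf 1};x_0,\x)$, since that identity is also what underlies Corollary \ref{V=V0}.
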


\begin{proof} Let $v\in V$. From our assumption, $v_{-1,\m}{\bf 1}=0$ for all but finitely many $\m\in \Z^{r}$.
On the other hand, by Proposition \ref{TvaStructProp}  we have
$$Y(v;x_0,\x)=\sum_{\m\in \Z^{r}}Y(v_{-1,\m}{\bf 1};x_0,\x)\   \mbox{ on }V.$$
Then $v-\sum_{\m\in \Z^{r}}v_{-1,\m}{\bf 1}\in K_{Y}(V)$. It follows that $\overline{V}\subset \overline{V}^{0}$.
Thus $\overline{V}^{0}=\overline{V}$.
\end{proof}

Next, we study the relation between the ideals of $V$ and $V^0$.

\begin{lem}\label{IdealCorresLem}
Let $V$ be a \tva{}.
For any ideal $I$ of $V$, set
\begin{eqnarray}
&&\idealR(I)=I\cap V^0.
\end{eqnarray}
Then $\idealR(I)$ is an ideal of $V^0$ and
\begin{eqnarray}\label{IdealCorresLemEqtemp1}
\idealR(I)={\rm span} \{ a_\suball{m}\vac\  |\   a\in I,\  \rangeall{m} \}.
\end{eqnarray}
On the other hand, for any ideal $I^0$ of $V^0$, set
\begin{eqnarray}
&&\idealG(I^0)={\rm span} \{ a_\suball{m}v\  |\  a\in I^0,\   v\in V,\  \rangeall{m} \}.
\end{eqnarray}
Then $\idealG(I^0)$ is an ideal of $V$ and it is the ideal generated by $I^0$.
\end{lem}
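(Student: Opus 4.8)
The plan is to verify the four assertions in turn, working directly from the structural results in Proposition \ref{TvaStructProp}, Lemma \ref{simple-facts}, and equation (\ref{tempEq1}). First I would show that $\idealR(I)=I\cap V^0$ is an ideal of $V^0$: this is immediate, since $V^0$ is a toroidal vertex subalgebra of $V$ and $I$ is an ideal of $V$, so for $a\in I\cap V^0$, $b\in V^0$ and $(m_0,\m)\in\Zall$ we have $a_{m_0,\m}b\in I$ (as $I$ is an ideal of $V$) and $a_{m_0,\m}b\in V^0$ (as $V^0$ is a subalgebra), and likewise $b_{m_0,\m}a\in I\cap V^0$; hence $I\cap V^0$ is stable under the component operators of $V^0$.

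Next I would establish the formula (\ref{IdealCorresLemEqtemp1}). The inclusion $\supseteq$ is clear: if $a\in I$ then $a_{m_0,\m}\vac\in I$ because $I$ is an ideal, and $a_{m_0,\m}\vac\in V^0$ by the definition of $V^0$ in Proposition \ref{TvaStructProp}, so $a_{m_0,\m}\vac\in\idealR(I)$. For $\subseteq$, take $v\in I\cap V^0$. By Corollary \ref{V0-characterization}, $v=\sum_{\m\in\Z^r}v_{-1,\m}\vac$, a finite sum; and each term $v_{-1,\m}\vac$ equals $a_{-1,\m}\vac$ with $a=v\in I$, so each term lies in the right-hand span, whence so does $v$.

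Then I would treat $\idealG(I^0)$ for an ideal $I^0$ of $V^0$. To see $\idealG(I^0)$ is an ideal of $V$, I need it to be stable under all operators $b_{n_0,\n}$ for $b\in V$; the natural tool is the Jacobi identity (\ref{jacobi-tva}) applied to the pair $(b,a)$ with $a\in I^0$, which expresses $b_{n_0,\n}(a_{m_0,\m}v)$ as a (finite, after taking residues) combination of terms of the form $(b_{k_0,\k}a)_{l_0,\l}(\cdots v)$ and $a_{p_0,\p}(b_{q_0,\q}v)$; the first kind lies in $\idealG(I^0)$ because $b_{k_0,\k}a$ — with $a\in I^0$ — need not itself be in $I^0$ unless $b\in V^0$, so the key point is to reduce to the case $b\in V^0$. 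Here I would use (\ref{V0-sum}), i.e. $\Yt{b}{x_0}{\x}=\sum_{\m}\Yt{b_{-1,\m}\vac}{x_0}{\x}$, to replace the general $b\in V$ by the elements $b_{-1,\m}\vac\in V^0$; for those, $(b_{-1,\m}\vac)_{k_0,\k}a\in I^0$ since $I^0$ is an ideal of $V^0$. Combined with the observation that $a_{p_0,\p}w\in\idealG(I^0)$ for any $w\in V$ directly from the definition, this shows $\idealG(I^0)$ is a left ideal; the "right" stability (that $\idealG(I^0)$ absorbs under $v\mapsto v_{n_0,\n}a$) follows similarly, or from skew-symmetry-type manipulations already packaged in the Jacobi identity. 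Finally, $\idealG(I^0)$ contains $I^0$ because for $a\in I^0$ we have $a=\sum_\m a_{-1,\m}\vac$ by Corollary \ref{V0-characterization} and each $a_{-1,\m}\vac=a_{-1,\m}\vac$ with $v=\vac\in V$; and conversely any ideal of $V$ containing $I^0$ must contain all $a_{m_0,\m}v$, so $\idealG(I^0)$ is the smallest such, i.e. the ideal generated by $I^0$.

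The main obstacle I anticipate is the ideal property of $\idealG(I^0)$: one must carefully extract from the Jacobi identity (via residues in $z_0$ and the $\delta$-function manipulations, exactly as in the proof of Proposition \ref{prop:Vmod-V0mod}) that $b_{n_0,\n}$ applied to a spanning element $a_{m_0,\m}v$ lands back in the span, and the decomposition (\ref{V0-sum}) of an arbitrary $b\in V$ into $V^0$-components is what makes this go through — without it, $b_{k_0,\k}a$ would not obviously lie in $I^0$. Everything else is routine bookkeeping with the component operators.
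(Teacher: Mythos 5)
Your proposal is correct, and for three of the four assertions (the ideal property of $\idealR(I)$, the identity (\ref{IdealCorresLemEqtemp1}), and the containment $I^0\subset\idealG(I^0)$) it coincides with the paper's argument, which likewise rests on Corollary \ref{V0-characterization}. Where you genuinely diverge is at the step you correctly single out as the main obstacle, the stability of $\idealG(I^0)$ under the operators $b_{n_0,\n}$ for general $b\in V$. The paper does \emph{not} reduce to $b\in V^0$: it invokes weak commutativity, writing $(x_0-y_0)^{k}\Yt{b}{x_0}{\m}\Yt{a}{y_0}{\y}v=(x_0-y_0)^{k}\Yt{a}{y_0}{\y}\Yt{b}{x_0}{\m}v$, observing that the right-hand side manifestly has coefficients in $\idealG(I^0)$ (it is $a$ acting on elements of $V$), and then multiplying back by $(x_0-y_0)^{-k}$; the cost is a careful bookkeeping of which formal-series space the product lives in so that this division is legitimate. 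Your route instead uses the commutator formula together with the identity $b_{n_0,\n}=(b_{-1,\n}\vac)_{n_0,\n}$ from Lemma \ref{simple-facts} to replace $b$ by $b_{-1,\n}\vac\in V^0$, so that $(b_{-1,\n}\vac)_{i,\n}a\in I^0$; this is a valid fix for exactly the circularity you identify (for general $b$, $b_{i,\n}a$ lands nowhere useful), and it trades the analytic care about $(x_0-y_0)^{-k}$ for the algebraic input (\ref{2.15}). One caution on your closing remark: genuine skew-symmetry is not available in a general \tva{} (there is no creation property or $D$-operator, cf.\ Remark \ref{difference}), so the ``right'' stability $(a_{m_0,\m}u)_{n_0,\n}v\in\idealG(I^0)$ should be obtained, as in the paper, from the iterate formula ($\Res_{x_0}$ of the Jacobi identity), whose two resulting terms $a_{\cdot,\m}(u_{\cdot,\n}v)$ and $u_{\cdot,\n}(a_{\cdot,\m}v)$ lie in $\idealG(I^0)$ by definition and by the already-established left stability, respectively; your phrase ``packaged in the Jacobi identity'' covers this, but ``skew-symmetry'' is not the right mechanism here.
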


\begin{proof}
It is clear that $\idealR(I)$ is an ideal of $V^0$.
Recall from Corollary \ref{V0-characterization}  that for  $a\in V^0$,
$a=\sum\limits_{\ranger{m}}\eltzHomo{a}{m},$
which is a finite sum. Using this we get
$$\idealR(I)=I\cap V^0\subset{\rm span}\{ a_\suball{m}\vac\  |\  a\in I,\  \rangeall{m} \}\subset I\cap V^0=\idealR(I),$$
proving (\ref{IdealCorresLemEqtemp1}).

Now, let $I^0$ be an ideal of $V^0$.
For $a\in I^0\  (\subset V^0)$, from Corollary \ref{V0-characterization}, we have
$$a=\sum\limits_{\ranger{m}} \eltzHomo{a}{m}\  \  (\mbox{a finite sum}) \in \idealG(I^0).$$
This proves $I^0\subset\idealG(I^0)$.
Let  $a\in I^0$, $u,v\in V,\ \m\in \Z^{r}$. Then there exists a nonnegative integer $k$ such that
\begin{eqnarray}\label{locality-need}
(x_0-y_0)^{k}\Yt{u}{x_0}{\m}\Yt{a}{y_0}{\y}v =(x_0-y_0)^{k}\Yt{a}{y_0}{\y}\Yt{u}{x_0}{\m}v.
\end{eqnarray}
Noticing that the right-hand side lies in $\idealG(I^0)[[x_0^{\pm 1},y_{i}^{\pm 1}\  | \  0\le i\le r]]$, we get
$$(x_0-y_0)^{k}\Yt{u}{x_0}{\m}\Yt{a}{y_0}{\y}v\in \left(\idealG(I^0)[[y_{i}^{\pm 1}\  | \  1\le i\le r]]\right)((x_0))((y_0)).$$
Multiplying by $(x_0-y_0)^{-k}$ we obtain
$$\Yt{u}{x_0}{\m}\Yt{a}{y_0}{\y}v\in \left(\idealG(I^0)[[y_{i}^{\pm 1}\  | \  1\le i\le r]]\right)((x_0))((y_0)).$$
Then it follows that
 $\idealG(I^0)$ is a left ideal of $V$.
Furthermore, for  $a\in I^0$, $u,v\in V,\ \m\in \Z^{r}$, we have
\begin{eqnarray*}
	&&\Yt{\Yt{a}{z_0}{\m}u}{y_0}{\y}v\\
	&=&\Res_{x_0}\dfunc{z_0}{x_0-y_0}{z_0}\degr{y}{m}\Yt{a}{x_0}{\m}\Yt{u}{y_0}{\y}v \\
	&&-\Res_{x_0}\dfunc{z_0}{y_0-x_0}{-z_0}\degr{y}{m}\Yt{u}{y_0}{\y}\Yt{a}{x_0}{\m}v.
\end{eqnarray*}
From this it follows that $\idealG(I^0)$ is also a right ideal of $V$.
Therefore, $\idealG(I^0)$ is an ideal of $V$. It is clear that $\idealG(I^0)$ is the smallest ideal containing $I^0$.
\end{proof}

Furthermore, we have:

\begin{prop}\label{IdealCorresProp}
Let $V$ be a \tva{}. Then $\idealR(\idealG(I^0))=I^0$ for any ideal $I^0$ of $V^0$.
\end{prop}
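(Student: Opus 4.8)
The plan is to establish the two inclusions $\idealR(\idealG(I^0))\supseteq I^0$ and $\idealR(\idealG(I^0))\subseteq I^0$ separately, using the explicit descriptions from Lemma \ref{IdealCorresLem} together with the structural facts about $V^0$ developed earlier (Corollary \ref{V0-characterization}, Corollary \ref{rV0fact}, and equation (\ref{e2.16})).

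For the inclusion $I^0\subseteq \idealR(\idealG(I^0))$: since $I^0$ is an ideal of $V^0$, Lemma \ref{IdealCorresLem} already gives $I^0\subseteq \idealG(I^0)$, and of course $I^0\subseteq V^0$, so $I^0\subseteq \idealG(I^0)\cap V^0=\idealR(\idealG(I^0))$. This direction is immediate.

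For the reverse inclusion $\idealR(\idealG(I^0))\subseteq I^0$: by (\ref{IdealCorresLemEqtemp1}) applied to the ideal $I=\idealG(I^0)$, every element of $\idealR(\idealG(I^0))$ is a linear combination of elements of the form $b_{m_0,\m}\vac$ with $b\in \idealG(I^0)$ and $\rangeall{m}$. Since these vanish unless $m_0<0$ (by (\ref{e2.14})) and since $b_{m_0,\m}\vac$ is expressible in terms of $b_{-1,\n}\vac$'s via the derivation relations of Proposition \ref{TvaStructProp} (indeed $\Yt{b}{x_0}{\x}\vac=\sum_{\ranger{m}}\Yt{b_{-1,\m}\vac}{x_0}{\x}$ by (\ref{V0-sum}), and each $\Yt{b_{-1,\m}\vac}{x_0}{\x}\vac$ is a power series in $x_0$ whose coefficients are in the span of the $b_{-1,\m}\vac$'s), it suffices to show $b_{-1,\m}\vac\in I^0$ for every $b\in \idealG(I^0)$ and $\m\in\Z^r$. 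Now write $b$ as a linear combination of elements $a_{\suball{n}}v$ with $a\in I^0$, $v\in V$, $\rangeall{n}$; by linearity we may assume $b=a_{n_0,\n}v$. Then $b_{-1,\m}\vac=(a_{n_0,\n}v)_{-1,\m}\vac$, and I will show this equals $(a_{n_0,\n}v)_{-1,\n+\mathbf{p}}\vac$ for the appropriate $\mathbf{p}$ (forced by the grading) and then apply the identity (\ref{tempEq1}), which gives $(a_{n_0,\n}v)_{-1,\n+\mathbf{p}}\vac=a_{n_0,\n}(v_{-1,\mathbf{p}}\vac)$. Since $v_{-1,\mathbf{p}}\vac\in V^0$ and $a\in I^0$ with $I^0$ an ideal of $V^0$, the right-hand side lies in $I^0$. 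Summing over the finitely many relevant $\mathbf{p}$ recovers $b_{-1,\m}\vac$, still in $I^0$.

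The main technical point — and the step to be handled with care — is the bookkeeping in passing from a general element $b\in\idealG(I^0)$ to the components $b_{-1,\m}\vac$ and then expressing these through the identity (\ref{tempEq1}): one must use the $\Z^r$-grading implicit in (\ref{e2.14})–(\ref{2.15}) to see that only the matching multi-index $\n$ contributes in $(a_{n_0,\n}v)_{-1,\m}\vac$, i.e. that $(a_{n_0,\n}v)_{-1,\m}\vac=a_{n_0,\n}(v_{-1,\m-\n}\vac)$, so that the right side manifestly lies in $a_{n_0,\n}V^0\subseteq I^0$. Everything else is routine linearity, the finiteness statements from Corollary \ref{V0-characterization}, and the already-established fact that $V^0$ is a toroidal vertex algebra.
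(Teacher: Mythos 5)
Your proposal is correct and follows essentially the same route as the paper: the easy inclusion is identical, and the hard inclusion again reduces to showing $(a_{n_0,\n}v)_{-1,\m}\vac=a_{n_0,\n}\bigl(v_{-1,\m-\n}\vac\bigr)\in I^0$ for $a\in I^0$, $v\in V$, which you obtain from the prepackaged identity (\ref{tempEq1}) while the paper redoes the underlying residue computation on $Y(a_{n_0,\n}v;y_0,\y)\vac$ directly. The only loose point is your justification of the reduction to the modes $m_0=-1$ via ``derivation relations''; the clean way is either to start, as the paper does, from $u=\sum_{\m}u_{-1,\m}\vac$ for $u\in V^0\cap\idealG(I^0)$ using Corollary \ref{V0-characterization}, or to note that once $b_{-1,\m}\vac\in I^0$ is known one gets $b_{m_0,\m}\vac=(b_{-1,\m}\vac)_{m_0,\m}\vac\in I^0$ from (\ref{2.15}) and the fact that $I^0$ is an ideal of $V^0$.
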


\begin{proof}
Let $I^0$ be an ideal of $V^0$. By Lemma \ref{IdealCorresLem}, we have  $I^0\subset\idealG(I^0)$.
Thus $I^0\subset V^{0}\cap \idealG(I^0)=\idealR\left(\idealG(I^0)\right)$.
On the other hand, let $u\in \idealR\left(\idealG(I^0)\right)=V^{0}\cap \idealG(I^0)$.
As $u\in V^0$, by Corollary \ref{V0-characterization} again we have
\begin{eqnarray}\label{eu-expre}
u=\sum_{\m\in \Z^{r}} u_{-1,\m}{\bf 1} \  \  \  (\mbox{a finite sum}).
\end{eqnarray}
As $u\in \idealG(I^0)$, by definition $u$ is a linear combination of vectors  of the form $a_{n_{0},\n}v$
with $a\in I^0,\ v\in V,\ (n_{0},\n)\in \Z^{r+1}$. Furthermore, we have
\begin{eqnarray*}
	&&Y\left(a_{n_{0},\n}v;y_0,\y\right)\vac\\
	&=&\Res_{x_0}(x_0-y_0)^{n_{0}}\degr{y}{n}\Yt{a}{x_0}{\n}\Yt{v}{y_0}{\y}\vac \\
	&&-\Res_{x_0}(-y_0+x_0)^{n_{0}}\degr{y}{n}\Yt{v}{y_0}{\y}\Yt{a}{x_0}{\n}\vac\\
	&=&\Res_{x_0}(x_0-y_0)^{n_{0}}\degr{y}{n}\Yt{a}{x_0}{\n}\Yt{v}{y_0}{\y}\vac,
\end{eqnarray*}
which lies in $I^0[[y_{0},y_{1}^{\pm 1},\dots,y_{r}^{\pm 1}]]$ because $a\in I^0,
\  Y(v;y_{0},\y){\bf 1}\in V^0[[y_{0},y_{1}^{\pm 1},\dots,y_{r}^{\pm 1}]]$.
It then follows from (\ref{eu-expre}) that  for every $u \in \idealR\left( \idealG(I^0) \right)$, we have $u\in I^0$.
Thus $\idealR\left( \idealG(I^0) \right)\subset I^0$, and hence
$\idealR\left( \idealG(I^0) \right)=I^0$.
\end{proof}

Note that from Proposition \ref{IdealCorresProp},
 $\idealG$ gives rise to a one-to-one correspondence between
the set of ideals of $V^0$ and the set of the equivalence classes of  ideals of $V$
where ideals $I$ and $J$ of $V$ are said to be {\em equivalent} if $\idealR(I)=\idealR(J)$.

Let $V$ be a \tva{}. For an ideal $I$ of $V$, set
\begin{eqnarray}
K(I)=\{ v\in V\ | \  v_{m_0,\m}V\subset I\  \  \mbox{ for all }(m_0,\m)\in \Z\times \Z^r\}.
\end{eqnarray}
It is straightforward to show that $K(I)$ is an ideal containing $I$.

\begin{prop}\label{K-GR-relation} Let $I$ be any ideal of $V$. Then
$$K(\idealG( \idealR(I))=K(I),\  \  \  \  \idealR(I)=\idealR(K(I)).$$
\end{prop}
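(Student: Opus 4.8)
The plan is to establish the two claimed equalities separately, exploiting the key structural fact from Lemma \ref{IdealCorresLem} that $\idealG(\idealR(I))$ is the ideal of $V$ generated by the ideal $\idealR(I)=I\cap V^0$ of $V^0$, together with the explicit spanning descriptions $\idealR(I)={\rm span}\{a_{m_0,\m}\vac\mid a\in I\}$ and $\idealG(I^0)={\rm span}\{a_{m_0,\m}v\mid a\in I^0,\ v\in V\}$. Throughout I will write $J=\idealG(\idealR(I))$, so $J\subset I$ since $\idealR(I)\subset I$ and $I$ is an ideal. The inclusion $K(J)\subset K(I)$ is then immediate from the definition of $K$ (if $v_{m_0,\m}V\subset J$ then $v_{m_0,\m}V\subset I$), so the first equality reduces to showing $K(I)\subset K(J)$. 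For this, take $v\in K(I)$, so that $v_{m_0,\m}V\subset I$ for all $(m_0,\m)$; I must upgrade "$\subset I$" to "$\subset J$". The trick is that for $u\in V$, the vector $v_{m_0,\m}u$ already lies in $I$, and I want to recognize it inside $\idealG(\idealR(I))$. I would use the skew-symmetry/Jacobi manipulation already recorded in the proof of Lemma \ref{IdealCorresLem}: modes of elements of $I$ acting on $\vac$ land in $\idealR(I)=I\cap V^0$; then applying a further vertex operator and using that $\idealG(\idealR(I))$ is a two-sided ideal of $V$ recovers $v_{m_0,\m}u$. Concretely, from (\ref{tempEq1}) and (\ref{ev1sum}) one writes $Y(v_{m_0,\m}u;x_0,\x)=\sum_{\n}Y((v_{m_0,\m}u)_{-1,\n}\vac;x_0,\x)$ and each $(v_{m_0,\m}u)_{-1,\n}\vac$ can be expressed via the Jacobi identity in terms of modes of $v$ (which lie in $K(I)$, so their action on $\vac\in V$ lands in $I\cap V^0=\idealR(I)$) acting on $u_{?,?}\vac\in V^0$; hence $v_{m_0,\m}u\in\idealG(\idealR(I))=J$. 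This shows $v\in K(J)$.

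For the second equality $\idealR(I)=\idealR(K(I))$, note first that $I\subset K(I)$ gives $\idealR(I)\subset\idealR(K(I))$ trivially. For the reverse inclusion, let $u\in\idealR(K(I))=K(I)\cap V^0$. Since $u\in V^0$, Corollary \ref{V0-characterization} gives the finite expansion $u=\sum_{\m\in\Z^r}u_{-1,\m}\vac$. Since $u\in K(I)$, we have $u_{-1,\m}V\subset I$ for every $\m$; in particular $u_{-1,\m}\vac\in I$. But $u_{-1,\m}\vac\in V^0$ as well, so $u_{-1,\m}\vac\in I\cap V^0=\idealR(I)$ for each $\m$, and summing the finitely many terms gives $u\in\idealR(I)$. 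Hence $\idealR(K(I))\subset\idealR(I)$, completing the second equality.

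The main obstacle is the inclusion $K(I)\subset K(\idealG(\idealR(I)))$, i.e. showing that an element $v$ with all $v_{m_0,\m}V\subset I$ in fact has all $v_{m_0,\m}V\subset\idealG(\idealR(I))$. The difficulty is that $\idealG(\idealR(I))$ can be strictly smaller than $I$, so one genuinely needs to re-express $v_{m_0,\m}u$ using only operations "a mode of something in $I\cap V^0$, applied to a vector of $V$" (plus the ideal property of $\idealG(\idealR(I))$), rather than "a mode of something in $I$ applied to $\vac$." I expect this to go through by the Jacobi-identity bookkeeping indicated above — essentially pushing the vacuum to the right as in the displays in the proofs of Lemma \ref{IdealCorresLem} and Proposition \ref{IdealCorresProp} — but writing out the residue computation carefully, keeping track of the $\x$-variables and the ranges of summation, is the only technically delicate part; everything else is a direct application of Corollary \ref{V0-characterization} and the definitions.
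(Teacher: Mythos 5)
Your treatment of the second equality $\idealR(I)=\idealR(K(I))$ is correct and is exactly the paper's argument, and the easy inclusion $K(\idealG(\idealR(I)))\subset K(I)$ via $\idealG(\idealR(I))\subset I$ is also fine. The problem is in your concrete plan for the hard inclusion $K(I)\subset K(\idealG(\idealR(I)))$. You propose to show $v_{m_0,\m}u\in J:=\idealG(\idealR(I))$ by proving that each vector $(v_{m_0,\m}u)_{-1,\n}\vac$ lies in $J$ and then concluding ``hence $v_{m_0,\m}u\in J$.'' That last inference requires $v_{m_0,\m}u=\sum_{\n}(v_{m_0,\m}u)_{-1,\n}\vac$, i.e.\ the creation property, which is precisely what fails for a general element of a toroidal vertex algebra (Remark \ref{difference}); the identity $w=\sum_{\n}w_{-1,\n}\vac$ is guaranteed only for $w\in V^0$ (Corollary \ref{V0-characterization}), and there is no reason for $v_{m_0,\m}u$ to lie in $V^0$. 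Knowing that all the vectors $(v_{m_0,\m}u)_{-1,\n}\vac$ lie in $J$ only tells you something about the vertex operator of $v_{m_0,\m}u$, not about the vector itself, so as written this step fails.

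The repair is short and is what the paper does: do not decompose the vector $v_{m_0,\m}u$, decompose the operator. By (\ref{V0-sum}) (equivalently (\ref{2.15}) with $k=0$) one has $v_{m_0,\m}u=(v_{-1,\m}\vac)_{m_0,\m}u$ for all $u\in V$. Since $v\in K(I)$ gives $v_{-1,\m}\vac\in I$, and always $v_{-1,\m}\vac\in V^0$, we get $v_{-1,\m}\vac\in I\cap V^0=\idealR(I)$; hence $v_{m_0,\m}u=(v_{-1,\m}\vac)_{m_0,\m}u$ lies in $\idealG(\idealR(I))$ by the very definition of $\idealG$, with no Jacobi-identity or residue computation needed. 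So the ingredients you name ((\ref{ev1sum}) and the fact that modes of $K(I)$ applied to $\vac$ land in $\idealR(I)$) are the right ones, but they must be applied to $Y(v;x_0,\x)$ directly rather than routed through $(v_{m_0,\m}u)_{-1,\n}\vac$.
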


\begin{proof} From Lemma \ref{IdealCorresLem}, we have
$$\idealG\big( \idealR(I) \big)={\rm span}\{ a_\suball{m}v\ |\ a\in I\cap V^0,\ (m_0,\m)\in \Z\times \Z^r,\ v\in V\}
\subset I.$$
Set $K'=K(\idealG( \idealR(I))$.
As $\idealG\big( \idealR(I) \big)\subset I$, we have $K'\subset K(I)$.
On the other hand, let $a\in K(I)$.  For $\rangeall{m}$, we have
$a_\suball{m}\vac \in I$ and $a_\suball{m}\vac \in V^0$, so that
\begin{eqnarray*}
a_\suball{m}\vac \in I\cap V^0=\idealR(I)\subset \idealG\big( \idealR(I) \big).
\end{eqnarray*}
It then follows from (\ref{V0-sum}) that
$$Y(a;x_0,\x)v=\sum_{\m\in \Z^r}Y(a_{-1,\m}{\bf 1};x_0,\x)v\in \idealG\big( \idealR(I) \big)[[x_0^{\pm 1},\x^{\pm 1}]]$$
for all $v\in V$. Thus  $a\in K'$.
This proves $K(I)\subset K'$, and hence $K(I)=K'$.

As $I\subset K(I)$, we have $\idealR(I)\subset\idealR(K(I))$.
Note that by Lemma \ref{IdealCorresLem} we have
$$\idealR(K(I))=\{ a_\suball{m}\vac\ |\ a\in K(I),\ \rangeall{m}\}.$$
Furthermore, for $a\in K(I)$, $\rangeall{m}$, we have $a_\suball{m}\vac\in I\cap V^0=\idealR(I)$.
Thus $\idealR(K(I))\subset \idealR(I)$. Therefore $\idealR(I)=\idealR(K(I))$.
\end{proof}

For an ideal of $V$, let $\pi$ be the canonical map from $V$ onto $V/I$.
We see that $\pi^{-1}(\ky{V/I})=K(I)$.
As an immediate consequence of Proposition \ref{K-GR-relation} we have:

\begin{coro} Let $I$ and $J$ be ideals of $V$. Then $\idealR(I)=\idealR(J)$ if and only if
$K(I)=K(J)$, or equivalently, the identity map of $V$
induces an isomorphism from $\overline{V/I}$ to $\overline{V/J}$.
\end{coro}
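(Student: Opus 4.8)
The plan is to extract everything from Proposition \ref{K-GR-relation}, together with the observation recorded just above the statement that $\pi^{-1}(\ky{V/I})=K(I)$, where $\pi$ denotes the canonical map $V\to V/I$. No new computation should be needed; the real content is already in Proposition \ref{K-GR-relation}.

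First I would settle the equivalence of the first two conditions. If $\idealR(I)=\idealR(J)$, then applying $\idealG$ and using the first identity of Proposition \ref{K-GR-relation}, namely $K(\idealG(\idealR(I)))=K(I)$, gives $K(I)=K(\idealG(\idealR(I)))=K(\idealG(\idealR(J)))=K(J)$. Conversely, if $K(I)=K(J)$, then applying $\idealR$ and invoking the second identity $\idealR(I)=\idealR(K(I))$ gives $\idealR(I)=\idealR(K(I))=\idealR(K(J))=\idealR(J)$. This is a two-line argument.

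Next I would treat the reformulation in terms of $\overline{V/I}$. The key point is the identification $\overline{V/I}\cong V/K(I)$: since $\ky{V/I}$ is an ideal of $V/I$ by Lemma \ref{Kylemma} and $K(I)=\pi^{-1}(\ky{V/I})$, the third isomorphism theorem yields a natural isomorphism $\overline{V/I}=(V/I)/\ky{V/I}\cong V/K(I)$ of \tva{}s, under which the composite $V\to V/I\to\overline{V/I}$ becomes the quotient map $V\to V/K(I)$; likewise $\overline{V/J}\cong V/K(J)$ compatibly with $V\to\overline{V/J}$. Hence the identity map of $V$ induces a well-defined homomorphism $\overline{V/I}\to\overline{V/J}$ exactly when $K(I)\subseteq K(J)$, and this induced map is an isomorphism exactly when $K(I)=K(J)$. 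Combining with the previous paragraph gives the chain $\idealR(I)=\idealR(J)\iff K(I)=K(J)\iff$ the identity map of $V$ induces an isomorphism $\overline{V/I}\to\overline{V/J}$, completing the proof.

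I do not expect a genuine obstacle here; the only point demanding a little care is making the identification $\overline{V/I}\cong V/K(I)$ precise and checking it respects the vertex-operator structure, but this is routine since all maps involved are quotient maps of \tva{}s.
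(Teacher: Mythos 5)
Your argument is correct and follows exactly the route the paper intends: the paper states this corollary as an immediate consequence of Proposition \ref{K-GR-relation} together with the preceding observation that $\pi^{-1}(\ky{V/I})=K(I)$, which is precisely the combination you use. The two applications of the identities $K(\idealG(\idealR(I)))=K(I)$ and $\idealR(I)=\idealR(K(I))$, plus the identification $\overline{V/I}\cong V/K(I)$, are all that is needed.
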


We have the following result on simplicity:

\begin{prop}\label{SimpleCoro}
Let $V$ be a \tva{}.
Then $\overline{V}$ is a simple \tva{} if and only if $V^0$ is a simple \tva{}.
\end{prop}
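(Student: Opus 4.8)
The plan is to reduce to the special case $\ky{V}=0$ and then to read the equivalence off the ideal correspondence $\idealR\leftrightarrow\idealG$ between a \tva{} and its subalgebra $V^{0}$ (Lemma \ref{IdealCorresLem} and Proposition \ref{IdealCorresProp}). The reduction rests on two observations about $\overline{V}=V/\ky{V}$. First, since $\Yt{\cdot}{x_0}{\x}$ is injective on $V^{0}$ (Proposition \ref{TvaStructProp}), we have $V^{0}\cap\ky{V}=0$; consequently the canonical quotient map $V\to\overline{V}$ restricts to an isomorphism of \tva{}s from $V^{0}$ onto $\overline{V}^{0}$ (it is injective on $V^{0}$ with the stated kernel, and its image is ${\rm span}\{\overline{v}_{m_0,\m}\overline{\vac}\}=\overline{V}^{0}$). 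Second, $\ky{\overline{V}}=0$: if $\overline{v}\in\ky{\overline{V}}$, then all coefficients of $\Yt{v}{x_0}{\x}\vac$ lie in $\ky{V}$; as these coefficients $v_{m_0,\m}\vac$ also lie in $V^{0}$, they lie in $V^{0}\cap\ky{V}=0$, so $\Yt{v}{x_0}{\x}\vac=0$ and hence $v\in\ky{V}$ by Lemma \ref{Kylemma}, i.e. $\overline{v}=0$. (This also follows from Proposition \ref{K-GR-relation} applied to $I=\ky{V}$, using $\idealR(\ky{V})=0$.) Thus $\overline{\overline{V}}=\overline{V}$, and since $\overline{V}^{0}\cong V^{0}$ it is enough to prove the statement with $\overline{V}$ in place of $V$; so we may assume $\ky{V}=0$ and must show that $V$ is simple if and only if $V^{0}$ is simple. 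Note that under this assumption $V\neq 0$ exactly when $V^{0}\neq 0$, since $\vac\in V^{0}$ and $\vac=0$ forces $V=0$.

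\textbf{The two implications (assuming $\ky{V}=0$).} Suppose $V$ is simple and let $J$ be a nonzero ideal of $V^{0}$. By Lemma \ref{IdealCorresLem}, $\idealG(J)$ is the ideal of $V$ generated by $J$, in particular $\idealG(J)\supseteq J\neq 0$, hence $\idealG(J)=V$; then Proposition \ref{IdealCorresProp} gives $J=\idealR(\idealG(J))=\idealR(V)=V\cap V^{0}=V^{0}$, so $V^{0}$ is simple. Conversely, suppose $V^{0}$ is simple and let $I$ be a nonzero ideal of $V$. If $\idealR(I)=I\cap V^{0}$ were $0$, then for every $a\in I$ each $a_{m_0,\m}\vac$ would lie in $I\cap V^{0}=0$, so $\Yt{a}{x_0}{\x}\vac=0$ and $a\in\ky{V}=0$ by Lemma \ref{Kylemma}, contradicting $I\neq 0$. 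Hence $\idealR(I)\neq 0$, and simplicity of $V^{0}$ forces $\idealR(I)=V^{0}$, i.e. $V^{0}\subseteq I$. Since $\Yt{\vac}{x_0}{\x}v=v$ for all $v\in V$, the ideal $\idealG(V^{0})$ contains every $v=\vac_{-1,{\bf 0}}v$, so $V=\idealG(V^{0})\subseteq I$ and $I=V$. Therefore $V$ is simple.

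\textbf{Expected main obstacle.} Once Lemma \ref{IdealCorresLem} and Proposition \ref{IdealCorresProp} are available, the two implications are purely formal. The only real content is the legitimacy of the reduction, namely the identities $\overline{V}^{0}\cong V^{0}$ and $\ky{\overline{V}}=0$ (equivalently $K(\ky{V})=\ky{V}$); both are immediate consequences of the injectivity of $\Yt{\cdot}{x_0}{\x}$ on $V^{0}$ (Proposition \ref{TvaStructProp}) together with the vacuum description of $\ky{\cdot}$ (Lemma \ref{Kylemma}), so no further computation is needed.
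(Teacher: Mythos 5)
Your proof is correct and rests on exactly the same ingredients as the paper's: the ideal correspondence $\idealR/\idealG$ of Lemma \ref{IdealCorresLem} and Proposition \ref{IdealCorresProp}, the vacuum characterization of $\ky{V}$ from Lemma \ref{Kylemma}, and the injectivity of $\Yt{\cdot}{x_0}{\x}$ on $V^0$. The only difference is organizational: you first reduce to the case $\ky{V}=0$ via the isomorphisms $V^0\cong\overline{V}^0$ and $\ky{\overline{V}}=0$, whereas the paper carries $\ky{V}$ through both implications directly (e.g.\ via the dichotomy $\idealG(I^0)+\ky{V}=V$ or $\idealG(I^0)\subset\ky{V}$); the two arguments are equivalent.
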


\begin{proof}
Assume that $V^0$ is a simple \tva{}. Let $I$ be any ideal of $\overline{V}$. Then $I=J/ \ky{V}$ where $J$ is an ideal of $V$,
 containing $\ky{V}$. As $V^0$ is simple, we have either $J\cap V^0=V^0$ or $J\cap V^0=0$.
 If $J\cap V^0=V^0$, we have ${\bf 1}\in V^0\subset J$, which implies $J=V$. In this case, we have $I=\overline{V}$.
 Assume $J\cap V^0=0$.
 For any $u\in J$, we have $u_{m_0,\m}{\bf 1}\in J\cap V^0=0$ for all $(m_0,\m)\in \Z^{r+1}$, that is,
 $u\in \ky{V}$ by Lemma \ref{Kylemma}. Thus $J\subset \ky{V}$. Conseqently, $J=\ky{V}$ and hence $I=0$.
 Therefore, $\overline{V}$ is simple.

Conversely, assume that $\overline{V}$ is a simple \tva{}. Let $I^0$ be any  ideal of $V^0$.
As $\overline{V}$ is simple,
we have either $\idealG\left( I^0 \right)+\ky{V}=V$ or $\idealG\left( I^0 \right)\subset \ky{V}$.
First, consider the case with $\idealG\left( I^0 \right)+\ky{V}=V$.
Then there exist $u\in\idealG\left(I^0\right)$ and $v\in\ky{V}$ such that $u+v=\vac$.
For  any $w\in V$, we have
 $$w=\Yt{\vac}{x_0}{\x}w=Y(u;x_{0},\x)w+Y(v;x_0,\x)w=\Yt{u}{x_0}{\x}w\in \idealG\left(I^0\right).$$
 Thus  $\idealG\left(I^0\right)=V$, which implies $I^0=\idealR(\idealG(I^0))=\idealR(V)=V^0$ by Proposition \ref{IdealCorresProp}.
We now consider the case with $\idealG\left(I^0\right)\subset \ky{V}$.  We have
$$I^0=\idealR(\idealG\left(I^0\right))\subset \idealR(\ky{V})=\ky{V}\cap V^0=0.$$
Thus, $I^0=0$. This proves that $V^0$ is a simple \tva{}.
\end{proof}

%\begin{rem}
%{\em Let $\varphi:V_1\rightarrow V_2$ be an \rtva{} homomorphism.
%Then we have the following commutating diagram (???)}
%\begin{displaymath}
%\xymatrixcolsep{5pc}
 %   \xymatrix{
%    V_1^0\ar[r]^{\varphi|_{V_1^0}} \ar[d]_{\iota_{V_1}} & V_2^0 \ar[d]^{\iota_{V_2}} \\
 %   V_1\ar[r]_{\varphi} & V_2
 %   }
%\end{displaymath}
%\end{rem}

\begin{rem}\label{ResFuncLem}
{\em For a \tva{} $V$, denote by $\Mod V$ the category of $V$-modules.
The following are straightforward analogs of classical facts:
 Let $\varphi:V_1\rightarrow V_2$ be a homomorphism of \tva{}s.
 Then any $V_2$-module $W$ is naturally a $V_1$-module, which is denoted by $\funcRes{\varphi}(W)$.
 Moreover, if $\alpha:W\rightarrow W'$ is a  homomorphism of $V_2$-modules,
 then $\alpha$ is also a $V_1$-module homomorphism from $\funcRes{\varphi}(W)$ to $\funcRes{\varphi}(W')$.
  Set $\funcRes{\varphi}(\alpha)=\alpha$.
 Then $\funcRes{\varphi}$ is an exact faithful covariant functor from $\Mod V_2$ to $\Mod V_1$.
 Furthermore, if $\varphi:V_1\rightarrow V_2$, $\psi:V_2\rightarrow V_3$
  are \rtva{} homomorphisms, then  $\funcRes{\varphi}\circ\funcRes{\psi}=\funcRes{\psi\circ\varphi}$.}
\end{rem}

Recall that a subcategory $\mathcal{C}_1$ of a category $\mathcal{C}$
is called a {\em full subcategory} if
$\Hom_{\mathcal{C}}(A,B)=\Hom_{\mathcal{C}_1}(A,B)$  for any objects $A,B$ in $\mathcal{C}_1$.
 Furthermore, a full subcategory $\mathcal{C}_1$ of $\mathcal{C}$
is called a {\em strictly full subcategory}
if whenever an object $A$ in $\mathcal{C}$ is isomorphic to an object of $\mathcal{C}_1$,
$A$ belongs to $\mathcal{C}_1$.

Here,  we have:

\begin{lem}\label{DominantProp}
Let $V_1$ and $V_2$ be \tva{}s and let $\varphi:V_1\rightarrow V_2$ be a \tva{} homomorphism
such that $V_2^0\subset \Image \varphi$.
Then $\funcRes{\varphi}$ is an isomorphism from
$\Mod V_2$ to a strictly full subcategory of $\Mod V_1$.
\end{lem}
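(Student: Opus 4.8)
The goal is to show that $\funcRes{\varphi}$ is an isomorphism of categories from $\Mod V_2$ onto a strictly full subcategory of $\Mod V_1$. Since $\funcRes{\varphi}$ is already known (Remark \ref{ResFuncLem}) to be an exact faithful covariant functor, the essential points to establish are: (i) $\funcRes{\varphi}$ is \emph{full} onto its image, i.e.\ every $V_1$-module homomorphism between modules of the form $\funcRes{\varphi}(W_1),\funcRes{\varphi}(W_2)$ is automatically a $V_2$-module homomorphism; (ii) the image of $\funcRes{\varphi}$ on objects is closed under isomorphism in $\Mod V_1$; and (iii) $\funcRes{\varphi}$ is injective on objects (so that together with fullness-onto-image it is an isomorphism onto the full subcategory it generates). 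The hypothesis $V_2^0\subset \Image\varphi$ is what drives all of this: a $V_2$-module structure is, by Proposition \ref{TvaModStructProp} and Corollary \ref{rV0fact}, determined by the operators $Y_W(u;x_0,\x)$ for $u\in V_2^0$, hence by the $V_1$-module structure via $\varphi$.

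First I would make precise the reconstruction statement. Given a $V_2$-module $(W,Y_W)$, the restricted $V_1$-module is $(W, Y_W\circ\varphi)$. For any $v\in V_2$ we have, by \eqref{ev1sum}, $Y_W(v;x_0,\x)=\sum_{\m}Y_W(v_{-1,\m}\vac;x_0,\x)$, and each $v_{-1,\m}\vac\in V_2^0$. Pick, by the hypothesis, elements $u^{(\m)}\in V_1$ with $\varphi(u^{(\m)})=v_{-1,\m}\vac$; then $Y_W(v_{-1,\m}\vac;x_0,\x)=(Y_W\circ\varphi)(u^{(\m)};x_0,\x)$ is determined by the $V_1$-module structure. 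Hence the whole map $Y_W$ is recovered from $Y_W\circ\varphi$. This already gives (iii): if $Y_W\circ\varphi = Y_{W'}\circ\varphi$ on a common underlying space then $Y_W=Y_{W'}$. It also gives the key half of fullness: if $\alpha\colon \funcRes{\varphi}(W_1)\to\funcRes{\varphi}(W_2)$ is a $V_1$-homomorphism, then $\alpha$ intertwines $(Y_{W_i}\circ\varphi)$, hence intertwines $Y_{W_i}(\,\cdot\,;x_0,\x)$ applied to every $v_{-1,\m}\vac$, $v\in V_2$, hence (summing over $\m$ and using \eqref{ev1sum}) intertwines $Y_{W_i}(v;x_0,\x)$ for all $v\in V_2$; so $\alpha$ is a $V_2$-homomorphism.

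For (ii), suppose $M$ is a $V_1$-module and $\beta\colon M\xrightarrow{\ \sim\ } \funcRes{\varphi}(W)$ is a $V_1$-module isomorphism for some $V_2$-module $W$. Transport the $V_2$-structure along $\beta$: define $Y_M^{V_2}(v;x_0,\x)=\beta^{-1}Y_W(v;x_0,\x)\beta$ for $v\in V_2$. This is a $V_2$-module structure on $M$ (Jacobi identity and vacuum axiom transported through the isomorphism $\beta$), and its restriction along $\varphi$ is $\beta^{-1}(Y_W\circ\varphi)\beta$, which equals the original $Y_M$ precisely because $\beta$ is a $V_1$-homomorphism. Hence $M=\funcRes{\varphi}(M^{V_2})$ lies in the image. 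Finally I would assemble these: let $\mathcal{C}$ be the strictly full subcategory of $\Mod V_1$ on the objects of the form $\funcRes{\varphi}(W)$; by the reconstruction above $\funcRes{\varphi}\colon \Mod V_2\to\mathcal{C}$ is bijective on objects, full (shown) and faithful (Remark \ref{ResFuncLem}), hence an isomorphism of categories, and $\mathcal{C}$ is strictly full by construction and closed under isomorphism by (ii).

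**Main obstacle.** The only real content is verifying that the reconstruction formula genuinely produces the original structure and that summation over $\m$ is legitimate — i.e.\ that the passage ``$\alpha$ intertwines $Y_{W_i}$ on each $v_{-1,\m}\vac$'' $\Rightarrow$ ``$\alpha$ intertwines $Y_{W_i}(v;x_0,\x)$'' is valid, which requires the sum in \eqref{ev1sum} to converge in $\Espaceall{W}$ when applied to a fixed vector. This is exactly guaranteed by \eqref{eV0-module} (the truncation $Y_W(v_{-1,\m}\vac;x_0,\x)\in\x^{-\m}\Espacez{W}$ forces, for a fixed $w$, only finitely many $\m$ to contribute to any given monomial in $\x$), so there is no genuine analytic difficulty; the argument is a careful but routine bookkeeping of which operators are determined by which, with the hypothesis $V_2^0\subset\Image\varphi$ doing the essential work.
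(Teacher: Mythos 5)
Your proposal is correct and follows essentially the same route as the paper: fullness is obtained because every operator $v_{m_0,\m}$ on a $V_2$-module is realized by the element $v_{-1,\m}{\bf 1}\in V_2^0\subset\Image\varphi$ (you phrase this via the sum \eqref{ev1sum}, the paper via the coefficient identity \eqref{2.15}, which are equivalent), and strict fullness is obtained by transporting the $V_2$-structure along the given $V_1$-isomorphism. Your handling of the convergence of the sum over $\m$ and the explicit injectivity-on-objects remark are fine refinements but do not change the argument.
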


\begin{proof}
Let $W_1,W_2$ be $V_2$-modules. Assume that $\alpha:\funcRes{\varphi}(W_1)\rightarrow\funcRes{\varphi}(W_2)$ is a $V_1$-module homomorphism, or equivalently,  an $\Image \varphi$-module homomorphism.
For $v\in V_2$, $w\in W_1,\  \rangeall{m}$, using (\ref{2.15}) we have
\begin{eqnarray*}
&&\alpha(v_\suball{m}w)=\alpha\big( (\eltzHomo{v}{m})_\suball{m}w \big)=\left(\eltzHomo{v}{m}\right)_\suball{m}\alpha(w)=v_\suball{m}\alpha(w)
\end{eqnarray*}
since $\eltzHomo{v}{m}\in V_2^0\subset \Image \varphi$. This proves that $\alpha$ is also a $V_2$-module homomorphism.
Thus $\funcRes{\varphi}$ is full. The proof for strict fullness is classical: Let $W_1$ be a $V_1$-module, $W_2$ a $V_2$-module, and $\alpha:\funcRes{\varphi}(W_2)\rightarrow W_1$ a $V_1$-module isomorphism. For $u\in \ker \varphi$, $\rangeall{m}$, we have
$$u_\suball{m}\alpha(w_2)=\alpha(u_\suball{m}w_2)=\alpha(\varphi(u)_\suball{m}w_2)=0\  \  \  \mbox{ for }w_2\in W_2.$$
As $\alpha(W_2)=W_1$, we get $u_\suball{m}W_1=0$.
Then $W_1$ is naturally an ${\rm Im}\varphi$-module.
By using $\alpha$, one can make $W_1$ a $V_2$-module denoted by $\tilde{W}_{1}$ such that $\funcRes{\varphi}\tilde{W}_1=W_1$.
\end{proof}

Let $V$ be a \tva{} and let $L$ be a quotient \tva{} of $V$.
Then we have the following commutative diagram
\begin{equation*}
\begin{matrix}
V^0& \longrightarrow & V\\
\downarrow & & \downarrow \\
L^0& \longrightarrow & L
\end{matrix}
\end{equation*}
As an immediate consequence of Proposition \ref{prop:Vmod-V0mod} and Lemma \ref{DominantProp}, we have:

\begin{prop}\label{STvaModCateProp}
A $V$-module $W$ is naturally an $L$-module if and only if $W$ is naturally an $L^0$-module.
Furthermore, $\Mod L$ is the intersection of $\Mod V$ and $\Mod L^0$ as strictly full subcategories of $\Mod V^0$.
\end{prop}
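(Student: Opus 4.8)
The plan is to derive both statements by combining Proposition~\ref{prop:Vmod-V0mod} and Lemma~\ref{DominantProp} along the commutative square displayed above the statement. Write $q\colon V\to L$ for the given quotient homomorphism and $q^{0}\colon V^{0}\to L^{0}$ for its restriction to $V^{0}$. First I would verify that $q^{0}$ really maps into $L^{0}$ and is onto: since $q$ is surjective and preserves the vacuum, $L^{0}$ is spanned by the vectors $q(v)_{\suball{m}}q(\vac)=q(v_{\suball{m}}\vac)$ with $v\in V$, so $L^{0}=q(V^{0})$. Consequently $\Image q=L\supset L^{0}$ and $\Image q^{0}=L^{0}\supset (L^{0})^{0}$, so Lemma~\ref{DominantProp} applies to both $q$ and $q^{0}$: it identifies $\Mod L$ with a strictly full subcategory of $\Mod V$ and $\Mod L^{0}$ with a strictly full subcategory of $\Mod V^{0}$. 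Using in addition the surjectivity of $q$ and $q^{0}$ (the descent of a module structure to a quotient), one sees that, for a $V$-module $W$, being ``naturally an $L$-module'' means precisely that $\ker q$ annihilates $W$, while, for a $V^{0}$-module $W$, being ``naturally an $L^{0}$-module'' means precisely that $\ker q^{0}=(\ker q)\cap V^{0}$ annihilates $W$.

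Next I would place everything inside $\Mod V^{0}$. By Proposition~\ref{prop:Vmod-V0mod}, applied to the pair $(V,V^{0})$ and also to $(L,L^{0})$, restriction along the inclusions $V^{0}\hookrightarrow V$ and $L^{0}\hookrightarrow L$ realizes $\Mod V$ as a strictly full subcategory of $\Mod V^{0}$ and $\Mod L$ as a strictly full subcategory of $\Mod L^{0}$; strict fullness holds because formula~(\ref{toroidal-connection}) expresses $Y_{W}(v;x_{0},\x)=\sum_{\ranger{m}}Y_{W}(v_{-1,\m}\vac;x_{0},\x)$ ($v\in V$) in terms of the $V^{0}$-action, so a $V^{0}$-module map between $V$-modules is automatically a $V$-module map, while condition~(\ref{Y0-toroidal-truncation}) is isomorphism-invariant. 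Applying the composition rule $\funcRes{\psi\circ\varphi}=\funcRes{\varphi}\circ\funcRes{\psi}$ of Remark~\ref{ResFuncLem} to the two ways $V^{0}\hookrightarrow V\twoheadrightarrow L$ and $V^{0}\twoheadrightarrow L^{0}\hookrightarrow L$ of traversing the square shows that the embeddings of $\Mod L$, $\Mod V$, and $\Mod L^{0}$ into $\Mod V^{0}$ are mutually compatible, so they can be intersected there.

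The remaining point is the elementary observation that a $V$-module $W$ is annihilated by $\ker q$ if and only if it is annihilated by $(\ker q)\cap V^{0}$; granting this, the equivalence in the proposition is an immediate translation via the two descriptions in the first paragraph. The forward implication is trivial. For the converse, let $a\in\ker q$ and $\m\in\Z^{r}$: then $a_{-1,\m}\vac$ lies in $\ker q$ (an ideal of $V$) and in $V^{0}$ (by definition of $V^{0}$), hence in $(\ker q)\cap V^{0}$, so by~(\ref{ev1sum}) we get $Y_{W}(a;x_{0},\x)=\sum_{\ranger{m}}Y_{W}(a_{-1,\m}\vac;x_{0},\x)=0$, whence $\ker q$ annihilates $W$. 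Finally, for the last sentence: an object of $\Mod V^{0}$ lying in $\Mod V\cap\Mod L^{0}$ is the restriction of a $V$-module on which $(\ker q)\cap V^{0}$ acts trivially, hence --- by the fact just proved --- on which $\ker q$ acts trivially, hence the restriction of an $L$-module; and conversely $\Mod L$ already sits inside both $\Mod V$ and $\Mod L^{0}$ compatibly. I expect no genuine obstacle: the argument is pure bookkeeping with the commutative square and the restriction functors, the only substantive input being the one-line computation $a_{-1,\m}\vac\in(\ker q)\cap V^{0}$ together with~(\ref{ev1sum}); the sole point needing care is to check that the several restriction functors into $\Mod V^{0}$ really coincide and that ``naturally a module'' translates faithfully into the annihilator conditions.
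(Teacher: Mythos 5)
Your proposal is correct and follows essentially the same route the paper intends: it combines Proposition~\ref{prop:Vmod-V0mod} and Lemma~\ref{DominantProp} along the commutative square, with the only substantive addition being the (correct) observation that a $V$-module is annihilated by $\ker q$ if and only if it is annihilated by $(\ker q)\cap V^{0}$, via $a_{-1,\m}\vac\in(\ker q)\cap V^{0}$ and (\ref{ev1sum}). This is exactly the content the paper leaves implicit in calling the proposition an ``immediate consequence,'' so no further comment is needed.
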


Recall from \cite{LTW2} that a {\em vertex Leibniz algebra} is a vector space $V$ equipped with a linear map
$Y(\cdot,x): V\rightarrow {\mathcal{E}}(V)$ such that
the Jacobi identity for vertex algebras holds.
Combining Remark \ref{eVVbar},  Corollary \ref{barV=V}, and Theorem \ref{ZeroModCateProp},
we immediately have:

\begin{coro}
Suppose that $V$ is a \tva{} such that
$$Y(v;x_0,\x){\bf 1}\in V[[x_0]][x_{1}^{\pm 1},\dots,x_{r}^{\pm 1}]\  \  \  \mbox{ for every }v\in V.$$
Then
$$Y(u;x_0,\x)v\in V[[x_{0}^{\pm 1}]][x_{1}^{\pm 1},\dots,x_{r}^{\pm 1}]\  \  \  \mbox{ for any }u,v\in V.$$
For $u,v\in V$, define
$$Y^{0}(u,x_{0})v=\left(Y(u;x_0,\x)v\right)|_{\x=1}. $$
Then $(V,Y^{0})$  is a vertex Leibniz algebra and $\overline{V}=V/K_{Y}(V)$
is a vertex algebra. Furthermore,
the category $\Mod V$  is naturally isomorphic to
 the category of modules for  $\overline{V}$ viewed as a vertex algebra.
\end{coro}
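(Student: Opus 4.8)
The statement packages four assertions --- a finiteness property of $Y(u;x_0,\x)v$, the vertex Leibniz algebra structure on $(V,Y^0)$, the vertex algebra structure on $\overline{V}=V/\ky{V}$, and the equivalence of module categories --- and the plan is to extract each in turn from results already in hand, principally Proposition~\ref{TvaStructProp}, Lemma~\ref{barV=V}, Remark~\ref{eVVbar}, and Theorem~\ref{ZeroModCateProp}. I would begin with the finiteness property. Fix $u\in V$. By (\ref{V0-sum}), $Y(u;x_0,\x)=\sum_{\m\in\Z^r}Y(u_{-1,\m}{\bf 1};x_0,\x)$, and the hypothesis $Y(u;x_0,\x){\bf 1}\in V[[x_0]][x_1^{\pm1},\dots,x_r^{\pm1}]$ forces $u_{-1,\m}{\bf 1}=0$ for all but finitely many $\m$, so the sum is finite; by (\ref{etva-m}) each term equals $Y^0(u_{-1,\m}{\bf 1},x_0)\x^{-\m}$, which sends $V$ into $\x^{-\m}V((x_0))$. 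Hence $Y(u;x_0,\x)v\in V((x_0))[x_1^{\pm1},\dots,x_r^{\pm1}]\subseteq V[[x_0^{\pm1}]][x_1^{\pm1},\dots,x_r^{\pm1}]$ for every $v\in V$. In particular $Y^0(u,x_0)v:=\left(Y(u;x_0,\x)v\right)|_{\x=1}$ is a well-defined element of $V((x_0))$, so $Y^0$ is a linear map from $V$ to $\mathcal{E}(V)$ (agreeing on $V^0$ with the map $Y^0$ of Proposition~\ref{TvaStructProp}).

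To see that $(V,Y^0)$ is a vertex Leibniz algebra it then suffices to verify the Jacobi identity for $Y^0$, and I would obtain it by setting $\z={\bf 1}$ and $\y={\bf 1}$ in the toroidal Jacobi identity (\ref{jacobi-tva}). This specialization is legitimate: the $\delta$-function factors in (\ref{jacobi-tva}) involve only $x_0,y_0,z_0$ and are untouched, and by the finiteness property just proved the iterate term $Y(Y(u;z_0,\z)v;y_0,\y)w$ --- hence, via (\ref{jacobi-tva}) itself, the left-hand side applied to $w$ --- is, for each $w\in V$, a Laurent polynomial in $z_1,\dots,z_r,y_1,\dots,y_r$ with coefficients formal in $x_0,y_0,z_0$; under $\z={\bf 1}$, $\y={\bf 1}$ the identity (\ref{jacobi-tva}) collapses precisely to the vertex-algebra Jacobi identity for $Y^0$.

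For the last two assertions, note first that the hypothesis is exactly that of Lemma~\ref{barV=V}, which gives $\overline{V}^0=\overline{V}$. Applying Proposition~\ref{TvaStructProp} to the toroidal vertex algebra $\overline{V}$ then equips $(\overline{V})^0=\overline{V}$ with a vertex algebra structure $Y^0_{\overline{V}}$; and since the quotient map $q:V\to\overline{V}$ is a homomorphism of toroidal vertex algebras, $q\!\left(Y^0(u,x_0)v\right)=Y^0_{\overline{V}}(q(u),x_0)q(v)$, so this is the vertex algebra obtained from the vertex Leibniz algebra $(V,Y^0)$ by passing to the quotient by $\ky{V}$. Finally, Remark~\ref{eVVbar} provides an isomorphism $\eta$ from $\Mod\overline{V}$ to $\Mod V$, and Theorem~\ref{ZeroModCateProp} applied to $\overline{V}$, together with $\overline{V}^0=\overline{V}$, provides an isomorphism $\Mod\overline{V}\cong\Mod^0\overline{V}$ between $\overline{V}$-modules as a toroidal vertex algebra and $\overline{V}$-modules as a vertex algebra; composing with $\eta^{-1}$ yields the desired natural isomorphism of $\Mod V$ with the category of modules for $\overline{V}$ viewed as a vertex algebra.

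The argument is in essence just assembly of the cited results; the single step that demands care --- and the reason the finiteness assertion has to be settled first --- is the legitimacy of setting the loop variables $\z,\y$ equal to ${\bf 1}$ in (\ref{jacobi-tva}). I would regard that specialization, equivalently the finiteness property $Y(u;x_0,\x)v\in V[[x_0^{\pm1}]][x_1^{\pm1},\dots,x_r^{\pm1}]$, as the crux.
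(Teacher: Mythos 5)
Your proof is correct and follows the paper's own route: the paper obtains this corollary by combining Remark \ref{eVVbar}, Lemma \ref{barV=V}, and Theorem \ref{ZeroModCateProp}, which is precisely what you do, with the finiteness of $Y(u;x_0,\x)v$ and the legitimacy of the specialization $\z=\y={\bf 1}$ in (\ref{jacobi-tva}) filled in via Proposition \ref{TvaStructProp}. The one point you assert without detail---that the iterate $Y\!\left(Y(u;z_0,\z)v;y_0,\y\right)w$ has finite loop-variable support uniformly over the $z_0$-coefficients---does hold, since (\ref{tempEq1}) gives $(u_{m_0,\m}v)_{-1,\rsymbol{p}}\vac=u_{m_0,\m}v_{-1,\rsymbol{p}-\m}\vac$, so that support is contained in the finite set $\{\m+\n\ |\ u_{-1,\m}\vac\neq 0,\ v_{-1,\n}\vac\neq 0\}$ (alternatively, one can note that the left-hand side of (\ref{jacobi-tva}) is manifestly a Laurent polynomial in $\z,\y$ and transfer this to the iterate term by the identity itself).
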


%We conclude this section with the following:

%\begin{rem}\label{STvaModCateProp}
%{\em Let $V$ be a \tva{} and $L$ a quotient algebra.
%we have the following commutative diagram (???)
%\begin{equation}
%\xymatrixcolsep{5pc}
%    \xymatrix{
  %  	V^0\ar[r] \ar[d] & V \ar[d] \\
 %   	L^0 \ar[r] 		 & L
%    }
%\end{equation}
%It is clear that every $L$-module is naturally a $V$-module and an $L^0$-module. On the other hand,
%if $(W,Y_W)$ is  a $V$-module,  for every $v\in V,\ w\in W$, there exists an integer $k$ such that (\ref{Y0-truncation}) holds.
%Then by Theorem \ref{DominantProp}, $W$ is an $L$-module.
%Therefore, $\Mod L$ equals the intersection of $\Mod V$ and $\Mod L^0$ which are viewed as (strictly full) subcategories of $\Mod V^0$. }
%\end{rem}

\section{Irreducible modules for  $r$-loop vertex algebras}
In this section, we continue to study the structure of $V^0$ for a general \rtva{} $V$. First, we show that
$V^0$ has a canonical $\Z^r$-grading which makes $V^0$ a vertex $\Z^r$-graded algebra in a certain sense.
Then we prove that the structure of an \rtva{} $V$ with $V^0=V$ exactly amounts to that of a vertex $\Z^r$-graded algebra.
We also study special vertex $\Z$-graded algebras which are $r$-loop vertex algebras and
we classify their irreducible modules in terms of irreducible modules for the corresponding vertex algebras.

First, we study the \rtva{} structure of $V^0$ for a general \rtva{} $V$.

\begin{de}\label{gradedva}
A {\em vertex $\Z^{r}$-graded algebra}
is a vertex algebra $V$ equipped with a $\Z^{r}$-grading $V=\bigoplus_{\m\in\Z^{r}}V_{(\m)}$ such that ${\bf 1}\in V_{(0)}$  and
$$u_{k}v\in V_{(\m+\n)}\  \  \  \mbox{ for }u\in V_{(\m)},\  v\in V_{(\n)},\  \m,\n\in \Z^{r},\ k\in \Z.$$
\end{de}

\begin{prop}\label{prop:RTVA2ZrGradedVA}
Let $V$ be an \rtva{}. Then $V^0=\bigoplus_{\m\in \Z^r}V_{(\m)}$ is a vertex $\Z^r$-graded algebra, where for $\m\in \Z^r$,
\begin{eqnarray}
V_{(\m)}=\{ v_{-1,-\m}{\bf 1}\  |\   v\in V\}.
\end{eqnarray}
\end{prop}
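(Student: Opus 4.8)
The plan is to verify the two defining conditions of a vertex $\Z^r$-graded algebra directly against the candidate grading $V^0=\bigoplus_{\m\in\Z^r}V_{(\m)}$ with $V_{(\m)}=\{v_{-1,-\m}{\bf 1}\mid v\in V\}$. We already know from Proposition~\ref{TvaStructProp} that $V^0={\rm span}\{v_{-1,\m}{\bf 1}\mid v\in V,\ \m\in\Z^r\}$ and that $Y^0$ makes $V^0$ a vertex algebra; and from Corollary~\ref{V0-characterization} that each $u\in V^0$ decomposes (as a \emph{finite} sum) as $u=\sum_{\m}u_{-1,\m}{\bf 1}$. So the first thing I would establish is that the sum $\sum_{\m}V_{(\m)}$ is genuinely \emph{direct}: if $\sum_{\m}v^{(\m)}_{-1,-\m}{\bf 1}=0$ (a finite sum), I would apply the operator $(\cdot)_{-1,\n}{\bf 1}$ and use (\ref{e2.14}) and (\ref{2.15}) of Lemma~\ref{simple-facts}, which give $(v_{-1,-\m}{\bf 1})_{-1,\n}{\bf 1}=\delta_{-\m,\n}v_{-1,-\m}{\bf 1}$, to pick out the $\n=-\m$ summand and conclude each $v^{(\m)}_{-1,-\m}{\bf 1}=0$. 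This simultaneously shows that for $u\in V^0$ the decomposition $u=\sum_\m u_{-1,\m}{\bf 1}$ from Corollary~\ref{V0-characterization} \emph{is} its decomposition with respect to this grading, so $V^0=\bigoplus_\m V_{(\m)}$ as claimed, and that ${\bf 1}={\bf 1}_{-1,0}{\bf 1}\in V_{(0)}$ (using ${\bf 1}_{-1,\m}{\bf 1}=\delta_{\m,0}{\bf 1}$, which follows from $Y({\bf 1};x_0,\x)=1$).

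The crux is the grading compatibility: for $u\in V_{(\m)}$, $v\in V_{(\n)}$, and $k\in\Z$, one needs $u_k^0 v\in V_{(\m+\n)}$, where $u_k^0$ is the $k$-th mode of the vertex algebra $Y^0$ on $V^0$. Write $u=a_{-1,-\m}{\bf 1}$ and $v=b_{-1,-\n}{\bf 1}$ for some $a,b\in V$. The key computational input is (\ref{etva-fact2}), which (with $\m$ replaced by $-\m$, $\n$ by $-\n$) reads
\[
Y(a_{-1,-\m}{\bf 1};z_0,\z)(b_{-1,-\n}{\bf 1})=\Res_{x_0}x_0^{-1}\z^{\m}Y\big(Y(a;z_0,-\m)b;x_{0},-\m-\n\big){\bf 1}.
\]
Setting $\z=1$ recovers $Y^0(u,z_0)v$, so extracting the coefficient of $z_0^{-k-1}$ shows that $u_k^0 v$ is a finite linear combination of vectors of the form $\big(a_{j,-\m}b\big)_{-1,-\m-\n}{\bf 1}$ with $a_{j,-\m}b\in V$; by the very definition of $V_{(\m+\n)}$, each such vector lies in $V_{(\m+\n)}$. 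This is exactly the closure condition. The step I expect to cost the most care is keeping the sign bookkeeping straight between the ``$-\m$'' that appears in the index and the ``$+\m$'' in the exponent $\x^{-\m}$ versus $\z^{\m}$, and making sure that all sums invoked (the one in Corollary~\ref{V0-characterization}, and the $\Res_{x_0}$ on the right above) are finite or at least well defined in $\Espacez{V}$; these are handled by Proposition~\ref{TvaStructProp} and Corollary~\ref{V0-characterization}, so no genuinely new obstacle arises. I would conclude by noting that $V^0$ with this grading satisfies Definition~\ref{gradedva}, hence is a vertex $\Z^r$-graded algebra.
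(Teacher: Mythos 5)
Your proposal is correct and follows essentially the same route as the paper: directness of the sum via the idempotent/orthogonality relations of Lemma \ref{simple-facts} (the paper packages this as projections $\phi_{\m}(u)=u_{-1,\m}{\bf 1}$), and closure of the grading under the modes of $Y^0$ via (\ref{tempEq2}), of which (\ref{etva-fact2}) is just the generating-function form. Your explicit check that ${\bf 1}\in V_{({\bf 0})}$ is a small addition the paper leaves implicit, but nothing in substance differs.
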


\begin{proof} For $\m\in \Z^r$, define a linear map $\phi_{\m}: V\rightarrow V_{(-\m)}$
by $\phi_{\m}(u)=u_{-1,\m}{\bf 1}$ for $u\in V$.
From Lemma \ref{simple-facts}, we have
$$\phi_{\m}|_{V_{(-\n)}}=\delta_{\m,\n}\   \   \   \mbox{ for }\m,\n\in \Z^r.$$
It follows that $V^0=\bigoplus_{\m\in \Z^r}V_{(\m)}$ as a vector space.
For $u,v\in V$, $\m,\n\in \Z^r,\ m_0\in \Z$, we have
\begin{eqnarray*}
\Res_{x_0}x_0^{m_0}Y^0(u_{-1,\m}{\bf 1},x_0)(v_{-1,\n}{\bf 1})
&=&\Res_{x_0}x_0^{m_0}Y(u_{-1,\m}{\bf 1};x_0,\x)(v_{-1,\n}{\bf 1})|_{\x=1}\\
&=&\Res_{x_0}x_0^{m_0}\x^{\m}Y(u_{-1,\m}{\bf 1};x_0,\x)(v_{-1,\n}{\bf 1})\\
&=&(u_{-1,\m}{\bf 1})_{m_0,\m}(v_{-1,\n}{\bf 1}),
\end{eqnarray*}
which  by (\ref{tempEq2}) is equal to $\left((u_{-1,\m}{\bf 1})_{m_0,\m}v\right)_{-1,\m+\n}{\bf 1}\in V_{(-\m-\n)}$.
This proves that $V^0$ is a vertex $\Z^r$-graded algebra.
\end{proof}

On the other hand, we have the following result which is straightforward to prove:

\begin{lem}\label{va-rtva}
Let  $V=\bigoplus_{\m\in\Z^{r}}V_{(\m)}$ be a vertex $\Z^{r}$-graded algebra.
For $u\in V_{(\m)}$ with $\m\in \Z^{r}$, define
$$Y(u;x_{0},\x)=Y(u,x_{0})\x^{\m}.$$
Then $V$ is an  \rtva{} with $V^0=V$, which we denote by $F(V)$. Furthermore,
if $\alpha: U\rightarrow V$ is a homomorphism of vertex $\Z^r$-graded algebras,  then $\alpha: F(U)\rightarrow F(V)$
is also a homomorphism of  $(r+1)$-toroidal vertex algebras.
\end{lem}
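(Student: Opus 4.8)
The plan is to reduce the toroidal Jacobi identity (\ref{jacobi-tva}) to the ordinary Jacobi identity of the vertex algebra $V$ by keeping track of the powers of $\x$. First I would extend the proposed map linearly to all of $V$: writing $v=\sum_{\m\in\Z^r}v^{(\m)}$ with $v^{(\m)}\in V_{(\m)}$ (a finite sum, since the grading is a direct sum), set $Y(v;x_0,\x)=\sum_{\m\in\Z^r}Y(v^{(\m)},x_0)\x^{\m}$. Since $Y(v^{(\m)},x_0)w\in V((x_0))$ for each $w\in V$, this lies in $\Espaceall{V}$. The vacuum axioms are immediate: $\vac\in V_{(0)}$ and $Y(\vac,x_0)=1_V$ give $Y(\vac;x_0,\x)v=v$, while the creation property $Y(v^{(\m)},x_0)\vac\in V[[x_0]]$ gives $Y(v;x_0,\x)\vac\in V\varvac{x}$.

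For the Jacobi identity, since both sides of (\ref{jacobi-tva}) are linear in $u$ and in $v$ separately, it suffices to treat homogeneous $u\in V_{(\m)}$, $v\in V_{(\n)}$. Then $Y(u;x_0,\z\y)=Y(u,x_0)\z^{\m}\y^{\m}$, so the first two terms of the left-hand side carry the common factor $\z^{\m}\y^{\m+\n}$ multiplying $Y(u,x_0)Y(v,y_0)$ and $Y(v,y_0)Y(u,x_0)$ respectively. On the right-hand side, $Y(u;z_0,\z)v=(Y(u,z_0)v)\z^{\m}$, and since $u_k v\in V_{(\m+\n)}$ for every $k\in\Z$, one gets $Y\big((Y(u,z_0)v)\z^{\m};y_0,\y\big)=Y(Y(u,z_0)v,y_0)\,\z^{\m}\y^{\m+\n}$. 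Cancelling the common factor $\z^{\m}\y^{\m+\n}$, the identity becomes exactly the ordinary Jacobi identity for $Y(\cdot,x_0)$ applied to the pair $u,v$, which holds because $V$ is a vertex algebra. This proves $F(V)$ is an \rtva{}.

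To see $V^0=V$, I would invoke Corollary \ref{V=V0}: it remains to check that $Y(\cdot;x_0,\x)$ is injective and that $Y(v;x_0,\x)\vac\in V[[x_0]][\varr{x}]$ for all $v$. The latter was noted above, as $Y(v;x_0,\x)\vac=\sum_{\m}(Y(v^{(\m)},x_0)\vac)\x^{\m}$ is a finite sum of elements of $V[[x_0]]\x^{\m}$. For injectivity, if $Y(v;x_0,\x)=0$ then, comparing coefficients of the linearly independent monomials $\x^{\m}$, each $Y(v^{(\m)},x_0)=0$; since the vertex operator map of a vertex algebra is injective (by the creation property, cf. Remark \ref{difference}), each $v^{(\m)}=0$, hence $v=0$.

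Finally, for functoriality, let $\alpha:U\to V$ be a homomorphism of vertex $\Z^r$-graded algebras, so $\alpha(\vac)=\vac$, $\alpha(u_k w)=\alpha(u)_k\alpha(w)$, and $\alpha(U_{(\m)})\subseteq V_{(\m)}$ for all $\m$. For homogeneous $u\in U_{(\m)}$ and $w\in U$, $\alpha(Y_U(u;x_0,\x)w)=\alpha(Y_U(u,x_0)w)\x^{\m}=Y_V(\alpha(u),x_0)\alpha(w)\x^{\m}=Y_V(\alpha(u);x_0,\x)\alpha(w)$, using $\alpha(u)\in V_{(\m)}$; extending by linearity in $u$, together with $\alpha(\vac)=\vac$, shows $\alpha:F(U)\to F(V)$ is an \rtva{} homomorphism. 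None of the steps presents a genuine obstacle; the only point requiring care is keeping the bookkeeping of the $\z$- and $\y$-powers straight when expanding the toroidal Jacobi identity, and observing that the substitution $\x\mapsto\z\y$ causes no difficulty here because on a homogeneous vector $Y(u;x_0,\x)$ is a single monomial in $\x$ times a Laurent series in $x_0$.
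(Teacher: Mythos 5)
Your proof is correct, and it fills in exactly the verification the paper omits (the paper states Lemma \ref{va-rtva} without proof, calling it straightforward): reducing to homogeneous $u,v$, factoring out the common monomial $\z^{\m}\y^{\m+\n}$ from the toroidal Jacobi identity, and checking the hypotheses of Corollary \ref{V=V0} is the intended argument. The only simplification available is that $V^0=V$ also follows immediately from $v^{(\m)}=(v^{(\m)})_{-1,-\m}\vac$ for $v^{(\m)}\in V_{(\m)}$, without invoking Corollary \ref{V=V0}.
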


As an immediate consequence, we have:

\begin{coro}
Let $V$ be a \tva{}.
Denote by $D_0$ the canonical derivation of $V^0$ viewed as a \va{}.
Define linear operators $D_1,\dots,D_r$ on $V^0$ by
\begin{eqnarray}
D_j(v)=m_j v \ \ \ \ \mbox{for }v\in V_{(\m)},\ \m=(m_1,\dots,m_r)\in \Z^r.
\end{eqnarray}
Then $V^0$ equipped with derivations $D_0,D_1,\dots,D_r$ is an extended \tva{}, where $D_1,\dots,D_r$ act on $V^0$ semi-simply with only integer eigenvalues.
Furthermore, the extended \tva{} structure on $V^0$ is unique.
\end{coro}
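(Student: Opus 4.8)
The plan is to build the extended $(r+1)$-toroidal vertex algebra structure on $V^0$ by combining Proposition \ref{prop:RTVA2ZrGradedVA} with Lemma \ref{va-rtva}, and then verify the three derivation axioms together with the semisimplicity and uniqueness claims. First I would recall from Proposition \ref{prop:RTVA2ZrGradedVA} that $V^0=\bigoplus_{\m\in\Z^r}V_{(\m)}$ is a vertex $\Z^r$-graded algebra, and by Lemma \ref{va-rtva} the induced toroidal vertex operator on a homogeneous $v\in V_{(\m)}$ is $\Yt{v}{x_0}{\x}=\Yva{v}{x_0}\x^{\m}$, which coincides with the original toroidal structure on $V^0$ (this compatibility is exactly formula (\ref{etva-m}) in Proposition \ref{TvaStructProp}, so no new structure is being imposed). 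So the vertex-algebra derivation $D_0$ on $V^0$ and the operators $D_1,\dots,D_r$ defined by $D_j(v)=m_jv$ for $v\in V_{(\m)}$ are all well-defined linear operators on $V^0$.

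Next I would check the two defining conditions of a derivation (Definition of derivation in Section 2) for each $D_j$, $1\le j\le r$: that $D_j(\vac)=0$, which is immediate since $\vac\in V_{(0)}$, and that $[D_j,\Yt{v}{x_0}{\x}]=\Yt{D_j(v)}{x_0}{\x}$ for all $v\in V^0$. By linearity it suffices to take $v\in V_{(\m)}$ and apply to a homogeneous vector $w\in V_{(\n)}$; then $v_{k,\p}w$ lies in $V_{(\m+\n)}$ only when $\p=\m$ (by Lemma \ref{simple-facts}, and the grading condition of Definition \ref{gradedva}), so $D_j$ acts on $v_{k,\m}w$ by the scalar $m_j+n_j$ while acting on $w$ by $n_j$, giving the difference $m_j$, which matches $\Yt{D_j(v)}{x_0}{\x}$. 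The $D_0$ condition $\Yt{D_0(v)}{x_0}{\x}=\pder{x_0}\Yt{v}{x_0}{\x}$ follows from the standard $\mathcal{D}$-operator property of the vertex algebra $(V^0,Y^0,\vac)$ (Proposition \ref{TvaStructProp}) together with the fact that $\x^{\m}$ is a constant with respect to $x_0$. Similarly $\Yt{D_j(v)}{x_0}{\x}=m_j\Yva{v}{x_0}\x^\m=\bigl(\degder{x}{j}\bigr)\Yt{v}{x_0}{\x}$ since $\degder{x}{j}$ acting on $\x^\m$ produces the scalar $m_j$. That $D_1,\dots,D_r$ act semisimply with integer eigenvalues is built into their definition — $V^0=\bigoplus_\m V_{(\m)}$ is precisely the simultaneous eigenspace decomposition with $V_{(\m)}$ the $(m_1,\dots,m_r)$-eigenspace.

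For uniqueness, suppose $D_0',D_1',\dots,D_r'$ is another extended toroidal vertex algebra structure on $V^0$. Applying the derivation identities to $\vac$ and using the creation-type property $\Yt{v}{x_0}{\x}\vac\in V^0\varvac{x}$ together with its value at $x_0=0$ (formula (\ref{etva-m}): for $v\in V_{(\m)}$, $\Yt{v}{x_0}{\x}\vac\in \x^{-(-\m)}\,V^0((x_0))=\x^{\m}V^0((x_0))$ and the constant term in $x_0$ recovers $v$), I would extract from $\Yt{D_j'(v)}{x_0}{\x}\vac=(\degder{x}{j})\Yt{v}{x_0}{\x}\vac$ that $D_j'(v)=m_jv$ for $v\in V_{(\m)}$, and likewise $D_0'=D_0$ from $\Yt{D_0'(v)}{x_0}{\x}\vac=\pder{x_0}\Yt{v}{x_0}{\x}\vac$. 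Hence $D_j'=D_j$ for all $j$. The main obstacle, such as it is, is purely bookkeeping: one must be careful about the sign convention in the indexing $V_{(\m)}=\{v_{-1,-\m}\vac\mid v\in V\}$ versus the exponent $\x^{\m}$ appearing in $\Yt{v}{x_0}{\x}$ for $v\in V_{(\m)}$, so that the eigenvalue of $D_j$ on $V_{(\m)}$ really is $m_j$ and the derivation identity with $\degder{x}{j}$ comes out with the correct sign; everything else is a direct consequence of the already-established results.
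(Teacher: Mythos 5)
Your proposal is correct and is essentially the argument the paper intends: the corollary is stated as an immediate consequence of Proposition \ref{prop:RTVA2ZrGradedVA} and Lemma \ref{va-rtva}, and your verification of the derivation axioms, semisimplicity, and uniqueness (via injectivity of the vertex operator map on $V^0$, or equivalently the creation property applied to $\vac$) is exactly the unpacking of that. One small index slip: for $v\in V_{(\m)}=\{u_{-1,-\m}\vac\}$ the nonvanishing components are $v_{k,-\m}$ rather than $v_{k,\m}$, but since you explicitly flag this sign convention at the end and it does not affect the conclusion, this is bookkeeping rather than a gap.
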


\begin{de}\label{subcategory0}
Denote by ${\mathcal{C}}_{r+1}^{0}$ the subcategory of $(r+1)$-toroidal vertex algebras, which consists of $V$
satisfying the condition that $V^{0}=V$.
\end{de}

In view of Lemma \ref{va-rtva}, we have a functor $F$ from the category of
$\Z^r$-graded vertex algebras to ${\mathcal{C}}_{r+1}^{0}$. It is clear that $F$ is an isomorphism.
To summarize, we have:

\begin{prop}\label{prop:ZrGradedVAtoETVA}
The category ${\mathcal{C}}_{r+1}^{0}$ is a strictly full subcategory of the category of \etva{}s and
$F$  is an isomorphism from the category of vertex $\Z^r$-graded algebras  to ${\mathcal{C}}_{r+1}^{0}$.
\end{prop}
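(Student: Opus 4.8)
The plan is to establish Proposition \ref{prop:ZrGradedVAtoETVA} in two halves. First I would show that every object $V$ of $\mathcal{C}_{r+1}^{0}$ is naturally an extended toroidal vertex algebra, by invoking the Corollary just before Definition \ref{subcategory0}: since $V^{0}=V$, that Corollary equips $V$ with derivations $D_{0},D_{1},\dots,D_{r}$ making it an extended toroidal vertex algebra, and it also records that $D_{1},\dots,D_{r}$ act semisimply with integer eigenvalues and that this extended structure is unique. Uniqueness is exactly what gives strict fullness on the level of objects: if $V\in\mathcal{C}_{r+1}^{0}$ happens to be isomorphic (as a toroidal vertex algebra) to some extended toroidal vertex algebra, the transported extended structure must coincide with the canonical one, so $V$ genuinely sits inside the category of extended toroidal vertex algebras as a strictly full subcategory. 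For fullness of the inclusion as a subcategory of toroidal vertex algebras, note that a morphism of toroidal vertex algebras between two objects of $\mathcal{C}_{r+1}^{0}$ automatically intertwines the canonical $V^{0}$-vertex-algebra structures, hence (again by uniqueness of the extended structure, or directly by tracking eigenvalues of $D_{j}$) commutes with all the $D_{j}$; so it is a morphism of extended toroidal vertex algebras.

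Next I would prove that $F$ is an isomorphism of categories from vertex $\Z^{r}$-graded algebras to $\mathcal{C}_{r+1}^{0}$. Lemma \ref{va-rtva} already provides the functor $F$ on both objects and morphisms and shows $F(V)$ lies in $\mathcal{C}_{r+1}^{0}$. To get the inverse functor $G$, I would send an object $V\in\mathcal{C}_{r+1}^{0}$ to the underlying vertex algebra $(V,Y^{0},\vac)$ of $V^{0}=V$ (Proposition \ref{TvaStructProp}) together with the $\Z^{r}$-grading $V=\bigoplus_{\m\in\Z^{r}}V_{(\m)}$ furnished by Proposition \ref{prop:RTVA2ZrGradedVA}. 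That Proposition already asserts this is a vertex $\Z^{r}$-graded algebra, so $G$ is well defined on objects; on morphisms $G$ is the identity on underlying maps, and one checks a toroidal-vertex-algebra homomorphism between objects of $\mathcal{C}_{r+1}^{0}$ preserves both $Y^{0}$ and the grading, the latter because $\phi(v_{-1,-\m}\vac)=\phi(v)_{-1,-\m}\vac$ by the definition of a toroidal vertex algebra homomorphism.

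The remaining point is that $F$ and $G$ are mutually inverse. For $G\circ F=\mathrm{id}$: starting from a vertex $\Z^{r}$-graded algebra $V$, Lemma \ref{va-rtva} gives $Y(u;x_{0},\x)=Y(u,x_{0})\x^{\m}$ for $u\in V_{(\m)}$, so $u_{-1,-\m}\vac=u$ and the grading recovered by Proposition \ref{prop:RTVA2ZrGradedVA} is the original one, while $Y^{0}(u,x_{0})=Y(u;x_{0},\x)|_{\x=1}=Y(u,x_{0})$ recovers the original vertex algebra structure. For $F\circ G=\mathrm{id}$: given $V\in\mathcal{C}_{r+1}^{0}$, equation (\ref{etva-m}) of Proposition \ref{TvaStructProp} says $Y(v_{-1,-\m}\vac;x_{0},\x)=Y^{0}(v_{-1,-\m}\vac,x_{0})\x^{\m}$, which is precisely the formula Lemma \ref{va-rtva} uses to build $F(G(V))$ out of the graded vertex algebra $G(V)$; since $V^{0}=V$ every element of $V$ is of the form $v_{-1,-\m}\vac$ (summed finitely over $\m$, by Corollary \ref{V0-characterization}), so the two toroidal vertex operator maps agree on all of $V$. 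The main obstacle, such as it is, is purely bookkeeping: keeping the sign conventions straight between the grading $V_{(\m)}=\{v_{-1,-\m}\vac\}$ and the exponent $\x^{\m}$, and making sure morphisms really do preserve everything — but all the structural content is already packaged in Proposition \ref{TvaStructProp}, Proposition \ref{prop:RTVA2ZrGradedVA}, Lemma \ref{va-rtva}, and the Corollary establishing uniqueness of the extended structure, so the proof is essentially an assembly of these facts.
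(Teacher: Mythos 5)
Your proposal is correct and follows the same route the paper intends: the paper offers no detailed argument, simply declaring the result "clear" as a summary of the preceding Corollary on the canonical extended structure, Proposition \ref{prop:RTVA2ZrGradedVA}, and Lemma \ref{va-rtva}, which are exactly the ingredients you assemble. Your verification that $F$ and $G$ are mutually inverse and that morphisms automatically preserve the grading and the derivations fills in precisely the bookkeeping the paper leaves implicit, with the sign conventions handled correctly.
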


From Corollary 2.7 and Proposition 2.16 in \cite{LTW} (including the skew-symmetry), we immediately have:

%\begin{coro}\label{AlmostGoodCoro}
%Let $V$ be an \rtva{}. Then
%  \begin{eqnarray}
%  	\Yt{v}{x_0}{\x}=\Yva{\x^\rsymbol{\der}v}{x_0}\   \   \  \mbox{  for } v\in V^0,
 % \end{eqnarray}
%where $\x^\rsymbol{\der}=x_1^{\der_1}\cdots x_r^{\der_r}$ and
% $Y^0$ is defined as in Proposition \ref{TvaStructProp}.
 % We also have the following skew-symmetry
  %		\begin{eqnarray}\label{SkewSymEq}
  %			\Yt{u}{z_0}{\z}v=\expop{z_0}{\der_0}\z^{\der}\Yt{v}{-z_0}{\z^{-1}}u
  %		\end{eqnarray}
% for $u,v\in V^0$,  where $\z^{-1}=z_1^{-1}\cdots z_r^{-1}$.
%\end{coro}

\begin{coro}\label{IdealCriLem}
Let $V$ be a \tva{}  and let $I$ be a left ideal of $V^0$.
Then $I$ is an ideal of $V^0$ (viewed as a \tva{}) if and only if $I$ is stable under the actions of $\der_0,\dots,\der_r$.
\end{coro}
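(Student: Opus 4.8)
The plan is to reduce the claim to the corresponding fact for vertex algebras and the explicit description of the toroidal vertex operator $\Yt{v}{x_0}{\x}$ in terms of $Y^0$. Recall that by Proposition \ref{TvaStructProp} we may identify $V^0$ with a vertex $\Z^r$-graded algebra, and that for $v\in V^0$ we have $\Yt{v}{x_0}{\x}=\sum_{\m\in\Z^r}Y^0(v_{-1,\m}\vac,x_0)\x^{-\m}$, where the sum is finite (Corollary \ref{V0-characterization}). In particular, the $\x$-components of $\Yt{v}{x_0}{\x}$ are exactly the operators $Y^0(u,x_0)$ for the graded pieces $u=v_{-1,\m}\vac$ of $v$. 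Thus a subspace $I\subset V^0$ that is $\Z^r$-graded is a (two-sided) ideal of $V^0$ as a \tva{} if and only if $I$ is an ideal of $V^0$ as a vertex algebra in the graded sense, i.e.\ $u_k v\in I$ and $v_k u\in I$ for all $u\in I$, $v\in V^0$, $k\in\Z$, where $u_k$ denotes the vertex-algebra modes. So the corollary will follow once we establish: (i) a left ideal of $V^0$ (as a \tva{}) that is stable under $\der_0,\dots,\der_r$ is automatically $\Z^r$-graded, and (ii) for a $\Z^r$-graded left ideal $I$ of the vertex algebra $V^0$, $\der_j$-stability for $j=1,\dots,r$ holds trivially (since $\der_j$ acts as the scalar $m_j$ on $V_{(\m)}$), while $\der_0$-stability together with left-ideal-ness yields right-ideal-ness via skew-symmetry.

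First I would handle the grading issue. The operators $\der_1,\dots,\der_r$ act semisimply on $V^0$ with $V_{(\m)}$ the $\m$-eigenspace (Corollary following Lemma \ref{va-rtva}); hence a subspace $I$ is stable under all of $\der_1,\dots,\der_r$ precisely when $I=\bigoplus_{\m}(I\cap V_{(\m)})$ is $\Z^r$-graded. So the hypothesis "$I$ stable under $\der_0,\dots,\der_r$" is equivalent to "$I$ is a $\Z^r$-graded subspace stable under $\der_0$." Next, for such a graded $I$ that is a left ideal of the \tva{} $V^0$: unpacking the definition, left-ideal-ness means $v_{m_0,\m}I\subset I$ for all $v\in V^0$, which by the mode description above is the same as $u_k\, w\in I$ for all $u\in V^0$, $w\in I$, $k\in\Z$ (vertex-algebra modes), i.e.\ $I$ is a left ideal of the vertex algebra $V^0$. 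Conversely a graded left ideal of the vertex algebra is a left ideal of the \tva{}. So the content reduces to: a $\der_0$-stable left ideal $I$ of the vertex algebra $V^0$ is automatically a two-sided ideal. This is the standard vertex-algebra fact, proved via skew-symmetry $Y^0(u,x_0)w = e^{x_0\der_0}Y^0(w,-x_0)u$: for $w\in I$, $Y^0(w,-x_0)u$ has coefficients in $I$ (left ideal), and applying $e^{x_0\der_0}$ keeps them in $I$ by $\der_0$-stability; hence $u_k w = w_{k}'u$-type relations put $w_k u\in I$ as well. This is exactly the input cited from Corollary 2.7 and Proposition 2.16 of \cite{LTW}, which supply the skew-symmetry and the relevant mode identities in the toroidal setting.

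I expect the only genuine subtlety—rather than an obstacle—to be bookkeeping: making the translation between toroidal modes $v_{m_0,\m}$ and vertex-algebra modes clean, and checking that "left ideal of the \tva{}" is faithfully captured by the $\x$-component operators $Y^0(v_{-1,\m}\vac,x_0)$ without loss of information (this is where the injectivity of $\Yt{\cdot}{x_0}{\x}$ on $V^0$ from Proposition \ref{TvaStructProp}, and the finiteness of the $\m$-sum from Corollary \ref{V0-characterization}, are used). With those identifications in hand the two implications are both short: "$\der_j$-stable for all $j$" $\Leftrightarrow$ "$\Z^r$-graded and $\der_0$-stable," and a $\der_0$-stable graded left ideal of a vertex algebra is two-sided by skew-symmetry. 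Finally, one notes the converse direction: any ideal of the \tva{} $V^0$ is in particular $\der_0,\dots,\der_r$-stable because $V^0$ carries its canonical extended \tva{} structure and $\der_0,\dots,\der_r$ are realized there by the vertex operator map, so an ideal is automatically stable under these derivations. This completes the equivalence.
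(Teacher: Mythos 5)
Your argument is correct and follows the same route the paper intends: the paper gives no proof beyond citing Corollary 2.7 and Proposition 2.16 of \cite{LTW} ``including the skew-symmetry,'' and your reconstruction --- semisimplicity of $\der_1,\dots,\der_r$ forcing $\Z^r$-gradedness, then skew-symmetry together with $\der_0$-stability upgrading a graded left ideal of the vertex algebra $V^0$ to a two-sided one, with the converse coming from $\der_j(v)$ being expressible through modes $v_{n,\m}\vac$ --- is exactly the standard unwinding of those cited facts. No gaps.
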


Next, we give a canonical functor from the category of \va{}s to the category of \tva{}s.
Set
\begin{eqnarray}
\Lr=\C[\varr{t}],
\end{eqnarray}
a commutative and associative algebra over $\C$.
Note that any commutative and associative algebra $A$ with identity $1$ is naturally a vertex algebra with
$Y(a,x)b=ab$ for $a,b\in A$ and with $1$ as the vacuum.
In particular, $\Lr$ is naturally a \va{}.

\begin{rem}  {\em Let $A$ be a commutative and associative algebra with identity $1$.
If $W$ is an $A$-module, it can be readily seen that $W$ is a module for $A$ viewed as a vertex algebra. On the other hand,
let $(W,Y_{W})$ be a module for $A$ viewed as a vertex algebra.  For any $a\in A$, we have (cf. \cite{LL})
$$\frac{d}{dx}Y_{W}(a,x)=Y_{W}(D a,x)=0,$$
as $D a=\left(\frac{d}{dx}Y(a,x)1\right)|_{x=0}=0$. Then it is straightforward to show that $W$ is a module
for $A$ viewed as an associative algebra.
Therefore, a module for $A$ viewed as an associative algebra is the same as a module for $A$ viewed as a vertex algebra.}
\end{rem}

For any \vs{} $W$, we set
$$\lr{W}=W\otimes \Lr.$$
Note that $L_r$ is naturally an associative $\Z^r$-graded algebra and a vertex $\Z^r$-graded algebra.
For any vertex algebra $V$, the tensor product vertex algebra $V\otimes L_r$
is naturally a vertex $\Z^r$-graded algebra. In view of Lemma \ref{va-rtva},  $L_r(V)$ is  an \etva{} with
\begin{eqnarray}
\Yt{v\otimes \bdt^\m}{x_0}{\x}= (Y(v,x_0)\otimes \bdt^\m )\degr{x}{m}
\end{eqnarray}
and with derivations $D_0=\der\otimes 1$ and $D_i=-1\otimes \degder{t}{i}$ for $1\le i\le r$.
Furthermore, for any \va{} homomorphism $\varphi:V_1\rightarrow V_2$,
$\varphi\otimes 1$ is an \rtva{} homomorphism from $\lr{V_1}$ to $\lr{V_2}$.
In this way, we have a covariant functor from the category of \va{}s to the category of vertex $\Z^r$-graded algebras,
and then to the category of \rtva{}s.

We have the following technical result:

\begin{lem}\label{LrLem}
Let $V$ be a vertex $\Z^r$-graded algebra and
 let $\varphi:V\rightarrow A$ be a homomorphism of \va{}s.
Define a linear map $\tilde{\varphi}:\  V\rightarrow \lr{A}$ by
\begin{eqnarray*}
\tilde{\varphi}(v)=\varphi(v)\otimes \bdt^{-\m}
\  \   \   \mbox{ for }v\in V_{(\m)} \mbox{ with }\m\in \Z^r.
\end{eqnarray*}
Then $\tilde{\varphi}$ is a homomorphism of vertex $\Z^r$-graded algebras.
Furthermore, if $V$ is graded simple and if $\varphi$ is not zero, then
$\tilde{\varphi}$ is injective.
\end{lem}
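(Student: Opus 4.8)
The plan is to verify three things in order: that $\tilde{\varphi}$ sends $\vac$ to the vacuum of $\lr{A}$, that it respects the vertex operator maps (which for $\Z^r$-graded vertex algebras amounts to checking the defining relation $\tilde{\varphi}(u_k v)=\tilde{\varphi}(u)_k\tilde{\varphi}(v)$ together with grading-compatibility), and finally that injectivity holds when $V$ is graded simple and $\varphi\neq 0$. For the first point, $\vac\in V_{(0)}$ by the definition of a vertex $\Z^r$-graded algebra, so $\tilde{\varphi}(\vac)=\varphi(\vac)\otimes\bdt^{0}=\mathbf{1}_A\otimes 1$, which is the vacuum of the tensor product vertex algebra $A\otimes\Lr$. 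Grading-compatibility is immediate from the definition: $\tilde{\varphi}(V_{(\m)})\subset A\otimes\C\bdt^{-\m}=(A\otimes\Lr)_{(\m)}$.

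**Next I would** check the vertex-operator compatibility. Take homogeneous $u\in V_{(\m)}$, $v\in V_{(\n)}$. On the one hand, using the $\Z^r$-graded vertex algebra structure, $u_k v\in V_{(\m+\n)}$, so
\begin{eqnarray*}
\tilde{\varphi}(u_k v)=\varphi(u_k v)\otimes \bdt^{-\m-\n}=\varphi(u)_k\varphi(v)\otimes \bdt^{-\m-\n},
\end{eqnarray*}
since $\varphi$ is a vertex algebra homomorphism. On the other hand, in $\lr{A}=A\otimes\Lr$ the tensor-product vertex operator gives
\begin{eqnarray*}
Y(\tilde{\varphi}(u),x_0)\tilde{\varphi}(v)=Y\big(\varphi(u)\otimes\bdt^{-\m},x_0\big)\big(\varphi(v)\otimes\bdt^{-\n}\big)=\big(Y(\varphi(u),x_0)\varphi(v)\big)\otimes\bdt^{-\m-\n},
\end{eqnarray*}
whose coefficient of $x_0^{-k-1}$ is exactly $\varphi(u)_k\varphi(v)\otimes\bdt^{-\m-\n}$. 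Hence $\tilde{\varphi}(u)_k\tilde{\varphi}(v)=\tilde{\varphi}(u_k v)$ for all $k$, and by linearity $\tilde{\varphi}$ commutes with the vertex operator maps. Together with the vacuum-preservation and grading-preservation, this shows $\tilde{\varphi}$ is a homomorphism of vertex $\Z^r$-graded algebras.

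**For injectivity**, I would argue that $\ker\tilde{\varphi}$ is a $\Z^r$-graded ideal of $V$: it is a vertex algebra ideal because $\tilde{\varphi}$ is a homomorphism, and it is $\Z^r$-homogeneous because $\tilde{\varphi}$ maps distinct homogeneous components into distinct components $(A\otimes\Lr)_{(\m)}$, so a general element lies in $\ker\tilde{\varphi}$ iff each of its homogeneous pieces does. Since $V$ is graded simple, $\ker\tilde{\varphi}$ is either $0$ or all of $V$; the latter is excluded because $\varphi=\tilde{\varphi}$ composed with the (injective on components) identification followed by projection, so $\tilde{\varphi}=0$ would force $\varphi=0$. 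More directly: if $v\in V_{(\m)}$ and $\tilde{\varphi}(v)=\varphi(v)\otimes\bdt^{-\m}=0$ then $\varphi(v)=0$, so $\ker\tilde{\varphi}\subseteq\ker\varphi$ on each homogeneous component, whence $\ker\tilde{\varphi}=V$ would give $\varphi=0$, a contradiction. Therefore $\ker\tilde{\varphi}=0$.

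**The only mildly delicate point** is making precise that $\ker\tilde{\varphi}$ inherits the $\Z^r$-grading so that graded simplicity applies; this is routine once one observes that $\tilde{\varphi}$ restricted to $V_{(\m)}$ is just $\varphi|_{V_{(\m)}}$ followed by the linear isomorphism $A\xrightarrow{\sim}A\otimes\C\bdt^{-\m}$, so it is injective on $V_{(\m)}$ exactly when $\varphi$ is, and a homogeneous decomposition of a kernel element is killed componentwise. No real obstacle arises; the lemma is essentially a bookkeeping consequence of the definitions of the tensor-product vertex algebra and of graded simplicity.
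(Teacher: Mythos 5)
Your proposal is correct and follows essentially the same route as the paper: the homomorphism property is verified by the identity $\tilde{\varphi}(u_kv)=\varphi(u)_k\varphi(v)\otimes \bdt^{-\m-\n}=\tilde{\varphi}(u)_k\tilde{\varphi}(v)$ on homogeneous elements together with $\tilde{\varphi}(\vac)=\vac\otimes 1$. The injectivity argument, which the paper dismisses as clear, is filled in correctly by observing that $\ker\tilde{\varphi}$ is a $\Z^r$-graded ideal that coincides componentwise with $\ker\varphi$, so graded simplicity and $\varphi\neq 0$ force it to vanish.
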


\begin{proof} Let  $u\in V_{(\m)},\ v\in V_{(\n)},\ k\in \Z$ with $\m,\n\in \Z^r$.
As $V$ is a vertex $\Z^r$-graded algebra, we have $u_{k}v\in V_{(\m+\n)}$.
Then
$$\tilde{\varphi}(u_{k}v)=\varphi(u_{k}v)\otimes \bdt^{-\m-\n}=\varphi(u)_{k}\varphi(v)\otimes \bdt^{-\m-\n}
=\left(\varphi(u)\otimes \bdt^{-\m}\right)_{k}\left(\varphi(v)\otimes \bdt^{-\n}\right)
=\tilde{\varphi}(u)_{k}\tilde{\varphi}(v).$$
Also, as ${\bf 1}\in V_{({\bf 0})}$, we have $\tilde{\varphi}({\bf 1})={\bf 1}\otimes 1$. Thus
$\tilde{\varphi}$ is a homomorphism of vertex $\Z^r$-graded algebras.
The last assertion is clear.
\end{proof}

%Let $L$ be a simple \rtva{} and let $L_0$ be a simple \vqa{} of $L^0$ with $L^0$ viewed as a \va{}.
%In the following we study irreducible modules for $\lr{L_0}$ viewed as an \rtva{}.
%In view of Theorem \ref{ZeroModCateProp}, it suffices to determine irreducible modules
%for $\lr{L_0}$ viewed as a vertex algebra which is the tensor product \va{} $L_0\otimes \Lr$.

\begin{rem}
{\em Let $V$ be a \va{}. We have the tensor product \va{} $L_{r}(V)$.
Note that an $L_{r}(V)$-module structure on a vector space $W$ amounts to a $V$-module structure $Y_{W}(\cdot,x)$
together with an $L_{r}$-module structure such that
$$aY_{W}(v,x)w=Y_{W}(v,x)(aw)\  \  \  \mbox{ for }a\in L_r,\ v\in V,\ w\in W.$$ }
\end{rem}

We have the following straightforward analogue of the notion of evaluation module for affine Lie algebras:

\begin{lem}\label{IrrModDefLem}
Let $V$ be a \va{}, let $(W,Y_W)$ be a $V$-module, and let $\bda\in (\C^\times)^r$.
For $v\in V,\  \m\in \Z^{r}$, set
\begin{eqnarray}
\widehat{Y_{W}}(v\otimes \bdt^\m,x)=\bda^\m Y_{W}(v,x).
\end{eqnarray}
Then $(W,\widehat{Y_W})$ is an $\lr{V}$-module, which is denoted by $W_{\bda}$.
Furthermore, if $W$ is an irreducible $V$-module, then $W_{\bda}$ is an irreducible $\lr{V}$-module.
\end{lem}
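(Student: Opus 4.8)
The plan is to recognize $W_{\bda}$ as an instance of the construction described in the Remark immediately preceding the lemma. First I would note that $\Lr=\C[\varr{t}]$ is a commutative associative algebra with identity, and that the assignment $\bdt^{\m}\mapsto\bda^{\m}$ ($\m\in\Z^{r}$), extended linearly, is an algebra homomorphism $\Lr\to\C$ — this is where the hypothesis $\bda\in(\C^{\times})^{r}$ is used, namely to make the assignment defined and nonzero on the negative powers $\bdt^{\m}$. Consequently $W$ becomes an $\Lr$-module on which each $\bdt^{\m}$ acts as the scalar $\bda^{\m}$. By the Remark, an $\lr{V}$-module structure on $W$ — here ``$\lr{V}$-module'' meaning a module for $\lr{V}$ viewed as a vertex algebra, equivalently (since $(\lr{V})^{0}=\lr{V}$ by Lemma \ref{va-rtva}, so that Theorem \ref{ZeroModCateProp} applies) as an $(r+1)$-toroidal vertex algebra — amounts to giving a $V$-module structure $Y_{W}$ on $W$ together with an $\Lr$-module structure satisfying $a\,Y_{W}(v,x)w=Y_{W}(v,x)(aw)$. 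Feeding in the given $Y_{W}$ and the $\Lr$-action just described, this compatibility is automatic since $\bdt^{\m}$ acts by a scalar, and the corresponding $\lr{V}$-module vertex operator map sends $v\otimes\bdt^{\m}$ to $\bda^{\m}Y_{W}(v,x)$, which is precisely $\widehat{Y_W}$. Hence $(W,\widehat{Y_W})$ is an $\lr{V}$-module. (Alternatively, one may check the Jacobi identity for $\widehat{Y_W}$ directly; since the $\bda^{\m}$ are nonzero scalars with $\bda^{\m}\bda^{\n}=\bda^{\m+\n}$, it reduces at once to the Jacobi identity for $Y_{W}$.)

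For the second assertion, assume $W$ is an irreducible $V$-module. The coefficient operators appearing in the vertex operators of $W_{\bda}$ are exactly the operators $\bda^{\m}v_{n}$ with $v\in V$, $n\in\Z$, $\m\in\Z^{r}$, where $v_{n}$ denotes the $n$-th coefficient of $Y_{W}(v,x)$. If $U\subseteq W$ is a nonzero $\lr{V}$-submodule of $W_{\bda}$, then $U$ is stable under all of these operators; since $\bda^{\m}\neq0$ for every $\m$, $U$ is stable under every $v_{n}$, so $U$ is a nonzero $V$-submodule of $W$, whence $U=W$ by irreducibility of $W$. Therefore $W_{\bda}$ is an irreducible $\lr{V}$-module.

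I expect no genuine obstacle: the argument is essentially an unwinding of definitions together with the multiplicativity $\bda^{\m}\bda^{\n}=\bda^{\m+\n}$. The two points to state with a little care are which category the phrase ``$\lr{V}$-module'' refers to — vertex algebra versus $(r+1)$-toroidal vertex algebra, which coincide here via Theorem \ref{ZeroModCateProp} because $(\lr{V})^{0}=\lr{V}$ — and, in the irreducibility step, the observation that a subspace of $W_{\bda}$ which is a $V$-submodule is automatically stable under the $\Lr$-action, since $\Lr$ operates through scalars, so no separate verification of $\Lr$-stability is required.
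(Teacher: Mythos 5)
Your proof is correct and follows exactly the route the paper intends: the paper omits the proof as ``straightforward,'' and the preceding remark (an $\lr{V}$-module structure amounts to a $V$-module structure plus a compatible $\Lr$-module structure) is precisely the mechanism you invoke, with the evaluation homomorphism $\bdt^{\m}\mapsto\bda^{\m}$ supplying the scalar $\Lr$-action and the nonvanishing of $\bda^{\m}$ giving irreducibility. No gaps.
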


Furthermore, we have:

\begin{prop}\label{IrrTensorProp}
Let $V$ be a \va{} of countable dimension (over $\C$) and let $W$ be an irreducible $\lr{V}$-module.
Then $W$  as an $\lr{V}$-module is isomorphic to $U_\bda$ for some irreducible $V$-module $U$
and for some $\bda\in (\C^\times)^r$.
\end{prop}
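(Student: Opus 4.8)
The plan is to reduce the statement to the analogous classification result for modules over the commutative algebra $L_r$, by first isolating a "vacuum-like" piece of $W$ on which $L_r$ acts and on which a copy of $V$ acts.

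First I would exploit the $\Z^r$-grading. Since $L_r(V) = V \otimes L_r$ is a vertex $\Z^r$-graded algebra (the $L_r$-degree), and $W$ is irreducible, I want to show $W$ inherits a compatible grading, or at least that the action of the degree-zero part $V \otimes \C \subset L_r(V)$ and of $L_r$ on $W$ can be understood separately. The key observation is that $L_r \otimes 1$, being a subalgebra acting through $Y_W(a,x) = $ (a constant operator, by the Remark that a module over a commutative algebra $A$ viewed as a vertex algebra is just an $A$-module) gives $W$ the structure of a module over the commutative ring $L_r$ that commutes with the $V$-action, in the sense $a \cdot Y_W(v,x)w = Y_W(v,x)(a\cdot w)$ for $a \in L_r$, $v \in V$, $w \in W$ (this is exactly the content of the Remark characterizing $L_r(V)$-modules). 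So $W$ is a module for $V$ in the category of $L_r$-modules.

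Next, since $L_r = \C[\varr{t}]$ is a Laurent polynomial ring in $r$ variables, I would invoke the standard fact (this is where Rao's idea of looking at $\Z^r$-graded simple modules enters, cf.\ \cite{R1}): an irreducible object in the relevant category forces $L_r$ to act through a one-dimensional quotient, i.e.\ through evaluation $\bdt^\m \mapsto \bda^\m$ at some point $\bda \in (\C^\times)^r$ — the units $\bdt_i$ act invertibly, and irreducibility together with the countable-dimension hypothesis on $V$ (so that Schur's lemma applies: $\End_{L_r(V)}(W) = \C$) pins down the action of the central copy of $L_r$ to scalars. Once $L_r$ acts by the character $\bda$, the formula $\widehat{Y_W}(v \otimes \bdt^\m, x) = \bda^\m Y_W(v,x)$ shows that the $L_r(V)$-module structure on $W$ is determined entirely by the restricted $V$-module structure $Y_W(\cdot,x) = Y_W(\cdot \otimes 1, x)$, and irreducibility of $W$ over $L_r(V)$ forces irreducibility of $W$ over $V$ (any $V$-submodule would be an $L_r(V)$-submodule since $L_r$ acts by scalars). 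Setting $U = W$ with this $V$-module structure gives $W \cong U_\bda$ as in Lemma \ref{IrrModDefLem}.

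The main obstacle I anticipate is the Schur's-lemma step: justifying that $\End_{L_r(V)}(W) = \C$ (hence that $L_r$ acts by scalars) genuinely needs the countable-dimensionality hypothesis on $V$ — one argues that $W$ is spanned by countably many vectors obtained from a single vector by applying modes of a countable spanning set of $L_r(V)$, so a nonzero endomorphism cannot have empty spectrum, forcing it to be scalar. One must also be careful that $L_r$ itself is countable-dimensional (it is) so that $L_r(V)$ is countable-dimensional. A secondary point requiring care is verifying that the character of $L_r$ takes values in $\C^\times$ rather than possibly sending some $\bdt_i$ to $0$: this follows because each $\bdt_i$ acts invertibly on any $L_r$-module (its inverse $\bdt_i^{-1}$ lies in $L_r$), so its scalar eigenvalue is nonzero. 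The rest — checking $W$ is irreducible as a $V$-module and assembling the isomorphism $W \cong U_\bda$ — is routine.
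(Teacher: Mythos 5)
Your proposal is correct and follows essentially the same route as the paper: observe that $L_r$ is central in $L_r(V)$, use cyclicity of the irreducible module together with the countable dimension of $L_r(V)$ to get that $W$ has countable dimension, apply the generalized (Dixmier) Schur lemma to conclude that $L_r$ acts by a character valued in $(\C^\times)^r$, and then identify $W$ with $U_{\bda}$ for the resulting irreducible $V$-module structure $U$. The details you flag as needing care (the spectrum argument for Schur and the invertibility of the $t_i$ forcing nonzero eigenvalues) are exactly the points the paper relies on.
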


\begin{proof} Pick a nonzero vector $w$. Since $W$ is an irreducible $\lr{V}$-module,
 it follows (see \cite{Li4}, \cite{DM}) that $W={\rm span}\{ v_{n}w\ |\ v\in \lr{V},\ n\in \Z\}$.
 As $\lr{V}$ is of countable dimension, $W$ is of countable dimension.
 Notice that $L_r$ lies in the center of $L_r(V)$ viewed as a vertex algebra and that
 a module for $\Lr$ viewed as a vertex algebra is the same as
 a module for $\Lr$ viewed as an associative algebra.
Then by the generalized Schur lemma, every element of $L_r$ acts as a scaler on $W$, so that $W$ is necessarily
an irreducible $V$-module. Denote this irreducible $V$-module by $U$.
Then we conclude that $W\simeq U_{\bda}$ as an $L_r(V)$-module
for some $\bda\in (\C^\times)^r$.
\end{proof}

\section{Toroidal vertex algebras associated to toroidal Lie algebras}

In this section, we study toroidal vertex algebras associated to the $r$-loop algebras of affine Kac-Moody Lie algebras
and we study their simple
quotient toroidal vertex algebras.

Let $\g$ be a finite dimensional simple Lie algebra and let $\<\cdot,\cdot\>$ be the killing form suitably normalized.
Let $\hat{\g}=\g\otimes \C[t_0^{\pm 1}]\oplus \C \cent$ be the affine Lie algebra,
where  $\cent$ is the standard central element
and
\begin{equation}
[a\otimes t_0^m,b\otimes t_0^n]=[a,b]\otimes t_0^{m+n}+m\<a,b\>\delta_{m+n,0}\cent
\end{equation}
for $a,b\in \g,\  m,n\in \Z$.
Set
\begin{eqnarray}
\Toro=\hat{\g}\otimes \Lr=\hat{\g}\otimes \C[t_{1}^{\pm 1},\dots,t_{r}^{\pm 1}],
\end{eqnarray}
 where
\begin{equation}\label{LieBracketEq}
[u\otimes \bdt^\m,v\otimes \bdt^\n] = [u,v]\otimes \bdt^{\m+\n}
\end{equation}
for $u,v\in \hat{\g},\  \m, \n\in \Zr$. Note that $L_r(\C\cent)$ $(=\C\cent\otimes L_r)$ lies in the center of $\Toro$.

For $a\in \g$, we set
\begin{equation}
\eltAbs{a}{x_0}{x}=\sum\limits_{\rangeall{m}}\eltAbs{a}{m_0}{m}\degall{x}{m},
\end{equation}
where $\eltAbs{a}{m_0}{m}=a\otimes t_0^{m_0}\bdt^\m$.
We also set
\begin{equation}
\cent(\x)=\sum\limits_{\ranger{m}}\cent(\m)\degr{x}{m},
\end{equation}
where $\cent(\m)=\cent\otimes \bdt^\m$.
Then the Lie bracket relations (\ref{LieBracketEq}) amount to
\begin{eqnarray}\label{GenFuncEq}
[\eltAbs{a}{x_0}{x},\eltAbs{b}{y_0}{y}] &=& \eltAbs{[a,b]}{y_0}{y}\dfunc{x_0}{y_0}{x_0} \delta\left( \frac{\rsymbol{y}}{\rsymbol{x}} \right) \nonumber\\
&&+\<a,b\>\cent(\y)\pder{y_0}\dfunc{x_0}{y_0}{x_0}  \delta\left( \frac{\rsymbol{y}}{\rsymbol{x}} \right),
\end{eqnarray}
where $\delta\left(\frac{\rsymbol{y}}{\rsymbol{x}} \right)=\delta\left(\frac{y_1}{x_1}\right)\cdots \delta\left(\frac{y_r}{x_r}\right)$.

As we need, we briefly recall  from \cite{LTW} a conceptual construction.
Let $W$ be a vector space. An ordered pair $\big(\eltAbs{a}{x_0}{x},\eltAbs{b}{x_0}{x}\big)$
in $\Espaceall{W}$ is said to be {\em compatible} if
there exists non-negative integer $k$ such that
\begin{eqnarray}\label{ComEq}
(x_0-y_0)^k\eltAbs{a}{x_0}{x}\eltAbs{b}{y_0}{y}\in \Hom \left( W, W[[\varr{x},\varr{y}]]((x_0,y_0)) \right).
\end{eqnarray}
A subset $U$ of $\Espaceall{W}$ is said to be {\em local} if for any $\eltAbs{a}{x_0}{x},\eltAbs{b}{x_0}{x}\in U$,
there exists non-negative integer $k$ such that
\begin{eqnarray}\label{localEq}
(x_0-y_0)^k\eltAbs{a}{x_0}{x}\eltAbs{b}{y_0}{y}=(x_0-y_0)^k\eltAbs{b}{y_0}{y}\eltAbs{a}{x_0}{x}.
\end{eqnarray}
Notice that this commutation relation implies (\ref{ComEq}).

\begin{de}
Let $\eltAbs{a}{x_0}{x},\eltAbs{b}{x_0}{x}\in \Espaceall{W}$. Assume $(\eltAbs{a}{x_0}{x},\eltAbs{b}{x_0}{x})$ is compatible. Define
\begin{eqnarray*}
\eltAbs{a}{x_0}{x}_\suball{m}\eltAbs{b}{x_0}{x}\in \Espaceall{W}&&  \fortext\rangeall{m}
\end{eqnarray*}
in terms of generating function
\begin{equation}
\Ye{\eltAbs{a}{y_0}{y}}{z_0}{\z}\eltAbs{b}{y_0}{y}=\sum\limits_{\rangeall{m}}\eltAbs{a}{y_0}{y}_\suball{m}\eltAbs{b}{y_0}{y}\degall{z}{m}
\end{equation}
by
\begin{equation}
\Ye{\eltAbs{a}{y_0}{y}}{z_0}{\z}\eltAbs{b}{y_0}{y}
=z_0^{-k}\left( (x_0-y_0)^k\eltAbs{a}{x_0}{zy}\eltAbs{b}{y_0}{y} \right)\big|_{x_0=y_0+z_0},
\end{equation}
where $k$ is any non-negative integer such that (\ref{ComEq}) holds.
\end{de}

A $\Toro$-module $W$ is called a {\em restricted module} if
\begin{equation}
\eltAbs{a}{x_0}{x}\in \Espaceall{W} \   \   \   \mbox{for all }a\in\g.
\end{equation}

We have:

\begin{lem}\label{ConstructLem}
Let $W$ be a restricted $\Toro$-module. Set
\begin{equation}
U_W={\rm span}\{ 1_{W},\  \cent(\x),\   \eltAbs{a}{x_0}{x}\,|\,a\in \g \}.
\end{equation}
Then $U_W$ is a local subspace of $\Espaceall{W}$ and generates an \rtva{} $\<U_W\>$ with $W$ as a faithful module.
Moreover, $\<U_W\>$ is a $\Toro$-module with
$\eltAbs{a}{y_0}{y}$ acting as  $\Ye{\eltAbs{a}{x_0}{x}}{y_0}{\y}$  for $a\in \g$ and with
$\cent(\y)$ acting as $\Ye{\cent(\x)}{y_0}{\y}$.
\end{lem}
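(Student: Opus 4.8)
The plan is to invoke the general local-to-vertex-algebra machinery developed in \cite{LTW} for $(r+1)$-toroidal vertex algebras, applied to the specific generating functions coming from the toroidal Lie algebra $\Toro$. First I would verify that $U_W$ is a \emph{local} subset of $\Espaceall{W}$: this is immediate from the bracket relation (\ref{GenFuncEq}). Indeed, the $\delta$-function identities on the right-hand side, together with the fact that $\pder{y_0}\dfunc{x_0}{y_0}{x_0}$ is annihilated by $(x_0-y_0)^2$ and $\dfunc{x_0}{y_0}{x_0}$ by $(x_0-y_0)$, show that $(x_0-y_0)^2[\eltAbs{a}{x_0}{x},\eltAbs{b}{y_0}{y}]=0$, hence $\eltAbs{a}{x_0}{x}$ and $\eltAbs{b}{y_0}{y}$ are mutually local of order $\le 2$. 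Since $\cent(\x)$ is central in $\Toro$, it commutes with everything and with itself, so it is local with all the other generators, and $1_W$ is trivially local. Thus $U_W\subset \Espaceall{W}$ is local.

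Next I would cite the main structural theorem from \cite{LTW} (the analogue, for $(r+1)$-toroidal vertex algebras, of the Li–Lepowsky–Li theorem on local systems of vertex operators): any local subset $U$ of $\Espaceall{W}$ generates, under the operations $\eltAbs{a}{x_0}{x}_\suball{m}\eltAbs{b}{x_0}{x}$ defined above, an $(r+1)$-toroidal vertex algebra $\<U_W\>$ with vacuum $1_W$, and $W$ is naturally a faithful $\<U_W\>$-module via $\Ytmod{W}{\alpha(x_0,\x)}{y_0}{\y}=\alpha(y_0,\y)$. Faithfulness is the key point: an element of $\<U_W\>$ that acts as $0$ on $W$ is, as an element of $\Espaceall{W}$, already the zero operator, and hence is the zero element of $\<U_W\>$ (here one uses that the elements of $\<U_W\>$ \emph{are} operators on $W$, not an abstract quotient). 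So $W$ is a faithful $\<U_W\>$-module essentially by construction.

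Finally, for the $\Toro$-module structure on $\<U_W\>$ I would check that the assignment $\eltAbs{a}{y_0}{y}\mapsto \Ye{\eltAbs{a}{x_0}{x}}{y_0}{\y}$ (and $\cent(\y)\mapsto \Ye{\cent(\x)}{y_0}{\y}$) respects the bracket (\ref{GenFuncEq}). The commutator of two such "second-layer" vertex operators $\Ye{\cdot}{y_0}{\y}$ acting on $\<U_W\>$ is computed by the iterate/commutator formula built into the Jacobi identity for $\<U_W\>$, which reduces the commutator $[\Ye{\eltAbs{a}{x_0}{x}}{y_0}{\y},\Ye{\eltAbs{b}{x_0}{x}}{z_0}{\z}]$ to terms governed by the products $\eltAbs{a}{x_0}{x}_\suball{m}\eltAbs{b}{x_0}{x}$; by the definition of these products via $(x_0-y_0)^k\eltAbs{a}{x_0}{zy}\eltAbs{b}{y_0}{y}|_{x_0=y_0+z_0}$ and the order-$\le 2$ locality computed in the first step, these products recover exactly the right-hand side of (\ref{GenFuncEq}). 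Hence the map is a Lie algebra homomorphism from $\Toro$ to $\End \<U_W\>$, making $\<U_W\>$ a $\Toro$-module, and it is restricted because each $\Ye{\eltAbs{a}{x_0}{x}}{y_0}{\y}$ lies in $\Espaceall{\<U_W\>}$ by construction of the toroidal vertex algebra.

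The main obstacle I anticipate is not any single calculation but making sure the quoted results from \cite{LTW} are stated in exactly the form needed here — in particular that the "generated toroidal vertex algebra" construction applies verbatim to a local (not merely compatible) generating set, that $W$ is faithful, and that the second-layer operators $\Ye{\cdot}{y_0}{\y}$ on $\<U_W\>$ satisfy a commutator formula strong enough to reproduce (\ref{GenFuncEq}). Granting those, the proof is a matter of bookkeeping with $\delta$-functions; the substantive input — the order-$\le 2$ locality of the generators — is read off directly from the bracket relations.
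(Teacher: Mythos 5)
Your proposal is correct and follows essentially the same route as the paper: locality of $U_W$ read off from the commutator relation (\ref{GenFuncEq}), the local-system theorem of \cite{LTW} (Theorem 3.4) for the generated toroidal vertex algebra with $W$ as a faithful module, and the commutator formula together with the explicit products $\eltAbs{a}{x_0}{x}_{j,\m}\eltAbs{b}{x_0}{x}$ (Proposition 3.9 of \cite{LTW}) for the $\Toro$-module structure on $\<U_W\>$.
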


\begin{proof} It follows from (\ref{GenFuncEq}) that $U_W$ is a local subspace of $\Espaceall{W}$.
From \cite{LTW} (Theorem 3.4), $U_{W}$ generates an \rtva{}
$(\<U_W\>,Y_{\mathcal{E}}, 1_W)$ and $W$ is a faithful $\<U_W\>$-module.
The last assertion follows from the commutator relation (\ref{GenFuncEq}) and  the Proposition 3.9 of \cite{LTW}.
\end{proof}

We shall need various subalgebras of $\tau$. Set
\begin{eqnarray}
\borel=\g\otimes \C[t_0], \  \ \nil_+ = \g\otimes t_0\C[t_0], \   \  \nil_- = \g\otimes t_0^{-1}\C[t_0^{-1}].
\end{eqnarray}
Especially, we set
\begin{eqnarray}
\Sc=S(L_r(\C \cent))=S\left(\mathop{\bigoplus}\limits_{\ranger{m}} \C(\cent\otimes \bdt^\m)\right)
\end{eqnarray}
(the symmetric algebra).

We next introduce a special  $L_r(\borel)$-module.

\begin{lem}\label{econstruction-T}
Set $$T=\g\oplus\C\cent\oplus\C,$$
a vector space.
Then $T$ is an $L_{r}(\borel)$-module with
\begin{equation}\label{TopDefEq}
\renewcommand{\arraystretch}{1.5}
\begin{array}{l}
(\eltLie{a}{k}{m})\cdot b=\left\{
	\begin{array}{ll}
	[a,b] \  &\text{if } k=0\\
	\<a,b\>\cent \  & \text{if } k=1 \\
	0 \ & \text{if } k\geq 2,
	\end{array}
\right. \\
(\eltLie{a}{k}{m})\cdot(\C\cent+ \C) =0
\end{array}
\end{equation}
for $a,b\in \g$, $k\geq 0$ and $\ranger{m}$.
\end{lem}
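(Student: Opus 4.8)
The plan is to verify directly that the prescribed action makes $T=\g\oplus\C\cent\oplus\C$ a module over $L_r(\borel)=\borel\otimes L_r$, where $\borel=\g\otimes\C[t_0]$. Since $L_r(\borel)$ is a Lie algebra, it suffices to check that the assignment $x\mapsto (x\cdot)$ is linear and respects the bracket, i.e.\ that $[\xi,\eta]\cdot w = \xi\cdot(\eta\cdot w)-\eta\cdot(\xi\cdot w)$ for all $\xi,\eta\in L_r(\borel)$ and $w\in T$. By bilinearity of the bracket and of the action it is enough to treat generators $\xi=\eltLie{a}{k}{m}$, $\eta=\eltLie{b}{l}{n}$ with $a,b\in\g$, $k,l\ge 0$, $\m,\n\in\Z^r$; note $[\xi,\eta]=[a,b]\otimes t_0^{k+l}\bdt^{\m+\n}$, coming from (\ref{LieBracketEq}) combined with the affine bracket, where the central term $(k)\<a,b\>\delta_{k+l,0}\cent\otimes\bdt^{\m+\n}$ appears only when $k=l=0$ (and then the coefficient $k=0$ kills it), so in fact $[\xi,\eta]=[a,b]\otimes t_0^{k+l}\bdt^{\m+\n}$ in every case. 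The $\bdt$-exponents are spectators throughout, so the computation reduces to the affine-type generators $a\otimes t_0^k$ acting on $T$.

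The core of the argument is then a short case analysis on $(k,l)$ and on which summand of $T$ the vector $w$ lies in. Because $\cent$ and the extra $\C$ are annihilated by everything, both sides vanish whenever $w\in\C\cent+\C$, so assume $w=c\in\g$. If $k,l\ge 2$, or $k\ge 2$ and $l$ arbitrary (and symmetrically), one checks that each term is $0$: e.g.\ $(a\otimes t_0^k)\cdot c=0$ when $k\ge 2$, and $[a,b]\otimes t_0^{k+l}$ acts as $0$ on $c$ since $k+l\ge 2$. The substantive cases are $(k,l)\in\{(0,0),(0,1),(1,0),(1,1)\}$. For $(0,0)$ one gets the Jacobi identity in $\g$: $[[a,b],c]=[a,[b,c]]-[b,[a,c]]$. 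For $(0,1)$: the left side is $[[a,b],c]\otimes t_0$ acting, giving $\<[a,b],c\>\cent$; the right side is $(a\otimes 1)\cdot(\<b,c\>\cent)-(b\otimes t_0)\cdot([a,c])=0-\<b,[a,c]\>\cent$, and these agree by invariance of the form $\<\cdot,\cdot\>$: $\<[a,b],c\>=-\<b,[a,c]\>$. The case $(1,0)$ is symmetric. For $(1,1)$: $[a,b]\otimes t_0^2$ acts as $0$ on $c$, while the right side is $(a\otimes t_0)\cdot(\<b,c\>\cent)-(b\otimes t_0)\cdot(\<a,c\>\cent)=0$, consistent.

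I expect no real obstacle here; the only mild subtlety — and the one point worth stating carefully — is confirming that the central term of the $\tau$-bracket never contributes to $[\xi,\eta]$ acting on $T$. This is because that term is supported in $L_r(\C\cent)$, which acts as $0$ on $T$ by (\ref{TopDefEq}), and moreover its coefficient vanishes anyway in the only degree ($k=l=0$) where $\cent\otimes\bdt^{\m+\n}$ could appear. Once this is noted, the verification is the routine case check above, resting entirely on the Jacobi identity and the invariance and symmetry of $\<\cdot,\cdot\>$. I would present the $(0,0)$ and $(0,1)$ cases explicitly and remark that the rest are either symmetric or trivial.
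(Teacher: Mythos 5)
Your proof is correct and is essentially the argument the paper gives: the paper observes that the $\bdt^{\m}$ factors play no role by pulling the action back along the evaluation homomorphism $L_r(\borel)\rightarrow \borel$, $t_i\mapsto 1$, and then cites \cite{LTW} for the routine verification that $T$ is a $\borel$-module. You make the same reduction (your ``spectator'' remark) and simply carry out the cited case-by-case check explicitly, including the correct observation that the central term of the affine bracket never arises on $\borel=\g\otimes\C[t_0]$.
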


\begin{proof}  It is straightforward  to show (cf. \cite{LTW})  that
$T$ ($=\g+\C\cent+\C$) is a module for $\borel\  \left(=\g\otimes \C[t_0]\right)$ with
\begin{eqnarray*}
(a\otimes t_0^{k})\cdot(b+\alpha \cent+\beta)=\begin{cases}[a,b]&\text{if } k=0\\
\<a,b\>\cent & \text{if } k=1 \\
	0 & \text{if } k\geq 2
\end{cases}
\end{eqnarray*}
for $a,b\in \g$, $\alpha,\beta \in \C$, $k\geq 0$.
Then
$T$ becomes an $L_{r}(\borel)$-module through evaluation $t_i=1$ for $i=1,\dots,r$. That is,
\begin{eqnarray}\label{edifferent-way}
(\eltLie{a}{k}{m})\cdot(b+\alpha \cent+\beta)=\begin{cases}[a,b]&\text{if } k=0\\
\<a,b\>\cent & \text{if } k=1 \\
	0 & \text{if } k\geq 2
\end{cases}
\end{eqnarray}
for $a,b\in \g$, $\alpha,\beta \in \C$, $k\geq 0$ and $\ranger{m}$, as  desired.
\end{proof}

From the definition of $T$, $\g+\C\cent$ is an $L_r(\borel)$-submodule, and we have
\begin{eqnarray}
T=(\g+\C\cent )\oplus \C,
\end{eqnarray}
as an $L_{r}(\borel)$-module. We shall always view $\g+\C\cent$ and $\C$ as $L_r(\borel)$-submodules of $T$.

Form an induced $\tau$-module
\begin{equation}
V(T,0)=U\big(\Toro\big)\otimes_{U(L_{r}(\borel))}T.
\end{equation}
View $T$ as a subspace of $V(T,0)$, so that $\g+\C\cent$ and $\C$ are subspaces of $V(T,0)$.
We see that
$$V(T,0)=U(\tau)T=U(\tau)(\g+\C\cent+\C).$$

Set
$$\vac=1\otimes 1\in V(T,0).$$
As the first main result of this section, we have:

\begin{thm}\label{TvaDefThm}
There exists an \rtva{} structure on $V(T,0)$, which is uniquely determined by the condition that $\vac$
is the vacuum vector and
\begin{eqnarray*}
 \Yt{\cent}{x_0}{\x}=\cent(\x),\   \   \   \Yt{a}{x_0}{\x}=\eltAbs{a}{x_0}{x} \  \  \  \fortext a\in \g.
\end{eqnarray*}
\end{thm}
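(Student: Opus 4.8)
The plan is to realize $V(T,0)$ inside a space of vertex operators via the universal construction of Lemma~\ref{ConstructLem}, and then identify the two as $\tau$-modules. First I would produce a suitable restricted $\tau$-module on which the generating functions $\eltAbs{a}{x_0}{x}$ and $\cent(\x)$ act. The natural candidate is $V(T,0)$ itself: since $V(T,0)=U(\tau)T$ with $T$ annihilated by $\nil_+\otimes\Lr$ (here using that $\g\otimes t_0^k\bdt^\m$ acts as $0$ on $T$ for $k\ge 2$ and appropriately for $k=0,1$), a straightforward PBW/filtration argument shows that for each $a\in\g$ and each $w\in V(T,0)$, $\eltAbs{a}{m_0}{m}w=0$ for $m_0$ sufficiently large; hence $\eltAbs{a}{x_0}{x}\in\Espaceall{V(T,0)}$ and $V(T,0)$ is a restricted $\tau$-module. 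Likewise $\cent(\x)\in\Espaceall{V(T,0)}$ since $L_r(\C\cent)$ is central and acts locally finitely in the $t_0$-direction (indeed $\cent(\m)$ shifts degree by $0$ in $t_0$).

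Next I would apply Lemma~\ref{ConstructLem} to $W=V(T,0)$: the space $U_W=\mathrm{span}\{1_W,\cent(\x),\eltAbs{a}{x_0}{x}\mid a\in\g\}$ is local by the commutator formula \eqref{GenFuncEq}, so it generates an \rtva{} $\langle U_W\rangle\subset\Espaceall{V(T,0)}$ with $V(T,0)$ a faithful $\langle U_W\rangle$-module, and moreover $\langle U_W\rangle$ is itself a $\tau$-module, with $\eltAbs{a}{y_0}{y}$ acting as $\Ye{\eltAbs{a}{x_0}{x}}{y_0}{\y}$ and $\cent(\y)$ as $\Ye{\cent(\x)}{y_0}{\y}$. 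Now I would build the identification. On one hand, the assignments $a\mapsto \eltAbs{a}{x_0}{x}$, $\cent\mapsto\cent(\x)$, $1\mapsto 1_W$ extend (by the universal property of the induced module, after checking they are compatible with the $L_r(\borel)$-action on $T$ — this is exactly where the $k=0,1,\ge 2$ cases of \eqref{TopDefEq} are matched against the singular part of \eqref{GenFuncEq}) to a $\tau$-module homomorphism $\theta:V(T,0)\to\langle U_W\rangle$ sending $\vac\mapsto 1_W$. On the other hand, $\langle U_W\rangle$ acting on the cyclic vector $\vac\in V(T,0)$ gives a map $\mathrm{ev}:\langle U_W\rangle\to V(T,0)$, $f\mapsto f_{-1,\mathbf 0}\vac$ in the appropriate sense (more precisely, evaluating the vertex operator at $x_0=0$, $\x=1$ on $\vac$), which is a $\tau$-module map by Lemma~\ref{ConstructLem}. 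One checks $\mathrm{ev}\circ\theta=\mathrm{id}_{V(T,0)}$ on the generators $T$, hence everywhere since $T$ generates $V(T,0)$ over $\tau$; since $V(T,0)$ is a faithful $\langle U_W\rangle$-module, $\theta$ is injective, and since $\langle U_W\rangle$ is generated by $U_W=\theta(T)+\C 1_W$ as an \rtva{}, $\theta$ is surjective. Thus $\theta$ is an isomorphism of $\tau$-modules, and transporting the \rtva{} structure of $\langle U_W\rangle$ through $\theta$ endows $V(T,0)$ with an \rtva{} structure satisfying $\Yt{a}{x_0}{\x}=\eltAbs{a}{x_0}{x}$, $\Yt{\cent}{x_0}{\x}=\cent(\x)$, and $\Yt{\vac}{x_0}{\x}=1$.

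For uniqueness, suppose $(V(T,0),Y,\vac)$ is any \rtva{} structure with the stated values on $\g$, $\cent$, and $\vac$. The Jacobi identity \eqref{jacobi-tva} determines $\Yt{u_{m_0,\m}v}{x_0}{\x}$ from $\Yt{u}{x_0}{\x}$ and $\Yt{v}{x_0}{\x}$; since $V(T,0)=U(\tau)T$ and $T=\g+\C\cent+\C$ with $Y$ already prescribed on $T$, an induction on PBW-degree shows $Y$ is determined on a spanning set, hence everywhere. (One uses here that $V(T,0)$, being cyclic on $\vac$ under the action induced by the modes $a_{m_0,\m}$, $\cent_{m_0,\m}$, is spanned by vectors obtained by iterated application of these modes to $\vac$; the $\tau$-action coming from $Y$ agrees with the defining $\tau$-action because the singular terms of the iterated Jacobi identities reproduce \eqref{GenFuncEq} and \eqref{TopDefEq}.)

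The main obstacle I anticipate is the compatibility check that $\theta$ is well-defined on the induced module: one must verify that the vertex-operator images $\eltAbs{a}{x_0}{x},\cent(\x)\in\langle U_W\rangle$, viewed as elements of the $\tau$-module $\langle U_W\rangle$ (with $\tau$ acting via $Y_{\mathcal E}$), satisfy precisely the relations defining $T$ as an $L_r(\borel)$-module — namely that $(\eltLie{a}{k}{m})$ acts on the "top" operators as $[a,\cdot]$ for $k=0$, as $\langle a,\cdot\rangle\cent$ for $k=1$, and as $0$ for $k\ge 2$ — which amounts to reading off the order of the pole in \eqref{GenFuncEq} and its residues. This is routine but is the step where the specific normalization of $T$ in Lemma~\ref{econstruction-T} is forced. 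Everything else (locality, restrictedness, the faithfulness and $\tau$-module assertions about $\langle U_W\rangle$) is supplied by Lemma~\ref{ConstructLem} and the results quoted from \cite{LTW}.
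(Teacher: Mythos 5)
Your scaffolding is the same as the paper's: realize the generating fields inside $\Espaceall{V(T,0)}$ via Lemma \ref{ConstructLem}, check that $\{1_W,\cent(\x),\eltAbs{a}{x_0}{x}\mid a\in\g\}$ is an $L_r(\borel)$-submodule of $\<U_W\>$ matching the relations (\ref{TopDefEq}) (read off from the singular part of (\ref{GenFuncEq})), and use the universal property of the induced module to obtain a $\tau$-module homomorphism $\theta\colon V(T,0)\rightarrow \<U_W\>$ with $W=V(T,0)$. The gap is in how you transfer the structure back: you claim $\theta$ is an isomorphism, and neither half of that claim is supported. For injectivity, ``$V(T,0)$ is a faithful $\<U_W\>$-module'' concerns the action of $\<U_W\>$ on $W$ (true by construction, since $\<U_W\>$ is a subspace of $\Espaceall{W}$) and says nothing about $\ker\theta$. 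Your section $\mathrm{ev}$ fails already on the generators: for $a\in\g$ one has $\theta(a)=\eltAbs{a}{x_0}{x}$, and the coefficient of $x_0^{0}\x^{\bf 0}$ in $\eltAbs{a}{x_0}{x}\vac$ is $(a\otimes t_0^{-1})\vac$, which by PBW is a \emph{different} element of $V(T,0)=U(\tau)\otimes_{U(L_r(\borel))}T$ from $a=1\otimes a$. This is not a normalization you can repair: a \tva{} has no creation property (Remark \ref{difference}), and $V(T,0)$ is not cyclic on $\vac$ — indeed $V(T,0)^{0}=V(\Sc,0)=U(\tau)\vac\subsetneq V(T,0)$ by Lemma \ref{VS=VT0} — so no evaluation of $Y(v;x_0,\x)\vac$ can recover $v$ for $v\in T\setminus\C\vac$. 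Surjectivity is also not free: $\Image\theta=U(\tau)U_W$, and its closure under all products $f_{m_0,\m}g$ (needed to conclude $\Image\theta=\<U_W\>$) is essentially the homomorphism property you are in the middle of establishing.

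The repair, which is what the paper does, is to not ask for an isomorphism at all: take $Y(v;x_0,\x):=\theta(v)$ as the vertex operator map on $V(T,0)$ \emph{itself}, verify the iterate formula $\Yt{\Yt{a}{z_0}{\z}v}{y_0}{\y}=\Res_{x_0}(\cdots)$ for the generators $a\in\g$ and arbitrary $v\in V(T,0)$ — this is exactly the statement that $\theta$ intertwines the $\tau$-action on $V(T,0)$ with the $Y_{\mathcal{E}}$-action on $\<U_W\>$, which your construction already provides — and then invoke Theorem 3.10 of \cite{LTW}, which produces the \rtva{} structure on $V(T,0)$ from precisely these data. Your uniqueness argument is essentially the paper's and is fine.
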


\begin{proof}
The uniqueness is clear as $V(T,0)$ as a $\tau$-module is generated by $T$. Now, we establish the existence.
Let $W$ be any restricted $\tau$-module. Recall from Lemma \ref{ConstructLem} the local subspace $U_{W}$
of $\Espaceall{W}$ and the $(r+1)$-toroidal vertex algebra  $\<U_{W}\>$ which is naturally a restricted $\tau$-module.
We next show that $U_{W}$ is an $L_{r}(\borel)$-submodule of $\<U_{W}\>$.
For $a,b\in \g$, $j\geq 0$, $\ranger{m}$, from the Proposition 3.9 of \cite{LTW} we have
\begin{equation*}
\renewcommand{\arraystretch}{1.5}
\begin{array}{l}
\eltAbs{a}{x_0}{x}_{j,\m}\eltAbs{b}{x_0}{x}=
\left\{
{
\begin{array}{ll}
\eltAbs{[a,b]}{x_0}{x} & \text{if } j=0, \\
\<a,b\>\cent(\x) & \text{if } j=1, \\
0 & \text{if } j\geq 2,
\end{array}
}\right. \\
\eltAbs{a}{x_0}{x}_{j,\m}1_W=0, \\
\eltAbs{a}{x_0}{x}_{j,\m}\cent(\x)=0.
\end{array}
\end{equation*}
It follows that there exists an $L_{r}(\borel)$-module homomorphism $\phi:T\rightarrow U_W$ such that
$$\phi(a)=\eltAbs{a}{x_0}{x},\ \phi(\cent)=\cent(\x),\  \phi(1)=1_{W}.$$
Then $\phi$ induces a $\Toro$-module homomorphism from $V(T,0)$ into $\<U_W\>$,
which is denoted by $\vmap{W}{x}$.
For $v\in V(T,0)$, define
\begin{equation}
\Yt{v}{x_0}{\x}=\vmap{\T{T}}{x}(v).
\end{equation}
Take $W=V(T,0)$ and set $\vendo{x}=\vmap{\T{T}}{x}$.
Then for $a\in \g$ and $v\in V(T,0)$, we have
\begin{eqnarray*}
&&\Yt{\Yt{a}{z_0}{\z}v}{y_0}{\y} \\
&=&\vendo{y}\left(\eltAbs{a}{z_0}{z}v\right) \\
&=&\Ye{a}{y_0}{\y}\vendo{y}(v) \\
&=&\Res_{x_0}\dfunc{z_0}{x_0-y_0}{z_0}\eltAbs{a}{x_0}{zy}\vendo{y}(v)
		-\dfunc{z_0}{y_0-x_0}{-z_0}\vendo{y}(v)\eltAbs{a}{x_0}{zy} \\
&=&\Res_{x_0}\dfunc{z_0}{x_0-y_0}{z_0}\Yt{a}{x_0}{\z\y}\Yt{v}{y_0}{\y} \\
&&\quad -\Res_{x_0}\dfunc{z_0}{y_0-x_0}{-z_0}\Yt{v}{y_0}{\y}\Yt{a}{x_0}{\z\y}.
\end{eqnarray*}
Now it follows immediately from \cite{LTW} (Theorem 3.10) that $\left( V(T,0),Y,\vac \right)$ carries the structure of an
\rtva{}, as desired.
\end{proof}

Furthermore, we have:

\begin{thm}\label{TvaModThm}
Let $W$ be a restricted $\tau$-module. Then there exists a $V(T,0)$-module structure on $W$,
which is uniquely determined by
$$\Ytmod{W}{\cent}{x_0}{\x}=\cent(\x), \  \  \
\Ytmod{W}{a}{x_0}{\x}=\eltAbs{a}{x_0}{x}\  \  \    \fortext a\in \g.$$
On the other hand, let $(W,Y_W)$ be a $V(T,0)$-module. Then $W$ becomes a restricted $\Toro$-module with
$$\cent(\x)=\Ytmod{W}{\cent}{x_0}{\x},\   \   \
\eltAbs{a}{x_0}{x} =\Ytmod{W}{a}{x_0}{\x} \  \  \fortext a\in \g.$$
Furthermore, the category of restricted $\Toro$-modules is isomorphic to $\Mod V(T,0)$.
\end{thm}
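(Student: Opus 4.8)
The plan is to transfer the proof of Theorem~\ref{TvaDefThm} from $V(T,0)$ itself to its modules, producing two constructions, and then to check they are mutually inverse.

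First, let $W$ be a restricted $\Toro$-module. I would apply Lemma~\ref{ConstructLem} to get the local subspace $U_W\subset\Espaceall{W}$, the \rtva{} $\<U_W\>$ having $W$ as a faithful module, and the $\Toro$-module structure on $\<U_W\>$ under which $\eltAbs{a}{y_0}{y}$ acts as $\Ye{\eltAbs{a}{x_0}{x}}{y_0}{\y}$ and $\cent(\y)$ as $\Ye{\cent(\x)}{y_0}{\y}$. Exactly as in the proof of Theorem~\ref{TvaDefThm}, $U_W$ is an $\lr{\borel}$-submodule of $\<U_W\>$, so $a\mapsto\eltAbs{a}{x_0}{x}$, $\cent\mapsto\cent(\x)$, $\vac\mapsto 1_W$ defines an $\lr{\borel}$-module map $\phi\colon T\to U_W$, which by the universal property of the induced module $V(T,0)$ extends to a $\Toro$-module homomorphism $\vmap{W}{x}\colon V(T,0)\to\<U_W\>$. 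Putting $\Ytmod{W}{v}{x_0}{\x}=\vmap{W}{x}(v)\in\Espaceall{W}$ and re-running the final computation in the proof of Theorem~\ref{TvaDefThm} with $W$ in place of $V(T,0)$, I would obtain $\Ytmod{W}{\vac}{x_0}{\x}=1_W$ together with
\begin{align*}
\Ytmod{W}{\Yt{a}{z_0}{\z}v}{y_0}{\y}
&=\Res_{x_0}\dfunc{z_0}{x_0-y_0}{z_0}\Ytmod{W}{a}{x_0}{\z\y}\Ytmod{W}{v}{y_0}{\y}\\
&\quad-\Res_{x_0}\dfunc{z_0}{y_0-x_0}{-z_0}\Ytmod{W}{v}{y_0}{\y}\Ytmod{W}{a}{x_0}{\z\y}
\end{align*}
for $a\in\g$, $v\in V(T,0)$; by \cite{LTW}, exactly as in the proof of Theorem~\ref{TvaDefThm}, this is enough to conclude that $\vmap{W}{x}$ is a homomorphism of \rtva{}s and hence that $(W,Y_W)$ is a $V(T,0)$-module with the asserted action on $\g$ and $\cent$. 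Uniqueness is automatic since $V(T,0)=U(\Toro)T$, so all module vertex operators are forced by those of $T$ through the Jacobi identity.

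Conversely, given a $V(T,0)$-module $(W,Y_W)$, I would set $\eltAbs{a}{x_0}{x}=\Ytmod{W}{a}{x_0}{\x}$ for $a\in\g$ and $\cent(\x)=\Ytmod{W}{\cent}{x_0}{\x}$; these lie in $\Espaceall{W}$, so $W$ will automatically be restricted once it is shown to be a $\Toro$-module. To verify the defining relations (\ref{GenFuncEq}) I would invoke the commutator formula extracted from the Jacobi identity (\ref{jacobi-tva}) for $Y_W$, which expresses the bracket $[\Ytmod{W}{u}{x_0}{\z\y},\Ytmod{W}{v}{y_0}{\y}]$ through the vectors $u_{j,\m}v\in V(T,0)$ with $j\ge0$, $\m\in\Z^r$, weighted by $\delta$-functions in $z_0$ and $\z$, together with the products in $V(T,0)$ read off from the $\lr{\borel}$-module structure of $T$ in Lemma~\ref{econstruction-T}: for $a,b\in\g$ and $\m\in\Z^r$,
\[
a_{0,\m}b=[a,b],\qquad a_{1,\m}b=\<a,b\>\cent,\qquad a_{j,\m}b=0\ \ (j\ge 2),
\]
while $a_{j,\m}\cent=0$ for $j\ge0$ and $\cent_{j,\m}=0$ as an operator for $j\ge0$ (since $\cent(\x)$ carries no $x_0$). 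Substituting these in and collapsing the resulting geometric sums over $\m$ into $\prod_{i=1}^r\delta(y_i/x_i)$ should reproduce (\ref{GenFuncEq}) exactly, making $W$ a restricted $\Toro$-module; it is the unique one with the prescribed generating functions, since the action of $a\otimes t_0^{m_0}\bdt^\m$ and $\cent\otimes\bdt^\m$ is just coefficient extraction.

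For the category isomorphism I would observe that both constructions leave the underlying space and linear maps unchanged, so the only point to check is that a linear map $W\to W'$ is a homomorphism of restricted $\Toro$-modules if and only if it is a homomorphism of $V(T,0)$-modules; this holds because $V(T,0)$ is generated as a $\Toro$-module by $\g$, $\cent$, $\vac$, and the maps $\vmap{W}{x}$ build every $\Ytmod{W}{v}{x_0}{\x}$ out of the operators attached to these generators. On objects the two constructions are mutually inverse by the uniqueness statements above, giving an isomorphism between the category of restricted $\Toro$-modules and $\Mod V(T,0)$. The step I expect to be the main obstacle is the second one: making the commutator formula precise with the extra variables $\z$ present, and checking that the nonzero creation-type modes $a_{j,\m}b$ with $j<0$ in $V(T,0)$ genuinely do not enter the bracket, so that (\ref{GenFuncEq}) comes out on the nose. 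The first construction is a faithful transcription of the proof of Theorem~\ref{TvaDefThm}, and the whole argument is the toroidal analogue of the classical equivalence between restricted $\hat{\g}$-modules and $V_{\hat{\g}}(\ell,0)$-modules.
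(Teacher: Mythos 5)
Your proposal follows the paper's own argument essentially verbatim: for the forward direction you build the module structure by extending the $L_r(\mathfrak{b})$-map $T\to U_W$ to the $\Toro$-homomorphism $\phi^W_{x_0,\mathbf{x}}:V(T,0)\to\langle U_W\rangle$ and invoke the generation/homomorphism machinery of \cite{LTW}, and for the converse you recover the Lie bracket relations from the commutator formula together with the products $a_{j,\mathbf{m}}b$ recorded in Lemma \ref{econstruction-T}, exactly as the paper does by combining (\ref{TopDefEq}) with formula (2.5) of \cite{LTW}. The step you flag as a possible obstacle is handled in the paper by that same citation, so there is no gap.
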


\begin{proof}
We only need to prove the first two assertions.
For the first assertion, the uniqueness is clear since $V(T,0)$ as an  \rtva{} is generated by $T$.
We now establish the existence.
For $a\in \g$, $v\in V(T,0)$, we have
\begin{eqnarray*}
&&\vmap{W}{x}\left( \Yt{a}{y_0}{\y}v \right)=\vmap{W}{x}\big( \eltAbs{a}{y_0}{y}v \big)\\
&&\quad	=\Ye{\eltAbs{a}{x_0}{x}}{y_0}{\y}\vmap{W}{x}(v)=\Ye{\vmap{W}{x}(a)}{y_0}{\y}\vmap{W}{x}(v).
\end{eqnarray*}
By the Lemma 2.10 of \cite{LTW}, $\Ytmod{W}{\cdot}{x_0}{\x}$ is an \rtva{} module homomorphism
from $V(T,0)$ to $\<U_W\>$.
Since $W$ is a canonical $\<U_W\>$-module, $W$ becomes a $V(T,0)$-module with
\begin{equation*}
\Ytmod{W}{v}{x_0}{\x}=\vmap{W}{x}(v)\  \  \  \mbox{ for }v\in V(T,0).
\end{equation*}
For the second assertion, combining (\ref{TopDefEq}) and the commutator formula (2.5) in \cite{LTW}, we get
\begin{eqnarray*}
&&\big[\Ytmod{W}{a}{x_0}{\x},\Ytmod{W}{b}{y_0}{\y}\big] \\
&=&\left(\Ytmod{W}{[a,b]}{y_0}{\y}\dfunc{x_0}{y_0}{x_0}
+ \<a,b\>\Ytmod{W}{\cent}{y_0}{\y}\pder{y_0}\dfunc{x_0}{y_0}{x_0} \right)\delta\left( \frac{\y}{\x} \right)
\end{eqnarray*}
for $a,b\in \g$. It follows that $W$ is a restricted $\Toro$-module with $\eltAbs{a}{x_0}{x} =\Ytmod{W}{a}{x_0}{\x}$ for $a\in \g$
and $\cent(\x)=\Ytmod{W}{\cent}{x_0}{\x}$.
\end{proof}

Recall that $\C$ is an $L_{r}(\borel)$-submodule of $T$.
Set
\begin{eqnarray}
\T{\Sc}=U\left(\Toro\right)\otimes_{U(L_{r}(\borel))}\C,
\end{eqnarray}
which is naturally a $\tau$-submodule of $V(T,0)$.

\begin{lem}\label{VS=VT0}
The $\tau$-submodule $V(\Sc,0)$ of $V(T,0)$ is an $(r+1)$-toroidal vertex subalgebra and we have
$V(\Sc,0)=V(T,0)^0$.
\end{lem}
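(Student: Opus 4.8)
The plan is to establish the two inclusions $V(\Sc,0)\subseteq V(T,0)^{0}$ and $V(T,0)^{0}\subseteq V(\Sc,0)$; the resulting equality also gives the first assertion, since $V(T,0)^{0}$ is a toroidal vertex subalgebra by Proposition \ref{TvaStructProp}. Throughout I shall use two facts coming from the construction in Theorem \ref{TvaDefThm}: that $V(\Sc,0)=U(\tau)\vac$ is a $\tau$-submodule of $V(T,0)$, and that on $V(T,0)$ the $(m_0,\m)$-mode $a_{m_0,\m}$ of $\Yt{a}{x_0}{\x}$ is precisely the action of $a\otimes t_0^{m_0}\bdt^{\m}$ for $a\in\g$, while $\cent_{-1,\m}$ is the action of $\cent\otimes\bdt^{\m}$ (and $\cent_{m_0,\m}=0$ for $m_0\neq-1$).

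First I would prove $V(\Sc,0)\subseteq V(T,0)^{0}$. One has $\vac=\vac_{-1,\mathbf 0}\vac\in V(T,0)^{0}$. For $a\in\g$ and $\m\in\Z^{r}$ put $\tilde a_{\m}:=a_{-1,\m}\vac$ and $\tilde\cent_{\m}:=\cent_{-1,\m}\vac$, both lying in $V(T,0)^{0}$. By \eqref{2.15} with $k=0$ we have $(v_{-1,\m}\vac)_{n_0,\m}=v_{n_0,\m}$ on $V(T,0)$ for every $n_0\in\Z$, hence $(\tilde a_{\m})_{n_0,\m}=a_{n_0,\m}$ and $(\tilde\cent_{\m})_{-1,\m}=\cent\otimes\bdt^{\m}$ as operators on $V(T,0)$. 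Since $V(T,0)^{0}$ is a toroidal vertex subalgebra it is stable under the modes of its own elements, hence stable under every $a_{n_0,\m}$ $(a\in\g)$ and every $\cent\otimes\bdt^{\m}$; as these span the action of $\tau$, the subspace $V(T,0)^{0}$ is a $\tau$-submodule of $V(T,0)$ containing $\vac$, so $V(\Sc,0)=U(\tau)\vac\subseteq V(T,0)^{0}$.

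For the reverse inclusion, by Corollary \ref{rV0fact} it suffices to show $v_{m_0,\m}\vac\in V(\Sc,0)$ for all $v\in V(T,0)$, i.e. that $\Yt{v}{x_0}{\x}\vac$ has all its coefficients in $V(\Sc,0)$. From the proof of Theorem \ref{TvaDefThm}, $\Yt{v}{x_0}{\x}=\vmap{V(T,0)}{x}(v)$ lies in the toroidal vertex algebra $\<U_{V(T,0)}\>\subseteq\Espaceall{V(T,0)}$ generated by $U_{V(T,0)}=\mathrm{span}\{1_{V(T,0)},\,\cent(\x),\,\eltAbs{a}{x_0}{x}\mid a\in\g\}$. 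The modes of these generating fields are the identity operator together with elements of $\tau$, all of which preserve the $\tau$-submodule $V(\Sc,0)$; by the standard fact that a subspace stable under the modes of a generating set of a toroidal vertex algebra is a submodule (cf. Lemma 2.10 of \cite{LTW}), $V(\Sc,0)$ is a $\<U_{V(T,0)}\>$-submodule of $V(T,0)$. Since $\vac\in V(\Sc,0)$, it follows that $\Yt{v}{x_0}{\x}\vac\in V(\Sc,0)[[\varr{x}]]((x_0))$ for every $v$, hence $v_{m_0,\m}\vac\in V(\Sc,0)$ and $V(T,0)^{0}\subseteq V(\Sc,0)$.

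Combining the two inclusions gives $V(\Sc,0)=V(T,0)^{0}$, which is then a toroidal vertex subalgebra by Proposition \ref{TvaStructProp}. The main point needing care is the submodule claim in the last step — that stability under modes of the generators of $\<U_{V(T,0)}\>$ propagates to all modes; if one prefers to avoid the general fact, the same conclusion follows by inducting on the length of $v$ as a word in $\tau$ applied to $T$, using the iterate form of the Jacobi identity $\Yt{\Yt{a}{z_0}{\z}v'}{y_0}{\y}\vac=\Res_{x_0}\dfunc{z_0}{x_0-y_0}{z_0}\Yt{a}{x_0}{\z\y}\Yt{v'}{y_0}{\y}\vac-\Res_{x_0}\dfunc{z_0}{y_0-x_0}{-z_0}\Yt{v'}{y_0}{\y}\Yt{a}{x_0}{\z\y}\vac$ together with the base cases $v\in T$, where $\Yt{a}{x_0}{\x}\vac$, $\Yt{\cent}{x_0}{\x}\vac$ and $\Yt{\vac}{x_0}{\x}\vac$ are computed directly to have all coefficients in $U(\tau)\vac$.
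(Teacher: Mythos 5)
Your proposal is correct and follows essentially the same route as the paper: the inclusion $V(\Sc,0)=U(\tau)\vac\subseteq V(T,0)^{0}$ via the fact that $V(T,0)^{0}$ is a $\tau$-stable subspace containing $\vac$ (using (\ref{2.15}) to realize the $\tau$-action by modes of elements of $V(T,0)^{0}$), and the reverse inclusion by propagating, through the iterate form of the Jacobi identity, the observation that the generating fields attached to $T$ send $\vac$ into $U(\tau)\vac$. The paper states both steps more tersely, but the underlying argument is the one you spell out.
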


\begin{proof}
 Recall that ${\bf 1}=1\otimes 1\in T\subset V(T,0)$. We have $V(\Sc,0)=U(\tau){\bf 1}$.
It follows that $V(\Sc,0)$ is the $(r+1)$-toroidal vertex subalgebra generated by the subspace $\g+\C\cent+\C {\bf 1}$
$(=T)$ of $V(T,0)$. On the one hand, as $V(\Sc,0)=U(\tau){\bf 1}$, we have $V(\Sc,0)\subset V(T,0)^0$.
On the other hand, since $V(T,0)=U(\tau)(\g+\C\cent+\C {\bf 1})$, it follows from the Jacobi identity of $V(T,0)$ that $V(T,0)^0\subset U(\tau){\bf 1}=V(\Sc,0)$.
\end{proof}

Combining Lemma \ref{VS=VT0} and Proposition \ref{prop:ZrGradedVAtoETVA}, we immediately have:

\begin{coro}
$V(\Sc,0)$ is an extended $(r+1)$-toroidal vertex algebra such that $V(\Sc,0)^0=V(\Sc,0)$, where the derivations $D_0,D_1,\dots,D_r$ are given by
\begin{eqnarray*}
&&D_0(v)=\left(\frac{\partial}{\partial x_{0}}Y(v;x_0,\x){\bf 1}\right)|_{x_0=0,\x=1},\\
&&D_j(v)=\left(x_j\frac{\partial}{\partial x_{j}}Y(v;x_0,\x){\bf 1}\right)|_{x_0=0,\x=1}
\end{eqnarray*}
for $v\in V(\Sc,0),\ 1\le j\le r$. On the other hand,
 $(V(\Sc,0),{\bf 1},Y^0)$ is a vertex algebra with $D_0$ as its $D$-operator, where
$$Y^{0}(v,x_0)=Y(v;x_{0},\x)|_{\x=1}\  \  \  \mbox{ for }v\in V(\Sc,0).$$
\end{coro}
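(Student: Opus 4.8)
The plan is to deduce the statement directly from the structure theory of $V^0$ built above, specialized to the \tva{} $V=V(T,0)$. First I would invoke Lemma \ref{VS=VT0}, which identifies $V(\Sc,0)$ with $V(T,0)^0$; thus $V(\Sc,0)$ is precisely an object of the form $V^0$. The Corollary following Lemma \ref{va-rtva} then applies verbatim: it equips $V(T,0)^0$ with the canonical derivation $D_0$ of the vertex algebra $(V(T,0)^0,Y^0,\vac)$ together with the diagonal operators $D_1,\dots,D_r$ determined by $D_j(v)=m_jv$ for $v\in V_{(\m)}$ with $\m=(m_1,\dots,m_r)$, makes $V(T,0)^0$ an \etva{}, and asserts that this extended structure is uniquely determined. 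The equality $V(\Sc,0)^0=V(\Sc,0)$ is then immediate, either from the identity $(V^0)^0=V^0$ recorded above or by observing that $V(\Sc,0)$ lies in ${\mathcal{C}}_{r+1}^0$ and applying Proposition \ref{prop:ZrGradedVAtoETVA}.

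It remains to rewrite the abstract operators $D_0$ and $D_j$ in the closed forms asserted. For $D_0$: by Proposition \ref{TvaStructProp} the triple $(V(\Sc,0),Y^0,\vac)$ is a vertex algebra, so its canonical derivation is its $D$-operator, characterized by $\Yva{D_0v}{x_0}=\pder{x_0}\Yva{v}{x_0}$ and hence given by $D_0v=\bigl(\pder{x_0}\Yva{v}{x_0}\vac\bigr)\big|_{x_0=0}$; since $\Yva{v}{x_0}=\Yt{v}{x_0}{\x}\big|_{\x=1}$ and formal $x_0$-differentiation commutes with setting $\x=1$, this is exactly the first displayed formula (here $\Yt{v}{x_0}{\x}\vac$ is a power series in $x_0$ by the creation-type axiom and a Laurent polynomial in $\x$ on $V^0$ by (\ref{V0-property1}), so both evaluations are legitimate). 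For $D_j$ with $1\le j\le r$: by Proposition \ref{prop:RTVA2ZrGradedVA} any $u\in V_{(\m)}$ satisfies $\Yt{u}{x_0}{\x}=\Yva{u}{x_0}\x^{\m}$ thanks to (\ref{etva-m}), so $\Yt{u}{x_0}{\x}\vac$ has $x_0$-constant term $u\,\x^{\m}$; applying $x_j\pder{x_j}$ multiplies this by $m_j$, and evaluating at $x_0=0$, $\x=1$ recovers $m_ju=D_j(u)$. As $V(\Sc,0)=\bigoplus_{\m\in\Zr}V_{(\m)}$ is spanned by such homogeneous vectors, the formula holds on all of $V(\Sc,0)$. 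The final assertion, that $(V(\Sc,0),\vac,Y^0)$ is a vertex algebra with $D_0$ as its $D$-operator, is then just Proposition \ref{TvaStructProp} together with the definition of $D_0$.

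The only point that needs any care — the ``main obstacle'', such as it is — is verifying that these explicit formulas really compute the abstract derivations supplied by the cited Corollary, i.e. that $\pder{x_0}$ and $x_j\pder{x_j}$ act on $\Yt{v}{x_0}{\x}\vac$ exactly as the $D$-operator and the grading operators should. This is controlled by the finite-sum decomposition $\Yt{v}{x_0}{\x}\vac=\sum_{\m\in\Zr}\Yva{v_{-1,\m}\vac}{x_0}\vac\,\x^{-\m}$ coming from (\ref{etva-m}) and (\ref{V0-sum}), which shows that the $\x$-dependence is purely monomial on each graded summand; with that in hand everything reduces to direct citations of earlier results, so no genuinely new argument is required.
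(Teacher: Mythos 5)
Your proposal is correct and follows the same route the paper takes: the paper derives this corollary immediately by combining Lemma \ref{VS=VT0} (identifying $V(\Sc,0)$ with $V(T,0)^0$) with the general structure theory of $V^0$ from Section 3 (Proposition \ref{prop:ZrGradedVAtoETVA} and the corollary following Lemma \ref{va-rtva}). Your additional verification that the explicit formulas for $D_0$ and $D_j$ compute the abstract derivations is a correct unpacking of what the paper leaves implicit.
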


Noticing that $Y(a;x_0,\x)=a(x_0,\x)$ for $a\in \g$ and $Y(\cent;x_0,\x)=\cent(\x)$ on $V(\Sc,0)$, we immediately have:

\begin{coro}\label{d-bracket-properties}
On $V(\Sc,0)$, the following relations hold  for $a\in \g$ and $1\leq j\leq r$:
\begin{eqnarray*}
&&\big[ \der_0,\eltAbs{a}{x_0}{x} \big] = \pder{x_0}\eltAbs{a}{x_0}{x}=-\pder{t_0}\eltAbs{a}{x_0}{x}, \\
&&\big[ \der_j,\eltAbs{a}{x_0}{x} \big] = \degder{x}{j}\eltAbs{a}{x_0}{x}=-\degder{t}{j}\eltAbs{a}{x_0}{x}, \\
&&\big[ \der_0,\cent(\x) \big]=\pder{x_0}\cent(\x)=0, \\
&&\big[ \der_j,\cent(\x) \big]=\degder{x}{j}\cent(\x)=-\degder{t}{j}\cent(\x).
\end{eqnarray*}
\end{coro}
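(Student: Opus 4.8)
The plan is to read off the four families of relations from the extended \tva{} structure on $V(\Sc,0)$ recorded in the preceding corollary, combined with the definition of a derivation of a \tva{}; the only subtlety is that $a$ (for $a\in\g$) and $\cent$ are not elements of $V(\Sc,0)=V(T,0)^0$, so the defining relations cannot be applied to them directly. Since $V(\Sc,0)$ is an extended \tva{} with derivations $\der_0,\dots,\der_r$, we have $\Yt{\der_0 v}{x_0}{\x}=\pder{x_0}\Yt{v}{x_0}{\x}$ and $\Yt{\der_j v}{x_0}{\x}=\big(\degder{x}{j}\big)\Yt{v}{x_0}{\x}$ for $v\in V(\Sc,0)$, and since each $\der_i$ is a derivation, $[\der_i,\Yt{v}{x_0}{\x}]=\Yt{\der_i v}{x_0}{\x}$. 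Hence, for every $v\in V(\Sc,0)$,
\[
\big[\der_0,\Yt{v}{x_0}{\x}\big]=\pder{x_0}\Yt{v}{x_0}{\x},\qquad
\big[\der_j,\Yt{v}{x_0}{\x}\big]=\Big(\degder{x}{j}\Big)\Yt{v}{x_0}{\x}.
\]

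To bring $\eltAbs{a}{x_0}{x}$ and $\cent(\x)$ into this picture, I would view $V(\Sc,0)$ as a restricted $\tau$-submodule of $V(T,0)$, hence as a $V(T,0)$-module by Theorem \ref{TvaModThm}, on which $\Yt{a}{x_0}{\x}=\eltAbs{a}{x_0}{x}$ and $\Yt{\cent}{x_0}{\x}=\cent(\x)$; and, $V(\Sc,0)$ being a toroidal vertex subalgebra of $V(T,0)$, for $w\in V(\Sc,0)$ the operator $\Yt{w}{x_0}{\x}$ is at the same time its intrinsic vertex operator in $V(\Sc,0)$. Then (\ref{V0-sum}) (equivalently (\ref{ev1sum})), applied with $v=a$ and with $v=\cent$, gives, as operators on $V(\Sc,0)$,
\[
\eltAbs{a}{x_0}{x}=\sum_{\ranger{m}}\Yt{\eltzHomo{a}{m}}{x_0}{\x},\qquad
\cent(\x)=\sum_{\ranger{m}}\Yt{\eltzHomo{\cent}{m}}{x_0}{\x},
\]
where now $\eltzHomo{a}{m},\eltzHomo{\cent}{m}\in V(\Sc,0)$. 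Applying the two displayed commutator identities to each summand and summing over $\m$ (which is harmless coefficientwise, since on any fixed vector only finitely many summands feed a given monomial in $x_0,\x$) yields $[\der_0,\eltAbs{a}{x_0}{x}]=\pder{x_0}\eltAbs{a}{x_0}{x}$, $[\der_j,\eltAbs{a}{x_0}{x}]=\big(\degder{x}{j}\big)\eltAbs{a}{x_0}{x}$, and the two analogues for $\cent(\x)$; in particular $\pder{x_0}\cent(\x)=0$, since $\cent(\x)$ does not involve $x_0$.

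It remains to identify $\pder{x_0}\eltAbs{a}{x_0}{x}$ with $-\pder{t_0}\eltAbs{a}{x_0}{x}$ and $\big(\degder{x}{j}\big)\eltAbs{a}{x_0}{x}$ with $-\big(\degder{t}{j}\big)\eltAbs{a}{x_0}{x}$ (and likewise for $\cent(\x)$), which is pure bookkeeping: from $\eltAbs{a}{x_0}{x}=\sum_{\rangeall{m}}\big(\eltLie{a}{m_0}{m}\big)\degall{x}{m}$ one finds, coefficientwise and after the shift $m_0\mapsto m_0+1$ in the $x_0$-derivative, that both sides equal $\sum_{\rangeall{m}}(-m_0)\big(\eltLie{a}{m_0-1}{m}\big)\degall{x}{m}$, respectively $\sum_{\rangeall{m}}(-m_j)\big(\eltLie{a}{m_0}{m}\big)\degall{x}{m}$, and the same for $\cent(\x)$. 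I expect the only step carrying content to be the middle one: observing that $a$ and $\cent$ lie outside $V(\Sc,0)$ and restoring them via (\ref{V0-sum}) through the vectors $\eltzHomo{a}{m},\eltzHomo{\cent}{m}\in V(\Sc,0)$; the rest is unwinding the definitions of an extended \tva{} and of a derivation, plus an elementary reindexing. A more computational but equally valid route is to check each commutator coefficient by coefficient directly from the explicit formulas for $\der_0,\dots,\der_r$ given in the preceding corollary.
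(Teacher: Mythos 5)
Your proposal is correct and follows the same route the paper intends: the corollary is stated there as an immediate consequence of the extended toroidal vertex algebra structure on $V(\Sc,0)$ together with the identifications $Y(a;x_0,\x)=\eltAbs{a}{x_0}{x}$ and $Y(\cent;x_0,\x)=\cent(\x)$, which is exactly what you unwind. Your extra step of decomposing $\eltAbs{a}{x_0}{x}$ and $\cent(\x)$ via (\ref{V0-sum}) into vertex operators of the vectors $\eltzHomo{a}{m},\eltzHomo{\cent}{m}\in V(\Sc,0)$ correctly handles the point (glossed over in the paper) that $a$ and $\cent$ themselves do not lie in $V(\Sc,0)$, and the final reindexing computations are accurate.
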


We define a $\Z\times \Z^r$-grading on $\tau$ by
\begin{eqnarray}\label{degree-def}
\deg (a\otimes t_0^{m_0}\bdt^{\m})=-(m_0,\m),\   \   \   \  \deg (\cent \otimes \bdt^{\m})=-(0,\m)
\end{eqnarray}
for $a\in \g,\ (m_0,\m)\in \Z\times \Z^r$. This makes $\tau$ a $\Z\times \Z^r$-graded Lie algebra.
Note that $L_r(\g\otimes \C[t_0])$ ($=L_r(\borel)$) is a graded subalgebra. By assigning $\deg {\bf 1}=(0,{\bf 0})$,
$V(\Sc,0)$ becomes  a $\Z\times \Z^r$-graded $\tau$-module
\begin{equation}
V(\Sc,0)=\mathop{\bigoplus}\limits_{(m_0,\m)\in \Z\times \Z^r}V(\Sc,0)_{(m_0,\m)}.
\end{equation}
For $\m\in \Z^r$, set
\begin{eqnarray}
V(\Sc,0)_{(\m)}=\bigoplus_{m_0\in \Z}V(\Sc,0)_{(m_0,\m)}.
\end{eqnarray}
On the other hand, for $n\in \Z$, set
\begin{equation}
V(\Sc,0)_{(n)}=\mathop{\bigoplus}\limits_{\ranger{m}}V(\Sc,0)_{(n,\m)}.
\end{equation}

We have:

\begin{lem}
Equipped with the $\Z$-grading $V(\Sc,0)=\oplus_{n\in \Z}V(\Sc,0)_{(n)}$,
vertex algebra $V(\Sc,0)$  becomes a $\Z$-graded vertex algebra in the sense that ${\bf 1}\in V(\Sc,0)_{(0)}$ and
\begin{eqnarray}\label{ezgva-relation}
u_{k}v\in V(\Sc,0)_{(m+n-k-1)}
\end{eqnarray}
for $u\in V(\Sc,0)_{(m)},\ v\in V(\Sc,0)_{(n)},\ m,n,k\in \Z$.
Furthermore, we have $V(\Sc,0)_{(n)}=0$ for $n<0$,
\begin{eqnarray}
 V(\Sc,0)_{(0)}=\Sc, \mbox{ and }\  V(\Sc,0)_{(1)}=L_r(\g\otimes t_0^{-1})\otimes \Sc.
\end{eqnarray}
\end{lem}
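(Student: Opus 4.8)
Here is how I would prove the lemma.

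The plan is to argue in three stages: fix the $\Z$-grading and pin down the pieces $V(\Sc,0)_{(n)}$ for $n\le 1$; reduce the graded vertex algebra axiom to one commutation relation with the grading operator; and establish that relation by checking it on generators and propagating.

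\emph{Stage 1.} As set up in the paragraph preceding the statement, $V(\Sc,0)=U(\Toro)\otimes_{U(L_r(\borel))}\C$ carries the $\Z\times\Z^r$-grading with $\deg(a\otimes t_0^{m_0}\bdt^{\m})=-(m_0,\m)$, $\deg(\cent\otimes\bdt^{\m})=-(0,\m)$ and $\deg{\bf 1}=(0,{\bf 0})$; summing over the $\Z^r$-component produces the $\Z$-grading $V(\Sc,0)=\bigoplus_{n\in\Z}V(\Sc,0)_{(n)}$, and ${\bf 1}\in V(\Sc,0)_{(0)}$ is immediate. Since $L_r(\borel)$ acts trivially on the one-dimensional module $\C$ and $\cent\otimes\Lr$ is a central abelian ideal, the Poincar\'e--Birkhoff--Witt theorem gives an isomorphism of $\Z$-graded vector spaces $V(\Sc,0)\cong U(\nil_-\otimes\Lr)\otimes\Sc$, in which $\nil_-\otimes\Lr=\g\otimes t_0^{-1}\C[t_0^{-1}]\otimes\Lr$ is concentrated in strictly positive $\Z$-degrees (the generator $a\otimes t_0^{-j}\bdt^{\m}$ having degree $j\ge 1$) and $\Sc$ sits in degree $0$. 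Reading off homogeneous components gives $V(\Sc,0)_{(n)}=0$ for $n<0$, $V(\Sc,0)_{(0)}=\Sc\cdot{\bf 1}\cong\Sc$, and $V(\Sc,0)_{(1)}=(\g\otimes t_0^{-1}\otimes\Lr)\cdot\Sc\cdot{\bf 1}$, which (because $\cent\otimes\Lr$ is central, so the two tensor factors commute as operators on ${\bf 1}$) is $L_r(\g\otimes t_0^{-1})\otimes\Sc$.

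\emph{Stage 2.} Let $L(0)$ denote the diagonalizable operator on $V(\Sc,0)$ acting as $n$ on $V(\Sc,0)_{(n)}$, and write $Y^0(v,x_0)=Y(v;x_0,\x)|_{\x=1}$ for the vertex operator of the vertex algebra $(V(\Sc,0),Y^0,{\bf 1})$ furnished by Proposition \ref{TvaStructProp}. I will show
$$[L(0),Y^0(v,x_0)]=Y^0(L(0)v,x_0)+x_0\frac{\partial}{\partial x_0}Y^0(v,x_0)\qquad\text{for all }v\in V(\Sc,0).$$
Comparing coefficients of $x_0^{-k-1}$ converts this into $[L(0),v_k]=(m-k-1)v_k$ for $v\in V(\Sc,0)_{(m)}$, whence $u_kv\in V(\Sc,0)_{(m+n-k-1)}$ for $u\in V(\Sc,0)_{(m)}$, $v\in V(\Sc,0)_{(n)}$, which is (\ref{ezgva-relation}). (The parallel compatibility with the $\Z^r$-grading is the content of Proposition \ref{prop:RTVA2ZrGradedVA} applied to $V(T,0)$, whose $V^0$ is $V(\Sc,0)$ by Lemma \ref{VS=VT0}.)

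\emph{Stage 3.} To prove the displayed identity I will first verify it on $\{{\bf 1}\}\cup\{\cent(\m){\bf 1},\ a(-1,\m){\bf 1}\mid a\in\g,\ \m\in\Z^r\}$, using $Y^0({\bf 1},x_0)=1$, $Y^0(\cent(\m){\bf 1},x_0)=\cent\otimes\bdt^{\m}$ (a constant field), and $Y^0(a(-1,\m){\bf 1},x_0)=\sum_{m_0\in\Z}(a\otimes t_0^{m_0}\bdt^{\m})x_0^{-m_0-1}$ (the latter two from Lemma \ref{simple-facts} and the definition of $Y$ on $V(T,0)$): the operators $\cent\otimes\bdt^{\m}$ and $a\otimes t_0^{m_0}\bdt^{\m}$ are $L(0)$-homogeneous of degrees $0$ and $-m_0$, and the states $\cent(\m){\bf 1}$, $a(-1,\m){\bf 1}$ lie in $V(\Sc,0)_{(0)}$, $V(\Sc,0)_{(1)}$, so each case is a short direct computation. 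These states generate $V(\Sc,0)$ as a vertex algebra, since all the vectors $(a\otimes t_0^{-j}\bdt^{\m}){\bf 1}$ ($j\ge1$) and $(\cent\otimes\bdt^{\m}){\bf 1}$ are obtained by applying modes of these vertex operators to ${\bf 1}$, and $V(\Sc,0)=U(\Toro){\bf 1}$. It then remains to see that the set of $v$ for which the identity holds is closed under $v\mapsto u_kv$, which follows from the vertex algebra iterate formula
$$Y^0(u_kv,x_0)=\Res_{x_1}\Big((x_1-x_0)^kY^0(u,x_1)Y^0(v,x_0)-(-x_0+x_1)^kY^0(v,x_0)Y^0(u,x_1)\Big)$$
together with $[L(0),\Res_{x_1}F]=\Res_{x_1}[L(0),F]$ and the Leibniz rule. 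The one genuinely delicate point I anticipate is exactly this last step: checking that $[L(0),\cdot]$ interacts with the residue in $x_1$ and with the binomial substitutions in the iterate formula precisely as the operator $x_0\frac{\partial}{\partial x_0}+L(0)$ does; granting that, the identity holds on all of $V(\Sc,0)$ and the lemma follows.
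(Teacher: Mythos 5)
Your proof is correct and follows essentially the same route as the paper's: identify the generating fields $Y^0((\cent\otimes\bdt^{\m}){\bf 1},x_0)$ and $Y^0((a\otimes t_0^{-1}\bdt^{\m}){\bf 1},x_0)$, verify the grading relation on these generators, and propagate by induction via the iterate formula. Your recasting of (\ref{ezgva-relation}) as the commutator identity $[L(0),Y^0(v,x_0)]=Y^0(L(0)v,x_0)+x_0\frac{\partial}{\partial x_0}Y^0(v,x_0)$ is an equivalent reformulation rather than a different argument, and your PBW computation simply spells out the "furthermore" assertion that the paper declares clear.
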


\begin{proof}  It is clear that ${\bf 1}\in V(\Sc,0)_{(0)}$. Note that for $a\in \g,\ \m\in \Z^r$, we have
$$Y^0\left((a\otimes t_0^{-1}\bdt^{\m}){\bf 1},x_0\right)
=\left(\sum_{k\in \Z} (a\otimes t_0^{k}\bdt^{\m})x_0^{-k-1}\x^{-\m}\right)_{\x=1}
=\sum_{k\in \Z} (a\otimes t_0^{k}\bdt^{\m})x_0^{-k-1},$$
$$Y^{0}((\cent\otimes \bdt^{\m}){\bf 1},x_0)=\cent\otimes \bdt^{\m}.$$
It then follows that $V(\Sc,0)$ as a vertex algebra is generated by the subspace
$L_{r}(\g\otimes t_0^{-1}){\bf 1}+L_{r}(\cent){\bf 1}$. We see that (\ref{ezgva-relation}) holds
for $u\in L_{r}(\g\otimes t_0^{-1}){\bf 1}$
and for $u\in L_{r}(\cent){\bf 1}$. Then it follows from induction that  (\ref{ezgva-relation}) holds for general $u$.
This proves that $V(\Sc,0)$ is a $\Z$-graded vertex algebra. The furthermore assertion is clear.
\end{proof}

For  $0\leq j\leq r$, denote by $d_j$ the derivation $1\otimes \degder{t}{j}$ on $\Toro$.
Set
\begin{eqnarray}
\ToroD=\Toro\rtimes (\C d_0+\cdots+\C d_r).
\end{eqnarray}

We have:

\begin{lem}\label{tilde-tau-module}
The $(r+1)$-toroidal vertex algebra $V(\Sc,0)$ is a $\ToroD$-module with $d_{1},\dots,d_{r}$ acting as $-D_{1},\dots,-D_{r}$ and with $d_0$ acting as $-L(0)$, where $L(0)$ is the linear operator on $V(\Sc,0)$ defined by
\begin{eqnarray}
L(0)v=nv\  \  \  \mbox{ for }v\in V(\Sc,0)_{(n)}, \ n\in \Z.
\end{eqnarray}
\end{lem}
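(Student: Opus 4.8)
The plan is to verify that the assignments $d_j \mapsto -D_j$ for $1 \le j \le r$ and $d_0 \mapsto -L(0)$ define a Lie algebra action of $\widetilde{\tau}$ on $V(\Sc,0)$ extending the given $\tau$-action. Since $V(\Sc,0)$ is already a $\tau$-module, and $\widetilde{\tau}$ is the semidirect product of $\tau$ with the abelian Lie algebra $\C d_0 + \cdots + \C d_r$, it suffices to check three families of relations: first, that $-L(0)$ and $-D_1,\dots,-D_r$ commute with each other; second, that each of $-L(0), -D_j$ acts on $\tau\subset \mathrm{End}\,V(\Sc,0)$ (via the adjoint-type action $x \mapsto [-D_j, x]$, etc.) in exactly the way $d_j$ acts on $\tau$ by the derivation $1\otimes t_j\partial/\partial t_j$ (resp. $1\otimes t_0\partial/\partial t_0$ for $d_0$); and third, implicitly, that these operators are well-defined on $V(\Sc,0)$, which for $L(0)$ and the $D_j$ follows from the $\Z\times\Z^r$-grading introduced in \eqref{degree-def} together with the preceding corollary identifying $D_1,\dots,D_r$ as the semisimple degree operators.

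\textbf{Key steps.} First I would record that, by the corollary following Lemma~\ref{VS=VT0} (via Proposition~\ref{prop:ZrGradedVAtoETVA} and Lemma~\ref{va-rtva}), $D_1,\dots,D_r$ act semisimply on $V(\Sc,0)$ with $D_j$ having eigenvalue $m_j$ on $V(\Sc,0)_{(\m)}$; likewise $L(0)$ is by definition semisimple with integer eigenvalues given by the $\Z$-grading $V(\Sc,0) = \oplus_n V(\Sc,0)_{(n)}$. Since these gradings come from the single $\Z\times\Z^r$-grading on $V(\Sc,0)$ as a $\tau$-module, $L(0)$ and $D_1,\dots,D_r$ are simultaneously diagonalizable, hence pairwise commuting; so $[-L(0),-D_j]=0=[-D_i,-D_j]$, matching $[d_0,d_j]=0=[d_i,d_j]$ in $\widetilde\tau$. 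Second, I would compute the bracket of $-D_j$ (and $-L(0)$) with the generating fields. For $a\in\g$ and $1\le j\le r$, Corollary~\ref{d-bracket-properties} already gives
\[
\big[D_j, \eltAbs{a}{x_0}{x}\big] = -\Big(t_j\frac{\partial}{\partial t_j}\Big)\eltAbs{a}{x_0}{x},\qquad \big[D_j,\cent(\x)\big]=-\Big(t_j\frac{\partial}{\partial t_j}\Big)\cent(\x),
\]
which on the level of coefficients says $[-D_j, a\otimes t_0^{m_0}\bdt^\m] = m_j\, a\otimes t_0^{m_0}\bdt^\m = (d_j)\cdot(a\otimes t_0^{m_0}\bdt^\m)$ acting on $V(\Sc,0)$, i.e. $\mathrm{ad}(-D_j)$ agrees with $\mathrm{ad}(d_j)$ on the generators $a(x_0,\x)$ and $\cent(\x)$ of the $\tau$-action. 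The same corollary handles $D_0$, so that $\mathrm{ad}(-L(0)) = \mathrm{ad}(d_0)$ on generators once we note $L(0)$ and $D_0$ act identically as the $t_0$-degree operator on the $\tau$-part — here one uses $[D_0, \eltAbs{a}{x_0}{x}] = \partial_{x_0}\eltAbs{a}{x_0}{x} = -(t_0\partial_{t_0})\,\mathrm{shifted}$; more precisely one checks directly on homogeneous vectors that $[L(0), u] = -(\text{$t_0$-degree of }u)\,u$ for $u\in\tau$ using the grading \eqref{degree-def} and $\deg\vac=(0,\mathbf 0)$. Third, since $V(\Sc,0)=U(\tau)\vac$ is generated by $\vac$ as a $\tau$-module, and $(-L(0))\vac = 0 = d_0\cdot\vac$, $(-D_j)\vac=0=d_j\cdot\vac$, the matching of the adjoint actions on all of $\tau$ together with these vacuum conditions propagates to give a well-defined $\widetilde\tau$-module structure: for $u\in\tau$, $w\in V(\Sc,0)$, $(-L(0))(u\cdot w) = [-L(0),u]\cdot w + u\cdot((-L(0))w)$ holds by the operator identity, which is exactly the Lie bracket relation $[d_0, u]$ in $\widetilde\tau$ applied in the module.

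\textbf{The main obstacle.} The genuinely substantive point is the second step: verifying that $\mathrm{ad}(-L(0))$ and $\mathrm{ad}(-D_j)$ on the \emph{operator algebra} $\{Y(v;x_0,\x) : v\in V(\Sc,0)\}$ restrict correctly to derivations of $\tau$ matching $d_0,\dots,d_r$. For $j\ge 1$ this is essentially immediate from Corollary~\ref{d-bracket-properties}. The slightly delicate case is $d_0$: one must be careful about the shift between the operator $D_0$ (which acts as $\partial/\partial x_0$ on vertex operators, i.e. the translation/$D$-operator of the vertex algebra) and $L(0)$ (the conformal-weight grading operator); these induce the \emph{same} action on the generators $a\otimes t_0^{m_0}\bdt^\m$ of $\tau$ — namely multiplication by $-m_0$ — but for the reason that on $V(\Sc,0)_{(n)}$ one has $L(0) = n$ while $D_0$ raises degree, so it is really the bracket $[L(0),-]$ versus $[D_0,-]$ that must be compared on $\tau\subset\mathrm{End}\,V(\Sc,0)$, and one checks both equal $-(t_0\text{-degree})$. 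Once that identification is pinned down the rest is the routine extension-of-a-module-to-a-semidirect-product verification, and the proof is complete.
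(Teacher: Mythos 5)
Your proposal is correct and takes essentially the same route as the paper, whose entire proof consists of the bracket computation $[L(0),a\otimes t_0^{m_0}\bdt^{\m}]=-m_0\,(a\otimes t_0^{m_0}\bdt^{\m})$, $[L(0),\cent\otimes\bdt^{\m}]=0$, $[L(0),D_j]=0$ read off from the grading (\ref{degree-def}), the $D_j$-brackets being already supplied by Corollary \ref{d-bracket-properties}. The only inaccuracy is your side remark that $[D_0,\,\cdot\,]$ also acts on $\tau$ as $-(t_0\text{-degree})$ --- in fact $[D_0,a\otimes t_0^{m_0}\bdt^{\m}]=-m_0\,a\otimes t_0^{m_0-1}\bdt^{\m}$, with a shift in the $t_0$-power --- but this is harmless since the lemma involves only $L(0)$ and $D_1,\dots,D_r$, and your direct check of $[L(0),u]$ on homogeneous $u\in\tau$ is exactly what is needed.
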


\begin{proof} For $a\in \g,\ (m_0,\m)\in \Z\times \Z^r$,  as
$$\deg (a\otimes t_0^{m_0}\bdt^{\m})=-(m_0,\m), \   \  \  \   \deg (\cent \otimes \bdt^{\m})=-(0,\m),$$
we have
$$[L(0),(a\otimes t_0^{m_0}\bdt^{\m})]=-m_0 (a\otimes t_0^{m_0}\bdt^{\m}),
\   \  [L(0),(\cent \otimes \bdt^{\m})]=0$$
and $[L(0),D_{j}]=0$ for $1\le j\le r$,
as needed.
\end{proof}

Note that $V(T,0)$ is a $\tau$-module and an $\Sc$-module.
 For $u\in \Sc$, denote by $\rho(u)$ the corresponding operator on $V(T,0)$.

\begin{lem}\label{Sc-structure}
The subspace $\Sc$ is a subalgebra of $V(\Sc,0)$ viewed as
an $(r+1)$-toroidal vertex algebra and a vertex algebra, where for $u\in (\Sc)_{\m}$ with $\m\in \Z^r$,
\begin{eqnarray}
Y(u;x_{0},\x)=\rho(u) \x^{\m}\  \  \mbox{ and }\  \   Y^0(u,x_0)=\rho(u).
\end{eqnarray}
Furthermore, every $V(\Sc,0)$-module is naturally an $\Sc$-module.
\end{lem}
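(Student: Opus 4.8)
The plan is to realize $\Sc$ inside $V(\Sc,0)$ as the toroidal vertex subalgebra generated by the central elements $\cent\otimes\bdt^\n$, to pin down the vertex operators there starting from the single known field $\Yt{\cent}{x_0}{\x}=\cent(\x)$, and finally to read off the module statement by restriction. For the embedding: $L_r(\C\cent)=\C\cent\otimes\Lr$ is central in $\tau$ and meets $L_r(\borel)$ trivially, so PBW gives $V(\Sc,0)=U(\tau)\otimes_{U(L_r(\borel))}\C\cong\Sc\otimes U(L_r(\nil_-))$ as vector spaces, the map $u\mapsto u\vac=\rho(u)\vac$ being the inclusion of the first tensor factor; I identify $\Sc$ with this subspace henceforth, and note $\rho(u)\rho(v)\vac=\rho(uv)\vac$ since $\rho$ is an algebra map and $\Sc$ is commutative.

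The core is the description of the vertex operators on $\Sc\vac$. Using $V(\Sc,0)=V(T,0)^0$ (Lemma~\ref{VS=VT0}) and applying the given field to the vacuum, $\Yt{\cent}{x_0}{\x}\vac=\cent(\x)\vac=\sum_\n\big(\rho(\cent\otimes\bdt^\n)\vac\big)\x^{-\n}$, comparison of powers of $x_0$ shows $\cent_{m_0,\m}\vac=0$ for $m_0\ne-1$ and $\cent_{-1,\n}\vac=\rho(\cent\otimes\bdt^\n)\vac$. Feeding this into (\ref{etva-m}) and (\ref{V0-sum}) for the vector $\cent$ and matching $\x$-homogeneous parts against $\cent(\x)=\sum_\n\rho(\cent\otimes\bdt^\n)\x^{-\n}$ gives $\Yt{\cent_{-1,\n}\vac}{x_0}{\x}=\rho(\cent\otimes\bdt^\n)\x^{-\n}$ and $\Yva{\cent_{-1,\n}\vac}{x_0}=\rho(\cent\otimes\bdt^\n)$: the field of $\cent_{-1,\n}\vac$ is just the multiplication operator $\rho(\cent\otimes\bdt^\n)$, constant in $x_0$. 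Hence the toroidal vertex subalgebra of $V(\Sc,0)$ generated by the $\cent_{-1,\n}\vac$ is spanned by the products $\rho(\cent\otimes\bdt^{\n_1})\cdots\rho(\cent\otimes\bdt^{\n_k})\vac=\rho(u)\vac$ with $u\in\Sc$, i.e.\ it equals $\Sc\vac$; and since a normally ordered product of fields constant in $x_0$ is again constant in $x_0$, induction on the length of $u$ (together with $\rho(u)\rho(v)=\rho(uv)$) yields $\Yva{\rho(u)\vac}{x_0}=\rho(u)$ for all $u\in\Sc$. As $\rho(u)\vac$ is $\Z^r$-homogeneous when $u$ is (Proposition~\ref{prop:RTVA2ZrGradedVA}), the correspondence of Lemma~\ref{va-rtva} --- applicable because $V(\Sc,0)^0=V(\Sc,0)$ (Proposition~\ref{prop:ZrGradedVAtoETVA}) --- upgrades this to $\Yt{\rho(u)\vac}{x_0}{\x}=\rho(u)\x^\m$ for $u\in(\Sc)_\m$. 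In particular $\Yt{\rho(u)\vac}{x_0}{\x}\rho(v)\vac=\rho(uv)\vac\,\x^\m\in(\Sc\vac)[\varr x]$, so $\Sc\vac$ is closed under both $Y$ and $Y^0$, is the asserted subalgebra, and lies in ${\mathcal{C}}_{r+1}^{0}$ --- it is $F$ applied to $\Sc$ viewed as a commutative $\Z^r$-graded vertex algebra.

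Finally, the inclusion $\Sc\hookrightarrow V(\Sc,0)$ is a homomorphism of toroidal vertex algebras, so by restriction along it (Remark~\ref{ResFuncLem}) every $V(\Sc,0)$-module is a module for $\Sc$ viewed as a toroidal vertex algebra; applying Theorem~\ref{ZeroModCateProp} with $V=\Sc$ (so $V^0=\Sc$) identifies these with modules for $\Sc$ viewed as a vertex algebra, and a module for a commutative associative algebra viewed as a vertex algebra is just an ordinary module, as recalled earlier. All of this is functorial, so the $\Sc$-module structure is natural. The hard part will be the computation of all the vertex operators on $\Sc\vac$ from the single known field $\cent(\x)$; the delicate point is that the generating vector $\cent$ of $V(T,0)$ is not itself an element of $V(\Sc,0)$, which forces one to argue through the vectors $\cent_{-1,\n}\vac=\rho(\cent\otimes\bdt^\n)\vac$ and to use that fields constant in $x_0$ are stable under normally ordered products; everything else is formal, given the results of Sections~2--3.
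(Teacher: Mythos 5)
Your proof is correct and takes essentially the same route as the paper's: identify $\cent\otimes\bdt^{\n}$ with $\cent_{-1,\n}\vac$, read off its vertex operator from $Y(\cent;x_0,\x)=\cent(\x)$ via (\ref{etva-m}) and (\ref{V0-sum}), and extend to all of $\Sc$ by induction using the Jacobi identity. The paper leaves the module assertion and the $x_0$-independence bookkeeping implicit, and your way of spelling these out (restriction along the inclusion, Theorem \ref{ZeroModCateProp}, and the observation that a module for a commutative associative algebra viewed as a vertex algebra is an ordinary module) is exactly the intended argument.
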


\begin{proof} Noticing that $\cent\in T\subset V(T,0)$ and $Y(\cent;x_0,\x)=\cent(\x)$, we have
\begin{eqnarray}\label{cent-components}
\cent_{m_0,\m}=\delta_{m_0,-1}\rho(\cent\otimes {\bf t}^{\m})
\end{eqnarray}
for $m_0\in \Z,\  \m\in \Z^r$, which gives
$$\cent_{m_0,\m}{\bf 1}=\delta_{m_0,-1}(\cent\otimes {\bf t}^{\m})\in \Sc.$$
Using this relation, formula (2.13), and (\ref{cent-components}) we get
\begin{eqnarray}
Y(\cent\otimes \bdt^{\m};x_0,\x)=Y(\cent_{-1,\m}{\bf 1};x_0,\x)=\rho(\cent\otimes \bdt^{\m})\x^{-\m}.
\end{eqnarray}
Then we have
\begin{eqnarray}
Y^0(\cent\otimes \bdt^{\m},x_0)=Y(\cent\otimes \bdt^{\m};x_0,\x)|_{\x=1}=\rho(\cent\otimes \bdt^{\m}).
\end{eqnarray}
It then follows from induction and the Jacobi identity.
\end{proof}

Next, we  study simple \rtvqa{}s of $V(T,0)$.
By Proposition  \ref{SimpleCoro}, simple quotients of $V(T,0)$ correspond to simple \rtvqa{}s of $V(\Sc,0)$.
Recall that every $V(\Sc,0)$-module is naturally a module for $\Sc$ as an associative algebra.
On the other hand,  to an $\Sc$-module we can associate a $V(\Sc,0)$-module as follows:
Let $U$ be an $\Sc$-module. We let $L_r(\borel)$ act on $U$ trivially, making $U$ an $L_r(\borel+\C\cent)$-module.
Then set
\begin{eqnarray}
M(U)=U(\tau)\otimes_{U(L_r(\borel+\C\cent))} U,
\end{eqnarray}
which is a restricted $\tau$-module.
In view of Theorem \ref{TvaModThm}, $M(U)$ is naturally a $V(T,0)$-module
and a $V(\Sc,0)$-module. In particular, we have $V(\Sc,0)=M(\Sc)$.

\begin{rem}
{\em Note that  every irreducible $\tau$-module is of countable dimension over $\C$.
As $\Sc$ lies in the center of $U(\tau)$, each element of $\Sc$ necessarily acts as a scalar  on every irreducible $\tau$-module.
Thus, an algebra homomorphism $\psi: \Sc\rightarrow \C$ is attached to every irreducible $\tau$-module. }
\end{rem}

Now, we establish some basic results.

\begin{lem}\label{IdealCriToroLem}
An ideal of $V(\Sc,0)$ viewed as an \rtva{} (resp, a \va{})
exactly amounts to a $D_0$-stable and $\Z^{r}$-graded  (resp. $D_0$-stable) $\tau$-submodule.
\end{lem}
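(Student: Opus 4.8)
The plan is to identify, in both the toroidal-vertex-algebra and vertex-algebra settings, the ideals of $V(\Sc,0)$ with certain $\tau$-submodules, using the fact (established in Theorem \ref{TvaModThm} and its proof) that a $V(T,0)$-module structure amounts to a restricted $\tau$-module structure, together with the description of ideals of $V^0$ for a general \tva{} $V$ from Corollary \ref{IdealCriLem}. First I would observe that $V(\Sc,0)$ is generated as a \va{} (equivalently, as an \rtva{}, by Lemma \ref{va-rtva} and Proposition \ref{prop:ZrGradedVAtoETVA}, since $V(\Sc,0)^0=V(\Sc,0)$) by the operators $\eltAbs{a}{x_0}{x}$ for $a\in \g$ and $\cent(\x)$, whose components together with $d_1,\dots,d_r$ span $\tau$ up to the derivations. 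Concretely, for $u\in V(\Sc,0)$ and $a\in\g$, $(m_0,\m)\in\Z\times\Z^r$, the operator $Y^0(u,x_0)$ has among its components the actions of $a\otimes t_0^{m_0}\bdt^\m$ and $\cent\otimes\bdt^\m$; this is exactly Corollary \ref{d-bracket-properties} together with the construction in Theorem \ref{TvaDefThm}. Hence a subspace $I\subseteq V(\Sc,0)$ is a left ideal of the \va{} $V(\Sc,0)$ if and only if it is stable under all $v_k$ ($v\in V(\Sc,0)$, $k\in\Z$), which — because the generating operators' components generate the image of $\tau$ in $\End V(\Sc,0)$ — is equivalent to $I$ being a $\tau$-submodule. (Left ideals of a \va{} are automatically two-sided by skew-symmetry, cf. the citation of \cite{LTW} Corollary 2.7 preceding Corollary \ref{IdealCriLem}; the same applies to the \rtva{} $V(\Sc,0)$.)

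Next I would invoke Corollary \ref{IdealCriLem}: since $V(\Sc,0)=V(\Sc,0)^0$, an ideal of $V(\Sc,0)$ viewed as a \tva{} is the same as a left ideal of the underlying \va{} that is stable under the actions of $\der_0,\der_1,\dots,\der_r$. By the previous paragraph, a left ideal of the \va{} is precisely a $\tau$-submodule, and by the Corollary following Lemma \ref{VS=VT0} (the explicit formulas for $D_0,\dots,D_r$), plus Corollary \ref{d-bracket-properties}, stability under $\der_1,\dots,\der_r$ is exactly stability under $-d_1,\dots,-d_r$, i.e. $\Z^r$-homogeneity of $I$ with respect to the grading of \eqref{degree-def} (equivalently $I=\bigoplus_{\m}(I\cap V(\Sc,0)_{(\m)})$, since the $D_j$ act semisimply with integer eigenvalues). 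So an ideal of $V(\Sc,0)$ as an \rtva{} amounts to a $D_0$-stable, $\Z^r$-graded $\tau$-submodule. For the \va{} case: an ideal of $V(\Sc,0)$ viewed as a \va{} is a left ideal stable under the $D$-operator $D_0$ alone (again left implies two-sided by skew-symmetry), which by the same translation is a $D_0$-stable $\tau$-submodule, with no $\Z^r$-grading condition imposed.

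The one point deserving care — and the main (mild) obstacle — is verifying that a $\tau$-submodule of $V(\Sc,0)$ is genuinely a (left) ideal for the vertex operator map, not merely stable under the "linear" components $a\otimes t_0^{m_0}\bdt^\m$ and $\cent\otimes\bdt^\m$: one must check stability under $v_k$ for \emph{all} $v\in V(\Sc,0)$, which follows by induction on the $\Z_{\ge 0}$-grading using the iterate formula and the fact that $V(\Sc,0)$ is generated by $L_r(\g\otimes t_0^{-1}){\bf 1}+L_r(\cent){\bf 1}$, exactly as in the proof of the $\Z$-graded \va{} lemma above; the argument is the standard "ideal generated by a set of generating fields" computation and I would simply cite the relevant locality/iterate machinery rather than redo it. Conversely, any \va{} (or \rtva{}) ideal is in particular stable under the components of the generating fields, hence a $\tau$-submodule, and stable under $D_0$ (resp. all $D_j$) by definition of an ideal in the extended setting. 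Assembling these two directions gives the claimed bijection in both cases.
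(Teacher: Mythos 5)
Your proposal is correct and follows essentially the same route as the paper: the paper's (two-sentence) proof likewise notes that $V(\Sc,0)$ is generated from $\vac$ by the coefficients of $a(x_0,\x)$ $(a\in\g)$ and $\cent(\x)$, so that a left ideal amounts to a $\tau$-submodule, and then applies Corollary \ref{IdealCriLem} to translate the two-sidedness into stability under $\der_0,\dots,\der_r$ (with $\der_1,\dots,\der_r$-stability being $\Z^r$-gradedness). Your extra care about why a $\tau$-submodule is closed under $v_{m_0,\m}$ for \emph{all} $v$ (induction via the iterate formula from the generating fields) is exactly the detail the paper leaves implicit, and your handling of the vertex-algebra case via skew-symmetry matches the intended reading of that corollary.
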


\begin{proof} As $V(\Sc,0)$ is generated from ${\bf 1}$ by the coefficients of vertex operators
$$Y(a;x_0,\x)\  (=a(x_0,\x))\   \  (a\in \g)\  \mbox{ and }Y(\cent;x_0,\x)\   (=\cent(\x)),$$
 it follows that a left ideal of $V(\Sc,0)$ amounts to a $\tau$-submodule. Then by Corollary \ref{IdealCriLem},
an ideal of $V(\Sc,0)$ viewed as an \rtva{} (resp, a \va{})
amounts to a $D_0$-stable and $\Z^{r}$-graded  (resp. $D_0$-stable) $\tau$-submodule.
\end{proof}

\begin{lem}\label{ideal}
Let $K$ be a $\Z^r$-graded ideal (resp. an ideal) of $\Sc$. Then $U(\tau)K$
 is a $\Z$-graded ideal of $V(\Sc,0)$ viewed as an \rtva{} (resp. a \va{}).
\end{lem}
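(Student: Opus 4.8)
The statement asserts that if $K$ is a $\Z^r$-graded ideal (resp.\ an ideal) of $\Sc$, then $U(\tau)K$ is a $\Z$-graded ideal of $V(\Sc,0)$ viewed as an \rtva{} (resp.\ a \va{}). The plan is to reduce the claim to Lemma \ref{IdealCriToroLem}, which says that an ideal of $V(\Sc,0)$ as an \rtva{} is exactly a $D_0$-stable, $\Z^r$-graded $\tau$-submodule (and as a \va{}, a $D_0$-stable $\tau$-submodule). So I must verify three things about $U(\tau)K$: it is a $\tau$-submodule, it is $D_0$-stable, and (in the \rtva{} case) it is $\Z^r$-graded; then separately that it is $\Z$-graded as a subspace of the $\Z$-graded vertex algebra $V(\Sc,0)$.

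First, $U(\tau)K$ is a $\tau$-submodule essentially by construction, since $U(\tau)\cdot U(\tau)K = U(\tau)K$. Next, recall from the preceding discussion that $\Sc$ sits inside $V(\Sc,0)$ via $\rho$, and $V(\Sc,0)=U(\tau){\bf 1}$ with $\Sc = V(\Sc,0)_{(0)}$; moreover $D_0$ acts on $\Sc\subset V(\Sc,0)$. The key point is that $D_0$ is a derivation of the vertex algebra $V(\Sc,0)$ and satisfies $[D_0, u_{k,\m}] = -(k+1)u_{k-1,\m}$ type relations (it is the $\der$-operator, so $[D_0,Y(v;x_0,\x)] = \frac{\partial}{\partial x_0}Y(v;x_0,\x)$). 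In particular $D_0$ restricted to $\Sc\subset V(\Sc,0)$ kills $\Sc$ (since $\Sc = V(\Sc,0)_{(0)}$ and $D_0$ raises the $\Z$-degree by one, so $D_0(\Sc)\subset V(\Sc,0)_{(1)}$; but one checks directly that $D_0(\cent\otimes\bdt^\m) = 0$ using $Y^0(\cent\otimes\bdt^\m,x_0)=\rho(\cent\otimes\bdt^\m)$ is constant in $x_0$). Hence for $a\in\tau$, $u\in K$, Leibniz gives $D_0(\rho(a)u') = [D_0,\rho(a)]u' + \rho(a)D_0(u')$, and iterating together with $D_0(K)=0$ (as $K\subset\Sc$ and $D_0|_\Sc=0$) and the fact that $[D_0,\rho(a\otimes t_0^{m_0}\bdt^\m)] = -m_0\rho(a\otimes t_0^{m_0-1}\bdt^\m)$ (from Corollary \ref{d-bracket-properties}) shows $D_0(U(\tau)K)\subset U(\tau)K$.

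For the grading conditions: in the \rtva{} case, $K$ is $\Z^r$-graded as a subspace of $\Sc$, and since $\tau$ is $\Z\times\Z^r$-graded with the grading on $\Sc$ induced by $\deg(\cent\otimes\bdt^\m)=-(0,\m)$ (so the $\Z^r$-gradings match up to sign), $U(\tau)K$ is a $\Z\times\Z^r$-graded $\tau$-submodule, hence in particular $\Z^r$-graded; this together with $D_0$-stability and Lemma \ref{IdealCriToroLem} makes it an ideal of the \rtva{} $V(\Sc,0)$. In the \va{} case we only need $D_0$-stability, already established. Finally, in both cases $U(\tau)K$ is $\Z$-graded because $K\subset\Sc = V(\Sc,0)_{(0)}$ is homogeneous of $\Z$-degree $0$ and $\tau$ acts on $V(\Sc,0)$ by operators of definite $\Z$-degree (namely $a\otimes t_0^{m_0}\bdt^\m$ has $\Z$-degree $m_0$, $\cent\otimes\bdt^\m$ has $\Z$-degree $0$, by \eqref{degree-def} and the fact that $V(\Sc,0)_{(n)}=0$ for $n<0$), so $U(\tau)K = \bigoplus_{n\ge 0}(U(\tau)K\cap V(\Sc,0)_{(n)})$.

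The only mildly delicate point is verifying $D_0|_\Sc = 0$ and computing $[D_0,\rho(a\otimes t_0^{m_0}\bdt^\m)]$ cleanly enough to run the induction on word length in $U(\tau)$; but both follow immediately from Corollary \ref{d-bracket-properties} and the explicit formulas $Y(\cent;x_0,\x)=\cent(\x)$, $Y(a;x_0,\x)=\eltAbs{a}{x_0}{x}$, so there is no real obstacle — the lemma is essentially a repackaging of Lemma \ref{IdealCriToroLem} once these bracket relations are in hand.
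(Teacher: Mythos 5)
Your proof is correct and follows essentially the same route as the paper: the paper's own argument is exactly the observation that the $D$-operator annihilates $\Sc$ (hence $K$), so $U(\tau)K$ is a $D_0$-stable $\tau$-submodule, which is $\Z^r$-graded when $K$ is, and then Lemma \ref{IdealCriToroLem} applies. Your version merely spells out the commutator computation $[D_0,a\otimes t_0^{m_0}\bdt^{\m}]=-m_0(a\otimes t_0^{m_0-1}\bdt^{\m})$ and the $\Z$-grading check that the paper leaves implicit.
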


\begin{proof} Notice that $L(-1)\Sc=0$, which implies
$L(-1)K=0$. It follows that  $L(-1)\left(U(\tau)K\right)\subset U(\tau)K$.
Then $U(\tau)K$  is an ideal of $V(\Sc,0)$ viewed as a \va{}.
If $K$ is $\Z^r$-graded, then $U(\tau)K$  is also $\Z^r$-graded.
By Lemma \ref{IdealCriToroLem}, $U(\tau)K$  is an ideal of $V(\Sc,0)$ viewed as an \rtva{} in this case.
\end{proof}

On the other hand,  we have:

\begin{lem}\label{maximal-ideal}
Let $J$ be a maximal $\Z$-graded ideal of $V(\Sc,0)$ viewed as an \rtva{} (resp. a \va{}).
Then $J_{(0)}$ is a maximal $\Zr$-graded ideal (resp. maximal ideal) of $\Sc$.
\end{lem}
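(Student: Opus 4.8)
The plan is to work with the $\Z$-grading $V(\Sc,0)=\bigoplus_{n\ge 0}V(\Sc,0)_{(n)}$, for which $V(\Sc,0)_{(0)}=\Sc$, and to identify $J_{(0)}=J\cap\Sc$ as a proper ideal of $\Sc$ that admits no strictly larger proper ideal of the relevant kind. First I would note that, since $J$ is $\Z$-graded, $J_{(0)}=J\cap V(\Sc,0)_{(0)}=J\cap\Sc$; as $\Sc$ is a subalgebra of $V(\Sc,0)$ by Lemma \ref{Sc-structure} and $J$ is an ideal, $J\cap\Sc$ is an ideal of $\Sc$. Concretely, for $u\in\Sc$ the operator $Y^0(u,x_0)=\rho(u)$ is constant in $x_0$, so $u_{-1}v=uv$ and $ab=a_{-1}b\in J$ whenever $a\in\Sc$ and $b\in J_{(0)}$. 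In the \rtva{} case $J$ is moreover $\Zr$-graded by Lemma \ref{IdealCriToroLem}, hence so is $J_{(0)}$. Finally, $J_{(0)}$ is proper: if $J_{(0)}=\Sc$, then ${\bf 1}\in J$ and $J=V(\Sc,0)$, which is impossible since a maximal ideal is proper.

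The key technical step will be to establish that $\bigl(U(\tau)K\bigr)_{(0)}=K$ for every $\Zr$-graded ideal (resp. ideal) $K$ of $\Sc$. The inclusion $\supseteq$ is clear, as $K=1\cdot K\subseteq U(\tau)K$ and $K\subseteq\Sc=V(\Sc,0)_{(0)}$. For $\subseteq$ I would invoke the triangular decomposition $\tau=\lr{\nil_-}\oplus\lr{\borel}\oplus\lr{\C\cent}$ together with the PBW theorem: the subalgebra $\lr{\borel}$ annihilates ${\bf 1}$ and commutes with $\lr{\C\cent}$, hence annihilates all of $\Sc$, so $U(\lr{\borel+\C\cent})K=\Sc K=K$ (using $U(\lr{\C\cent})=S(\lr{\C\cent})=\Sc$), and therefore $U(\tau)K=U(\lr{\nil_-})K$. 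Since every element of $\lr{\nil_-}$ has strictly positive $\Z$-degree (because $\deg(a\otimes t_0^{m_0}\bdt^\m)=-(m_0,\m)$ with $m_0\le -1$ on $\nil_-$) while $K$ lies in $\Z$-degree $0$, the $\Z$-degree-$0$ part of $U(\lr{\nil_-})K$ is exactly $\C K=K$.

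The maximality argument is then short. Suppose, for contradiction, that $J_{(0)}\subsetneq K\subsetneq\Sc$ for some $\Zr$-graded ideal (resp. ideal) $K$ of $\Sc$. By Lemma \ref{ideal}, $U(\tau)K$ is a $\Z$-graded ideal of $V(\Sc,0)$ viewed as an \rtva{} (resp. a \va{}), and hence so is $J+U(\tau)K$. By the key step, $(J+U(\tau)K)_{(0)}=J_{(0)}+K=K$, which is strictly between $J_{(0)}$ and $\Sc=V(\Sc,0)_{(0)}$. Thus $J+U(\tau)K$ is a $\Z$-graded ideal of the required kind that strictly contains $J$ and is strictly contained in $V(\Sc,0)$, contradicting the maximality of $J$. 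Hence $J_{(0)}$ is a maximal $\Zr$-graded ideal (resp. maximal ideal) of $\Sc$.

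I expect the main obstacle to be the careful bookkeeping in the identity $(U(\tau)K)_{(0)}=K$: one must track the $\Z$-grading through the PBW decomposition and, crucially, verify that $\lr{\borel}$ acts as zero on the subspace $\Sc\subseteq V(\Sc,0)$, which is exactly where the precise form of the induced $\tau$-module structure of $V(\Sc,0)$ enters. Once that and the nonnegativity of the $\Z$-grading are in place, the rest is routine.
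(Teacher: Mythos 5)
Your proof is correct and follows essentially the same route as the paper: form $J+U(\tau)K$ for a hypothetical larger graded ideal $K$ of $\Sc$, and compare degree-zero components to contradict maximality. The only difference is that you carefully justify the identity $(U(\tau)K)_{(0)}=K$ via the PBW decomposition $U(\tau)=U(L_r(\nil_-))\,U(L_r(\borel+\C\cent))$ and the positivity of the $\Z$-grading on $L_r(\nil_-)$, a fact the paper asserts without proof; your verification is valid.
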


\begin{proof} Let $I$ be any proper $\Z^r$-graded ideal of $\Sc$, containing $J_{(0)}$. By Lemma \ref{ideal},
$U(\tau)I$ is a $\Z\times \Z^{r}$-graded ideal of $V(\Sc,0)$ with $(U(\tau)I)_{(0)}=I$.
Then $J+U(\tau)I$ is a $\Z^r$-graded ideal of $V(\Sc,0)$, containing $J$.
Since $\left(J+U(\tau)I\right)_{(0)}=I$, $J+U(\tau)I$ is a proper $\Z$-graded ideal. It follows that $J+U(\tau)I=J$. Thus
$I= J_{(0)}$. This proves that $J_{(0)}$ is maximal.
\end{proof}

\begin{de}Let $A=\Sc/I$ be a quotient algebra of $\Sc$ (with $I$ an ideal of $\Sc$). Define
\begin{eqnarray}
V(A,0)=V(\Sc,0)/U(\tau)I\simeq U(\tau)\otimes_{U(L_r(\borel+\C\cent))}A,
\end{eqnarray}
which is a vertex algebra by Lemma \ref{ideal}. If $A$ is also $\Z^r$-graded, $V(A,0)$ is an
$(r+1)$-toroidal vertex algebra.
\end{de}

\begin{prop}\label{simple-quotients}
For any $\Z^r$-graded simple quotient algebra $A$ of $\Sc$, $V(A,0)$ has a unique
maximal $\Z\times \Z^r$-graded ideal $J(A)$. Denote  by $L(A,0)$ the $\Z$-graded
simple quotient $(r+1)$-toroidal vertex algebra.
Furthermore, any simple $\Z$-graded quotient of $(r+1)$-toroidal vertex algebra $V(\Sc,0)$ is isomorphic to $L(A,0)$
for some $\Z^r$-graded simple quotient algebra $A$ of $\Sc$.
\end{prop}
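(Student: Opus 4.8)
The plan is to deduce the proposition entirely from the dictionary between graded ideals of $\T{\Sc}$ and graded $\Toro$-submodules recorded in Lemmas~\ref{IdealCriToroLem}, \ref{ideal}, and \ref{maximal-ideal}, together with the identification of the degree-zero subalgebra $\T{A}_{(0)}$ with the commutative algebra $A$ (which, exactly as in Lemma~\ref{Sc-structure} for $\Sc$, follows from the construction of $\T{A}$ as an induced $\Toro$-module whose degree-zero part acts by its own algebra multiplication). First I would isolate the key auxiliary fact: \emph{a $\Zall$-graded ideal $I$ of $\T{A}$ is proper if and only if $I\cap \T{A}_{(0)}=0$.} Indeed, $I\cap\T{A}_{(0)}$ is a $\Zr$-graded ideal of $A$ --- for $u$ in it and $v\in A$ the product $u_{-1}v$ again lies in $I\cap\T{A}_{(0)}$ and equals the algebra product in $A$ --- hence is $0$ or $A$ since $A$ is $\Zr$-graded simple, and $I\cap\T{A}_{(0)}=A$ forces $\vac\in I$, i.e.\ $I=\T{A}$.

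Granting this, $J(A)$ exists and is unique: the sum $J(A)$ of all proper $\Zall$-graded ideals of $\T{A}$ still meets $\T{A}_{(0)}$ trivially, hence is itself proper, and is therefore the unique maximal $\Zall$-graded ideal. By Lemma~\ref{IdealCriToroLem} every $\Z$-graded ideal of $\T{A}$ (viewed as an \rtva{}) is automatically $\Zr$-graded, hence $\Zall$-graded, so $J(A)$ is also the unique maximal $\Z$-graded ideal, and $\Stva{A}:=\T{A}/J(A)$ is the unique $\Z$-graded simple quotient \rtva{} of $\T{A}$. Writing $A=\Sc/I$, Lemma~\ref{ideal} exhibits $\T{A}=\T{\Sc}/U(\Toro)I$ as a $\Z$-graded quotient of $\T{\Sc}$, so $\Stva{A}$ is in particular a $\Z$-graded simple quotient of $\T{\Sc}$.

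For the converse, let $L=\T{\Sc}/J$ be a $\Z$-graded simple quotient, so that $J$ is a maximal $\Z$-graded ideal of $\T{\Sc}$; by Lemma~\ref{IdealCriToroLem} it is $\Zr$-graded, hence $\Zall$-graded, and by Lemma~\ref{maximal-ideal} its degree-zero part $J_{(0)}$ is a maximal $\Zr$-graded ideal of $\Sc$. Put $A=\Sc/J_{(0)}$, a $\Zr$-graded simple quotient algebra of $\Sc$. Since $J$ is a $\Toro$-submodule containing $J_{(0)}$ we get $U(\Toro)J_{(0)}\subseteq J$, and hence a factorization $\T{\Sc}\twoheadrightarrow\T{A}=\T{\Sc}/U(\Toro)J_{(0)}\twoheadrightarrow L$; the kernel $\bar J$ of the second map is a proper $\Zall$-graded ideal of $\T{A}$, so $\bar J\subseteq J(A)$ by the uniqueness just established. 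Then $J(A)/\bar J$ is a $\Z$-graded ideal of $L$, which must vanish since $L$ is $\Z$-graded simple and $\Stva{A}=\T{A}/J(A)\neq 0$; hence $\bar J=J(A)$ and $L\cong\Stva{A}$, as claimed.

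The proposition is thus essentially formal once Lemmas~\ref{IdealCriToroLem}--\ref{maximal-ideal} are in hand; the point requiring the most care is organizing the passage between $\Z$-graded and $\Zall$-graded ideals (trivialized by Lemma~\ref{IdealCriToroLem}) and verifying the displayed auxiliary fact, whose proof hinges on correctly identifying $\T{A}_{(0)}$ with the commutative algebra $A$ so that ``$\Zr$-graded simple quotient algebra of $\Sc$'' matches ``maximal $\Zr$-graded ideal'' on the nose. I do not anticipate a genuine obstacle beyond this bookkeeping.
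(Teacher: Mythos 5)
Your proposal is correct and follows essentially the same route as the paper: the paper also defines $J(A)$ as the sum of the $\Z\times\Z^r$-graded ideals $P$ with $P_{(0)}=0$ (your auxiliary fact is exactly the ``straightforward to see'' step it omits, namely that properness of a graded ideal is equivalent to trivial degree-zero part, via the simplicity of $A$), and for the converse it likewise passes through $A=\Sc/J_{(0)}$ using Lemmas \ref{IdealCriToroLem} and \ref{maximal-ideal} and a factorization of the quotient map through $V(A,0)$. Your write-up just makes explicit the bookkeeping between $\Z$-graded and $\Z\times\Z^r$-graded ideals that the paper leaves implicit.
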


\begin{proof} Note that $V(A,0)$ is naturally $\Z\times \Z^{r}$-graded with
$A=V(A,0)_{(0)}.$ Let $J(A)$ be the sum of all $\Z\times \Z^{r}$-graded ideals $P$ of $V(A,0)$ such that $P_{(0)}=0$.
It is straightforward to see that $J(A)$ is the unique maximal $\Z\times \Z^r$-graded ideal.
Now, let $J$ be any maximal $\Z$-graded ideal of $(r+1)$-toroidal vertex algebra $V(\Sc,0)$.
By Lemma \ref{IdealCriToroLem}, $J$ is $\Z\times \Z^{r}$-graded.
Set
$J_{(0)}=\bigoplus_{\m\in \Z^r}J_{(0,\m)}$. By Lemma \ref{maximal-ideal}, $J_{(0)}$ is a maximal $\Z^r$-graded ideal of $\Sc$.
 We have
$\left(V(\Sc,0)/J\right)_{(0)}=\Sc/J_{(0)}$,
$$L_{r}(\borel)(\Sc/J_{(0)})=0, \  \mbox{ and }\  V(\Sc,0)/J=U(\tau)(\Sc/J_{(0)}).$$
Set $A=\Sc/J_{(0)}$. By the construction of $V(A,0)$,  there exists a $\tau$-module homomorphism
from $V(A,0)$ to $V(\Sc,0)/J$, extending the identity map on $A$. It follows that
 $V(\Sc,0)/J\simeq L(A,0)$.
\end{proof}

\begin{rem}\label{rem-rao}
{\em In view of Proposition \ref{simple-quotients}, to determine simple $\Z$-graded quotients of $(r+1)$-toroidal vertex algebra $\T{\Sc}$, it suffices to determine $\Z^r$-graded simple quotients of $\Sc$.
Note that every $\Z^r$-graded simple quotient algebra of $\Sc$ is isomorphic to the image of a $\Z^r$-graded algebra
homomorphism from $\Sc$ to $L_r$ and that a $\Z^r$-graded simple quotient of $\Sc$
 is an irreducible $\Zr$-graded $\Sc$-module.
 It was proved by Rao (\cite{R1}, Lemma 3.3) that every irreducible $\Zr$-graded $\Sc$-module is
isomorphic to $\Image \psi$ for some $\Zr$-graded algebra homomorphism $\psi:\Sc\rightarrow \Lr$.}
\end{rem}

\begin{de}\label{CenterSimpleTvaDef}
Let $\psi:\Sc\rightarrow \Lr$ be a $\Zr$-graded algebra homomorphism.
Define $\T{\psi}$ to be the \rtvqa{} of $V(\Sc,0)$ modulo the ideal generated by $\ker\psi$, which is $U(\tau)(\ker\psi)$.
\end{de}

Alternatively, we have
\begin{eqnarray}
V(\psi,0)=V(A_{\psi},0)=U(\tau)\otimes_{U(L_r(\borel+\C\cent))} A_{\psi},
\end{eqnarray}
where $A_{\psi}=\Sc/\Ker\psi\simeq \Image\psi$.
As $U(\tau)(\ker \psi)$ is a $\Z\times \Z^r$-graded ideal, $V(\psi,0)$ is $\Z$-graded with
\begin{eqnarray}
V(\psi,0)_{(0)}=A_{\psi}\  \  \mbox{ and }\  \  V(\psi,0)_{(n)}=0\   \mbox{ for }n<0.
\end{eqnarray}

In view of Rao's result,
to determine simple $\Z$-graded quotients of $\T{\Sc}$, we need to determine simple quotients of $\T{\psi}$
for all $\Zr$-graded algebra homomorphisms $\psi:\Sc\rightarrow \Lr$ such that
$\Image\psi$ are $\Zr$-graded simple $\Sc$-modules.

\begin{lem}\label{SimpleQuotientLem}
As a $\tau$-module, $V(\psi,0)$  has a unique maximal $\Z\times \Z^r$-graded
$\tau$-submodule, which is denoted by $M_{\psi}$.
Moreover, $M_{\psi}$  is the unique maximal $\Z$-graded ideal of \rtva{} $V(\psi,0)$.
\end{lem}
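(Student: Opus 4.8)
The plan is to obtain $M_{\psi}$ as the sum of all proper $\Z\times\Z^r$-graded $\Toro$-submodules of $\T{\psi}$, to check that this sum is proper (hence is the unique maximal such submodule), and then to promote it to an ideal of the \rtva{}. Throughout I would use that $\T{\psi}=U(\Toro)\otimes_{U(L_r(\borel+\C\cent))}A_{\psi}$ is a $\Z\times\Z^r$-graded $\Toro$-module, vanishing in negative $\Z$-degrees, with $\T{\psi}_{(0)}=A_{\psi}$ and $\T{\psi}=U(\Toro)A_{\psi}$; and that, by the same argument as in Lemma \ref{IdealCriToroLem} (since $\T{\psi}$ is generated over $\Toro$ by its degree-zero part), left ideals of the \rtva{} $\T{\psi}$ are exactly its $\Toro$-submodules while its ideals are exactly the $D_0$-stable $\Z^r$-graded $\Toro$-submodules.

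First I would note that $A_{\psi}\cong\Image\psi$ possesses a unique maximal proper $\Z^r$-graded $\Sc$-submodule. Indeed, $\Image\psi$ is a $\Z^r$-graded subalgebra of $L_r$, so its degree-${\bf 0}$ component equals $\C\vac$, which is one-dimensional and generates $A_{\psi}$ over $\Sc$; hence every proper $\Z^r$-graded $\Sc$-submodule of $A_{\psi}$ has vanishing degree-${\bf 0}$ component, and therefore so does their sum $N_{\psi}$, which is consequently proper. Now if $P$ is any proper $\Z\times\Z^r$-graded $\Toro$-submodule of $\T{\psi}$, then $P\cap A_{\psi}$ is a $\Z^r$-graded $\Sc$-submodule of $A_{\psi}$, necessarily proper (otherwise $P\supseteq U(\Toro)A_{\psi}=\T{\psi}$), so $P\cap A_{\psi}\subseteq N_{\psi}$. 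It follows that the sum $M_{\psi}$ of all proper $\Z\times\Z^r$-graded $\Toro$-submodules satisfies $M_{\psi}\cap A_{\psi}\subseteq N_{\psi}\subsetneq A_{\psi}$, so $M_{\psi}$ is itself proper and is by construction the unique maximal proper $\Z\times\Z^r$-graded $\Toro$-submodule of $\T{\psi}$ --- the first assertion. (When $\Image\psi$ is $\Z^r$-graded simple this also refines Proposition \ref{simple-quotients} for $A=A_{\psi}$.)

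The remaining, and most delicate, point is that $M_{\psi}$ is $D_0$-stable, so that Corollary \ref{IdealCriLem} makes it an ideal of the \rtva{}. I would use that $D_0$ is homogeneous of degree $(1,{\bf 0})$ and that ${\rm ad}_{D_0}$ maps $\Toro$ into itself (on $\hat{\g}\otimes L_r$ it acts by $-\pder{t_0}$ in the $t_0$-variable and it annihilates $\C\cent\otimes L_r$). Setting $\widetilde{M}=\sum_{k\ge0}D_0^{k}M_{\psi}$ and commuting $x\in\Toro$ past powers of $D_0$ via a relation of the shape $xD_0^{k}=\sum_{j}c_{k,j}\,D_0^{k-j}({\rm ad}_{D_0})^{j}(x)$, one checks --- using that each $({\rm ad}_{D_0})^{j}(x)$ lies in $\Toro$ and hence preserves $M_{\psi}$ --- that $\widetilde{M}$ is again a $\Z\times\Z^r$-graded $\Toro$-submodule; since $D_0$ raises the $\Z$-degree and $\T{\psi}$ vanishes in negative $\Z$-degrees, the component of $\widetilde{M}$ in each degree $(0,\m)$ equals that of $M_{\psi}$, so $\widetilde{M}$ is proper. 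Maximality gives $\widetilde{M}\subseteq M_{\psi}$, whence $D_0 M_{\psi}\subseteq M_{\psi}$; being also $\Z^r$-graded and a $\Toro$-submodule, $M_{\psi}$ is then a ($\Z\times\Z^r$-graded) ideal of the \rtva{} $\T{\psi}$ by Corollary \ref{IdealCriLem}. Conversely any proper $\Z$-graded ideal of the \rtva{} is a $D_0$-stable $\Z^r$-graded $\Toro$-submodule, hence a proper $\Z\times\Z^r$-graded $\Toro$-submodule, hence contained in $M_{\psi}$; so $M_{\psi}$ is also the unique maximal $\Z$-graded ideal. The hard part is precisely this $D_0$-stability: a priori $M_{\psi}$ is only a left ideal, and one must exploit the nonnegativity of the $\Z$-grading to make it two-sided, which is exactly what forces the ``$\Toro$-submodule'' and ``ideal'' versions in the statement to coincide.
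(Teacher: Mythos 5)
Your proof is correct and follows essentially the same route as the paper: every proper $\Z\times\Z^r$-graded $\tau$-submodule has vanishing degree-zero component (since $A_{\psi}$ generates and is graded simple), so the sum of all of them is proper and maximal, and then $\C[D_0]M_{\psi}$ is shown to be a proper graded $\tau$-submodule via the commutation relations of $D_0$ with $\tau$ and the nonnegativity of the $\Z$-grading, forcing $D_0M_{\psi}\subset M_{\psi}$ and hence ideality by the criterion of Lemma \ref{IdealCriToroLem}. The only cosmetic difference is your auxiliary submodule $N_{\psi}$, which is $0$ under the paper's standing assumption that $\Image\psi$ is $\Z^r$-graded simple.
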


\begin{proof} Note that $V(\psi,0)_{(0)}=A_{\psi}\simeq {\rm Im}\psi$, which is a $\Z^r$-graded irreducible $\Sc$-module
and that  $V(\psi,0)=U(\tau)A_{\psi}$.
In view of this, for every proper  $\Z\times \Z^r$-graded $\tau$-submodule $W$ of $V(\psi,0)$, we have $W_{(0)}=0$.
Then it follows that the sum  $M_{\psi}$ of all proper $\Z\times \Z^r$-graded $\tau$-submodules of $V(\psi,0)$ is
the (unique) maximal proper $\Z\times \Z^r$-graded $\tau$-submodule. To prove that $M_{\psi}$ is an ideal of $V(\psi,0)$,
by Lemma \ref{IdealCriToroLem}, we need to show
$\der_0(M_{\psi})\subset M_{\psi}$.
Since $D_{0}V(\psi,0)_{(n)}\subset V(\psi,0)_{(n+1)}$ for $n\in \Z$ and
$(M_{\psi})_{(0)}=0$, we have
\begin{eqnarray*}
\C[\der_0]M_{\psi}\subset \mathop{\bigoplus}\limits_{k>0} V(\psi,0)_{(k)}.
\end{eqnarray*}
Note that
$$[D_0,a\otimes t_0^{k}\bdt^{\m}]=-k(a\otimes t_0^{k-1}\bdt^{\m}),\  \  \  \   [D_0,\cent\otimes \bdt^{\m}]=0$$
for $a\in \g,\ k\in \Z,\ \m\in \Z^r$.
It follows that $\C[\der_0]M_{\psi}$ is a proper $\Z\times \Z^r$-graded $\tau$-submodule.
From the definition of $M_{\psi}$, we have $\C[\der_0]M_{\psi}\subset M_{\psi}$, which implies
$\der_0 M_{\psi}\subset M_{\psi}$.
By Lemma \ref{IdealCriToroLem}, $M_{\psi}$ is an ideal.
\end{proof}

\begin{de}
Let $\psi:\Sc\rightarrow \Lr$ be a $\Zr$-graded algebra homomorphism
such that $\Image\psi$ is a $\Zr$-graded simple $\Sc$-module.
Define $\Stva{\psi}$ to be the unique simple $\Z$-graded \rtvqa{} of $\T{\psi}$.
\end{de}

Let $E(1):\Lr\rightarrow \C$ be the evaluation map with $t_j=1$ for $1\le j\le r$.
For any  algebra homomorphism $\psi:\Sc\rightarrow \Lr$, we set
\begin{eqnarray}
\bar{\psi}=E(1)\circ \psi,
\end{eqnarray}
an algebra homomorphism from $\Sc$ to $\C$.
Conversely, any algebra homomorphism arises this way from a $\Zr$-graded algebra homomorphism $\psi$.

\begin{de}\label{V0}
Let  $\psi:\Sc\rightarrow \Lr$ be an algebra homomorphism.  We define
\begin{equation}
\overline{V}(\psi,0)=V(\Sc,0)/U(\tau)(\Ker \bar{\psi})=U(\tau)\otimes_{U(L_{r}(\borel +\C\cent))}\left(\Sc/\ker \bar{\psi}\right),
\end{equation}
a \vqa{}  of $V(\Sc,0)$.
\end{de}

Setting $A_{\bar{\psi}}=\Sc/(\ker \bar{\psi})$, we have
\begin{eqnarray}
\overline{V}(\psi,0)=V(A_{\bar{\psi}},0)=U(\tau)\otimes_{U(L_{r}(\borel +\C\cent))}A_{ \bar{\psi}}.
\end{eqnarray}
Note that $U(\tau)(\ker \bar{\psi})$ is a $\Z$-graded ideal. Thus $\overline{V}(\psi,0)$ is $\Z$-graded with
$$\overline{V}(\psi,0)_{(0)}=A_{\bar{\psi}}\  \  \mbox{ and }\  \  \overline{V}(\psi,0)_{(n)}=0\  \  \mbox{ for }n<0.$$
As $\ker \psi\subset \ker \bar{\psi}$, $\overline{V}(\psi,0)$ is naturally a homomorphism image of $V(\psi,0)$.

Similarly, we have:

\begin{lem}\label{SimpleQuotientVA}
As a $\tau$-module, $\overline{V}(\psi,0)$ has a unique maximal $\Z$-graded $\tau$-submodule denoted by $M_{\bar{\psi}}$,
which is the unique maximal $\Z$-graded ideal of vertex algebra $\overline{V}(\psi,0)$.
\end{lem}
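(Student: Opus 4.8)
The proof is a verbatim adaptation of that of Lemma \ref{SimpleQuotientLem}, with the $\Z\times\Z^{r}$-grading replaced by the $\Z$-grading, and with the one-dimensional (hence automatically irreducible) $\Sc$-module $A_{\bar{\psi}}=\Sc/\ker\bar{\psi}\cong\C$ playing the role of $A_{\psi}$.

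First I would record the two structural facts: $\overline{V}(\psi,0)_{(0)}=A_{\bar{\psi}}$ is an irreducible $\Sc$-module (being one-dimensional), and $\overline{V}(\psi,0)=U(\tau)A_{\bar{\psi}}$. Since $\Sc\subset U(\tau)$ acts on $\overline{V}(\psi,0)$ preserving the $\Z$-degree, for any $\Z$-graded $\tau$-submodule $W$ the subspace $W_{(0)}$ is an $\Sc$-submodule of $A_{\bar{\psi}}$; if $W_{(0)}=A_{\bar{\psi}}$ then $W\supset U(\tau)A_{\bar{\psi}}=\overline{V}(\psi,0)$, so when $W$ is proper we must have $W_{(0)}=0$. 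Therefore the sum $M_{\bar{\psi}}$ of all proper $\Z$-graded $\tau$-submodules of $\overline{V}(\psi,0)$ again satisfies $(M_{\bar{\psi}})_{(0)}=0$ and hence is itself proper; it is thus the unique maximal $\Z$-graded $\tau$-submodule.

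Next I would promote $M_{\bar{\psi}}$ to an ideal of the vertex algebra $\overline{V}(\psi,0)$. Exactly as in Lemma \ref{IdealCriToroLem}, since $\overline{V}(\psi,0)$ is generated from ${\bf 1}$ by the coefficients of $Y(a;x_0,\x)$ ($a\in\g$) and $Y(\cent;x_0,\x)$, a left ideal of the vertex algebra $\overline{V}(\psi,0)$ is the same thing as a $\tau$-submodule, and an ideal is the same thing as a $D_0$-stable $\tau$-submodule; so it suffices to check $D_0(M_{\bar{\psi}})\subset M_{\bar{\psi}}$. Because $D_0$ raises the $\Z$-degree by one and $(M_{\bar{\psi}})_{(0)}=0$, we get $\C[D_0]M_{\bar{\psi}}\subset\bigoplus_{k>0}\overline{V}(\psi,0)_{(k)}$, so $\C[D_0]M_{\bar{\psi}}$ has trivial degree-zero part and is in particular proper. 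Using $[D_0,a\otimes t_0^{k}\bdt^{\m}]=-k(a\otimes t_0^{k-1}\bdt^{\m})$ and $[D_0,\cent\otimes\bdt^{\m}]=0$ one sees that $\C[D_0]M_{\bar{\psi}}$ is $\tau$-stable, hence is a proper $\Z$-graded $\tau$-submodule; by the maximality established above, $\C[D_0]M_{\bar{\psi}}\subset M_{\bar{\psi}}$, and in particular $D_0(M_{\bar{\psi}})\subset M_{\bar{\psi}}$. Consequently $M_{\bar{\psi}}$ is a $\Z$-graded ideal of $\overline{V}(\psi,0)$, and since every proper $\Z$-graded ideal is a proper $\Z$-graded $\tau$-submodule (hence contained in $M_{\bar{\psi}}$), $M_{\bar{\psi}}$ is the unique maximal $\Z$-graded ideal.

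There is essentially no serious obstacle; the only points that require a line of verification are that $\Sc$ acts degree-preservingly (so that $W_{(0)}$ is $\Sc$-stable) and that $\C[D_0]M_{\bar{\psi}}$ remains $\tau$-stable and proper. Both are immediate from the displayed commutation relations together with the fact, recorded before the statement, that the $\Z$-grading of $\overline{V}(\psi,0)$ is bounded below by $0$ with $\overline{V}(\psi,0)_{(0)}=A_{\bar{\psi}}$.
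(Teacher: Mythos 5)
Your proposal is correct and follows essentially the same route as the paper, which proves this lemma by the word ``Similarly'' referring back to Lemma \ref{SimpleQuotientLem}: the degree-zero piece $A_{\bar{\psi}}\cong\C$ is an irreducible $\Sc$-module generating $\overline{V}(\psi,0)$ over $U(\tau)$, so every proper $\Z$-graded $\tau$-submodule has trivial degree-zero part, and the $D_0$-stability argument via $\C[D_0]M_{\bar{\psi}}$ is exactly the one used there. Your observation that the only substantive change is replacing the $\Z^r$-graded irreducibility of $\Image\psi$ by the one-dimensionality of $A_{\bar{\psi}}$ is precisely the point.
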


Define
\begin{eqnarray}
\VSzero{\psi}=\overline{V}(\psi,0)/M_{\bar{\psi}},
\end{eqnarray}
a simple $\Z$-graded vertex algebra.

We have:

\begin{coro}
Let $\psi:\Sc\rightarrow \Lr$ be a $\Zr$-graded algebra homomorphism
such that $\Image\psi$ is a $\Zr$-graded simple $\Z$-graded $\Sc$-module.
Then $\VSzero{\psi}$ is isomorphic to a simple $\Z$-graded \vqa{} of $L(\psi,0)$.
\end{coro}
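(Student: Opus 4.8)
The plan is to exhibit $\VSzero{\psi}$ as a quotient of $\Stva{\psi}$ through their common ancestor $\T{\psi}$, working with the vertex algebra structures $Y^{0}$ throughout. Recall that $\T{\psi}=\T{\Sc}/U(\tau)(\Ker\psi)$ and $\overline{V}(\psi,0)=\T{\Sc}/U(\tau)(\Ker\bar{\psi})$ (Definitions \ref{CenterSimpleTvaDef} and \ref{V0}), and that $\Ker\psi\subset\Ker\bar{\psi}$ since $\bar{\psi}=E(1)\circ\psi$. Hence the identity map of $\T{\Sc}$ induces a surjective vertex algebra homomorphism $q\colon\T{\psi}\to\overline{V}(\psi,0)$, and composing with the canonical map $\overline{V}(\psi,0)\to\VSzero{\psi}=\overline{V}(\psi,0)/M_{\bar{\psi}}$ gives a surjective homomorphism $Q\colon\T{\psi}\to\VSzero{\psi}$ of $\Z$-graded vertex algebras. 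On the other side, $M_{\psi}$ is an ideal of the toroidal vertex algebra $\T{\psi}$ (Lemma \ref{SimpleQuotientLem}); since $\T{\psi}$, being a quotient of $\T{\Sc}$, coincides with its own subalgebra $V^{0}$ and thus lies in ${\mathcal{C}}_{r+1}^{0}$ (Definition \ref{subcategory0}), the ideal $M_{\psi}$ is also an ideal for the vertex algebra structure $Y^{0}$, so $\Stva{\psi}=\T{\psi}/M_{\psi}$ is a $\Z$-graded vertex algebra quotient of $\T{\psi}$. It therefore suffices to establish the single inclusion $M_{\psi}\subset\Ker Q$: then $Q$ factors through $\Stva{\psi}$, yielding a surjective homomorphism $\Stva{\psi}\to\VSzero{\psi}$ of $\Z$-graded vertex algebras, which presents the simple $\Z$-graded vertex algebra $\VSzero{\psi}$ as a quotient of $\Stva{\psi}$.

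To prove $M_{\psi}\subset\Ker Q$ — equivalently, that the image of $M_{\psi}$ in $\overline{V}(\psi,0)$ lies in $M_{\bar{\psi}}$ — I would pass to $N:=M_{\psi}+\Ker q$. Here $\Ker q=U(\tau)(\Ker\bar{\psi})/U(\tau)(\Ker\psi)$ is a $\Z$-graded $\tau$-submodule of $\T{\psi}$, being $U(\tau)$ applied to the subspace $\Ker\bar{\psi}/\Ker\psi$ of the degree-zero component $\T{\psi}_{(0)}=A_{\psi}$, and $M_{\psi}$ is $\Z\times\Z^{r}$-graded, so $N$ is a $\Z$-graded $\tau$-submodule of $\T{\psi}$. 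The crucial observation is that $N$ is proper: in degree zero one has $(M_{\psi})_{(0)}=0$ by Lemma \ref{SimpleQuotientLem} and $(\Ker q)_{(0)}=\Ker\bar{\psi}/\Ker\psi$, a proper subspace of $A_{\psi}$ because $\bar{\psi}$ is an algebra homomorphism with $\bar{\psi}(1)=1$; hence $\vac\notin N$ and $N\neq\T{\psi}$. Therefore $N/\Ker q$ is a proper $\Z$-graded $\tau$-submodule of $\overline{V}(\psi,0)$, and so is contained in the unique maximal such submodule $M_{\bar{\psi}}$ (Lemma \ref{SimpleQuotientVA}). Consequently $M_{\psi}\subset N\subset\Ker Q$, completing the argument.

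The verifications that $q$ and $Q$ are homomorphisms of $\Z$-graded vertex algebras and that $M_{\psi}$ is a vertex algebra ideal are routine, resting on the constructions of Section 4 together with Lemma \ref{IdealCriToroLem} (applied to $\T{\psi}$). The one step needing care — and the main obstacle — is the inclusion $M_{\psi}\subset\Ker Q$: one cannot directly compare $M_{\psi}$ with $\Ker Q$ as "largest graded submodules", since $M_{\psi}$ is $\Z^{r}$-graded whereas $\Ker Q$ and $M_{\bar{\psi}}$ are only $\Z$-graded (the ideal $\Ker\bar{\psi}$ of $\Sc$ is not $\Z^{r}$-homogeneous). Passing instead to $N=M_{\psi}+\Ker q$, and using that $\vac$ has nonzero image in $A_{\bar{\psi}}=\C$ to force $N$ to be proper, is what makes the comparison with $M_{\bar{\psi}}$ go through.
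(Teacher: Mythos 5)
Your proof is correct and follows the route the paper intends (the paper states this corollary without proof, and the surjection $\pi:L(\psi,0)\to L^{0}(\psi,0)$ you construct is exactly the map appearing later in the commutative diagram (\ref{ecomm-diagram1})). The key point — that $q(M_{\psi})$ is a proper $\Z$-graded $\tau$-submodule of $\overline{V}(\psi,0)$ because its degree-zero component vanishes while $\bar{\psi}(1)=1$, hence it lies in $M_{\bar{\psi}}$ — is handled correctly by your passage to $N=M_{\psi}+\Ker q$.
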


Recall that $L^0(\psi,0)$ is the unique simple $\Z$-graded quotient vertex algebra of $\overline{V}(\psi,0)$
 and that $L^0(\psi,0)$ is a $\Z$-graded irreducible $\tau$-module generated by vector $1_{\psi}$
satisfying the condition:
$$L_r(\borel)1_{\psi}=0,\   \   \   u\cdot 1_{\psi}=\bar{\psi}(u)1_{\psi}\   \   \   \mbox{ for }u\in \Sc.$$

Let $\gamma$ be a linear functional on $L_r$.  We define
\begin{eqnarray}
I_{\gamma}=\{ u\in L_r\ |\ \gamma(uL_r)=0\}.
\end{eqnarray}
It can be readily seen that $I_{\gamma}$ is an ideal of $L_r$ such that $\gamma(I_{\gamma})=0$ and
it is the (unique) maximal ideal with this property.
Define a bilinear form $\<\cdot,\cdot\>_{\gamma}$ on $L_r(\g)$ by
\begin{eqnarray}
\< a\otimes \bdt^{\m},b\otimes \bdt^{\n}\>_{\gamma}=\<a,b\>\gamma(\bdt^{\m+\n})
\end{eqnarray}
for $a,b\in \g,\ \m,\n\in \Z^r$. It is clear that $\<\cdot,\cdot\>_{\gamma}$ is a symmetric invariant bilinear form on $L_r(\g)$.
Furthermore, we have:

\begin{lem}\label{kernel}
The kernel of the bilinear form $\<\cdot,\cdot\>_{\gamma}$ on $L_r(\g)$  is $\g\otimes I_{\gamma}$.
\end{lem}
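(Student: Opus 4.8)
```latex
\textbf{Proof proposal.}
The plan is to identify the kernel of $\<\cdot,\cdot\>_{\gamma}$ directly from the definition, using the nondegeneracy of the Killing form $\<\cdot,\cdot\>$ on $\g$ together with the defining (maximality) property of $I_{\gamma}$. Write $\rm{Rad}$ for the kernel of $\<\cdot,\cdot\>_{\gamma}$ on $L_r(\g)=\g\otimes L_r$. First I would check the easy inclusion $\g\otimes I_{\gamma}\subseteq \rm{Rad}$: for $a\in\g$, $u\in I_{\gamma}$, and any $b\otimes v\in L_r(\g)$ (with $v$ a monomial, extended by linearity), we have $\<a\otimes u, b\otimes v\>_{\gamma}=\<a,b\>\gamma(uv)$, and $\gamma(uv)=0$ since $uv\in uL_r$ and $\gamma(uL_r)=0$ by definition of $I_{\gamma}$. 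Hence $\g\otimes I_{\gamma}$ is contained in the radical.

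For the reverse inclusion, take a general element $w\in \rm{Rad}$ and write it (uniquely) as $w=\sum_{i} x_i\otimes u_i$ where $\{x_i\}$ is a fixed basis of $\g$ and $u_i\in L_r$. For every $b\in\g$ and every $v\in L_r$ we then have $0=\<w,b\otimes v\>_{\gamma}=\sum_i \<x_i,b\>\gamma(u_i v)$. Now I would fix an index $i_0$ and, using that the Killing form is nondegenerate, choose $b\in\g$ with $\<x_i,b\>=\delta_{i,i_0}$ (this is possible: nondegeneracy lets us realize any linear functional on $\g$, in particular the coordinate functional dual to $x_{i_0}$, as $\<\cdot,b\>$ for a suitable $b$). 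Substituting this $b$ gives $\gamma(u_{i_0}v)=0$ for all $v\in L_r$, i.e. $\gamma(u_{i_0}L_r)=0$, so $u_{i_0}\in I_{\gamma}$. Since $i_0$ was arbitrary, every $u_i\in I_{\gamma}$, whence $w\in\g\otimes I_{\gamma}$. This proves $\rm{Rad}\subseteq \g\otimes I_{\gamma}$, and combined with the first inclusion yields $\rm{Rad}=\g\otimes I_{\gamma}$, as claimed.

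The only point requiring any care is the dual-basis step: one should record that $L_r(\g)$, as a $\C$-vector space, is $\bigoplus_i (x_i\otimes L_r)$, so that the coefficients $u_i$ in $w=\sum_i x_i\otimes u_i$ are well defined, and that the finite sum over $i$ lets us isolate one $u_{i_0}$ by pairing against the Killing-dual vector of $x_{i_0}$. I do not expect a genuine obstacle here; the argument is a routine ``tensor with a nondegenerate form'' computation, and the statement of the lemma is exactly what one needs later to compare $\VSzero{\psi}$ (equivalently $L^0(\psi,0)$) with a tensor product of affine vertex algebras $L_{\hat{\g}}(\ell_i,0)$ via the form $\<\cdot,\cdot\>_{\gamma}$ attached to the relevant $\gamma$.
```
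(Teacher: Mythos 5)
Your proof is correct, but it takes a slightly different route from the paper's. You argue purely by linear algebra: writing an element of the radical as $\sum_i x_i\otimes u_i$ over a basis $\{x_i\}$ of $\g$ and using nondegeneracy of the normalized Killing form to produce a dual vector $b$ with $\<x_i,b\>=\delta_{i,i_0}$, thereby isolating each coefficient $u_{i_0}$ and showing $\gamma(u_{i_0}L_r)=0$. The paper instead observes that the kernel $K$ is a $\g$-submodule of $L_r(\g)$ (by invariance of $\<\cdot,\cdot\>_{\gamma}$), invokes simplicity of $\g$ to conclude $K=\g\otimes U$ for some subspace $U\subseteq L_r$, and then tests $U$ against a single pair $a,b$ with $\<a,b\>=1$ to get $U\subseteq I_{\gamma}$. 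Your argument is more elementary in that it needs only nondegeneracy of the form on $\g$ and no representation theory, while the paper's is slightly slicker in that it never chooses a basis; both establish the easy inclusion $\g\otimes I_{\gamma}\subseteq K$ identically, and both are complete. The one step you should make explicit (and you do flag it) is that $\g\otimes L_r=\bigoplus_i(x_i\otimes L_r)$, so the coefficients $u_i$ are well defined; with that recorded, there is no gap.
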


\begin{proof} It is clear from the definitions that $\g\otimes I_{\gamma}$ is contained in the kernel.
For convenience, let $K$ denote the kernel. Then $K$ is a $\g$-submodule of $L_r(\g)$.
Since $\g$ is a finite-dimensional simple Lie algebra (over $\C$), we have $K=\g\otimes U$
for some subspace $U$ of $L_r$. Choose $a,b\in \g$ such that $\<a,b\>=1$. For any $u\in U,\ v\in L_r$, we have
$$0=\<a\otimes u,b\otimes v\>_{\gamma}=\<a,b\>\gamma(uv)=\gamma(uv).$$
This implies $U\subset I_{\gamma}$, proving $K=\g\otimes U\subset \g\otimes I_{\gamma}$.
\end{proof}

For a linear functional $\gamma: L_r\rightarrow \C$, set
\begin{eqnarray}
G(r,\g,\gamma)=\g\otimes (L_r/I_{\gamma})
\end{eqnarray}
and equip $G(r,\g,\gamma)$ with the symmetric invariant bilinear form $\<\cdot,\cdot\>_{\gamma}$, which is non-degenerate
by Lemma \ref{kernel}. Denote by $\widehat{G}(r,\g,\gamma)$ the affine Lie algebra.
It is known that for any complex number $\ell$, we have a simple $\Z$-graded vertex algebra
$L_{\widehat{G}(r,\g,\gamma)}(\ell,0)$.
Let $\psi:\Sc\rightarrow \Lr$ be a $\Zr$-graded algebra homomorphism.
Define $\gamma_\psi:\Lr\rightarrow \C$ by $\bdt^\m\mapsto \bar{\psi}(\cent\otimes \bdt^\m)$ for $\m\in \Zr$.
Then we have:

\begin{lem}
The simple $\Z$-graded vertex algebra $L^0(\psi,0)$ is isomorphic to $L_{\widehat{G}(r,\g,\gamma_\psi)}(\ell,0)$ with $\ell=\gamma_\psi(1)$.
In particular,  $L^0(\psi,0)_{(1)}$ is isomorphic to $G(r,\g,\gamma_\psi)$.
\end{lem}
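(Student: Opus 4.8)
The plan is to realize $\VSzero{\psi}$ as the simple affine vertex algebra $L_{\widehat{G}(r,\g,\gamma_\psi)}(\ell,0)$; write $G$ for $G(r,\g,\gamma_\psi)$ and $\widehat{G}$ for $\widehat{G}(r,\g,\gamma_\psi)$. The three steps I would carry out are: identify $\VSzero{\psi}_{(1)}$ with $G$; show that the $\Toro$-action on $\VSzero{\psi}$ factors through the natural homomorphism $\Toro\to\widehat{G}$; and then apply the universal property of affine vertex algebras together with the simplicity of $\VSzero{\psi}$. Throughout I use that $\VSzero{\psi}=\overline{V}(\psi,0)/M_{\bar\psi}$ with $M_{\bar\psi}$ the maximal $\Z$-graded ideal (Lemma \ref{SimpleQuotientVA}), that $\cent\otimes\bdt^\m$ acts on $\overline{V}(\psi,0)$ by the scalar $\gamma_\psi(\bdt^\m)$ (as $\overline{V}(\psi,0)_{(0)}=A_{\bar\psi}=\C$), and that there is a linear isomorphism $\phi\colon\lr{\g}\to\overline{V}(\psi,0)_{(1)}$, $\phi(a\otimes\bdt^\m)=(a\otimes t_0^{-1}\bdt^\m)1_\psi$.

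For step one I would compute products in the vertex algebra $\overline{V}(\psi,0)$. Because $\Yva{\phi(a\otimes\bdt^\m)}{x_0}=\sum_{n\in\Z}(a\otimes t_0^{n}\bdt^\m)x_0^{-n-1}$, moving $\lr{\borel}$-elements to the right (where they annihilate $1_\psi$) and using the central action yields
\[
\phi(a\otimes\bdt^\m)_{0}\,\phi(b\otimes\bdt^\n)=\phi\big([a,b]\otimes\bdt^{\m+\n}\big),\qquad
\phi(a\otimes\bdt^\m)_{1}\,\phi(b\otimes\bdt^\n)=\langle a,b\rangle\,\gamma_\psi(\bdt^{\m+\n})\,1_\psi,
\]
so $\phi$ is an isomorphism of Lie algebras (the target carrying the bracket $u_{0}v$) matching $\langle\cdot,\cdot\rangle_{\gamma_\psi}$ with the pairing $(u,v)\mapsto u_{1}v$. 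Since $M_{\bar\psi}$ is a $\Toro$-submodule, $(M_{\bar\psi})_{(1)}$ is a $\g\otimes\Lr$-submodule of $\overline{V}(\psi,0)_{(1)}\cong\lr{\g}$, hence (as $\g$ is simple) equals $\phi(\g\otimes I)$ for an ideal $I\subseteq\Lr$; a short bracket computation using $\gamma_\psi(I_{\gamma_\psi}\Lr)=0$ shows $I_{\gamma_\psi}\subseteq I$ (the $\Toro$-submodule generated by $\phi(\g\otimes I_{\gamma_\psi})$ meets $\C1_\psi$ trivially, hence lies in $M_{\bar\psi}$) and $I\subseteq I_{\gamma_\psi}$ (otherwise, for $f\in I\setminus I_{\gamma_\psi}$, some $(b\otimes t_0 g)\cdot\phi(a\otimes f)=\langle b,a\rangle\gamma_\psi(fg)1_\psi$ is a nonzero multiple of $1_\psi$ inside $M_{\bar\psi}$). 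Thus $(M_{\bar\psi})_{(1)}=\phi(\g\otimes I_{\gamma_\psi})$ and, by Lemma \ref{kernel},
\[
\VSzero{\psi}_{(1)}\cong\lr{\g}/(\g\otimes I_{\gamma_\psi})=\g\otimes(\Lr/I_{\gamma_\psi})=G,
\]
which is the ``in particular'' assertion; moreover Lemma \ref{kernel} shows $\langle\cdot,\cdot\rangle_{\gamma_\psi}$ descends to the nondegenerate invariant form used to form $\widehat{G}$.

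For step two, let $\Phi\colon\Toro\to\widehat{G}$ be the Lie algebra homomorphism with $a\otimes t_0^{m}\bdt^\m\mapsto(a\otimes(\bdt^\m+I_{\gamma_\psi}))\otimes s^{m}$ and with $\cent$ mapping into the centre of $\widehat{G}$ (that this respects brackets uses Lemma \ref{kernel}). Its kernel is spanned by the elements $\cent\otimes f$ with $\gamma_\psi(f)=0$, which act as $0$ since $\cent\otimes f$ acts by $\gamma_\psi(f)$, together with the elements $a\otimes t_0^{m}f$ with $a\in\g$, $m\in\Z$, $f\in I_{\gamma_\psi}$. For the latter: $\phi(a\otimes f)=(a\otimes t_0^{-1}f)1_\psi\in M_{\bar\psi}$ by step one, hence is $0$ in $\VSzero{\psi}$; as $M_{\bar\psi}$ is an ideal, $\Yva{(a\otimes t_0^{-1}f)1_\psi}{x_0}=\sum_{n\in\Z}(a\otimes t_0^{n}f)x_0^{-n-1}$ acts as $0$ on $\VSzero{\psi}$, so every $a\otimes t_0^{n}f$ does. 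Hence $\ker\Phi$ annihilates $\VSzero{\psi}$, and $\VSzero{\psi}$ becomes a restricted $\Z_{\ge 0}$-graded $\widehat{G}$-module: irreducible, generated by $1_\psi$, with $G\otimes\C[s]$ killing $1_\psi$ and with the canonical central element acting by $\gamma_\psi(1)$ (the scalar by which $\cent$ acts); that is, of level $\ell=\gamma_\psi(1)$. This descent is the main obstacle: its real content is that $\ker\Phi$ --- in particular all $a\otimes t_0^{n}f$ with $f\in I_{\gamma_\psi}$ --- acts trivially, i.e.\ that $\VSzero{\psi}$ is genuinely a $\widehat{G}$-module rather than merely a $\Toro$-module.

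For step three, since the fields $\Yva{\phi(a\otimes\bdt^\m)}{x_0}$ generate the $\widehat{G}$-action and $\VSzero{\psi}$ is a vertex algebra generated by its degree-one subspace $G$ (because $\overline{V}(\psi,0)$, and hence its quotient, is generated as a vertex algebra by its degree-one part), the universal property of the affine vertex algebra $V_{\widehat{G}}(\ell,0)$ provides a surjective vertex algebra homomorphism $V_{\widehat{G}}(\ell,0)\to\VSzero{\psi}$ carrying the vacuum to $1_\psi$. Its kernel is a proper $\Z$-graded ideal; since $\VSzero{\psi}$ is simple, this kernel is the unique maximal $\Z$-graded ideal of $V_{\widehat{G}}(\ell,0)$, and therefore $\VSzero{\psi}\cong L_{\widehat{G}}(\ell,0)=L_{\widehat{G}(r,\g,\gamma_\psi)}(\ell,0)$, as desired.
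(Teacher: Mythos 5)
Your proposal is correct and follows essentially the same route as the paper: the heart of both arguments is the observation that for $u\in I_{\gamma_\psi}$ one has $(b\otimes v)_{1}(a\otimes u)_{-1}\vac=0$, so the $\Toro$-submodule generated by $(a\otimes u)_{-1}\vac$ is proper and hence $\g\otimes I_{\gamma_\psi}$ dies in degree one, identifying the degree-one piece with $G(r,\g,\gamma_\psi)$. The paper only writes out this degree-one computation and leaves the descent to a $\widehat{G}(r,\g,\gamma_\psi)$-module and the universal-property/simplicity argument implicit; your steps two and three supply exactly those missing details correctly.
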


\begin{proof} Recall that
$L^0(\psi,0)$ is an $\N$-graded vertex algebra with $L^0(\psi,0)_{(0)}=\C {\bf 1}$ and $L^0(\psi,0)_{(1)}$ is linearly spanned by $a_{-1,\m}{\bf 1}$ for $a\in \g,\ \m\in \Z^r$.
Note that $L^0(\psi,0)_{(1)}$ is a Lie algebra.
Let $\phi: L_r(\g)\rightarrow L^0(\psi,0)_{(1)}$ be the linear map given by
$\phi(a\otimes \bdt^{\m})=a_{-1,\m}{\bf 1}$ for $a\in \g,\ \m\in \Z^r$. This is a Lie algebra homomorphism.
We show that $\ker \phi=\g\otimes I_{\gamma}$. Let $a\in \g,\ u\in I_{\gamma}$. For any $b\in \g,\ v\in L_r$, we have
$$(b\otimes v)_{1}(a\otimes u)_{-1}{\bf 1}=0.$$
Then $U(\tau)(a\otimes u)_{-1}{\bf 1}$ is a proper graded submodule of $L^0(\psi,0)$.
As $L^0(\psi,0)$ is a $\Z$-graded irreducible $\tau$-module, we must have $(a\otimes u)_{-1}{\bf 1}=0$.
This shows that $\g\otimes I_{\gamma}\subset \ker \phi$. Consequently,
 $L^0(\psi,0)_{(1)}$ is isomorphic to $\g\otimes (L_r/I_{\gamma})$.
\end{proof}

Let $s$ be a positive integer, let  $\bda_1,\dots,\bda_s\in (\C^\times)^r$ be distinct,
and let  $\ell_1,\dots,\ell_s$ be complex numbers. To these data, we associate
a $\Zr$-graded algebra homomorphism $\psi:\Sc\rightarrow \Lr$ determined by
\begin{equation}\label{IntCentActEq}
\psi(\cent\otimes \bdt^\m)
=\left(\sum\limits_{i=1}^{s}\ell_i \bda_i^\m\right)\bdt^\m
\end{equation}
for $\m=(m_1,\dots,m_r)\in \Z^r$, where $\bda_i^\m=a_{i1}^{m_1}\cdots a_{ir}^{m_r}$.
We have
\begin{equation}\label{ebarpsi}
\bar{\psi}(\cent\otimes \bdt^\m)=\sum\limits_{i=1}^{s}\ell_i \bda_i^\m.
\end{equation}

\begin{rem}\label{rem:comp123456} {\em
For each $1\leq i\leq r$, suppose $b_{i1},\dots,b_{iN_i}$ are all the distinct elements of
$\{ a_{1i},\dots,a_{si} \}$.
Set
\begin{eqnarray*}
B=\{ (b_{1j_1},\dots,b_{rj_r}) \,|\, 1\leq j_i\leq N_i\ \mbox{for }1\leq i\leq r  \}\subset (\C^{\times})^{r}
\end{eqnarray*}
with $|B|=N_1N_2\cdots N_r$.
From definition we have $\{\bda_1,\dots,\bda_s\}\subset B$.
Write $B=\{\rsymbol{b}_{1},\dots, \rsymbol{b}_{N}\}$ with $N=N_1N_2\cdots N_r$
such that $\rsymbol{b}_i=\bda_i$ ($1\leq i\leq s$),
noticing that $\bda_1,\dots, \bda_s$ are distinct.
Then $\bar{\psi}$ defined in (\ref{ebarpsi})
is a special case of the algebra homomorphism $\psi$ defined in (\cite{R2}; (3.14)).
From \cite{R2} (Lemma 3.11), the Lie algebra homomorphism
$\Phi:\lr{\hat{\g}}\rightarrow \hat{\g}^{\oplus N}$,
which is given by $a\otimes \bdt^\m\mapsto (\rsymbol{b}_i^\m a)_{1\leq i\leq N}$ ($a\in\hat{\g}$), is surjective.
Denote by $\pi$ the projection map from $\hat{\g}^{\oplus N}$ to $\hat{\g}^{\oplus s}$ via
$(a_i)_{1\leq i\leq N}\mapsto (a_i)_{1\leq i\leq s}$. Then $\pi\circ \Phi$ is surjective.}
\end{rem}

Note that for any complex number $\ell$, one has a simple $\Z$-graded vertex algebra $L_{\hat{\g}}(\ell,0)$
associated to the affine Lie algebra $\hat{\g}$.
Furthermore, if $\ell\ne -h^{\vee}$, where $h^{\vee}$ is the dual Coxeter number of $\g$,
$L_{\hat{\g}}(\ell,0)$ is a simple vertex operator algebra.
We have:

\begin{prop}\label{SzeroStructProp}
Let $s$ be a positive integer, let $\ell_1,\dots,\ell_{s}$ be complex numbers, and let $\bda_{1},\dots,\bda_{s}\in (\C^{\times})^{r}$ be distinct. Let $\psi:\Sc\rightarrow L_r$ be defined as in
(\ref{IntCentActEq}).
Then
there exists a \va{} isomorphism which is also a $\Toro$-module isomorphism
\begin{equation}
\eta:\VSzero{\psi}\rightarrow L_{\hat{\g}}(\ell_1,0)\otimes \cdots\otimes L_{\hat{\g}}(\ell_s,0),
\end{equation}
 such that
\begin{eqnarray}
\eta(a_{-1,\m}\vac)=\sum\limits_{i=1}^s \bda_i^\m \left(\vac \otimes \cdots \otimes
		\mathop{a_{-1}\vac}_{\substack{\uparrow \\ i\text{-th}}} \otimes \cdots \otimes \vac\right)
\end{eqnarray}
for $a\in \g$, $\ranger{m}$. Furthermore, if $\ell_{i}\ne -h^{\vee}$ for $1\le i\le s$,
$L^0(\psi,0)$ is a simple vertex operator algebra.
\end{prop}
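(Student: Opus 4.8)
The plan is to realize $\VSzero{\psi}$ directly as $M:=L_{\hat{\g}}(\ell_1,0)\otimes\cdots\otimes L_{\hat{\g}}(\ell_s,0)$ and then read off every assertion. Recall that $\VSzero{\psi}=\overline{V}(\psi,0)/M_{\bar{\psi}}$, where by Lemma \ref{SimpleQuotientVA} $M_{\bar{\psi}}$ is the unique maximal $\Z$-graded $\tau$-submodule of $\overline{V}(\psi,0)$, and that $\overline{V}(\psi,0)=U(\tau)\otimes_{U(L_r(\borel+\C\cent))}A_{\bar{\psi}}$ with $A_{\bar{\psi}}\cong\C$. By Remark \ref{rem:comp123456} the Lie algebra homomorphism $\rho:=\pi\circ\Phi:\tau=L_r(\hat{\g})\rightarrow\hat{\g}^{\oplus s}$, $a\otimes\bdt^\m\mapsto(\bda_i^\m a)_{1\le i\le s}$, is surjective. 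Viewing $M$ as a $\tau$-module through $\rho$, with the $i$-th summand of $\hat{\g}^{\oplus s}$ acting on the $i$-th tensor factor $L_{\hat{\g}}(\ell_i,0)$, $M$ is $\Z$-graded by total conformal weight, compatibly with the $\Z$-grading of $\tau$ by $t_0$-degree. On $\vac_M:=\vac\otimes\cdots\otimes\vac$ one checks that $\rho(L_r(\borel))\subseteq(\g\otimes\C[t_0])^{\oplus s}$ annihilates $\vac_M$, that $\rho(\cent\otimes\bdt^\m)=(\bda_i^\m\cent)_{1\le i\le s}$ acts on $\vac_M$ as the scalar $\sum_{i=1}^s\ell_i\bda_i^\m=\bar{\psi}(\cent\otimes\bdt^\m)$, and that $M=U(\hat{\g}^{\oplus s})\vac_M=U(\tau)\vac_M$. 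Hence by the universal property of $\overline{V}(\psi,0)$ there is a unique $\tau$-module homomorphism $\Theta:\overline{V}(\psi,0)\rightarrow M$ with $\vac\mapsto\vac_M$; it is surjective and respects the $\Z$-gradings.

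The crux is that $M$ has no nonzero proper $\Z$-graded $\tau$-submodule. Since $\rho$ is surjective, $\tau$-submodules of $M$ coincide with $\hat{\g}^{\oplus s}$-submodules, so it suffices that $M$ be irreducible over $\hat{\g}^{\oplus s}$. Each $L_{\hat{\g}}(\ell_i,0)$ is an irreducible $\hat{\g}$-module: it is the unique irreducible quotient of the $\N$-graded cyclic module $V_{\hat{\g}}(\ell_i,0)$ whose degree-zero part is $\C\vac$, so it has no nonzero proper submodule; being of countable dimension over $\C$, it satisfies ${\rm End}_{\hat{\g}}L_{\hat{\g}}(\ell_i,0)=\C$ by the generalized Schur lemma. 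Since $U(\hat{\g}^{\oplus s})=U(\hat{\g})^{\otimes s}$, the density theorem then shows that $M$ is an irreducible $\hat{\g}^{\oplus s}$-module (this is the tensor-product mechanism underlying Rao's constructions; cf. \cite{R1}, \cite{R2}). Consequently $\ker\Theta$, a proper $\Z$-graded $\tau$-submodule of $\overline{V}(\psi,0)$, lies in $M_{\bar{\psi}}$, while $M\cong\overline{V}(\psi,0)/\ker\Theta$ is irreducible; therefore $\ker\Theta=M_{\bar{\psi}}$, and $\Theta$ induces a $\Z$-graded $\tau$-module isomorphism $\eta:\VSzero{\psi}\xrightarrow{\sim}M$ with $\eta(1_\psi)=\vac_M$.

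It remains to upgrade $\eta$ to a vertex algebra isomorphism, record the explicit formula, and derive the last statement. Since $a_{-1,\m}1_\psi=(a\otimes t_0^{-1}\bdt^\m)\cdot 1_\psi$, $\tau$-equivariance of $\eta$ gives $\eta(a_{-1,\m}1_\psi)=\rho(a\otimes t_0^{-1}\bdt^\m)\vac_M=\sum_{i=1}^s\bda_i^\m(\vac\otimes\cdots\otimes a_{-1}\vac\otimes\cdots\otimes\vac)$, with $a_{-1}\vac$ in the $i$-th position, which is exactly the stated formula. Both $\VSzero{\psi}$ and $M$ are generated as vertex algebras by their weight-one subspaces (for $\VSzero{\psi}$ the weight-zero part is $\C\vac$ since $\overline{V}(\psi,0)_{(0)}=A_{\bar{\psi}}\cong\C$, so the $\cent$-generators become scalars; for $M$ this holds because each $L_{\hat{\g}}(\ell_i,0)$ is so generated and the property is preserved by tensor products), $\eta$ preserves $\vac$, and the vertex operators of the generators are intertwined by $\eta$: on $\VSzero{\psi}$ one has $Y^0(a_{-1,\m}\vac,x_0)=\sum_{k\in\Z}(a\otimes t_0^k\bdt^\m)x_0^{-k-1}$, while the vertex operator on $M$ of $\eta(a_{-1,\m}\vac)$ equals $\sum_{k\in\Z}\rho(a\otimes t_0^k\bdt^\m)x_0^{-k-1}$, and these agree under $\eta$ by $\tau$-equivariance. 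By uniqueness of the vertex algebra structure generated by a given family of mutually local fields, $\eta$ intertwines all vertex operators, i.e.\ it is an isomorphism of vertex algebras. Finally, if $\ell_i\ne -h^{\vee}$ for every $i$, each $L_{\hat{\g}}(\ell_i,0)$ is a simple vertex operator algebra with Sugawara conformal vector $\omega_i$; hence $M$ is a vertex operator algebra with conformal vector $\sum_{i=1}^s\omega_i$, with finite-dimensional weight spaces bounded below, and it is simple since any vertex algebra ideal of $M$ is in particular a $\tau$-, hence $\hat{\g}^{\oplus s}$-, submodule. Transporting along $\eta$, $\VSzero{\psi}$ is a simple vertex operator algebra.

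I expect the main obstacle to be the irreducibility of $M$ over $\hat{\g}^{\oplus s}$ (equivalently over $\tau$), which must be argued uniformly for all complex $\ell_i$, including non-rational and critical levels where $V_{\hat{\g}}(\ell_i,0)$ may be highly reducible. The surjectivity of $\pi\circ\Phi$ from Remark \ref{rem:comp123456} is what lets one pass from a $\tau$-module question to an $\hat{\g}^{\oplus s}$-module question, and from there one needs the countable-dimensionality of the $L_{\hat{\g}}(\ell_i,0)$'s together with a density-theorem argument for tensor products of absolutely irreducible modules; this is where the input from Rao's work is essential.
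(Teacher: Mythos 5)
Your overall route is the same as the paper's: both proofs use the surjection $\pi\circ\Phi$ of Remark \ref{rem:comp123456} to make $M=L_{\hat{\g}}(\ell_1,0)\otimes\cdots\otimes L_{\hat{\g}}(\ell_s,0)$ a $\tau$-module, check that ${\bf 1}^{\otimes s}$ is annihilated by $L_r(\borel)$ and carries the character $\bar{\psi}$ of $\Sc$, identify $M$ with the unique graded simple quotient $\VSzero{\psi}$ of the induced module $\overline{V}(\psi,0)$, and then upgrade the $\tau$-module isomorphism to a vertex algebra isomorphism using that the weight-one subspace generates everything. The explicit formula for $\eta(a_{-1,\m}\vac)$ comes out identically.

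The one point where you diverge is the irreducibility of $M$ over $\tau$, which the paper attributes to Rao and you try to prove by Jacobson density, and there your argument has a gap. You assert that each $L_{\hat{\g}}(\ell_i,0)$ ``has no nonzero proper submodule'' because it is the graded simple quotient of $V_{\hat{\g}}(\ell_i,0)$ with degree-zero piece $\C\vac$. That only excludes proper \emph{graded} submodules; a $\hat{\g}$-submodule need not be graded, since the grading operator is not an element of $U(\hat{\g})$. For $\ell_i\ne -h^{\vee}$ one can repair this with the Sugawara operator $L_0$ (every submodule is $L_0$-stable, hence graded), but the proposition allows arbitrary complex $\ell_i$, and at the critical level $\ell_i=-h^{\vee}$ this device is unavailable, so the ungraded irreducibility your density argument needs is not established. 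Fortunately its full strength is not required: since $\VSzero{\psi}$ is characterized as the unique \emph{graded} simple quotient of $\overline{V}(\psi,0)$, it suffices that $M$ have no proper nonzero graded $\hat{\g}^{\oplus s}$-submodule, and this follows from graded simplicity of the factors by an elementary singular-vector argument: a nonzero graded submodule contains a nonzero homogeneous vector of minimal degree, which is then annihilated by $(\g\otimes t_0\C[t_0])^{\oplus s}$; examining its top homogeneous component in the first tensor slot shows that component would generate a proper graded submodule of $L_{\hat{\g}}(\ell_1,0)$ unless it lies in degree zero, and iterating over the factors forces the vector into $M_0=\C\,{\bf 1}^{\otimes s}$. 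With that substitution your proof goes through; the remaining steps --- surjectivity of $\Theta$, the identification $\ker\Theta=M_{\bar{\psi}}$, the intertwining of the generating fields, and the simple vertex operator algebra statement when every $\ell_i\ne -h^{\vee}$ --- are correct and match the paper.
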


\begin{proof} From \cite{R2} (Lemma 3.11) (see Remark \ref{rem:comp123456}),
the right hand side is an irreducible $\Z$-graded $\tau$-module
with $\cent \otimes \bdt^{\m}$ acting as scalar
$$\sum_{i=1}^{s}\ell_{i}\bda_{i}^{\m}\  \  \left(=\bar{\psi}(\cent\otimes \bdt^{\m})\right)$$
and with $a\otimes t_0^{k}\bdt^{\m}$ acting as
$$\sum_{i=1}^{s}\ell_i \bda_{i}^{\m}\pi_{i}(a\otimes t_0^{k})$$
for $a\in \g,\ (k,\m)\in \Z\times \Z^r$, where $\pi_{i}$ is the Lie algebra embedding of $\hat{\g}$ into $\hat{\g}^{\oplus s}$.
Let $V$ denote this $\tau$-module locally and let ${\bf 1}_{R}$ denote the generator ${\bf 1}^{\otimes s}$.
We have
$$L_r(\borel){\bf 1}_{R}=0,\   \   \   u\cdot {\bf 1}_{R}=\bar{\psi}(u){\bf 1}_{R}\   \   \   \mbox{ for }u\in \Sc.$$
It follows that there is a $\tau$-module isomorphism $\eta$ from $L^0(\psi,0)$ to $V$ with $\eta({\bf 1})={\bf 1}_{R}$.
For $a\in \g,\ \m\in \Z^r$, we have
$$\eta(a_{-1,\m}{\bf 1})=a_{-1,\m}{\bf 1}_{R}=\sum_{i=1}^s \bda_i^\m \left(\vac \otimes \cdots \otimes
		\mathop{a_{-1}\vac}_{\substack{\uparrow \\ i\text{-th}}} \otimes \cdots \otimes \vac\right).$$
	By (\ref{2.4}) we have $$Y(a_{-1,\m}{\bf 1};x_0,\x)=\sum_{m_0\in \Z}a_{m_0,\m}x_0^{-m_0-1}\x^{-\m},$$
	so that
		$$Y(a_{-1,\m}{\bf 1},x_0)=Y(a_{-1,\m}{\bf 1};x_0,\x)|_{\x=1}
		=\sum_{m_0\in \Z}a_{m_0,\m}x_0^{-m_0-1}=a(x_0,\m).$$
On the other hand,  for $a\in \g,\ \m\in \Z^r$, we have
	$$Y(\eta(a_{-1,\m}{\bf 1}),x_0)=\sum_{i=1}^{s}\bda_{i}^{\m}Y(\pi_{i}(a),x_0)=\rho_{R}(a(x_0,\m)),$$
	where $\rho_{R}$ is the Lie algebra homomorphism affording the $\tau$-module $V$ and
	$\pi_{i}$ is the Lie algebra embedding of $\g$ into $\oplus_{j=1}^{s}\g$.
 With $\eta$ a $\tau$-module homomorphism, we have
$$\eta(Y(u,x_0)w)=Y(\eta(u),x_0)\eta(w)\  \  \  \mbox{ for all }u\in L^0(\psi,0)_{(1)},\  w\in L^0(\psi,0).$$
As $L^0(\psi,0)_{(1)}$ generates $L^0(\psi,0)$ as a vertex algebra, it follows that $\eta$ is a homomorphism of vertex algebras.
Thus $\eta$ is an isomorphism of vertex algebras.
Restricted onto $L^0(\psi,0)_{(1)}$, $\eta$ becomes an isomorphism of Lie algebras
onto $\oplus_{i=1}^{s}\g$.
\end{proof}

Note  (see \cite{FZ}, \cite{DL},  \cite{Li1}, \cite{MP1}, \cite{MP2}) that for every positive integer $\ell$,
the module category of $L_{\hat{\g}}(\ell,0)$ is semi-simple
and irreducible $L_{\hat{\g}}(\ell,0)$-modules
are exactly integrable highest weight $\hat{\g}$-modules of level $\ell$.
From \cite{FHL}, \cite{DLM} (Proposition 3.3) and Proposition \ref{SzeroStructProp}, we immediately have:

\begin{coro}\label{SzeroModCateCoro1}
Let $\psi:\Sc\rightarrow \Lr$ be the $\Zr$-graded algebra homomorphism given as in  Proposition \ref{SzeroStructProp}.
Assume that $\ell_1,\dots,\ell_s$ are positive integers.
Then every $\VSzero{\psi}$-module is completely reducible and $\VSzero{\psi}$ has only
 finitely many irreducible modules up to isomorphism.
 \end{coro}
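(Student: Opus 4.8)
The plan is to transport both assertions through the structural identification already in hand. By Proposition \ref{SzeroStructProp}, since $\ell_1,\dots,\ell_s$ are positive integers (so in particular $\ell_i\neq -h^{\vee}$ for all $i$, as $h^{\vee}>0$), we have an isomorphism of $\Z$-graded vertex algebras
\[
\eta:\VSzero{\psi}\ \xrightarrow{\ \sim\ }\ L_{\hat{\g}}(\ell_1,0)\otimes\cdots\otimes L_{\hat{\g}}(\ell_s,0),
\]
and each factor $L_{\hat{\g}}(\ell_i,0)$ is a simple vertex operator algebra (indeed $\VSzero{\psi}=L^0(\psi,0)$ is itself a simple vertex operator algebra by the same proposition). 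Under $\eta$, $\VSzero{\psi}$-modules correspond bijectively, compatibly with submodules and homomorphisms, to modules for the tensor product vertex operator algebra on the right; hence it suffices to prove complete reducibility of modules and finiteness of the set of irreducibles for $L_{\hat{\g}}(\ell_1,0)\otimes\cdots\otimes L_{\hat{\g}}(\ell_s,0)$.

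First I would invoke the single-factor case. For a positive integer $\ell$ it is known (see \cite{FZ}, \cite{DL}, \cite{Li1}, \cite{MP1}, \cite{MP2}, \cite{DLM}) that $L_{\hat{\g}}(\ell,0)$ is rational: every $L_{\hat{\g}}(\ell,0)$-module is completely reducible, and its irreducible modules are exactly the integrable highest weight $\hat{\g}$-modules of level $\ell$, of which there are only finitely many. So for each $i$ the factor $L_{\hat{\g}}(\ell_i,0)$ enjoys the two desired properties, with $n_i$, the number of its irreducibles, finite.

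Next I would pass to the tensor product using the standard tensor-product theory for rational vertex operator algebras. By \cite{FHL}, for irreducible $L_{\hat{\g}}(\ell_i,0)$-modules $M_i$ ($1\le i\le s$) the outer tensor product $M_1\otimes\cdots\otimes M_s$ is an irreducible module for $L_{\hat{\g}}(\ell_1,0)\otimes\cdots\otimes L_{\hat{\g}}(\ell_s,0)$, and by \cite{DLM} (Proposition 3.3) a tensor product of finitely many rational vertex operator algebras is again rational, with every irreducible module for it isomorphic to an outer tensor product of irreducible modules of the factors. Consequently $L_{\hat{\g}}(\ell_1,0)\otimes\cdots\otimes L_{\hat{\g}}(\ell_s,0)$ is rational, so every one of its modules is completely reducible, and its irreducible modules are precisely the $M_1\otimes\cdots\otimes M_s$ with each $M_i$ irreducible over $L_{\hat{\g}}(\ell_i,0)$; there are $n_1 n_2\cdots n_s<\infty$ such modules. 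Transporting back along $\eta$ yields both claims for $\VSzero{\psi}$, and as a byproduct an explicit parametrization of its irreducible modules by $s$-tuples of integrable highest weight $\hat{\g}$-modules of levels $\ell_1,\dots,\ell_s$.

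Since all the substantive input --- rationality of positive integral level affine vertex operator algebras and the rationality of tensor products --- is imported wholesale, there is no real obstacle here; the one point needing care is the compatibility bookkeeping, i.e.\ making sure that $\eta$ respects enough structure for \cite{DLM} (Proposition 3.3) to apply verbatim. This is harmless: each $L_{\hat{\g}}(\ell_i,0)$ with $\ell_i$ a positive integer is a genuine vertex operator algebra (an $\N$-graded vertex algebra with finite-dimensional homogeneous components and a Sugawara conformal vector built from its degree-one part, the latter available since $\ell_i\neq -h^{\vee}$), the tensor product of vertex operator algebras is a vertex operator algebra, and $\eta$ is a $\Z$-graded isomorphism that restricts to a Lie algebra isomorphism on degree-one parts and therefore intertwines the conformal structures; alternatively one may phrase the whole argument in terms of $\Z$-graded vertex algebras and cite the graded versions of the rationality and tensor-product statements.
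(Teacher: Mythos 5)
Your proposal is correct and follows essentially the same route as the paper: the paper likewise deduces the corollary immediately from Proposition \ref{SzeroStructProp} together with the known rationality of $L_{\hat{\g}}(\ell,0)$ for positive integral $\ell$ and the tensor-product results of \cite{FHL} and \cite{DLM} (Proposition 3.3). Your write-up merely makes explicit the bookkeeping (counting irreducibles as $n_1\cdots n_s$ and checking that $\eta$ transports the structure) that the paper leaves implicit.
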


\section{Integrability}

In this section, we give a necessary and sufficient condition that $\Stva{\psi}$ is an integrable $\tau$-module, where
 $\psi:\Sc\rightarrow \Lr $ is a $\Zr$-graded algebra homomorphism
such that $\Image\psi$ is a $\Zr$-graded simple $\Sc$-module.

 Fix a Cartan subalgebra $\h$ of $\g$ and
denote by $\Delta$ and $\Delta_{+}$ the sets of roots and positive roots of $\g$, respectively. Set
$$\g_{\pm}=\sum_{\pm \alpha\in \Delta_{+}}\g_{\alpha}.$$
Let $\theta$ be the highest root of $\g$ and we fix nonzero root vectors
 $e_\theta\in \g_\theta,\ f_{\theta}\in \g_{-\theta}$ such that $\<e_{\theta},f_{\theta}\>=1$.
Let $A$ be any commutative associative algebra with identity (over $\C$).
A $\hat{\g}\otimes A$-module $W$ is said to be {\em integrable}
if $\h$  acts semisimply and $\g_{\alpha}\otimes t_0^{k}u$ acts locally nilpotently on $W$
for any $\alpha\in \Delta,\ k\in \Z,\ u\in A$.

Recall that $L(\psi,0)$ and $L^0(\psi,0)$ are naturally restricted $\tau$-modules, where
$\VSzero{\psi}$ is an irreducible quotient $\tau$-module of $\Stva{\psi}$. We have:

\begin{lem}\label{IntEqualLem}
$L(\psi,0)$ is an integrable $\Toro$-module if and only if $\VSzero{\psi}$ is integrable.
\end{lem}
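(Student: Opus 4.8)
The plan is to prove the two implications separately, the forward one being routine and the converse one resting on an embedding of $\Stva{\psi}$ into a tensor product toroidal vertex algebra. For the forward direction I would use that $\VSzero{\psi}$ is a quotient $\Toro$-module of $\Stva{\psi}$: by the corollary just above, $\VSzero{\psi}$ is isomorphic to a $\Z$-graded vertex-algebra quotient of $\Stva{\psi}$, and on both the $\Toro$-action is read off from the vertex operators, so the quotient map is $\Toro$-equivariant. Note first that on either module $\h$ automatically acts semisimply: $\h\subseteq\g\otimes 1\subseteq L_r(\borel)$ annihilates the degree-zero generating subspace and $[\h,\,a\otimes t_0^k\bdt^\m]=[\h,a]\otimes t_0^k\bdt^\m$, so the module decomposes into $\h$-weight spaces with weights in the root lattice of $\g$. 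Hence integrability of $\Stva{\psi}$ means exactly that every $\g_\alpha\otimes t_0^k\bdt^\m$ acts locally nilpotently, a property that obviously passes to quotient modules; this gives ``$\Stva{\psi}$ integrable $\Rightarrow$ $\VSzero{\psi}$ integrable.''

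For the converse, assume $\VSzero{\psi}$ is integrable. The key step is to embed $\Stva{\psi}$, as a $\Toro$-module, into $\lr{\VSzero{\psi}}=\VSzero{\psi}\otimes\Lr$. Since $\T{\Sc}$ coincides with its distinguished subalgebra $V^0$ (the corollary to Lemma \ref{VS=VT0}), so does its quotient $\Stva{\psi}$, and by Proposition \ref{prop:ZrGradedVAtoETVA} we may regard $\Stva{\psi}$ as a vertex $\Z^r$-graded algebra. Let $\pi\colon\Stva{\psi}\to\VSzero{\psi}$ be the canonical $\Z$-graded surjection of vertex algebras supplied by the corollary above. Applying Lemma \ref{LrLem} with $\varphi=\pi$, the twisted map
\[
\tilde\pi\colon\Stva{\psi}\longrightarrow\lr{\VSzero{\psi}},\qquad v\longmapsto \pi(v)\otimes\bdt^{-\m}\ \ (v\in\Stva{\psi}_{(\m)}),
\]
is a homomorphism of vertex $\Z^r$-graded algebras, hence (via the isomorphism $F$, Lemma \ref{va-rtva}) of $(r+1)$-toroidal vertex algebras. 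Its kernel is a $\Z\times\Z^r$-graded $\Toro$-submodule of $\Stva{\psi}$ not containing $\vac$, so $\ker\tilde\pi=0$ by Lemma \ref{SimpleQuotientLem} (the maximal $\Z\times\Z^r$-graded $\Toro$-submodule of $\T{\psi}$ having already been divided out in forming $\Stva{\psi}$); thus $\tilde\pi$ is injective.

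It remains to identify the $\Toro$-module structure induced on $\lr{\VSzero{\psi}}$ via $\tilde\pi$ and check it is integrable. Composing $\T{\Sc}\twoheadrightarrow\Stva{\psi}\xrightarrow{\tilde\pi}\lr{\VSzero{\psi}}$ makes $\lr{\VSzero{\psi}}$ a restricted $\Toro$-module (Theorem \ref{TvaModThm}) for which $\tilde\pi$ is $\Toro$-equivariant and $\tilde\pi(\Stva{\psi})$ is a $\Toro$-submodule, since as a toroidal vertex subalgebra it is stable under the operators realizing $\Toro$. Unwinding the definition of $L_r(-)$ in Lemma \ref{va-rtva}, under this action $a\otimes t_0^k\bdt^\m$ operates on $\VSzero{\psi}\otimes\Lr$ as $\rho(a\otimes t_0^k\bdt^\m)\otimes\mu_\m$, where $\rho$ is the given action on $\VSzero{\psi}$ and $\mu_\m$ is multiplication by $\bdt^\m$ on $\Lr$. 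Then $\h\otimes 1$ acts as $\rho(\h)\otimes 1$, semisimply, and for a root vector $x=\g_\alpha\otimes t_0^k\bdt^\m$ one has $(\rho(x)\otimes\mu_\m)^j(w\otimes p)=\rho(x)^j w\otimes\bdt^{j\m}p$, which vanishes once $\rho(x)^j w=0$; so $\lr{\VSzero{\psi}}$ is integrable. Hence its $\Toro$-submodule $\tilde\pi(\Stva{\psi})\cong\Stva{\psi}$ is integrable, which proves ``$\VSzero{\psi}$ integrable $\Rightarrow$ $\Stva{\psi}$ integrable.''

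The step I expect to demand the most care is making the embedding $\tilde\pi$ rigorous: one must know $\Stva{\psi}$ genuinely lies in $\mathcal{C}_{r+1}^0$ and is ``graded simple enough'' for Lemma \ref{LrLem} (cleanly handled by Lemma \ref{SimpleQuotientLem} as above), and one must trace the definitions of $F$ and of $L_r(-)$ carefully to confirm that the $\Toro$-structure $\tilde\pi$ transports onto $\lr{\VSzero{\psi}}$ is precisely the evaluation-type action $\rho\otimes\mu$ under which integrability is visibly inherited. An alternative to Lemma \ref{SimpleQuotientLem} for injectivity is to check it directly on the generating subspace $\Stva{\psi}_{(0)}\cong\Image\psi$, on which $\tilde\pi$ restricts to the inclusion $\Image\psi\hookrightarrow\Lr$ --- the twist $\bdt^{-\m}$ exactly undoing the collapse $\Image\psi\twoheadrightarrow\C$ effected by $\pi$.
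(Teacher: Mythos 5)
Your proof is correct and follows essentially the same route as the paper: the forward direction by passing integrability to the quotient $\Toro$-module $\VSzero{\psi}$, and the converse by using Lemma \ref{LrLem} to embed the graded simple algebra $\Stva{\psi}$ into $\lr{\VSzero{\psi}}$ and observing that the latter is integrable exactly when $\VSzero{\psi}$ is. Your additional verifications (semisimplicity of $\h$, the explicit form $\rho\otimes\mu_{\m}$ of the action on the tensor product) simply spell out what the paper leaves as ``readily seen.''
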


\begin{proof} Since $\VSzero{\psi}$ is a quotient $\tau$-module of
$\Stva{\psi}$, the ``only if'' part is clear. Let $\pi$ be the natural quotient map from
$\Stva{\psi}$ to $\VSzero{\psi}$.  By Lemma \ref{LrLem}, we have an injective \rtva{} homomorphism $\tilde{\pi}$  from
$\Stva{\psi}$ to $\lr{\VSzero{\psi}}$. As $\tilde{\pi}$ is injective,
$\Stva{\psi}$ is a $\tau$-submodule of $\lr{\VSzero{\psi}}$.
It can be readily seen that
$\lr{\VSzero{\psi}}$ is an integrable $\tau$-module if and only if
 $\VSzero{\psi}$ is an integrable $\tau$-module. Then the ``if'' part follows.
\end{proof}

Let $\{e,f,h\}$ be the standard basis of $\mathfrak{sl}_2$. Consider the $r$-loop algebra
$L_r(\mathfrak{sl}_2)$. We equip $L_r(\mathfrak{sl}_2)$ with the $\Z$-grading given by
$$\deg L_r(\C e)=1,  \  \   \deg (L_r(\C f))=-1, \   \   \deg(L_r(\C h))=0,$$
to make $L_r(\mathfrak{sl}_2)$ a $\Z$-graded Lie algebra.
Notice that Lie algebra $\lr{\C e\oplus\C h}$
is the semi-product of abelian subalgebra $L_{r}(\C h)$  with ideal $L_{r}(\C e)$.

Let $\phi:\lr{\C h}\rightarrow \C$ be a linear function.
We have a $1$-dimensional $\lr{\C e\oplus\C h}$-module, denoted by $\C_{\phi}$, where $\C_{\phi}= \C$ and
\begin{eqnarray}
(h\otimes \rsymbol{t}^\rsymbol{m})\cdot 1=\phi(h\otimes \rsymbol{t}^\rsymbol{m})1,\    \    \
(e\otimes \rsymbol{t}^\rsymbol{m})\cdot 1=0\   \  \   \mbox{for }\ranger{m}.
\end{eqnarray}
Form an induced module
\begin{eqnarray}\label{eq:defofUpsi}
U(\phi)= U(\lr{\mathfrak{sl}_2})\otimes_{U(\lr{\C e\oplus\C h})}\C_{\phi}.
 \end{eqnarray}
 Let $1_{\phi}$ denote the generator $1\otimes 1$. From the P-B-W theorem we have
 $U(\phi)=U(L_r(\C f))1_{\phi}$.
It then follows that $h$ acts semisimply on $U(\phi)$ with
$$U(\phi)=\bigoplus_{n\in \N}U(\phi)_{\phi(h)-2n},$$
where $U(\phi)_{\lambda}=\{ u\in U(\phi) \  |\  hu=\lambda u\}$ for $\lambda\in \C$.
Consequently, $U_{\phi}$
has a unique maximal $\lr{\mathfrak{sl}_2}$-submodule.
Denote by $L_\phi$ the unique simple quotient $\lr{\mathfrak{sl}_2}$-module of $U(\phi)$.

\begin{lem}\label{preparation}
Let $I$ be the sum of ideals $J$  of $\Lr$ such that $\phi(h \otimes J)=0$.
Then, for $a\in\Lr$,  $\phi(h\otimes a\Lr)=0$ if and only if $a\in I$,
 and if and only if $(f\otimes a)1_{\phi}=0$. On the other hand, $\dim (L_r/I)<\infty$ if and only if
 $\dim (L_r/\sqrt{I})<\infty$, where
\begin{eqnarray}
\sqrt{I}=\left\{ a\in \Lr\,|\,a^n\in I \text{ for some positive integer }n \right\}.
\end{eqnarray}
\end{lem}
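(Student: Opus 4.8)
The plan is to separate the three biconditionals packed into the statement and handle them independently: (i) $\phi(h\otimes a\Lr)=0\iff a\in I$; (ii) $\phi(h\otimes a\Lr)=0\iff (f\otimes a)1_{\phi}=0$ in $L_{\phi}$; and (iii) $\dim(\Lr/I)<\infty\iff\dim(\Lr/\sqrt I)<\infty$. For (i) I would first record that $I$, being a sum of ideals of $\Lr$, is itself an ideal, and that $\phi(h\otimes I)=0$, since every element of $I$ is a finite sum of elements each lying in some ideal $J$ with $\phi(h\otimes J)=0$; thus $I$ is precisely the largest ideal of $\Lr$ annihilated by $\phi(h\otimes-)$. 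The equivalence is then immediate: if $\phi(h\otimes a\Lr)=0$ then the principal ideal $a\Lr$ is one of the admissible $J$'s, so $a\in a\Lr\subseteq I$; conversely if $a\in I$ then $a\Lr\subseteq I$, whence $\phi(h\otimes a\Lr)\subseteq\phi(h\otimes I)=0$.

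For (ii), set $w=(f\otimes a)1_{\phi}\in U(\phi)$. Using $[e,f]=h$, $[h,f]=-2f$, the relations $(e\otimes\bdt^{\m})1_{\phi}=0$ and $(h\otimes b)1_{\phi}=\phi(h\otimes b)1_{\phi}$, a one-line bracket computation gives
\[
(e\otimes\bdt^{\m})w=\phi(h\otimes \bdt^{\m}a)\,1_{\phi},\qquad (h\otimes\bdt^{\m})w=\phi(h\otimes\bdt^{\m})\,w-2\,(f\otimes\bdt^{\m}a)1_{\phi}.
\]
For ``$\Leftarrow$'': if the image of $w$ in $L_{\phi}$ vanishes, i.e.\ $w$ lies in the unique maximal proper submodule $N$ of $U(\phi)$, while $\phi(h\otimes\bdt^{\m}a)\ne 0$ for some $\m$, then $1_{\phi}=\phi(h\otimes\bdt^{\m}a)^{-1}(e\otimes\bdt^{\m})w\in N$, contradicting $N\ne U(\phi)$; hence $\phi(h\otimes a\Lr)=0$. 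For ``$\Rightarrow$'': assuming $\phi(h\otimes a\Lr)=0$, the displayed formulas give $\Lr(\C e)w=0$ and $\Lr(\C h)w\subseteq\Lr(\C f)1_{\phi}$, the latter sitting in the $h$-weight space $U(\phi)_{\phi(h)-2}$. Writing $U(\lr{\mathfrak{sl}_2})=U(\Lr(\C f))\,U(\Lr(\C h))\,U(\Lr(\C e))$ via the triangular PBW decomposition and invoking the $\Z$-grading ($\deg\Lr(\C e)=1$, $\deg\Lr(\C f)=-1$, $\deg\Lr(\C h)=0$), one checks that every positive-degree element of $U(\lr{\mathfrak{sl}_2})$ kills $w$ and hence that $U(\lr{\mathfrak{sl}_2})w\subseteq\bigoplus_{n\ge1}U(\phi)_{\phi(h)-2n}$; in particular $1_{\phi}\notin U(\lr{\mathfrak{sl}_2})w$, so $U(\lr{\mathfrak{sl}_2})w$ is a proper submodule, hence contained in $N$, so $w=0$ in $L_{\phi}$.

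For (iii), one direction is free: $I\subseteq\sqrt I$, so $\Lr/\sqrt I$ is a quotient of $\Lr/I$ and $\dim(\Lr/I)<\infty$ forces $\dim(\Lr/\sqrt I)<\infty$. For the converse, assume $\dim_{\C}(\Lr/\sqrt I)<\infty$. Since $\Lr=\C[\varr{t}]$ is a finitely generated $\C$-algebra it is Noetherian, so $A:=\Lr/I$ is Noetherian and its nilradical $\mathfrak n=\sqrt I/I$ is a finitely generated nil ideal, hence nilpotent: $\mathfrak n^{k}=0$ for some $k\ge1$. From the filtration $A\supseteq\mathfrak n\supseteq\mathfrak n^{2}\supseteq\cdots\supseteq\mathfrak n^{k}=0$, each quotient $\mathfrak n^{i}/\mathfrak n^{i+1}$ is a finitely generated module over the finite-dimensional $\C$-algebra $A/\mathfrak n\cong\Lr/\sqrt I$, hence finite-dimensional over $\C$; summing over the $k$ steps gives $\dim_{\C}A<\infty$, i.e.\ $\dim(\Lr/I)<\infty$.

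I expect part (iii) to be the main obstacle in the sense that it is the only place requiring input beyond formal manipulation: Noetherianity of the Laurent polynomial ring, nilpotence of the nilradical of a Noetherian ring, and finiteness of finitely generated modules over a finite-dimensional algebra. Within (ii) the step demanding care is the grading bookkeeping that shows the submodule generated by $w$ is proper; the rest is a direct bracket computation.
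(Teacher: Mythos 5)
Your proof is correct. Parts (i) and (ii) follow essentially the same route as the paper: (i) is the same one\-/line observation that $a\Lr$ is an ideal containing $a$, and for (ii) the paper performs the identical bracket computation $\phi(h\otimes ab)1_{\phi}=[e\otimes b,f\otimes a]1_{\phi}$ and then simply asserts ``as $L_{\phi}$ is irreducible, $(f\otimes a)1_{\phi}=0$''; your PBW/grading bookkeeping showing that $U(\lr{\mathfrak{sl}_2})w$ avoids the top weight space is exactly the standard argument being invoked there, spelled out. Part (iii) is where you genuinely diverge. The paper argues concretely: from $\dim(\Lr/\sqrt I)<\infty$ it extracts nonzero polynomials $p_i(t_i)\in\sqrt I$, raises them to a common power $n$ so that $p_i(t_i)^n\in I$, and concludes $\dim(\Lr/I)<\infty$ from the cofiniteness of the ideal $(p_1(t_1)^n,\dots,p_r(t_r)^n)$. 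You instead use general commutative algebra: Noetherianity of $\Lr$, nilpotence of the nilradical $\sqrt I/I$ of the Noetherian ring $\Lr/I$, and the filtration by powers of the nilradical with finitely generated quotients over the finite-dimensional algebra $\Lr/\sqrt I$. Both are valid; the paper's version is self-contained and elementary (it needs only that a single power works for finitely many generators), while yours is shorter to state, applies verbatim to any Noetherian commutative algebra in place of $\Lr$, and avoids having to exhibit the polynomials $p_i$ explicitly --- at the cost of importing the standard facts about Noetherian rings.
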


\begin{proof} The first part is clear as $aL_r$ is an ideal containing $a$. If $(f\otimes a)1_{\phi}=0$, for any $b\in L_r$ we have
$$\phi(h\otimes ab)1_{\phi}=[e\otimes b,f\otimes a]1_{\phi}=(e\otimes b)(f\otimes a)1_{\phi}-(f\otimes a)(e\otimes b)1_{\phi}=0,$$
which implies $aL_r\subset I$, and hence $a\in I$. Conversely, assume $a\in I$. We have
$$(e\otimes b)(f\otimes a)1_{\phi}=(f\otimes a)(e\otimes b)1_{\phi}+\phi(h\otimes ab)1_{\phi}=0$$
for all $b\in L_r$. That is, $L_r(\C e)(f\otimes a)1_{\phi}=0$. As $L_{\phi}$ is irreducible, we must have $(f\otimes a)1_{\phi}=0$.

As for the last assertion, assume $\dim (L_r/\sqrt{I})<\infty$.
As $\dim (\C[t_i]+\sqrt{I})/\sqrt{I}<\infty$ for each $1\le i\le r$, there are nonzero polynomials $p_1(t),\dots,p_r(t)\in \C[ t]$ such that
\[
\left( p_1(t_1),\dots,p_r(t_r) \right)\subset \sqrt{I}.
\]
Then there exists a positive integer $n$ such that
\[
\left(p_1(t_1)^n, \dots, p_r(t_r)^n \right)\subset I.
\]
Thus, $\dim (L_r/I)<\infty$. The other direction is clear as $I\subset \sqrt{I}$.
\end{proof}

We have:

\begin{prop}\label{HwIntModLem}
$L_\phi$ is an integrable $\lr{\mathfrak{sl}_2}$-module if and only if $\dim L_{\phi}<\infty$.
\end{prop}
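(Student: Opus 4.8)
\emph{Proof proposal.} The ``if'' direction is routine. If $\dim L_{\phi}<\infty$, then for each $\m\in\Z^{r}$ the elements $e\otimes\bdt^{-\m}$, $h\otimes 1$, $f\otimes\bdt^{\m}$ span a copy of $\mathfrak{sl}_{2}$ inside $\lr{\mathfrak{sl}_{2}}$, and the restriction of $L_{\phi}$ to it is a finite-dimensional $\mathfrak{sl}_{2}$-module; hence $e\otimes\bdt^{-\m}$ and $f\otimes\bdt^{\m}$ act nilpotently on $L_{\phi}$ and $h\otimes 1$ acts semisimply. Since $\m$ is arbitrary, $L_{\phi}$ is integrable.

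For the ``only if'' direction, assume $L_{\phi}$ is integrable and set $\ell=\phi(h\otimes 1)$. Using the $\mathfrak{sl}_{2}$-triple above, $1_{\phi}$ is a highest weight vector of weight $\ell$ (it is killed by $e\otimes\bdt^{-\m}$) on which $f\otimes\bdt^{\m}$ acts locally nilpotently, so the standard $\mathfrak{sl}_{2}$-string argument forces $\ell\in\N$ and
\[
(f\otimes\bdt^{\m})^{\ell+1}1_{\phi}=0,\qquad (f\otimes\bdt^{\m})^{\ell}1_{\phi}\neq 0\qquad(\m\in\Z^{r}).
\]
If $\ell=0$ one checks directly that $L_{\phi}=\C1_{\phi}$ is one-dimensional, so assume $\ell\geq 1$.

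The heart of the argument is to deduce from this that the ideal $I$ of Lemma~\ref{preparation} has finite codimension in $\Lr$. Since $\lr{\C f}$ is commutative, a direct computation in $U(\lr{\mathfrak{sl}_{2}})$ gives, for $k\geq 1$ and $\m,\n\in\Z^{r}$,
\begin{align*}
(e\otimes\bdt^{\n})(f\otimes\bdt^{\m})^{k}1_{\phi}={}&k\,\phi(h\otimes\bdt^{\m+\n})(f\otimes\bdt^{\m})^{k-1}1_{\phi}\\
&-k(k-1)(f\otimes\bdt^{2\m+\n})(f\otimes\bdt^{\m})^{k-2}1_{\phi},
\end{align*}
and applying this with $k=\ell+1$ to the relation above already yields, in $L_{\phi}$,
\[
(f\otimes\bdt^{\m})^{\ell-1}(f\otimes\bdt^{\rsymbol{p}})1_{\phi}=\frac{1}{\ell}\,\phi(h\otimes\bdt^{\rsymbol{p}-\m})\,(f\otimes\bdt^{\m})^{\ell}1_{\phi}\qquad(\m,\rsymbol{p}\in\Z^{r}).
\]
Iterating, i.e.\ applying operators $e\otimes\bdt^{\bullet}$ repeatedly and using the irreducibility of $L_{\phi}$ to push the resulting identities into $L_{\phi}$ (a Garland-type computation, in the spirit of Rao's arguments in \cite{R1}, \cite{R2}), produces a system of relations among the scalars $\phi(h\otimes\bdt^{\m})$ which forces the linear functional $\bdt^{\m}\mapsto\phi(h\otimes\bdt^{\m})$ to be annihilated by a finite-codimension ideal of $\Lr$; equivalently $\dim(\Lr/I)<\infty$. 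I expect this extraction of finite codimension from integrability to be the main obstacle; the remaining steps are formal.

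Granting $\dim(\Lr/I)<\infty$, the conclusion follows. By Lemma~\ref{preparation}, $(f\otimes a)1_{\phi}=0$ for $a\in I$, and since $\lr{\C f}$ is commutative and $L_{\phi}=U(\lr{\C f})1_{\phi}$, this gives $(f\otimes a)L_{\phi}=0$ for all $a\in I$. Hence the $(h\otimes 1)$-weight space of $L_{\phi}$ of weight $\ell-2n$, being spanned by the vectors $(f\otimes b_{1})\cdots(f\otimes b_{n})1_{\phi}$, is a homomorphic image of $S^{n}(\Lr/I)$ and is therefore finite-dimensional. On the other hand, integrability with respect to the horizontal subalgebra $\mathfrak{sl}_{2}\otimes 1$ makes $L_{\phi}$ a direct sum of finite-dimensional irreducible $\mathfrak{sl}_{2}$-modules; since the highest $(h\otimes 1)$-weight occurring in $L_{\phi}$ is $\ell$, no weight less than $-\ell$ can occur, so only the finitely many weights $-\ell,-\ell+2,\dots,\ell$ appear. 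Combining the two facts, $\dim L_{\phi}<\infty$.
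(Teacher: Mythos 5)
Your overall architecture matches the paper's: reduce everything to showing that the ideal $I$ of Lemma \ref{preparation} has finite codimension in $\Lr$, and then conclude $\dim L_\phi<\infty$ from $L_\phi=U(\lr{\C f})1_\phi$ together with nilpotence of the elements $f\otimes a$. The ``if'' direction and the endgame (granting finite codimension of $I$) are fine. But the central step is not proved. You derive one correct identity,
\[
(f\otimes\bdt^{\rsymbol{p}})(f\otimes\bdt^{\m})^{\ell-1}1_{\phi}
=\tfrac{1}{\ell}\,\phi(h\otimes\bdt^{\rsymbol{p}-\m})\,(f\otimes\bdt^{\m})^{\ell}1_{\phi},
\]
and then assert that ``iterating'' such identities ``forces'' $\dim(\Lr/I)<\infty$, explicitly flagging this as the expected main obstacle. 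That assertion is precisely the heart of the proposition and is left unproved; it is also not clear it can be pushed through as stated, since your identity lives entirely in the single $(h\otimes 1)$-weight space of weight $-\ell$ and does not obviously constrain $(f\otimes a)1_\phi$ for individual $a\in\Lr$, which is what the codimension of $I$ measures.

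The paper closes this gap by a contradiction argument of a different flavor. Using the last assertion of Lemma \ref{preparation}, one may replace $I$ by $\sqrt{I}=\bigcap_{i=1}^{d}Q_i$ with the $Q_i$ prime. If some $\Lr/Q_i$ were infinite dimensional, it would contain a transcendental element $y+Q_i$, so the powers $y^{n}$ are linearly independent modulo $I$ and the vectors $(f\otimes y^{n})1_\phi$ are linearly independent in $L_\phi$. An induction (identity (\ref{EFActionEq})) then gives
\[
(e\otimes 1)^{n-1}(f\otimes y)^{n}1_\phi\equiv (-1)^{n-1}n!\,(n-1)!\,(f\otimes y^{n})1_\phi
\not\equiv 0 \pmod{\textstyle\sum_{0\le m<n}\C(f\otimes y^{m})1_\phi},
\]
which contradicts $(f\otimes y)^{n}1_\phi=0$ for large $n$, supplied by integrability. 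Only after all $\Lr/Q_i$ are known to be finite dimensional does one get $Q_i=(t_1-c_{i1},\dots,t_r-c_{ir})$ and hence $\dim(\Lr/\sqrt{I})<\infty$. To complete your proposal you would need either to reproduce this argument or to carry out in full the Garland-type derivation you allude to; as written, the essential point is assumed rather than proved.
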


\begin{proof} For any $\ranger{m}$, set
$$\mathfrak{sl}_2^{(\m)}=\C (e\otimes \rsymbol{t}^\rsymbol{m})+ \C (f\otimes \degr{t}{m})+ \C h,$$
which is canonically isomorphic to $\mathfrak{sl}_2$.
It can be readily seen that $L_\phi$ is integrable if $\dim L_{\phi}<\infty$.

Now, we assume that $L_\phi$ is integrable.
Recall from Lemma \ref{preparation} the ideal $I$ of $L_r$. We next prove that $I$ is co-finite dimensional.
By Lemma \ref{preparation}, we need to prove that
$\sqrt{I}$ is co-finite dimensional. Note that
\begin{eqnarray*}
\sqrt{I}=\mathop{\bigcap}\limits_{i=1}^d Q_i,
\end{eqnarray*}
where $Q_i$  for $1\leq i\leq d$ are prime ideals of $\Lr$, so that
 $\Lr/Q_i$ are integral ring extensions over $\C$.

We first consider the case that all $L_r/Q_i$ are finite dimensional.
In this case, every $L_r/Q_i$ is a finite-dimensional field extension of $\C$, which must be $1$-dimensional. Thus
$Q_i=\left( t_1-c_{i1},\dots,t_r-c_{ir} \right)$ for some non-zero complex numbers $c_{i1},\dots, c_{ir}$, for $1\leq i\leq d$.
Consequently,
\[
\left( \prod\limits_{i=1}^d (t_1-c_{i1}),\dots,\prod\limits_{i=1}^d (t_r-c_{ir}) \right)
\subset \mathop{\bigcap}\limits_{i=1}^d Q_i=\sqrt{I}.
\]
Thus $\sqrt{I}$ is co-finite dimensional, and so is $I$.

Now, we prove that all $L_r/Q_i$ must be finite dimensional. Otherwise, $\Lr\big/ Q_i$ is infinite dimensional for some $i$.
As $\Lr\big/ Q_i$ is also an infinite dimensional integral domain (over $\C$), there exists $y\in L_r$ such that $y+Q_i$ is a transcendental element of $L_r/Q_i$
over $\C$.
Then $y^{n}$ for $n\in \N$ are linearly independent in $\Lr$ modulo $Q_i$, which
implies that $y^{n}$ for $n\in \N$ are linearly independent in $\Lr$ modulo $I$.
By Lemma \ref{preparation}, $(f\otimes y^{n})1$ with $n\in \N$ are linearly
independent in $L_\phi$.
For each positive integer $n$, we set
\begin{eqnarray*}
L(n)=\mathop{\bigoplus}\limits_{0\leq m<n} \C \left( f\otimes y^m \right)1_\phi\subset L_{\phi}.
\end{eqnarray*}

We claim that for each positive integer $n$ and for any positive integers $i_1,\dots,i_n$,
\begin{eqnarray}\label{EFActionEq}
&&\left( e\otimes 1 \right)^{n-1}\prod\limits_{j=1}^n (f\otimes y^{i_j})1_\phi
 \equiv c_n \left(f\otimes y^{N}\right)1_\phi \ \ \mbox{ modulo } L(N),
\end{eqnarray}
where $N=i_1+\cdots +i_n$ and $c_n=(-1)^{n-1}n!(n-1)!$.
It is clear that (\ref{EFActionEq}) is true when $n=1$.
Assume that $n$ is a positive integer such that (\ref{EFActionEq}) holds for
each positive integer less than or equal to $n$.
For positive integers $i_1,\dots,i_{n+1}$ and any $1\leq k<\ell\leq n+1$, we set $N'=\sum\limits_{j=1}^{n+1}i_j$ and set
\begin{eqnarray*}
A(k)=\prod\limits_{\substack{1\leq j\leq n+1\\j\neq k}}(f\otimes y^{i_j}) &
\mbox{and} & A(k,\ell)=\prod\limits_{\substack{1\leq j\leq n+1\\j\neq k,\ell}}(f\otimes y^{i_j}).
\end{eqnarray*}
Then
\begin{eqnarray*}
&&\left( e\otimes 1 \right)^{n}\prod\limits_{j=1}^{n+1} (f\otimes y^{i_j})1_\phi
	\equiv\left( e\otimes 1 \right)^{n-1}\left[e\otimes 1,\prod\limits_{j=1}^n (f\otimes y^{i_j})\right]1_\phi\\
	&&\quad\equiv\left( e\otimes 1 \right)^{n-1}\sum\limits_{k=1}^{n+1}\varphi(h\otimes y^{i_k})A(k)1_\phi
		-2\left( e\otimes 1 \right)^{n-1}\sum\limits_{1\leq k<\ell\leq n+1}A(k,\ell)(f\otimes y^{i_k+i_\ell})1_\phi\\
	&&\quad \equiv c_n \sum\limits_{k=1}^{n+1}\varphi(h\otimes y^{i_k}) \left(f\otimes y^{M_k}\right)1_\phi
		-2c_n\sum\limits_{1\leq k<\ell\leq n+1}\left(f\otimes y^{N'}\right)1_\phi\ \ \ \ \mbox{modulo }L(N''),
\end{eqnarray*}
where $M_k=\sum\limits_{j\neq k}i_j<N'$ and $L(N'')=\sum\limits_{k=1}^{n+1}L(M_k)+L(N')=L(N')$.
Therefore,
\begin{eqnarray*}
&&\left( e\otimes 1 \right)^{n}\prod\limits_{j=1}^{n+1} (f\otimes y^{i_j})1_\phi\equiv -c_n (n+1) n \left(f\otimes y^{N'}\right)1_\phi\\
&&\quad\equiv c_{n+1} \left(f\otimes y^{N'}\right)1_\phi \ \ \ \ \mbox{modulo }L(N').
\end{eqnarray*}
This proves that (\ref{EFActionEq}) holds for all positive integers $n$.

As $L_\phi$ is integrable, there exists a positive integer $n$ such that $(f\otimes y)^n 1_\phi=0$.
Then
\begin{eqnarray*}
0\equiv(e\otimes 1)^{n-1}(f\otimes y)^n 1_\phi\equiv c_n (f\otimes y^n)1_\phi\not\equiv 0\ \ \ \ \mbox{modulo }L(n).
\end{eqnarray*}
This contradiction implies that all $\Lr/Q_i$ must be finite dimensional.
Therefore, $I$ is co-finite dimensional.
With $U(L_r(\C f))$ commutative, we have
 $$L_{\phi}=U(L_r(\mathfrak{sl}_2))1_{\phi}=U(L_r(\C f))1_{\phi}=U(\C f\otimes (L_r/I))1_{\phi}.$$
Since $\dim (L_r/I)<\infty$ and for every $\m\in \Z^r$, $f\otimes {\bf t}^{\m}$ is nilpotent on $1_{\phi}$,
it follows that  $L_\phi$ is finite dimensional.
\end{proof}

Now, we have:

\begin{thm}\label{IntegrableProp123}
Let $\psi:\Sc\rightarrow \Lr$ be a $\Zr$-graded algebra homomorphism
such that $\Image\psi$ is a $\Zr$-graded simple $\Sc$-module. Then
$L(\psi,0)$  is an integrable $\Toro$-module if and only if there exist finitely many positive integers $\ell_{1},\dots,\ell_{s}$
and distinct vectors $\bda_1,\dots,{\bf a}_{s}\in (\C^{\times})^r$
such that
\begin{eqnarray*}
\psi(\cent\otimes\bdt^\m)=\left(\sum\limits_{i=1}^{s} \ell_i \bda^{\m}\right)\bdt^\m \ \ \ \ \mbox{for } \m\in\Zr.
\end{eqnarray*}
\end{thm}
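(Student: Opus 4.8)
The plan is to prove the two implications separately, in each case passing to the vertex algebra $\VSzero{\psi}$ via the equivalence ``$\Stva{\psi}$ integrable $\iff$ $\VSzero{\psi}$ integrable'' of Lemma \ref{IntEqualLem}. For the ``if'' direction, suppose $\psi$ has the stated form; then $\psi$ is exactly the $\Zr$-graded homomorphism (\ref{IntCentActEq}) attached to positive integers $\ell_1,\dots,\ell_s$ and distinct $\bda_1,\dots,\bda_s\in(\C^\times)^r$, so Proposition \ref{SzeroStructProp} gives a $\Toro$-module isomorphism $\VSzero{\psi}\cong L_{\hat{\g}}(\ell_1,0)\otimes\cdots\otimes L_{\hat{\g}}(\ell_s,0)$, under which each $x\otimes t_0^{k}\bdt^\m$ (with $\alpha\in\Delta$, $x\in\g_\alpha$) acts as a linear combination of the pairwise-commuting operators $\pi_i(x\otimes t_0^k)$ ($1\le i\le s$, $\pi_i$ the inclusion of the $i$-th tensor factor) and each $h\in\h$ acts as $\sum_i\pi_i(h)$. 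Since every $L_{\hat{\g}}(\ell_i,0)$ with $\ell_i\in\N$ is an integrable $\hat{\g}$-module, $x\otimes t_0^k$ is locally nilpotent on it; a vector of the tensor product is a finite sum of pure tensors whose factors are killed by a uniformly bounded power of $x\otimes t_0^k$, and commutativity of operators on distinct tensor factors then makes the above linear combination locally nilpotent by a pigeonhole argument, while $\h$ acts diagonally. Thus $\VSzero{\psi}$, and hence $\Stva{\psi}$, is integrable.

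For the ``only if'' direction, suppose $\Stva{\psi}$, equivalently $\VSzero{\psi}$, is integrable; the first step is a reduction to $\mathfrak{sl}_2$ bounding the size of $\Lr/I_{\gamma_\psi}$. Fix the highest root $\theta$ with $\<e_\theta,f_\theta\>=1$ and set $e=e_\theta\otimes t_0^{-1}$, $f=f_\theta\otimes t_0$, $h=h_\theta-\cent$; these form an $\mathfrak{sl}_2$-triple in $\Toro$, so $\lr{\mathfrak{sl}_2}=\mathfrak{sl}_2\otimes\Lr$ is a subalgebra of $\Toro$. The generator $1_\psi$ of $\VSzero{\psi}$ satisfies $(f\otimes\bdt^\m)1_\psi=0$ and $(h\otimes\bdt^\m)1_\psi=-\bar\psi(\cent\otimes\bdt^\m)1_\psi$, because $L_r(\borel)1_\psi=0$ and $\cent\otimes\bdt^\m$ acts by $\bar\psi(\cent\otimes\bdt^\m)$. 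Twisting $U(\lr{\mathfrak{sl}_2})1_\psi$ by the Chevalley involution of $\mathfrak{sl}_2$ presents it as a graded quotient of the induced module $U(\phi)$ of Proposition \ref{HwIntModLem} with $\phi(h\otimes\bdt^\m)=\bar\psi(\cent\otimes\bdt^\m)=\gamma_\psi(\bdt^\m)$; having nonzero degree-zero part, it surjects onto the simple quotient $L_\phi$. Integrability of $\VSzero{\psi}$ descends to $U(\lr{\mathfrak{sl}_2})1_\psi$, hence to $L_\phi$, so $\dim L_\phi<\infty$ by Proposition \ref{HwIntModLem}, and the proof of Lemma \ref{preparation} (which embeds $\Lr/I_{\gamma_\psi}$ into $L_\phi$ via $a\mapsto (f\otimes a)1_\phi$) yields $\dim(\Lr/I_{\gamma_\psi})<\infty$.

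Since $G(r,\g,\gamma_\psi)=\g\otimes(\Lr/I_{\gamma_\psi})$ is now finite-dimensional, the isomorphism $\VSzero{\psi}\cong L_{\widehat{G}(r,\g,\gamma_\psi)}(\ell,0)$ with $\ell=\gamma_\psi(1)$ (established earlier, using Lemma \ref{kernel}) exhibits $\VSzero{\psi}$ as an irreducible integrable $\Toro$-module with finite-dimensional weight spaces --- indeed an irreducible integrable $\ToroD$-module, $d_1,\dots,d_r$ acting as $-D_1,\dots,-D_r$ and $d_0$ as $-L(0)$. By Rao's classification of such modules \cite{R2}, $\VSzero{\psi}$ is isomorphic to the concrete module attached to finitely many integrable highest weight $\hat{\g}$-modules and an equal number of distinct points of $(\C^\times)^r$; as the lowest-degree space $\C 1_\psi$ is one-dimensional and the grading is nonnegative, those highest weights must be $\ell_1\Lambda_0,\dots,\ell_s\Lambda_0$ with $\ell_i\in\Z_{>0}$ (any factor with $\ell_i=0$ being trivial and removable) and the points some distinct $\bda_1,\dots,\bda_s\in(\C^\times)^r$, with $\cent\otimes\bdt^\m$ acting by the scalar $\sum_i\ell_i\bda_i^\m$. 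Hence $\bar\psi(\cent\otimes\bdt^\m)=\sum_i\ell_i\bda_i^\m$; and since $\psi$ is $\Zr$-graded, $\psi(\cent\otimes\bdt^\m)$ is a scalar multiple of $\bdt^\m$ whose coefficient, recovered by the evaluation $E(1)$, equals $\bar\psi(\cent\otimes\bdt^\m)$, so $\psi(\cent\otimes\bdt^\m)=\big(\sum_{i=1}^s\ell_i\bda_i^\m\big)\bdt^\m$. The main obstacle is this last direction: the $\mathfrak{sl}_2$-reduction by itself only yields finiteness of $\Lr/I_{\gamma_\psi}$ and neither excludes nilpotents in that quotient nor forces the levels to be nonnegative integers, so Rao's classification --- which becomes applicable only once that finiteness is secured --- is genuinely needed in order to identify $\VSzero{\psi}$ with a tensor product of vacuum modules $L_{\hat{\g}}(\ell_i,0)$ and read $\psi$ off from the action of the center.
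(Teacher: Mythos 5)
Your proof is correct in substance and coincides with the paper's for the ``if'' direction and for the first half of the ``only if'' direction: the paper likewise reduces via Lemma \ref{IntEqualLem} to $\VSzero{\psi}$, extracts a loop-$\mathfrak{sl}_2$ inside $\Toro$ (it uses the triple $e\otimes t_0$, $f\otimes t_0^{-1}$, $\check{\alpha}+\cent$ for an arbitrary root $\alpha$, so no Chevalley twist is needed, but that is cosmetic), realizes $U(\Lr(\s))\vac$ as a quotient of $U(\phi)$ with $\phi((\check{\alpha}+\cent)\otimes\bdt^{\m})=\bar{\psi}(\cent\otimes\bdt^{\m})$, and concludes $\dim L_{\phi}<\infty$ from Proposition \ref{HwIntModLem}. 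The divergence is in how the form of $\psi$ is then extracted. The paper stays entirely inside the loop-$\mathfrak{sl}_2$ picture: it quotes the proof of Proposition 3.20 of \cite{R2}, which exhibits the finite-dimensional irreducible $\Lr(\mathfrak{sl}_2)$-module $L_{\phi}$ as a tensor product of evaluation modules at distinct points of $(\C^{\times})^{r}$ with nonzero dominant integral weights $\lambda_{i}$, and reads off $\phi((\check{\alpha}+\cent)\otimes\bdt^{\m})=\sum_{i}\bda_{i}^{\m}\lambda_{i}(\check{\alpha}+\cent)$ directly, the $\ell_{i}=\lambda_{i}(\check{\alpha}+\cent)$ being positive integers for free. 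You instead feed the finiteness of $\Lr/I_{\gamma_{\psi}}$ into the identification $\VSzero{\psi}\cong L_{\widehat{G}(r,\g,\gamma_{\psi})}(\ell,0)$ and invoke Rao's classification of irreducible integrable toroidal modules with finite-dimensional weight spaces. That route also works, but it is a heavier hammer and obliges you to re-derive (via the one-dimensional top degree) that the highest weights are $\ell_{i}\Lambda_{0}$, information the $\mathfrak{sl}_2$ route delivers automatically.

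One side-claim you make is false and should be repaired: $d_{1},\dots,d_{r}$ do \emph{not} act on $\VSzero{\psi}$ as $-D_{1},\dots,-D_{r}$, because $\overline{V}(\psi,0)$ is the quotient of $V(\Sc,0)$ by $U(\tau)(\ker\bar{\psi})$ and $\ker\bar{\psi}$ is not $\Z^{r}$-graded, so $D_{1},\dots,D_{r}$ do not descend (only $D_{0}$, equivalently the $\Z$-grading, survives). Hence $\VSzero{\psi}$ is not a $\ToroD$-module but only a $\Toro\oplus\C d_{0}$-module. This does not sink the argument, since the version of Rao's result actually needed --- and the one the paper itself says is implicitly contained in \cite{R2} --- concerns irreducible integrable restricted $\Toro\oplus\C d_{0}$-modules with finite-dimensional weight spaces, a hypothesis supplied by the finite-dimensionality of the graded pieces of $L_{\widehat{G}(r,\g,\gamma_{\psi})}(\ell,0)$; but as written the appeal to the $\ToroD$-classification is not available and the sentence should be corrected.
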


\begin{proof}
The ``if'' part  follows from Proposition \ref{SzeroStructProp} and Lemma \ref{IntEqualLem}.
Now, we assume that $L(\psi,0)$ is integrable. Then  $\VSzero{\psi}$ is integrable by Lemma \ref{IntEqualLem}.
We fix an $\alpha\in \Delta$ and choose $e\in\g_\alpha$, $f\in\g_{-\alpha}$ such that
$[e,f]=\check{\alpha}$ and $\<e,f\>=1$.
Set
\[
\s=\C (e\otimes t_0)+ \C (f\otimes t_0^{-1})+ \C(\check{\alpha}+\cent).
\]
Notice that $\s\cong \mathfrak{sl}_2$.
Then $U(\Lr(\s))\vac$ is an integrable $\Lr(\s)$-submodule of $\VSzero{\psi}$.
Define a $\Zr$-graded algebra homomorphism $\psi_{\alpha}:U(\lr{\check{\alpha}+\cent})\rightarrow \Lr$ by
$$\psi_{\alpha}((\check{\alpha}+\cent)\otimes \bdt^\m)= \psi(\cent\otimes \bdt^\m)\   \   \   \mbox{ for }\m\in \Z^r.$$
Then $\Image\psi_{\alpha}$ is a $\Zr$-graded simple $U(\lr{\check{\alpha}+\cent})$-module.
Set $\phi=E(1)\circ \psi_{\alpha}$.
We have $L_r(e\otimes t_0)\vac=0$ and $X\cdot \vac=\phi(X)\vac$ for $X\in U(\lr{\check{\alpha}+\cent})$,
noticing that $(\check{\alpha}\otimes \bdt^{\m})\cdot \vac=0$ for $\m\in \Z^r$.
It follows that  $L_{\phi}$ is a homomorphism image of $U(\Lr(\s))\vac$.
Consequently, $L_{\phi}$ is integrable. By Proposition \ref{HwIntModLem}, $L_{\phi}$ is finite dimensional.
From the proof of Proposition 3.20 in \cite{R2},  there exist finitely many nonzero dominant integral weights $\lambda_{1},\dots,\lambda_{s}$ of $\s$ and
distinct vectors $\bda_1,\dots,{\bf a}_{s}\in (\C^{\times})^r$ such that
\begin{eqnarray*}
\phi((\check{\alpha}+\cent)\otimes \bdt^\m)=\sum\limits_{i=1}^s \bda_i^\m \lambda_{i}(\check{\alpha}+\cent)
\end{eqnarray*}
for $\m\in \Z^r$. Thus
\begin{eqnarray*}
\psi(\cent\otimes\bdt^\m)=\left(\sum\limits_{i=1}^{s} \ell_i \bda_i^{\m}\right)\bdt^\m \ \ \ \ \mbox{for } \m\in\Zr,
\end{eqnarray*}
where $\ell_i=\lambda_i(\check{\alpha}+\cent)$.
\end{proof}

\section{Irreducible $L(\psi,0)$-modules}

In this section, we determine irreducible $L(\psi,0)$-modules with $L(\psi,0)$ an integrable $\tau$-module.

%Let $U$ be any $\tau$-module. Then $L_r(U)$ becomes a $\Z^r$-graded $\tau$-module with
%\begin{eqnarray}
%(X\otimes \bdt^{\m})(u\otimes \bdt^{\n})=(X\otimes \bdt^{\m})u\otimes \bdt^{\m+\n}
%\end{eqnarray}
%for $X\in \hat{\g},\ u\in U,\ \m,\n\in \Z^r$.

%Recall that $L^0(\psi,0)$ is a $\tau$-module and a vertex operator algebra.
%We have
%$$[L(0), a\otimes t_0^{k}\bdt^{\m}]=-k(a\otimes t_0^{k}\bdt^{\m})$$
%for $a\in \g,\ (k,\m)\in \Z\times \Z^r$. Then
 %$L_r(\VSzero{\psi})$ is naturally a $\ToroD$-module with
 %$$d_0=-L(0)\otimes 1,\  \  d_i=1\otimes t_i\frac{\partial}{\partial t_i}\  \  \  \mbox{ for }1\le i\le r.$$
  %Furthermore, $L_r(\VSzero{\psi})$ is a completely reducible $\ToroD$-module (see \cite{R2}, Theorem 3.18).

%It can be readily seen that $\tilde{\pi}$ is also a $\ToroD$-module homomorphism.
%The image of $\tilde{\pi}$ is the direct summand of $L_r(\VSzero{\psi})$
 %(the $\tilde{\tau}$-submodule generated by ${\bf 1}\otimes 1$);
%$1\otimes \Image \psi$.

For this section, we assume that  $\psi:\Sc\rightarrow \Lr$ is the $\Zr$-graded algebra homomorphism
defined in (\ref{IntCentActEq}),
associated to positive integers $\ell_1,\dots,\ell_s$ and distinct vectors
\begin{eqnarray}
\bda_i=\left( a_{i1},\dots,a_{ir} \right)\in (\C^{\times})^{r}
\end{eqnarray}
for $1\leq i\leq s$. We next present some basic results about  $\Image \psi$. First, we have:

\begin{lem}\label{ImPsiLem}
There exist positive integers $\nu_1,\dots,\nu_r$ such that
\begin{eqnarray}
\C\left[ t_1^{\pm \nu_1},\dots, t_r^{\pm \nu_r} \right]\subset \Image \psi.
\end{eqnarray}
\end{lem}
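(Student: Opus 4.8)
The plan is to describe $\Image\psi$ concretely enough to see exactly which Laurent monomials it contains, and then to show it swallows all monomials supported on a suitable sublattice $\nu_1\Z\times\cdots\times\nu_r\Z$. First I would record the structural observation: since $\psi$ is a $\Zr$-graded algebra homomorphism and $\Lr$ is $\Zr$-graded with one-dimensional homogeneous components $\C\bdt^\m$, the subalgebra $\Image\psi$ is $\Zr$-graded, so $\Image\psi=\bigoplus_{\m\in S}\C\bdt^\m$ for some submonoid $S\subseteq\Zr$ (note $\rsymbol{0}\in S$). Writing $c_\m=\sum_{i=1}^{s}\ell_i\bda_i^\m\in\C$, so that $\psi(\cent\otimes\bdt^\m)=c_\m\bdt^\m$, a product of these generators equals $\big(\prod_l c_{\m_l}\big)\bdt^{\sum_l\m_l}$; hence $S$ is precisely the submonoid of $\Zr$ generated by $Z=\{\m\in\Zr\,|\,c_\m\neq0\}$. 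In particular $\bdt^\m\in\Image\psi$ whenever $c_\m\neq0$.

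Next I would reduce everything to the coordinate axes. Let $\rsymbol{e}_1,\dots,\rsymbol{e}_r$ denote the standard basis of $\Zr$. It suffices to find, for each $j$, a positive integer $p_j$ and a negative integer $q_j$ with $p_j\rsymbol{e}_j\in Z$ and $q_j\rsymbol{e}_j\in Z$. Indeed, since $Z\subseteq S$ and $S$ is closed under addition, $S$ then contains $\big\{\sum_{j}(a_jp_j+b_jq_j)\rsymbol{e}_j\,\big|\,a_j,b_j\ \text{nonnegative integers}\big\}$, and for each $j$ the set $\{a_jp_j+b_jq_j\,|\,a_j,b_j\ge 0\}$ is exactly $\gcd(p_j,|q_j|)\Z$ (because $p_j>0>q_j$, one realizes $\gcd(p_j,|q_j|)$, hence every multiple of it, with nonnegative coefficients by a B\'ezout argument with sign adjustment). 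Thus $S\supseteq\bigoplus_{j}\gcd(p_j,|q_j|)\,\Z\,\rsymbol{e}_j$, and $\nu_j:=\gcd(p_j,|q_j|)$ works: $\C[t_1^{\pm\nu_1},\dots,t_r^{\pm\nu_r}]=\bigoplus_{\nu_j\mid m_j\ \forall j}\C\bdt^\m\subseteq\Image\psi$.

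To produce $p_j$ and $q_j$, I would restrict $c$ to the $j$-th axis. Let $b_{j1},\dots,b_{jN_j}$ be the distinct values among $a_{1j},\dots,a_{sj}$, all in $\C^\times$, and set $\tilde\ell_{jl}=\sum_{i:\,a_{ij}=b_{jl}}\ell_i$, a positive integer; then $c_{k\rsymbol{e}_j}=\sum_{i=1}^{s}\ell_i a_{ij}^{\,k}=\sum_{l=1}^{N_j}\tilde\ell_{jl}b_{jl}^{\,k}$ for all $k\in\Z$. This is a complex linear recurrence sequence with characteristic polynomial $\prod_{l}(x-b_{jl})$, whose constant term $(-1)^{N_j}\prod_l b_{jl}$ is nonzero; hence the recurrence runs in both directions, so the whole sequence $(c_{k\rsymbol{e}_j})_{k\in\Z}$ is determined by any $N_j$ consecutive terms. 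Since it is not identically zero (for instance $c_{\rsymbol{0}}=\sum_i\ell_i>0$, or by linear independence of the distinct characters $k\mapsto b_{jl}^{\,k}$ of $\Z$), it cannot vanish at $N_j$ consecutive integers. Applying this to $\{1,\dots,N_j\}$ and to $\{-N_j,\dots,-1\}$ yields $p_j\in\{1,\dots,N_j\}$ and $q_j\in\{-N_j,\dots,-1\}$ with $c_{p_j\rsymbol{e}_j}\neq0$ and $c_{q_j\rsymbol{e}_j}\neq0$, completing the argument.

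The only points that need care are the identification $S=\langle Z\rangle$ from the generators of $\Image\psi$, and the elementary but essential input that a reversible linear recurrence over $\C$ which vanishes on a window of length equal to its order vanishes identically; granting these, the remaining steps are routine monoid arithmetic, so I do not anticipate a genuine obstacle here.
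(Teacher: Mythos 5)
Your proof is correct and follows essentially the same route as the paper: reduce to the coordinate axes, produce a positive and a negative exponent $m$ with $\sum_{i}\ell_i a_{ij}^{m}\neq 0$, and then use multiplicativity of $\Image\psi$ to fill out a sublattice. The only cosmetic difference is that you establish the nonvanishing via a reversible linear recurrence argument where the paper invokes the nonsingularity of the Vandermonde-type system $b_1^m x_1+\cdots+b_p^m x_p=0$ for $m=1,\dots,p$; both rest on the same linear independence of the characters $m\mapsto b_l^{m}$, and your explicit $\gcd$ bookkeeping is a harmless refinement of the paper's tacit reduction.
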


\begin{proof} Note that $\Image \psi$ is a subalgebra of $L_r$. Then it suffices to prove that for each $1\leq j\leq r$,
there exists a positive integer $\nu_j$ such that $t_{j}^{\pm \nu_j}\in \Image \psi$. Furthermore,
 it suffices to prove that for each $1\leq j\leq r$,
there exist a positive integer $p_j$ and a negative integer $n_{j}$ such that $t_{j}^{p_j}, \ t_j^{n_{j}}\in \Image \psi$.
Fix $1\leq j\leq r$. Recall that $\psi(\cent\otimes t_j^m)=\left(\sum\limits_{i=1}^{s}\ell_i a_{ij}^m\right) t_j^m$ for $m\in \Z$.
Then it suffices to prove that  $\sum\limits_{i=1}^{s}\ell_i a_{ij}^m\ne 0$ for some positive integer $m$ and for some negative integer $m$.
Let $S_{1}\cup \cdots \cup S_{p}$ be the partition of the set $\{1,2,\dots,s\}$ corresponding to the equivalence relation given by
$\mu \equiv \nu$ if and only if $a_{\mu,j}=a_{\nu,j}$. For $1\le q\le p$, set $b_{q}=a_{ij}$ for some $i\in S_{q}$. Then
$$\sum\limits_{i=1}^{s}\ell_i a_{ij}^m=\left(\sum_{i\in S_{1}}\ell_{i}\right)b_{1}^{m}+\dots +\left(\sum_{i\in S_{p}}\ell_{i}\right)b_{p}^{m}.$$
Since $b_{1},\dots,b_{p}$ are distinct nonzero complex numbers, the system of linear equations
$$b_{1}^{m}x_{1}+\dots +b_{p}^{m}x_{p}=0$$
for $m=1,2,\dots$ does not have nontrivial solutions.
As $\ell_1,\dots,\ell_s$ are positive integers,  we must have that $\sum\limits_{i=1}^{s}\ell_i a_{ij}^m\ne 0$ for some positive integer $m$.
It is also clear that $\sum\limits_{i=1}^{s}\ell_i a_{ij}^m\ne 0$ for some negative integer $m$.
\end{proof}

As $\Image\psi$ is a $\Z^r$-graded simple subalgebra of $L_r$, we immediately have:

\begin{coro}\label{coro:ImPsiGp}
Set
\begin{eqnarray}
\Lambda(\psi)=\{\m\in\Zr\,\mid\, \bdt^\m\in\Image\psi\}.
\end{eqnarray}
Then $\Lambda(\psi)$ is a cofinite subgroup of $\Zr$.
\end{coro}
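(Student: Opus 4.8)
The plan is to verify three things in turn: that $\Lambda(\psi)$ is a submonoid of $\Zr$ containing ${\bf 0}$, that it is closed under negation, and that it is cofinite. The first two use that $\Image\psi$ is a $\Zr$-graded simple $\Sc$-module, and the last uses Lemma \ref{ImPsiLem}.

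First I would observe that $\Image\psi$ is a subalgebra of the $\Zr$-graded algebra $\Lr=\bigoplus_{\m\in\Zr}\C\bdt^\m$ which is itself $\Zr$-graded, hence it is the span of the monomials it contains: $\Image\psi=\bigoplus_{\m\in\Lambda(\psi)}\C\bdt^\m$. Since $\Image\psi$ is closed under multiplication and $1=\bdt^{\bf 0}\in\Image\psi$, the set $\Lambda(\psi)$ is a submonoid of $\Zr$ containing ${\bf 0}$. Next, fix $\m\in\Lambda(\psi)$; then $\bdt^\m\Image\psi$ is a nonzero $\Zr$-graded ideal of $\Image\psi$. Under the identification $\Image\psi\simeq\Sc/\Ker\psi$, the $\Sc$-action on $\Image\psi$ is multiplication through $\psi$, so the $\Zr$-graded $\Sc$-submodules of $\Image\psi$ are precisely its $\Zr$-graded ideals. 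As $\Image\psi$ is a $\Zr$-graded simple $\Sc$-module, we conclude $\bdt^\m\Image\psi=\Image\psi$; in particular $1\in\bdt^\m\Image\psi$, i.e.\ $\bdt^{-\m}\in\Image\psi$, so $-\m\in\Lambda(\psi)$. Therefore $\Lambda(\psi)$ is a subgroup of $\Zr$.

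For cofiniteness I would invoke Lemma \ref{ImPsiLem}: there exist positive integers $\nu_1,\dots,\nu_r$ with $\C[t_1^{\pm\nu_1},\dots,t_r^{\pm\nu_r}]\subseteq\Image\psi$. In particular, for every $\m\in\nu_1\Z\times\cdots\times\nu_r\Z$ the monomial $\bdt^\m$ is a product of the elements $t_j^{\pm\nu_j}\in\Image\psi$ and hence lies in the subalgebra $\Image\psi$, so $\nu_1\Z\times\cdots\times\nu_r\Z\subseteq\Lambda(\psi)$. Since $\nu_1\Z\times\cdots\times\nu_r\Z$ has index $\nu_1\cdots\nu_r$ in $\Zr$, it follows that $[\Zr:\Lambda(\psi)]\le\nu_1\cdots\nu_r<\infty$, i.e.\ $\Lambda(\psi)$ is a cofinite subgroup of $\Zr$.

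The only step needing any care is the bookkeeping in the second paragraph — recognizing that the $\Zr$-graded $\Sc$-submodules of $\Image\psi$ are exactly its $\Zr$-graded ideals, which is what makes the hypothesis ``$\Zr$-graded simple $\Sc$-module'' bite. It is also worth noting that the cofiniteness genuinely relies on Lemma \ref{ImPsiLem} (hence on $\ell_1,\dots,\ell_s$ being positive integers): graded simplicity alone only guarantees that $\Lambda(\psi)$ is a subgroup, not that it has finite index. Alternatively, one may skip the negation step entirely and use the elementary fact that a submonoid of $\Zr$ containing a finite-index subgroup is automatically a subgroup.
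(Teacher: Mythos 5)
Your proof is correct and follows the route the paper intends: the paper states this corollary as an immediate consequence of Lemma \ref{ImPsiLem} together with the $\Zr$-graded simplicity of $\Image\psi$, and you have simply supplied the details (graded simplicity forces $\bdt^{\m}\Image\psi=\Image\psi$, hence closure under negation; the lemma gives the finite-index sublattice $\nu_1\Z\times\cdots\times\nu_r\Z$). Your closing remark that the submonoid property plus a finite-index subgroup already forces $\Lambda(\psi)$ to be a group is a nice streamlining, but both versions are essentially the paper's argument.
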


\begin{de}
For each $1\le j\le r$, let $\nu_j$ be the least positive integer such that $t_j^{\pm \nu_{j}}\in \Image \psi$
(see Lemma \ref{ImPsiLem}).
Furthermore, for each $1\le j\le r$, set
$$S_{j}=\{ a_{ij}^{\nu_j}\ |\ i=1,2,\dots,s\}$$
(multiplicity-free) and define
\begin{eqnarray}\label{pdefi}
p_{j}(t_j)=\prod\limits_{c\in S_j}\left(t_j^{\nu_j}-c\right).
\end{eqnarray}
\end{de}

From definition we have
\begin{eqnarray}
p_{j}(t_j)\in\C\left[ t_1^{\pm \nu_1},\dots, t_r^{\pm \nu_r} \right]\subset \Image \psi.
\end{eqnarray}

\begin{rem}\label{explaination}
{\em Note that any $V(\Sc,0)$-module $W$ is naturally an $\Sc$-module and $\ker \psi\subset \Sc$.
We have that a $V(\psi,0)$-module amounts to a $V(\Sc,0)$-module $W$ such that
$(\ker \psi)W=0$. On the other hand, with $\Sc$ a subalgebra of $U(\tau)$, every $\tau$-module $W$ is also naturally an $\Sc$-module, and furthermore,
if $(\ker \psi)W=0$, then $W$ is naturally an $({\rm Im} \psi)$-module as $\Sc/\ker \psi\cong {\rm Im} \psi$.}
\end{rem}

\begin{rem}\label{integrable-module}
{\em We here present a simple fact about integrable $\hat{\g}$-modules.
Let $W$ be a nonzero restricted $\hat{\g}$-module of level $\ell$.
Let $\theta$ be the highest root of $\g$ and let $e_\theta$ be a non-zero vector in $\g_\theta$.
Assume that
$e_{\theta}(x)^{k}=0$ for some positive integer $k$. Then  we show that $W$ is an integrable $\hat{\g}$-module as follows:
(1) It is known that $W$ is naturally a module for vertex algebra $V_{\hat{\g}}(\ell,0)$ (cf. \cite{Li1}).
(2) Let $J$ be the annihilating ideal of $W$ and set
$\overline{V}=V_{\hat{\g}}(\ell,0)/J$.
Then $W$ is a faithful $\overline{V}$-module.
As $Y_{W}(e_{\theta}(-1)^{k}{\bf 1},x)=Y_{W}(e_{\theta},x)^{k}=e_{\theta}(x)^{k}=0$, we have
$e_{\theta}(-1)^{k}{\bf 1}=0$ in $\overline{V}$.
Then $\overline{V}$ is an integrable $\hat{\g}$-module (by using the Chevalley generators).
(3) As $W\ne 0$, we must have $\overline{V}\ne 0$. It follows that
$\overline{V}=L_{\hat{\g}}(\ell,0)$, which implies that
$L_{\hat{\g}}(\ell,0)$ is integrable and  $\ell$ must be a nonnegative integer (see \cite{K}).
 (4)  As $W$ is a weak $L_{\hat{\g}}(\ell,0)$-module, from \cite{DLM}
$W$ must be an integrable $\hat{\g}$-module.}
\end{rem}

The following is a characterization of $L^0(\psi,0)$ as a quotient module of $V^{0}(\psi,0)$:

\begin{prop}  Set $\ell=\ell_1+\dots+\ell_s$. Then $L^0(\psi,0)=V^0(\psi,0)/I$, where
$I$ is the ideal of vertex algebra $\Vzero{\psi}$ generated by
\begin{eqnarray}
 e_\theta(-1,\m)^{\ell+1}\vac,\  \  \  \   (u\otimes t_0^{-1}f(\bdt))\vac
\end{eqnarray}
for $\ranger{m},\  u\in \g,~ f(\bdt)\in p_1(t_1)L_r+\cdots+p_r(t_r)L_r$.
\end{prop}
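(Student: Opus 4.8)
The plan is to prove that the ideal $I$ coincides with the kernel $M_{\bar\psi}$ of the quotient map $\Vzero{\psi}\to\VSzero{\psi}$; this gives $\VSzero{\psi}=\Vzero{\psi}/M_{\bar\psi}=\Vzero{\psi}/I$. I would handle the two inclusions separately, the inclusion $I\subseteq M_{\bar\psi}$ being routine and $M_{\bar\psi}\subseteq I$ being the substance. For $I\subseteq M_{\bar\psi}$ it suffices to check that each listed generator dies under $\Vzero{\psi}\to\VSzero{\psi}$, and for this I would use the explicit isomorphism of Proposition \ref{SzeroStructProp}, under which $\VSzero{\psi}\cong L_{\hat\g}(\ell_1,0)\otimes\cdots\otimes L_{\hat\g}(\ell_s,0)$ and $a_{-1,\m}\vac\mapsto\sum_{i=1}^{s}\bda_i^\m\bigl(\vac\otimes\cdots\otimes a_{-1}\vac\otimes\cdots\otimes\vac\bigr)$. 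The element $(u\otimes t_0^{-1}f(\bdt))\vac$ is sent to $\sum_{i=1}^s f(\bda_i)\bigl(\vac\otimes\cdots\otimes u_{-1}\vac\otimes\cdots\bigr)$, which vanishes because $f\in p_1(t_1)\Lr+\cdots+p_r(t_r)\Lr$ and each $p_j$ was constructed so that $p_j(a_{ij})=0$. The element $e_\theta(-1,\m)^{\ell+1}\vac$ is sent to $\bigl(\sum_{i=1}^s\bda_i^\m e_\theta^{(i)}(-1)\bigr)^{\ell+1}\vac$, where $e_\theta^{(i)}(-1)$ acts on the $i$-th factor; since these operators commute, the multinomial expansion is a sum of monomials of the form $\prod_i(\bda_i^\m)^{k_i}\bigl(\bigotimes_i e_\theta(-1)^{k_i}\vac\bigr)$ with $k_1+\cdots+k_s=\ell+1>\ell_1+\cdots+\ell_s$, so some $k_i\ge\ell_i+1$, and in $L_{\hat\g}(\ell_i,0)$ one has $e_\theta(-1)^{\ell_i+1}\vac=0$ (hence $e_\theta(-1)^k\vac=0$ for all $k\ge\ell_i+1$), so every monomial vanishes. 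This yields a surjection $q:\tilde V:=\Vzero{\psi}/I\twoheadrightarrow\VSzero{\psi}$ of $\Z$-graded vertex algebras.

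For $M_{\bar\psi}\subseteq I$ the goal is to identify $\tilde V$ itself. First I would exploit the relations $(u\otimes t_0^{-1}f)\vac=0$ with $f$ in the ideal $\mathcal J:=p_1(t_1)\Lr+\cdots+p_r(t_r)\Lr$. Since $I$ is a vertex-algebra ideal, $Y^0\bigl((u\otimes t_0^{-1}f)\vac,x_0\bigr)=\sum_k(u\otimes t_0^kf)x_0^{-k-1}$ annihilates $\tilde V$, so every $u\otimes t_0^kf$ with $u\in\g$, $k\in\Z$ acts as $0$ on $\tilde V$; and $\cent\otimes f$ acts as the scalar $\bar\psi(\cent\otimes f)=\sum_i\ell_i f(\bda_i)=0$. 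Hence $\hat\g\otimes\mathcal J$ annihilates $\tilde V$, so the $\tau$-action on $\tilde V$ factors through $L_r(\hat\g)/(\hat\g\otimes\mathcal J)$. By the analogue of Lemma \ref{kernel} (this is essentially Rao's Lemma 3.11, cf. Remark \ref{rem:comp123456}) this is exactly the image $\hat\g^{\oplus N}$ of the surjective homomorphism $\Phi:L_r(\hat\g)\to\hat\g^{\oplus N}$ attached to the finite set $B$ of Remark \ref{rem:comp123456}, $N=|B|$. Thus $\tilde V$ becomes a $\Z_{\ge0}$-graded integrable-to-be $\hat\g^{\oplus N}$-module, cyclic on $\vac$, with $\bigoplus_i(\g\otimes\C[t_0])$ annihilating $\vac$ and with the $i$-th central element acting by a level $\mu_i$ determined by $\bar\psi$: exactly the $s$ copies indexed by the points $\bda_1,\dots,\bda_s\in B$ carry levels $\ell_1,\dots,\ell_s$, and the remaining $N-s$ copies carry level $0$.

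To finish I would show the remaining relations $e_\theta(-1,\m)^{\ell+1}\vac=0$ force $\tilde V$ to collapse to $\bigotimes_{i}L_{\hat\g}(\mu_i,0)=\bigotimes_{j}L_{\hat\g}(\ell_j,0)=\VSzero{\psi}$. Passing these relations through $\Phi$, and using that the modes of $e_\theta(x_0,\m)$ all commute (as $[e_\theta,e_\theta]=0$ and $\langle e_\theta,e_\theta\rangle=0$), the iterate formula gives $Y^0\bigl(e_\theta(-1,\m)^{\ell+1}\vac,x_0\bigr)=e_\theta(x_0,\m)^{\ell+1}$, which is $0$ on $\tilde V$ for every $\m$; combined with a Vandermonde argument in $\m$ (using that the characters $\m\mapsto b^\m$ for $b\in B$ are independent) and Remark \ref{integrable-module}, this forces $\tilde V$ to be an integrable module for each summand $\hat\g^{(i)}$, hence a quotient of $\bigotimes_i\bigl(V_{\hat\g}(\mu_i,0)/\langle e_\theta(-1)^{\mu_i+1}\vac\rangle\bigr)=\bigotimes_i L_{\hat\g}(\mu_i,0)$. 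Since the level-$0$ factors are trivial, this is $\bigotimes_j L_{\hat\g}(\ell_j,0)=\VSzero{\psi}$, so $q$ is an isomorphism and $I=M_{\bar\psi}$.

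The hard part will be this last step: extracting full integrability (for every summand $\hat\g^{(i)}$, and for all root directions — the long-root case from conjugating $e_\theta$ by the Weyl group, the short-root case needing the usual $\mathfrak{sl}_2$ argument à la Proposition \ref{HwIntModLem}) from the single family of relations $e_\theta(-1,\m)^{\ell+1}\vac=0$, especially because the points of $B$ may satisfy multiplicative relations so the Vandermonde separation is delicate; this is exactly the place where the "integrability" relations and the "central-action" relations $\,(u\otimes t_0^{-1}f)\vac=0\,$ must be played off against one another, and where Rao's structural results (as already used in the proof of Theorem \ref{IntegrableProp123}) are essential.
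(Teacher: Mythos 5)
Your overall strategy is the paper's: the inclusion $I\subseteq M_{\bar{\psi}}$ is checked on generators via the tensor-product realization of $\VSzero{\psi}$ from Proposition \ref{SzeroStructProp} (your computation for both families of generators matches the paper's), and the converse is obtained by showing that the relations holding in $\Vzero{\psi}/I$ force this quotient to be an integrable, hence irreducible, $\tau$-module, so that it must coincide with $\VSzero{\psi}$. The reduction of the $\tau$-action on $\Vzero{\psi}/I$ to an action of finitely many copies of $\hat{\g}$ via the relations $(u\otimes t_0^{-1}f(\bdt))\vac=0$ is also exactly what the paper does.

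The step you flag as ``the hard part'' is, however, where your proposal goes astray. You propose to extract componentwise integrability by separating the relations $e_\theta(x_0,\m)^{\ell+1}=0$ through a Vandermonde argument in $\m$; as you suspect, this fails in general, since the expansion of $\bigl(\sum_i b_i^{\m}e_\theta^{(i)}(x_0)\bigr)^{\ell+1}$ involves the characters $\m\mapsto (b_{i_1}\cdots b_{i_{\ell+1}})^{\m}$ and distinct multisets of indices can give equal products, so the individual degree-$(\ell+1)$ monomials cannot be recovered. Your intended conclusion, that $\tilde V$ is a quotient of $\bigotimes_i\bigl(V_{\hat{\g}}(\mu_i,0)/\langle e_\theta(-1)^{\mu_i+1}\vac\rangle\bigr)$, needs exactly those unrecoverable relations with the precise exponents $\mu_i+1$. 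The correct argument uses character independence only at the linear level: the quotient of $L_r$ by $p_1(t_1)L_r+\cdots+p_r(t_r)L_r$ is a product of copies of $\C$ indexed by the (distinct) common zeros of the $p_j$, so one can choose $g_i\in L_r$ mapping to the $i$-th primitive idempotent; then $e_\theta^{(i)}(x_0)$ is a \emph{finite linear combination} of the fields $e_\theta(x_0,\m)$, which pairwise commute (all brackets involve $[e_\theta,e_\theta]$ and $\langle e_\theta,e_\theta\rangle$, both zero) and each of which satisfies $e_\theta(x_0,\m)^{\ell+1}=0$; hence $e_\theta^{(i)}(x_0)^{k_i}=0$ for some possibly larger $k_i$. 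That is all Remark \ref{integrable-module} requires (it needs only \emph{some} power of $e_\theta(x)$ to vanish, and it already yields integrability for all of $\hat{\g}$, so your separate treatment of Weyl conjugates and short roots is unnecessary). With integrability in hand, the finish is also cleaner than what you propose: $\tilde V$ is completely reducible by \cite{DLM}, and being cyclic on the highest weight vector $\vac$ it is irreducible, hence equal to $\VSzero{\psi}$; no identification with a tensor product of simple vertex algebras is needed at this stage. (A minor inaccuracy elsewhere: the quotient of $L_r(\hat{\g})$ by $\hat{\g}\otimes\bigl(p_1(t_1)L_r+\cdots+p_r(t_r)L_r\bigr)$ corresponds to the full zero set of the $p_j$, of cardinality $\prod_j\nu_j|S_j|$, which may strictly contain the set $B$ of Remark \ref{rem:comp123456}; this is harmless since the extra points carry level $0$.)
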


\begin{proof} Let $\pi$ denote the quotient map from $V^0(\psi,0)$ to $L^0(\psi,0)$,
which is also a $\tau$-module homomorphism.
 Recall from Proposition \ref{SzeroStructProp} the explicit construction of $L^0(\psi,0)$ in terms of highest weight irreducible $\hat{\g}$-modules $L_{\hat{\g}}(\ell_i,0)$ for $i=1,\dots,s$. Let $\Delta_i$ denote the natural algebra map from $U(\hat{\g})$ to
 $U(\hat{\g})^{\otimes s}$. We have
 $$\pi\left(e_\theta(-1,\m)^{\ell+1}\vac\right)=e_\theta(-1,\m)^{\ell+1}\pi(\vac)=\left(\sum_{i=1}^{s}\bda_i^{\m}\Delta_i(e_{\theta}(-1))\right)^{\ell+1}\pi({\bf 1})=0$$
 as $\Delta_i(e_{\theta}(-1))^{\ell_i+1}\pi({\bf 1})=0$ for $1\le i\le s$.
 On the other hand, for $u\in \g$, as
 $$u(x_0,\x)\pi(\vac)=\sum_{i=1}^{s}\delta\left(\frac{\bda_i}{\x}\right)\Delta_i(u(x_0))\pi(\vac),$$
 where $\delta\left(\frac{\bda_i}{\x}\right)=\prod_{j=1}^{r}\delta\left(\frac{a_{ij}}{x_j}\right)$,
 we have
 $$p_j(x_j)u(x_0,\x)\pi(\vac)
 =\sum_{i=1}^{s}p_j(x_j)\delta\left(\frac{\bda_i}{\x}\right)\Delta_i(u(x_0))\pi(\vac)=0$$
 for $1\le j\le r$.  Thus  for any
 $f(\bdt)\in p_1(t_1)L_r+\cdots+p_r(t_r)L_r$, we have
 $f(\x)u(x_0,\x)\pi(\vac)=0,$
 which implies $(u\otimes t_0^{n_0}f(\bdt))\pi(\vac)=0$ for all $n_0\in \Z$.
It now follows that $I\subset \ker \pi$.

From definition, the following relations hold  in $\Vzero{\psi}\big/I$:
\begin{eqnarray*}
&&Y^0\left(e_\theta(-1,\m)\vac,x_0\right)^{\ell+1}=Y^{0}\left(e_\theta(-1,\m)^{\ell+1}\vac,x_0\right)=0, \\
&&\sum\limits_{k\in \Z}u\otimes t_0^{k}f(\bdt) x_0^{-k-1}=Y^{0}((u\otimes t_0^{-1}f(\bdt))\vac,x_0)=0
\end{eqnarray*}
for  $\ranger{m}$, $u\in \g$, $f(\bdt)\in p_1(t_1)L_r+\cdots+p_r(t_r)L_r$. Set
$$K=L_r\big/(p_1(t_1)L_r+\cdots+p_r(t_r)L_r).$$
The second relation implies that $\Vzero{\psi}\big/I$ is a restricted  module for the quotient algebra
$\hat{\g}\otimes K$ of $\tau$. As $K\cong \C^{\oplus N}$ with $N=\prod_{j=1}^s\deg p_j(t_j)$,
\begin{eqnarray*}
\hat{\g}\otimes K\cong \mathop{\oplus}_{N-\text{copies}} \hat{\g}
\end{eqnarray*}
canonically. It then follows from the first relation that $\Vzero{\psi}\big/I$ is an integrable
module for the quotient algebra  (see Remark \ref{integrable-module}) and then for $\tau$.
Then $\Vzero{\psi}\big/I$ is a completely reducible module for the quotient algebra (and then for $\Toro$).
Since $\Vzero{\psi}\big/I$, a nonzero $\Toro$-module, is generated by $\vac$ (a highest weight vector),
it follows that $\Vzero{\psi}\big/I$ is irreducible.
Consequently, $\Vzero{\psi}\big/I=\VSzero{\psi}$.
\end{proof}

Next, we shall determine $L(\psi,0)$ and its irreducible modules in terms of $L^0(\psi,0)$ and its irreducible modules.
Recall that $L^0(\psi,0)$ is the $\Z$-graded simple quotient vertex algebra of $L(\psi,0)$,
 $\Vzero{\psi}$ is the \vqa{} of $V(\psi,0)$ modulo the ideal generated by
$\bdt^\m-\vac$ for $\m\in \Z^r$ with $\bdt^\m\in \Image\psi$. We have the following commutative diagram
\begin{eqnarray}\label{ecomm-diagram1}
\begin{matrix} V(\psi,0)& \overset{\gamma}{\longrightarrow} & V^0(\psi,0)\\
\downarrow \alpha &\mbox{} & \downarrow \alpha^{0}\\
L(\psi,0)&\overset{\pi}{\longrightarrow} &L^0(\psi,0).
\end{matrix}
\end{eqnarray}
Note that $\alpha,\alpha^{0},\gamma$ and $\pi$ are all canonical quotient maps, which are
homomorphisms of vertex algebras  and $\tau$-modules. Furthermore,
using Lemma \ref{LrLem} we obtain the following  commutative  diagram
\begin{eqnarray}\label{ecomm-diagram2}
\begin{matrix} V(\psi,0)& \overset{\tilde{\gamma}}{\longrightarrow} & L_r(V^0(\psi,0))\\
\downarrow \alpha &\mbox{} & \downarrow \alpha^{0}\otimes 1\\
L(\psi,0)&\overset{\tilde{\pi}}{\longrightarrow} &L_r(L^0(\psi,0)).
\end{matrix}
\end{eqnarray}
From Lemma \ref{LrLem}, $\tilde{\pi}$ is injective.

Note that
\begin{eqnarray}
V(\psi,0)&\cong &U(L_r(\g\otimes t_0^{-1}\C[t_0^{-1}]))\otimes \Image\psi,\\
\Vzero{\psi}&\cong & U(L_r(\g\otimes t_0^{-1}\C[t_0^{-1}]))
\end{eqnarray}
 as vector spaces.
 In view of this, we shall always view $U(L_r(\g\otimes t_0^{-1}\C[t_0^{-1}]))$ as a subspace of $V(\psi,0)$ and $\Vzero{\psi}$.
Since $L_r(\g\otimes t_0^{-1}\C[t_0^{-1}])$ is a $\Zr$-graded Lie algebra,
$U(L_r(\g\otimes t_0^{-1}\C[t_0^{-1}]))$ is an associative $\Zr$-graded algebra.
For $\n\in\Zr$, denote by $U(\n)$ the degree $\n$ subspace of $U(L_r(\g\otimes t_0^{-1}\C[t_0^{-1}]))$.

We have
 \begin{eqnarray}
 \gamma(X\otimes \bdt^{\m})=X\   \   \
 \mbox{ for }X\in U(L_r(\g\otimes t_0^{-1}\C[t_0^{-1}])),\ \m\in\Lambda(\psi).
 \end{eqnarray}
For  the homomorphism $\tilde{\gamma}: V(\psi,0)\rightarrow L_r(V^0(\psi,0))$,
we have
 \begin{eqnarray}
 \tilde{\gamma}(X\otimes \bdt^{\m})=X\otimes \bdt^{\m+\n}
 \end{eqnarray}
for $X\in U(\n),\ \m\in \Lambda(\psi)$ with $\n\in \Z^r$.
Furthermore, we have:

\begin{lem}\label{lem:tildegammaProp}
The map $\tilde{\gamma}$ is injective.
On the other hand, we have
\begin{eqnarray}
\Image\tilde{\gamma}={\rm span}\left\{ X\otimes \bdt^{\m}\  |\   X\in U(\n),\  \m,\n\in \Z^r\  \mbox{with }\m-\n\in \Lambda(\psi)\right\}.
\end{eqnarray}
\end{lem}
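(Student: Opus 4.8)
The plan is to analyze the map $\tilde\gamma$ directly on the PBW-type basis of $V(\psi,0)$. Recall from the excerpt that $V(\psi,0)\cong U(L_r(\g\otimes t_0^{-1}\C[t_0^{-1}]))\otimes \Image\psi$ as a vector space, and $\Image\psi$ is a $\Z^r$-graded simple subalgebra of $L_r$ with grading set $\Lambda(\psi)$ (a cofinite subgroup of $\Z^r$ by Corollary \ref{coro:ImPsiGp}). Thus a spanning set for $V(\psi,0)$ is given by elements $X\otimes \bdt^\m$ with $X\in U(\n)$ for various $\n\in\Z^r$ and $\m\in\Lambda(\psi)$, and by the formula recorded just before the statement, $\tilde\gamma(X\otimes\bdt^\m)=X\otimes\bdt^{\m+\n}$ for such $X,\m$. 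First I would fix a homogeneous basis $\{X_i^{(\n)}\}$ of each $U(\n)$ and observe that $\{X_i^{(\n)}\otimes\bdt^\m\mid \n\in\Z^r,\ \m\in\Lambda(\psi)\}$ is an \emph{honest basis} of $V(\psi,0)$ (this uses that $\Image\psi$ has $\{\bdt^\m\mid\m\in\Lambda(\psi)\}$ as a basis, which follows from $\Z^r$-graded simplicity: each graded piece is at most one-dimensional and is nonzero exactly for $\m\in\Lambda(\psi)$).

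For injectivity: the image $\tilde\gamma(X_i^{(\n)}\otimes\bdt^\m)=X_i^{(\n)}\otimes\bdt^{\m+\n}$ lies in the $(\m+\n)$-graded piece of $L_r(V^0(\psi,0))=V^0(\psi,0)\otimes L_r$ and has $V^0(\psi,0)$-component $X_i^{(\n)}$, which is a basis vector of the $\n$-graded subspace $U(\n)\subset V^0(\psi,0)$. Since distinct pairs $(\n,\m)$ with $\m\in\Lambda(\psi)$ give distinct ``external'' exponents $\m+\n$ only up to the relation $\m+\n\equiv$ const, I would instead note that from the pair $(\m+\n,\ X_i^{(\n)})$ one recovers $\n$ (as the internal degree of $X_i^{(\n)}$) and hence $\m=(\m+\n)-\n$; so the images of the basis vectors are linearly independent in $V^0(\psi,0)\otimes L_r$. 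That gives injectivity of $\tilde\gamma$.

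For the description of $\Image\tilde\gamma$: the displayed set is exactly $\{X\otimes\bdt^\p\mid X\in U(\n),\ \p\in\Z^r,\ \p-\n\in\Lambda(\psi)\}$, and by the formula $\tilde\gamma(X\otimes\bdt^{\p-\n})=X\otimes\bdt^{\p}$ with $\p-\n\in\Lambda(\psi)$, every such element is hit; conversely every basis image $X_i^{(\n)}\otimes\bdt^{\m+\n}$ with $\m\in\Lambda(\psi)$ has the form $X\otimes\bdt^\p$ with $\p-\n=\m\in\Lambda(\psi)$. Since $\{X_i^{(\n)}\}$ spans $U(\n)$ linearly, taking linear combinations shows $\Image\tilde\gamma$ equals the claimed span. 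I would organize this as: (i) injectivity via the basis argument, then (ii) the two inclusions for the image, each being a one-line computation with the explicit formula for $\tilde\gamma$.

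The main obstacle, such as it is, is purely bookkeeping: making sure the ``internal'' $\Z^r$-grading on $U(L_r(\g\otimes t_0^{-1}\C[t_0^{-1}]))$ (coming from the $t_1,\dots,t_r$-degrees) is the same grading that makes $V^0(\psi,0)$ a vertex $\Z^r$-graded algebra, so that $\tilde\gamma$ really lands where the formula says and the degree-matching argument is legitimate. I expect no genuine difficulty here, since the grading on $V^0(\psi,0)$ was set up (via Proposition \ref{prop:RTVA2ZrGradedVA} and the identification $V(\Sc,0)=V(T,0)^0$, together with $V^0(\psi,0)$ being a graded quotient) precisely to agree with the $\bdt$-degree on $U(L_r(\g\otimes t_0^{-1}\C[t_0^{-1}]))$; I would just state this compatibility explicitly before running the argument.
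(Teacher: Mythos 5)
Your proposal is correct and follows essentially the same route as the paper: both arguments reduce to homogeneous components via the $\Z^r$-grading, use the identification $V(\psi,0)\cong U(L_r(\g\otimes t_0^{-1}\C[t_0^{-1}]))\otimes \Image\psi$ with the explicit formula $\tilde{\gamma}(X\otimes\bdt^{\m})=X\otimes\bdt^{\m+\n}$, and obtain injectivity from the observation that distinct $\m^i\in\Lambda(\psi)$ force distinct internal degrees $\n^i$ and hence linearly independent components $X_i$. The two inclusions for $\Image\tilde{\gamma}$ are handled in the same one-line fashion in both.
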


\begin{proof}  Since $\tilde{\gamma}$ preserves the $\Z^r$-grading, it suffices to show that $\tilde{\gamma}(X)\ne 0$ for any
nonzero homogeneous vector $X\in V(\psi,0)$. Assume $\deg X=\m$. Then
$$X=X_1 \otimes \bdt^{\m^1}+\cdots +X_k\otimes \bdt^{\m^k},$$
where $\m^{1},\dots,\m^{k}\in
\Lambda(\psi)$ are distinct and $X_{1},\dots,X_k$ are nonzero homogeneous
elements of $U(L_r(\g\otimes t_0^{-1}\C[t_0^{-1}]))$ of degrees $\n^1,\dots,\n^k$ such that $\m^i+\n^i=\m$.
As $\m^{1},\dots,\m^{k}$ are distinct,
$\n^1,\dots,\n^k$ must be distinct, so that $X_1,\dots,X_k$ are linearly independent. Thus
$$\tilde{\gamma}(X)=\tilde{\gamma}(X_1\otimes \bdt^{\m^1}+\cdots +X_k\otimes \bdt^{\m^k})
=(X_1+\dots+X_k)\otimes \bdt^{\m}\ne 0.$$
This shows that $\tilde{\gamma}$ is injective.

Note that $X\otimes \bdt^{\m}\in \Image\tilde{\gamma}$
for any $X\in U(\n),\ \m,\n\in \Z^r$ with $\m-\n\in \Lambda(\psi)$ as
\begin{eqnarray*}
\tilde{\gamma}\left(X\otimes \bdt^{\m-\n} \right)=\gamma(X\otimes \bdt^{\m-\n})\otimes \bdt^{\m}
=X\otimes \bdt^\m.
\end{eqnarray*}
On the other hand, let $X$ be a nonzero homogeneous element of $\Image\tilde{\gamma}$  of
degree $\m\in \Z^r$. As $X\in L_r(V^0(\psi,0))$, we have
$X=\sum_{i=1}^k X_i\otimes \bdt^\m$,
where $X_i\in U(L_r(\g\otimes t_0^{-1}\C[t_0^{-1}]))\subset \Vzero{\psi}$ are nonzero homogeneous of
distinct degrees $\n_i$.  Since $X\in \Image\tilde{\gamma}$, there exist nonzero homogeneous
$Y_1,\dots,Y_\ell\in U(L_r(\g\otimes t_0^{-1}\C[t_0^{-1}]))$ of distinct degrees
$\rsymbol{p}_j$ with $\m-\rsymbol{p}_j\in \Lambda(\psi)$ such that
\begin{eqnarray*}
\sum\limits_{i=1}^k X_i\otimes
\bdt^\m=\tilde{\gamma}\left(\sum\limits_{j=1}^\ell Y_j\otimes
\bdt^{\m-\rsymbol{p}_j}\right)=\sum\limits_{j=1}^\ell Y_j\otimes \bdt^\m.
\end{eqnarray*}
Since $X_1,\dots,X_k$ and $Y_1,\dots,Y_\ell$ are all homogeneous and nonzero,
we must have $\{\n_i\}_{i=1}^k=\{{\bf p}_j\}_{j=1}^\ell$.
Consequently, $\m-\n_i\in \Lambda(\psi)$. This proves that $X$ is contained in the span.
\end{proof}

We shall need the following technical result:

\begin{lem}\label{lem:IdealCorrespLem222222}
Let $J$ be an ideal of $\T{\psi}$. Then
\begin{eqnarray}
\tilde{\gamma}(J)=(\Image\tilde{\gamma})\cap\lr{\gamma(J)}.
\end{eqnarray}
\end{lem}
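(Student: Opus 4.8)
The plan is to prove the two inclusions separately; the inclusion $\tilde\gamma(J)\subseteq(\Image\tilde\gamma)\cap\lr{\gamma(J)}$ is formal, and the reverse inclusion carries all the content. Since $\T{\psi}$ is a quotient \rtva{} of $V(\Sc,0)$, which satisfies $V(\Sc,0)^0=V(\Sc,0)$ (Lemma \ref{VS=VT0} and its corollary), $\T{\psi}$ itself lies in ${\mathcal C}_{r+1}^0$; hence by Corollary \ref{IdealCriLem} the ideal $J$ is a left ideal of $\T{\psi}$ stable under $D_1,\dots,D_r$, and as these act semisimply with the $\Z^r$-grading as joint eigenspace decomposition, $J=\bigoplus_{\m\in\Z^r}J_{(\m)}$ is $\Z^r$-graded. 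I will freely use: the formula $\tilde\gamma(v)=\gamma(v)\otimes\bdt^{-\m}$ for $v\in\T{\psi}_{(\m)}$ (Lemma \ref{LrLem}); the injectivity of $\tilde\gamma$ and the description of $\Image\tilde\gamma$ in Lemma \ref{lem:tildegammaProp}; and the vector-space models $\T{\psi}\cong U(L_r(\g\otimes t_0^{-1}\C[t_0^{-1}]))\otimes\Image\psi$, $\Vzero{\psi}\cong U(L_r(\g\otimes t_0^{-1}\C[t_0^{-1}]))$ recalled just before that lemma. For the formal inclusion: a homogeneous $j\in J_{(\m)}$ has $\tilde\gamma(j)=\gamma(j)\otimes\bdt^{-\m}$, which lies in $\Image\tilde\gamma$ and in $\gamma(J)\otimes L_r=\lr{\gamma(J)}$; summing over homogeneous components gives $\tilde\gamma(J)\subseteq(\Image\tilde\gamma)\cap\lr{\gamma(J)}$.

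For the reverse inclusion I would take $\xi\in(\Image\tilde\gamma)\cap\lr{\gamma(J)}$ and write $\xi=\tilde\gamma(a)$ with $a\in\T{\psi}$ unique (injectivity of $\tilde\gamma$). Decomposing $a=\sum_\m a_{(\m)}$ into $\Z^r$-homogeneous components yields $\xi=\sum_\m\gamma(a_{(\m)})\otimes\bdt^{-\m}$, and comparing coefficients of the linearly independent monomials $\bdt^{-\m}$ with $\xi\in\gamma(J)\otimes L_r$ gives $\gamma(a_{(\m)})\in\gamma(J)$ for every $\m$. So the statement reduces to the ``saturation'' claim: \emph{if $b\in\T{\psi}_{(\m)}$ and $\gamma(b)\in\gamma(J)$, then $b\in J$} --- granting it, each $a_{(\m)}\in J$, hence $a\in J$ and $\xi=\tilde\gamma(a)\in\tilde\gamma(J)$.

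To prove the claim I record two facts. (1) $\gamma$ is injective on each homogeneous subspace $\T{\psi}_{(\m)}$ --- immediate from $\tilde\gamma(v)=\gamma(v)\otimes\bdt^{-\m}$ and injectivity of $\tilde\gamma$ --- and, by the description of $\Image\tilde\gamma$, for any $v\in\T{\psi}_{(\m)}$ the element $\gamma(v)$ is supported, in the $\Z^r$-grading of $\Vzero{\psi}$, on a single coset of $\Lambda(\psi)$ depending only on $\m$. (2) For ${\bf p}\in\Lambda(\psi)$ the homogeneous vector $\bdt^{{\bf p}}$ of the subalgebra $\Image\psi\subseteq\T{\psi}$ is invertible there (inverse $\bdt^{-{\bf p}}$, since $\Lambda(\psi)$ is a subgroup by Corollary \ref{coro:ImPsiGp}), so multiplication by $\bdt^{{\bf p}}$ --- which by Lemma \ref{Sc-structure} is a mode of the vector $\bdt^{{\bf p}}$ --- is a linear automorphism $\mu_{{\bf p}}$ of $\T{\psi}$ that preserves the ideal $J$, shifts the $\Z^r$-grading by $\pm{\bf p}$, and satisfies $\gamma\circ\mu_{{\bf p}}=\gamma$ (because $\gamma$ collapses $\Image\psi$ onto $\C\vac$). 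Now, given $b$ as in the claim, write $\gamma(b)=\gamma(j_0)$ with $j_0\in J$ and decompose $j_0=\sum_{{\bf q}}(j_0)_{({\bf q})}$, $(j_0)_{({\bf q})}\in J_{({\bf q})}$. Projecting $\gamma(b)=\sum_{{\bf q}}\gamma((j_0)_{({\bf q})})$ onto the coset of $\Vzero{\psi}$-degrees carrying $\gamma(b)$, and using that $\gamma((j_0)_{({\bf q})})$ sits in the coset attached to ${\bf q}$ while distinct cosets are disjoint, only the terms with ${\bf q}\in\m+\Lambda(\psi)$ survive, so $\gamma(b)=\sum_{{\bf q}\in\m+\Lambda(\psi)}\gamma((j_0)_{({\bf q})})$. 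For each such ${\bf q}$ let $\mu$ be whichever of $\mu_{{\bf q}-\m}$, $\mu_{\m-{\bf q}}$ carries $\T{\psi}_{({\bf q})}$ onto $\T{\psi}_{(\m)}$, and set $j_1:=\sum_{{\bf q}\in\m+\Lambda(\psi)}\mu\big((j_0)_{({\bf q})}\big)\in J_{(\m)}$. Then $\gamma(j_1)=\sum_{{\bf q}\in\m+\Lambda(\psi)}\gamma((j_0)_{({\bf q})})=\gamma(b)$ by $\gamma\circ\mu=\gamma$, and since $b,j_1\in\T{\psi}_{(\m)}$ the injectivity in (1) forces $b=j_1\in J$.

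The main obstacle is this last step. The map $\gamma$ is far from injective --- its kernel is generated by the \emph{inhomogeneous} vectors $\bdt^{{\bf p}}-\vac$ --- so $\gamma(b)\in\gamma(J)$ yields only $b\in J+\ker\gamma$, and one must upgrade this to $b\in J$ by matching the $\Z^r$-degree of $b$ with a degree at which $J$ ``already lives''. That upgrade is exactly what the invertible elements $\bdt^{{\bf p}}$ ($\,{\bf p}\in\Lambda(\psi)$) supply: they slide an ideal freely between graded components whose degrees differ by an element of $\Lambda(\psi)$. This is also the one place where the hypothesis that $\Image\psi$ is a $\Z^r$-graded simple $\Sc$-module --- equivalently, $\Lambda(\psi)$ a cofinite subgroup of $\Z^r$ (Corollary \ref{coro:ImPsiGp}) --- is genuinely needed. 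The remaining items, namely tracking the $\Z^r$-grading, the coset bookkeeping in $\Vzero{\psi}$, and the sign conventions in the grading of $\lr{\Vzero{\psi}}$, are routine given the two vector-space models and Lemma \ref{LrLem}.
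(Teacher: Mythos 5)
Your proof is correct and follows essentially the same route as the paper's: after reducing to homogeneous elements via the $\Z^r$-gradedness of $J$, both arguments isolate the components of a $\gamma$-preimage supported on the coset $\m+\Lambda(\psi)$ and then shift them into degree $\m$ by multiplying with the central invertible elements $\bdt^{\rsymbol{p}}$, $\rsymbol{p}\in\Lambda(\psi)$, which $\gamma$ collapses to the identity. The only cosmetic difference is that you finish by invoking injectivity of $\gamma$ on each homogeneous subspace (your ``saturation claim''), whereas the paper concludes by directly computing $\tilde{\gamma}\bigl(\sum_j \bdt^{\m-\rsymbol{p}_j}\cdot Y_j\bigr)=\gamma(Y)\otimes\bdt^{\m}$; these are equivalent.
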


\begin{proof}
From Corollary \ref{IdealCriLem},  $J$ is automatically $\Zr$-graded.
From the definition of $\tilde{\gamma}$ we have
$\tilde{\gamma}(J)\subset (\Image\tilde{\gamma})\cap \lr{\gamma(J)}$.
To prove the converse inclusion, let $X$ be a
nonzero homogeneous element in $(\Image\tilde{\gamma})\cap \lr{\gamma(J)}$ of
degree $\m$. Then
$X=\sum_{i=1}^k X_i\otimes \bdt^{\m}$,
where $X_i$ are nonzero homogeneous elements in
$U(L_r(\g\otimes t_0^{-1}\C[t_0^{-1}]))$ of distinct degrees $\n_i$ such that
$\sum_{i=1}^k X_i\in \gamma(J)$.
Let $Y\in J$ such that $\gamma(Y)=\sum_{i=1}^k X_i$.
As $J$ is $\Zr$-graded, we have
$Y=Y_1+\cdots+Y_\ell$, where $Y_j\in J$ are nonzero homogeneous elements of
distinct degrees $\rsymbol{p}_j$.
Furthermore, for each $1\le j\le \ell$, we have
\begin{eqnarray*}
Y_j=\sum\limits_{s=1}^{N_j}Y_{js}\otimes \bdt^{\rsymbol{p}_j-\m_{js}},
\end{eqnarray*}
where $Y_{js}\in U(L_r(\g\otimes t_0^{-1}\C[t_0^{-1}]))$ are nonzero homogeneous
elements of distinct degrees $\m_{js}$ such that
$\rsymbol{p}_j-\m_{js}\in \Lambda(\psi)$ for all $1\leq
s\leq N_j$.

Let $U_{\m}$ be the
subspace of $U(L_r(\g\otimes t_0^{-1}\C[t_0^{-1}]))$, spanned by homogeneous elements of degree
$\n\in\Lambda(\psi)+\rsymbol{m}$, and let
$U_{\rsymbol{m}}'$ be the subspace spanned by homogeneous elements of
degree $\n\not\in\Lambda(\psi)+\rsymbol{m}$.
Then
\begin{eqnarray*}
U(L_r(\g\otimes t_0^{-1}\C[t_0^{-1}]))=U_{\m}\oplus U_\m'.
\end{eqnarray*}
Set
\begin{eqnarray*}
Y_\m=\sum\limits_{\substack{1\leq j\leq \ell\\ \rsymbol{p}_j\in \Lambda(\psi)+\m
}}Y_j\in J&\andtext\,\,\,\,
Y_{\m}'=\sum\limits_{\substack{1\leq j\leq \ell\\ \rsymbol{p}_j\not\in
\Lambda(\psi)+\m }}Y_j\in J.
\end{eqnarray*}
Then $\gamma(Y_\m)\in U_\m$ and $\gamma(Y_\m')\in U_\m'$.
Since
\begin{eqnarray*}
\gamma(Y_\m)+\gamma(Y_\m')=X_1+\cdots+X_k\in U_\m,
\end{eqnarray*}
we get that  $\gamma(Y_\m')=0$ and
$\gamma(Y_\m)=X_1+\cdots+X_k$.
Thus, we may assume $Y=Y_\m$ in the first place.
From the definition of $Y_\m$, we have $\rsymbol{p}_j-\m\in \Lambda(\psi)$
for $1\leq j\leq n$.
Then $\bdt^{\m-\rsymbol{p}_j}\in \Image\psi\subset
\T{\psi}$. As $J$ is an ideal, we have $\sum_{j=1}^n \bdt^{\m-\rsymbol{p}_j}\cdot Y_j\in J$
and
\begin{eqnarray*}
\tilde{\gamma}\left(\sum\limits_{j=1}^n \bdt^{\m-\rsymbol{p}_j}\cdot Y_j \right)
=\gamma\left(\sum\limits_{j=1}^n \bdt^{\m-\rsymbol{p}_j}\cdot Y_j\right)\otimes \bdt^\m=\gamma(Y)\otimes
\bdt^\m=\sum\limits_{i=1}^k X_i\otimes \bdt^\m.
\end{eqnarray*}
This proves $X=\sum\limits_{i=1}^k X_i\otimes \bdt^\m\in \tilde{\gamma}(J)$.
Thus $(\Image\tilde{\gamma})\cap \lr{\gamma(J)}\subset \tilde{\gamma}(J)$.
Therefore, $(\Image\tilde{\gamma})\cap \lr{\gamma(J)}= \tilde{\gamma}(J)$.
\end{proof}

Note that for $\m\in \Z^r$, if $\bdt^\m\in \Image\psi$,
then $\bdt^{-\m}\in \Image\psi$.
For $a\in \g$, $\ranger{n}$, and $f(\bdt)=\sum\limits_{\ranger{m}}\beta_\m \bdt^\m  \in \Image\psi$, set
\begin{eqnarray}
X(a,\n, f(\bdt))=\sum\limits_{\ranger{m}}\beta_\m a_{-1,\m+\n}(1\otimes \bdt^{-\m})\in V(\psi,0).
\end{eqnarray}
 Now, we are ready to give a characterization of $L(\psi,0)$.

\begin{prop}\label{GenProp}
Set $\ell=\ell_1+\dots+\ell_s$. Then
$$L(\psi,0)=V(\psi,0)\big/J,$$
where $J$ is the ideal of $(r+1)$-toroidal vertex algebra $V(\psi,0)$, generated by
\begin{eqnarray}
\left(e_\theta(-1,\m)\right)^{\ell+1}\vac,\   \    \   X(a,\n,f(\bdt))
\end{eqnarray}
for $\ranger{m},\ a\in \g,~\ranger{n},~ f(\bdt)\in  p_1(t_1)(\Image\psi)+\cdots +p_r(t_r)(\Image\psi)\subset L_r$.
\end{prop}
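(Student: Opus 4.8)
The plan is to identify $L(\psi,0)$ with the quotient $V(\psi,0)/J$ by showing that $J$ is exactly the ideal whose quotient is simple. I would first verify that $J \subset M_\psi$, the unique maximal $\Z$-graded ideal of $V(\psi,0)$ (Lemma \ref{SimpleQuotientLem}). For the generators $(e_\theta(-1,\m))^{\ell+1}\vac$, apply the quotient map $\alpha : V(\psi,0)\to L(\psi,0)$ and then $\pi : L(\psi,0)\to L^0(\psi,0)$; using the explicit tensor-product description of $L^0(\psi,0)$ from Proposition \ref{SzeroStructProp}, $e_\theta(-1,\m)$ acts as $\sum_i \bda_i^\m \Delta_i(e_\theta(-1))$, and since $\Delta_i(e_\theta(-1))^{\ell_i+1}$ kills the vacuum, the $(\ell+1)$-st power kills $\pi(\vac)$; because $L^0(\psi,0)$ is a faithful module for $\overline{V}(\psi,0)/M_{\bar\psi}$-type quotient and the image generates, one deduces $\alpha((e_\theta(-1,\m))^{\ell+1}\vac)=0$ by the integrability argument of Remark \ref{integrable-module}. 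For the generators $X(a,\n,f(\bdt))$ with $f(\bdt)\in p_1(t_1)(\Image\psi)+\cdots+p_r(t_r)(\Image\psi)$, the point is that under $\tilde\gamma$ these map into $L_r$ times the vanishing elements $(u\otimes t_0^{-1}f(\bdt))\vac$ of the previous proposition characterizing $L^0(\psi,0)$; since $\tilde\pi\circ\alpha = (\alpha^0\otimes 1)\circ\tilde\gamma$ by diagram (\ref{ecomm-diagram2}) and $\tilde\pi$ is injective, vanishing of the image in $L_r(L^0(\psi,0))$ forces $\alpha(X(a,\n,f(\bdt)))=0$. This gives $J\subset \ker\alpha = M_\psi$.

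For the reverse inclusion, the strategy is to show $V(\psi,0)/J$ is already simple as an $(r+1)$-toroidal vertex algebra, hence equals $L(\psi,0)$. I would use Lemma \ref{lem:IdealCorrespLem222222}: any ideal $J'$ of $V(\psi,0)$ containing $J$ corresponds via $\tilde\gamma$ to $(\Image\tilde\gamma)\cap L_r(\gamma(J'))$, so it suffices to control $\gamma(J)\subset V^0(\psi,0)$. The claim is that $\gamma(J)$ contains the defining ideal $I$ of $L^0(\psi,0)$ from the previous proposition: indeed $\gamma\big((e_\theta(-1,\m))^{\ell+1}\vac\big)=(e_\theta(-1,\m))^{\ell+1}\vac$ directly, and $\gamma(X(a,\n,f(\bdt)))$ works out, using $\gamma(X\otimes\bdt^\m)=X$ for $\m\in\Lambda(\psi)$ and the definition of $X(a,\n,f(\bdt))$, to be (a generator of) the ideal generated by $(u\otimes t_0^{-1}f(\bdt))\vac$. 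Hence $\gamma(J)\supseteq I$, so $\gamma(J)=I$ because the reverse containment follows from $J\subseteq M_\psi$ and $\gamma(M_\psi)\subseteq \ker\alpha^0 = $ (the ideal defining $L^0(\psi,0)$), i.e. $\gamma(J)\subseteq \gamma(M_\psi)\subseteq I$. Then $V^0(\psi,0)/\gamma(J) = L^0(\psi,0)$, which is simple, so by Lemma \ref{lem:IdealCorrespLem222222} combined with the correspondence between ideals of $V(\psi,0)$ and $V^0(\psi,0)$ (Proposition \ref{IdealCorresProp}, Proposition \ref{SimpleCoro}) — a $\Z^r$-graded ideal $J'\supseteq J$ has $\gamma(J')$ an ideal of $V^0(\psi,0)/\gamma(J)$, which is $0$ or everything, forcing $J'=J$ or $J'=V(\psi,0)$.

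More precisely, for the last step I would invoke Proposition \ref{SimpleCoro}: $V(\psi,0)/J$ (up to passing to $\overline{\phantom{x}}$, which here changes nothing since the creation-type finiteness (\ref{efinite}) holds by construction of $V(\psi,0)$ as an induced module) is simple if and only if its degree-$0$ piece $V^0(\psi,0)/\gamma(J)$ is a simple $(r+1)$-toroidal vertex algebra. We have just identified the latter with $L^0(\psi,0)$, which is simple. Therefore $V(\psi,0)/J$ is simple, and since it is a $\Z$-graded quotient of $V(\psi,0)$ with degree-$0$ part $A_\psi\simeq\Image\psi$ (the defining ideal $J$ being $\Z\times\Z^r$-graded, which follows from Corollary \ref{IdealCriLem} since $J$ is visibly stable under $D_0,\dots,D_r$: $D_0$ lowers the $t_0$-degree and the listed generators are closed under this, and $D_1,\dots,D_r$ act by the $\Z^r$-grading), it must coincide with $\Stva{\psi} = L(\psi,0)$.

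The main obstacle I anticipate is the bookkeeping in showing $\gamma(J) = I$ exactly — in particular verifying that $\gamma(X(a,\n,f(\bdt)))$ really generates the same ideal of $V^0(\psi,0)$ as the elements $(u\otimes t_0^{-1}f(\bdt))\vac$, keeping careful track of the grading shift under $\gamma$ (which collapses the $\Image\psi$-factor) versus under $\tilde\gamma$ (which shifts $\bdt^\m$ by the internal degree), and checking that $D_0$-stability of $J$ is genuinely preserved by these specific generators. The integrability/faithfulness argument (Remark \ref{integrable-module}) feeding the first inclusion is standard but needs the observation that $V^0(\psi,0)/\gamma(J)$ is a nonzero integrable $\tau$-module generated by a highest-weight vector, hence irreducible, hence equal to $L^0(\psi,0)$ — this is essentially the content of the preceding proposition and should be cited rather than reproved.
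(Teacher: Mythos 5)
Your overall strategy coincides with the paper's: both hinge on the computation $\gamma(J)=I$, the injectivity of $\tilde{\gamma}$ and $\tilde{\pi}$ (Lemmas \ref{lem:tildegammaProp} and \ref{LrLem}), and Lemma \ref{lem:IdealCorrespLem222222}. The paper is more economical: from $\tilde{\pi}\circ\alpha=(\alpha^{0}\otimes 1)\circ\tilde{\gamma}$ and injectivity of $\tilde{\pi}$ one gets $\ker\alpha=\tilde{\gamma}^{-1}(\lr{I})$ in one stroke, and Lemma \ref{lem:IdealCorrespLem222222} together with injectivity of $\tilde{\gamma}$ gives $J=\tilde{\gamma}^{-1}(\lr{I})$ as well, so $J=\ker\alpha$ with no separate maximality argument. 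Your two-step route (containment plus simplicity of the quotient) is workable, but two points need repair.

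First, your appeal to Proposition \ref{SimpleCoro} rests on a misidentification: in Section 6 the symbol $\Vzero{\psi}$ denotes $\overline{V}(\psi,0)$, a further \emph{quotient} of $V(\psi,0)$ (by the ideal generated by $\bdt^{\m}-\vac$, $\m\in\Lambda(\psi)$), not the core subalgebra $(V(\psi,0))^{0}$ of Proposition \ref{TvaStructProp}; indeed $(V(\psi,0))^{0}=V(\psi,0)$ since $V(\psi,0)=U(\tau)\vac$. So Proposition \ref{SimpleCoro} says nothing about the pair $\left(V(\psi,0)/J,\ \Vzero{\psi}/\gamma(J)\right)$, and in general simplicity of a quotient does not imply simplicity of the source. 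Fortunately your concrete argument in the second paragraph does not need it: any graded ideal $J'\supseteq J$ has $\gamma(J')/I$ a graded ideal of the graded-simple $\VSzero{\psi}$, and Lemma \ref{lem:IdealCorrespLem222222} with injectivity of $\tilde{\gamma}$ then forces $J'=J$ or $J'=V(\psi,0)$. Second, your treatment of the generators $(e_\theta(-1,\m))^{\ell+1}\vac$ goes through $\pi$, which is not injective, so deducing $\alpha(v)=0$ from $\pi(\alpha(v))=0$ ``by integrability'' is not a proof; argue exactly as you do for $X(a,\n,f(\bdt))$, via $\tilde{\pi}$: $\tilde{\gamma}$ sends each generator to an $L_r$-multiple of an element of $I=\ker\alpha^{0}$, so $\tilde{\pi}(\alpha(\cdot))=(\alpha^{0}\otimes 1)(\tilde{\gamma}(\cdot))=0$ and injectivity of $\tilde{\pi}$ finishes. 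With these two repairs the proof is sound and is essentially the paper's.
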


\begin{proof} For $a\in \g$, $\ranger{n}$ and $f(\bdt)=\sum\limits_{\ranger{m}}\beta_\m \bdt^\m  \in \Image\psi$,
we have
$$\gamma (X(a,\n,f(\bdt)))=\sum_{\m\in \Z^r}\beta_{\m}(a\otimes t_0^{-1}\bdt^{\m+\n})\vac
=(a\otimes t_0^{-1}\bdt^{\n}f(\bdt))\vac.$$
Note that as $\Image\psi$ is a nonzero $\Z^r$-graded subring of $L_r$, we have $\big(\Image\psi\big)L_r=L_r$. Then
$\bdt^{\n}p_j(t_j)\big(\Image\psi\big)$ for $\n\in \Z^r$ linearly span $p_j(t_j)L_r$.
Since the generators of $J$ are homogeneous, $J$ is simply the $\tau$-submodule generated by those generators.
It then follows that $\gamma(J)=I$.

Recall the commutative diagrams (\ref{ecomm-diagram1}) and (\ref{ecomm-diagram2}).
As $L(\psi,0)$ is graded simple, by Lemma \ref{LrLem},  $\tilde{\pi}$ is injective.
By Lemma \ref{lem:tildegammaProp}, $\tilde{\gamma}$ is also injective.
Then
 $$\ker \alpha=\tilde{\gamma}^{-1}\left(\ker (\alpha^{0}\otimes 1)
 \right)=\tilde{\gamma}^{-1}\left(\lr{I} \right).$$ As $\gamma(J)=I$, it can be
 readily seen that the ideal of vertex algebra $\lr{\Vzero{\psi}}$ generated by $\tilde{\gamma}\left(J\right)$
 is equal to $\lr{I}$.
By Lemma \ref{lem:IdealCorrespLem222222}, we have
$\tilde{\gamma}(J)=\Image\tilde{\gamma}\cap \lr{I}$,
which is equivalent to  $J=\tilde{\gamma}^{-1}\left(\lr{I}\right)$.
Thus, $J=\tilde{\gamma}^{-1}\left(\lr{I}\right)=\ker \alpha$.
Therefore, $V(\psi,0)\big/J=L(\psi,0)$.
\end{proof}

Recall that a $V(\psi,0)$-module structure on a vector space amounts to a restricted $\tau$-module
structure with $\ker \psi$ acting trivially.
We have:

\begin{thm}\label{ModCorrespThm10086}
 An $\Stva{\psi}$-module structure on a vector space $W$ exactly amounts to
 a restricted and integrable $\Toro$-module structure such that
 $(\ker \psi) W=0$ and
\begin{eqnarray}\label{ModCorrespThm10086Eq10001}
  p_j(x_j/t_j)\eltAbs{a}{x_0}{\x}=0\   \mbox{ on }\ W\   \mbox{ for all }1\le j\le r,\ a\in \g.
\end{eqnarray}
\end{thm}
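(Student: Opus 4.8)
The plan is to combine the explicit presentation $L(\psi,0)=V(\psi,0)/J$ of Proposition~\ref{GenProp} with the module dictionary for toroidal vertex algebras. First I would observe that, by Theorem~\ref{TvaModThm} (for $V(T,0)$), Theorem~\ref{ZeroModCateProp} (identifying modules for $V(T,0)^{0}=V(\Sc,0)$ viewed as a \tva{} with modules for it viewed as a \va{}) and Remark~\ref{explaination}, a $V(\psi,0)$-module structure on a vector space $W$ is the same thing as a restricted $\Toro$-module structure with $(\ker\psi)W=0$. Hence an $L(\psi,0)$-module structure on $W$ is exactly such a $\Toro$-module structure for which the $V(\psi,0)$-action descends to $V(\psi,0)/J$, i.e.\ for which $\Ytmod{W}{u}{x_0}{\x}=0$ for all $u\in J$. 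As in Lemma~\ref{Kylemma} and the proof of Lemma~\ref{IdealCorresLem}, the set $\{u\in V(\psi,0)\mid \Ytmod{W}{u}{x_0}{\x}=0\}$ is an ideal of $V(\psi,0)$; it contains $J$ if and only if it contains the two generating families $(e_\theta(-1,\m))^{\ell+1}\vac$ and $X(a,\n,f(\bdt))$ of Proposition~\ref{GenProp}. So everything reduces to translating the vanishing of these two families into the two module conditions in the statement.

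For the family $X(a,\n,f(\bdt))$ I would use Lemma~\ref{simple-facts} and the identity~(\ref{tempEq1}) to compute that, up to an overall monomial in $\x$, $\Ytmod{W}{X(a,\n,f(\bdt))}{x_0}{\x}$ is the field obtained from $\eltAbs{a}{x_0}{x}$ by replacing each coefficient $a\otimes t_0^{m_0}\bdt^{\m}$ by $a\otimes t_0^{m_0}\bdt^{\m}\bdt^{\n}f(\bdt)$; letting $\n$ range over $\Zr$ and $f$ over $p_1(t_1)(\Image\psi)+\cdots+p_r(t_r)(\Image\psi)$ and using $(\Image\psi)L_r=L_r$, the totality of these vanishings is equivalent to (\ref{ModCorrespThm10086Eq10001}), where $t_j$ is understood via the $(\Image\psi)$-module structure on $W$ supplied by $(\ker\psi)W=0$. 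For the family $(e_\theta(-1,\m))^{\ell+1}\vac$, note that $[e_\theta,e_\theta]=0$ and $\<e_\theta,e_\theta\>=0$, so by (\ref{GenFuncEq}) the field $\sum_{m_0}(e_\theta\otimes t_0^{m_0}\bdt^{\m})x_0^{-m_0-1}$ commutes with itself; consequently $\Ytmod{W}{(e_\theta(-1,\m))^{\ell+1}\vac}{x_0}{\x}$ equals, up to a monomial in $\x$, the $(\ell+1)$-st power of that field, so this family acts by zero on $W$ exactly when $\big(\sum_{m_0}(e_\theta\otimes t_0^{m_0}\bdt^{\m})x_0^{-m_0-1}\big)^{\ell+1}=0$ on $W$ for every $\m\in\Zr$.

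The remaining point, and the heart of the argument, is that, given $(\ker\psi)W=0$ and (\ref{ModCorrespThm10086Eq10001}), the vanishing of these $(\ell+1)$-st powers for all $\m$ is equivalent to $W$ being an integrable $\Toro$-module. For the easy direction, if $W$ is integrable then for each $\m$ the elements $e_\theta\otimes t_0^{i}\bdt^{\m}$ and $f_\theta\otimes t_0^{i}\bdt^{-\m}$ ($i\in\Z$) generate a copy of $\widehat{\mathfrak{sl}_2}$ inside $\Toro$ on which $\cent$ acts by the scalar $\ell=\ell_1+\cdots+\ell_s\in\Z_{>0}$; $W$ restricts there to a restricted integrable module of level $\ell$, hence (Remark~\ref{integrable-module}) to an $L_{\widehat{\mathfrak{sl}_2}}(\ell,0)$-module, which forces the required vanishing. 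For the converse, taking $\m=0$ and applying Remark~\ref{integrable-module} to $\hat\g$ itself shows $W$ is already integrable in the $t_0$-direction, and one then exploits (\ref{ModCorrespThm10086Eq10001}): normalizing the fields $\eltAbs{a}{x_0}{x}$ by the (invertible) action of $L_r$ available from $(\ker\psi)W=0$, condition~(\ref{ModCorrespThm10086Eq10001}) turns the resulting coefficients into a finite system governed by the polynomials $p_j$, so that after this reduction $W$ becomes a module over an algebra of the form $\hat\g\otimes K$ with $K=L_r/(p_1(t_1)L_r+\cdots+p_r(t_r)L_r)\cong\C^{\oplus N}$, hence over $\hat\g^{\oplus N}$ (cf.\ Remark~\ref{rem:comp123456} and the proof of Proposition~\ref{SzeroStructProp}); the vanishing hypothesis on the $e_\theta$-fields, together with $\ell=\ell_1+\cdots+\ell_s$ and the multinomial argument used there, then makes the highest-root field of each $\hat\g$-summand nilpotent of order at most its level plus one, so by Remark~\ref{integrable-module} (applied to $\hat\g^{\oplus N}$) $W$ is integrable over $\hat\g\otimes K$ and therefore over $\Toro$. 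Assembling the three steps proves the theorem.

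I expect the third step, and within it the converse direction, to be the main obstacle: one must make precise the passage from ``the highest-root fields vanish to the power $\ell+1$ in every $\bdt$-direction'' to full integrability over $\Toro$, carefully controlling the interaction of the $\Toro$-action, the $(\Image\psi)$-module structure on $W$, and the finite reduction $\hat\g\otimes L_r\twoheadrightarrow\hat\g\otimes K$. This is exactly the kind of reduction carried out by Rao \cite{R2} in his classification of integrable toroidal modules (compare also Theorem~\ref{IntegrableProp123}), and the cleanest route is to import his results at this point rather than to redo the recurrence analysis by hand.
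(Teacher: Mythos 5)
Your overall skeleton coincides with the paper's: reduce via Proposition \ref{GenProp} to the vanishing on $W$ of the two generating families of $J$, translate the vanishing of the $X(a,\n,f(\bdt))$ into (\ref{ModCorrespThm10086Eq10001}) through the identity $\sum_{\n}Y^0_W(X(a,\n,f(\bdt))\vac,x_0)\x^{-\n}=f(\x/\bdt)\,a(x_0,\x)$, and handle $e_\theta(-1,\m)^{\ell+1}\vac$ through the self-commuting field $e_\theta(x_0,\m)$ and the copy of $\widehat{\mathfrak{sl}_2}$ it generates together with $\g_{-\theta}\otimes \bdt^{-\m}\C[t_0^{\pm1}]$. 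Your ``easy direction'' (integrability implies $e_\theta(x_0,\m)^{\ell+1}=0$) is exactly the paper's backward direction; the relevant inputs are the complete reducibility of restricted integrable level-$\ell$ $\widehat{\mathfrak{sl}_2}$-modules from \cite{DLM} and the vanishing of the $(\ell+1)$-st power on standard modules from \cite{LP}, not Remark \ref{integrable-module}, whose implication runs the other way.

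The genuine gap is in your forward direction, where you try to deduce integrability of $W$ from the conditions $e_\theta(x_0,\m)^{\ell+1}=0$ and (\ref{ModCorrespThm10086Eq10001}) alone. The proposed passage to a module over $\hat{\g}\otimes K$ with $K=L_r/(p_1(t_1)L_r+\cdots+p_r(t_r)L_r)$ is not available for a general module: condition (\ref{ModCorrespThm10086Eq10001}) only relates the modes $a\otimes t_0^{m_0}\bdt^{\m}$ for different $\m$ through the operators $\rho(t_j^{\pm\nu_j})$ furnished by the $\Image\psi$-action on $W$; these are invertible and central but are scalars only when $W$ is irreducible, so the $\tau$-action does not literally factor through $\hat{\g}\otimes K$. (The paper performs that reduction only on the cyclic highest-weight quotient $\Vzero{\psi}/I$, where it is legitimate.) Deferring to \cite{R2} does not close the hole either, since Rao treats irreducible integrable modules with finite-dimensional weight spaces, not arbitrary restricted modules. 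The fix is the paper's much shorter route, which uses more of $J$ than its generators: since $L(\psi,0)$ is integrable by Theorem \ref{IntegrableProp123}, for \emph{every} root $\alpha$ and every $\m$ some power $e_\alpha(-1,\m)^{k}\vac$ vanishes in $L(\psi,0)$; hence $e_\alpha(x_0,\m)^{k}=Y_W^0\left(e_\alpha(-1,\m)^{k}\vac,x_0\right)=0$ on any $L(\psi,0)$-module $W$ (the modes of $e_\alpha(x_0,\m)$ commute), and local nilpotency of each individual mode then follows from restrictedness by a downward induction on the top degree. Replacing your $\hat{\g}\otimes K$ step by this argument completes the proof.
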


\begin{proof}
Assume that $W$ is an $\Stva{\psi}$-module. We know that $W$ is a restricted $\Toro$-module.
From the definition of $\Stva{\psi}$, we get that $(\ker \psi)W=0$.
By Theorem \ref{IntegrableProp123}, $L(\psi,0)$ is an integrable $\tau$-module.
Then for $\alpha\in \Delta$, $e_\alpha\in \g_\alpha$ and $\m\in\Zr$, there exists a positive integer $k$ such that
$e_\alpha(-1,\m)^k\vac=0$. Thus we have
\begin{eqnarray}
e_\alpha(x_0,\n)^{k}=Y^{0}_{W}(e_{\alpha}(-1,\n)^{k}{\bf 1},x_{0})=0,
\end{eqnarray}
noticing that $[e_\alpha(m_0,\n),e_\alpha(n_0,\n)]=0$ for $m_0,n_0\in \Z$.
Let $w\in W$.
As $W$ is a restricted $\tau$-module, there exists $q\in \Z$ such that
$e_\alpha(n_0,\n)w=0$ for $n_0> q$. Suppose that $p$ is an integer such that
$e_\alpha(n_0,\n)$ are nilpotent on $w$ for $n_0\ge p$.
Then $e_\alpha(x_0,\n)-\sum\limits_{n_0=p}^{	q}e_\alpha(n_0,\n)x_0^{-n_0-1}$ is nilpotent on $w$.
It follows that $e_\alpha(p-1,\n)$ is nilpotent on $w$.
By induction on $p$, we get that $e_\alpha(n_0,\n)$ are nilpotent on $w$ for $n_0\in \Z$.
This shows that $W$ is integrable.
Note that
\begin{eqnarray}\label{eq:temptwoEquations2048}
&&\sum_{\n\in \Z^r}Y^0_{W}(X(a,\n,f(\bdt)){\bf 1},x_{0})\x^{-\n}=f(\x/\bdt)a(x_0,\x)
\end{eqnarray}
for $f(\bdt)\in p_1(t_1)({\rm Im}\psi)+\cdots+p_r(t_r)({\rm Im}\psi)$.
Then from Proposition \ref{GenProp} we see that \eqref{ModCorrespThm10086Eq10001} holds.

For the other direction, assume $W$ is a restricted and integrable $\Toro$-module satisfying the very condition.
First, $W$ is naturally a module for $V(\Sc,0)$.
Since $(\ker \psi)W=0$, $W$ is a module for $V(\psi,0)$.
In view of Proposition \ref{GenProp}, we only need to show that for each  $\ranger{n}$,
\begin{eqnarray}\label{ethetal+1}
e_\theta(x_0,\n)^{\ell+1}=0\   \mbox{ on } W,
\end{eqnarray}
where $\ell=\ell_1+\dots +\ell_s$.
For $\ranger{n}$, set
\[
\s_\n=\g_\theta\otimes \bdt^\n\C\left[t_0^{\pm 1}\right]+ \g_{-\theta}\otimes \bdt^{-\n}\C\left[t_0^{\pm 1}\right]  +
	\C\check{\theta}\otimes \C\left[t_0^{\pm 1}\right] + \C\cent.
\]
Notice that $\s_\n$ is a subalgebra of $\lr{\hat{\g}}$, which is isomorphic to $\widehat{\mathfrak{sl}_2}$.
Since $\psi(\ell-\cent)=\ell-\sum_{i=1}^{s}\ell_i=0$ and $(\ker \psi)W=0$,  $\cent$ acts on $W$ as scalar  $\ell$.
As a restricted and integrable $\s_\n$-module, from \cite{DLM} $W$ is a direct sum of irreducible highest weight integrable modules
of level $\ell$.
Then   (see \cite{LP}) we have (\ref{ethetal+1}).
Therefore, $W$ is an $\Stva{\psi}$-module.
\end{proof}

Recall that  $\tilde{\gamma}$ is an \rtva{} homomorphism from
$\T{\psi}$ to $\lr{\Vzero{\psi}}$.  Through $\tilde{\gamma}$, every
$\lr{\Vzero{\psi}}$-module is naturally a $\T{\psi}$-module. On the other hand, we have:

\begin{lem}\label{lem:ModLiftingLem}
Let $(W,Y_W)$ be a $\T{\psi}$-module. Suppose that $(W,\rho)$ is also an
$\Lr$-module such that
\begin{eqnarray*}
Y_W(\bdt^\m;x_0,\x)=\rho(\bdt^\m)&\andtext
&Y_W(v;x_0,\x)\rho(\bdt^\n)=\rho(\bdt^\n)Y_W(v;x_0,\x)
\end{eqnarray*}
for $v\in \T{\psi}$, $\m\in\Lambda(\psi)$ and $\n\in\Zr$.
Then there exists an $\lr{\Vzero{\psi}}$-module structure $Y_W'$
on $W$ such that
\begin{eqnarray*}
Y_W'(\tilde{\gamma}(v);x_0,\x)=Y_W(v;x_0,\x) \  \   \mbox{ for } v\in \T{\psi}.
\end{eqnarray*}
\end{lem}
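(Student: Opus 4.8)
The plan is to produce $Y_W'$ by first transporting $Y_W$ across $\tilde{\gamma}$ and then extending it by letting the $\Lr$-action $\rho$ supply the ``missing'' generators. Since $\tilde{\gamma}\colon\T{\psi}\to\lr{\Vzero{\psi}}$ is injective and $\Image\tilde{\gamma}$ is a toroidal vertex subalgebra (Lemma \ref{lem:tildegammaProp}), the requirement $Y_W'(\tilde{\gamma}(v);x_0,\x)=Y_W(v;x_0,\x)$ forces $Y_W'$ on $\Image\tilde{\gamma}$ and makes $W$ a module over $\Image\tilde{\gamma}\cong\T{\psi}$. The structural input for extending is Lemma \ref{lem:tildegammaProp} together with Corollary \ref{coro:ImPsiGp}: $\Lambda(\psi)$ is cofinite in $\Zr$, and for a homogeneous element $X\otimes\bdt^{\m}$ of $\lr{\Vzero{\psi}}$ with $X\in U(\n)$ one has $X\otimes\bdt^{\mathbf{q}}=\tilde{\gamma}\bigl(X\otimes\bdt^{\mathbf{q}-\n}\bigr)\in\Image\tilde{\gamma}$ for every $\mathbf{q}$ in the coset $\n+\Lambda(\psi)$, while $X\otimes\bdt^{\m}$ is obtained from $X\otimes\bdt^{\mathbf{q}}$ by multiplying its $\Lr$-factor by $\bdt^{\m-\mathbf{q}}$. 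Hence $\lr{\Vzero{\psi}}$ is generated, as a vertex algebra, by $\Image\tilde{\gamma}$ together with the central subalgebra $\vac\otimes\Lr$, and these two subalgebras meet precisely in $\{\vac\otimes\bdt^{\mathbf{p}}:\mathbf{p}\in\Lambda(\psi)\}$, on which the hypothesis $Y_W(\bdt^{\mathbf{p}};x_0,\x)=\rho(\bdt^{\mathbf{p}})$ is exactly the needed compatibility.

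Accordingly I would extend $Y_W'$ to $\vac\otimes\Lr$ by means of $\rho$ --- the unique toroidal vertex algebra module structure on this commutative subalgebra extending the associative $\Lr$-module $\rho$ --- and then, for homogeneous $X\otimes\bdt^{\m}$ with $X\in U(\n)$ and any $\mathbf{q}\in\n+\Lambda(\psi)$, set
\[
Y_W'\bigl(X\otimes\bdt^{\m};x_0,\x\bigr):=\rho\bigl(\bdt^{\m-\mathbf{q}}\bigr)\,Y_W\bigl(X\otimes\bdt^{\mathbf{q}-\n};x_0,\x\bigr),
\]
extended linearly (here and below I suppress the monomials in $\x$ that the $\Zr$-grading of $\lr{\Vzero{\psi}}$ attaches to these operators; matching them up is routine). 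The substantive point is independence of the choice of $\mathbf{q}$: a second admissible $\mathbf{q}'$ satisfies $\mathbf{q}-\mathbf{q}'\in\Lambda(\psi)$, so $\bdt^{\mathbf{q}-\mathbf{q}'}\in\Image\psi\subset\T{\psi}$, and inside $\T{\psi}$ one has $X\otimes\bdt^{\mathbf{q}-\n}=\bdt^{\mathbf{q}-\mathbf{q}'}\cdot\bigl(X\otimes\bdt^{\mathbf{q}'-\n}\bigr)$; applying $Y_W$ and using that $Y_W(\bdt^{\mathbf{q}-\mathbf{q}'};x_0,\x)$ is (up to an $\x$-monomial) the constant operator $\rho(\bdt^{\mathbf{q}-\mathbf{q}'})$, which by the second hypothesis commutes with every $Y_W(w;x_0,\x)$, shows that the two candidate values of $Y_W'(X\otimes\bdt^{\m};x_0,\x)$ agree. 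The same computation shows the formula above agrees with the transported $Y_W$ on all of $\Image\tilde{\gamma}$, giving $Y_W'(\tilde{\gamma}(v);x_0,\x)=Y_W(v;x_0,\x)$ for all $v\in\T{\psi}$.

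It then remains to verify the toroidal vertex algebra module axioms for $(W,Y_W')$. The vacuum axiom $Y_W'(\vac\otimes 1;x_0,\x)=1_W$ and the truncation $Y_W'(u;x_0,\x)\in\Espaceall{W}$ are immediate from the formula. For the Jacobi identity on a pair $\bigl(X\otimes\bdt^{\mathbf{i}},\,X'\otimes\bdt^{\mathbf{j}}\bigr)$ I would write each operator, via the formula above, as a Laurent monomial in the formal variables times a fixed operator from $\rho(\Lr)$ times a vertex operator $Y_W(\,\cdot\,;\cdot)$ coming from $\T{\psi}$; since $\rho(\Lr)$ is central and independent of the formal variables, those prefactors pull out of all three terms of the identity, and what remains is the Jacobi identity for $Y_W$ applied to the corresponding pair in $\T{\psi}$, which holds because $(W,Y_W)$ is a $\T{\psi}$-module --- provided one has checked that $\tilde{\gamma}$, being a homomorphism of toroidal vertex algebras, sends the iterate $Y_{\lr{\Vzero{\psi}}}(u;z_0,\z)v$ to the corresponding iterate in $\T{\psi}$, the additional $\bdt$-shifts being absorbed into the $\rho(\Lr)$-prefactors. (Equivalently, by Proposition \ref{prop:ZrGradedVAtoETVA} and Theorem \ref{ZeroModCateProp} it is enough to equip $W$ with a $\Vzero{\psi}$-module structure commuting with $\rho$, i.e.\ a module over the vertex algebra $\Vzero{\psi}\otimes\Lr$, which is assembled in the same way.) I expect the main obstacle to be exactly this last verification: keeping track of the $\x$- (equivalently $\bdt$-) shifts and the $\rho(\Lr)$-factors through the iterate term of the Jacobi identity and checking they are consistent across all three terms --- this is where both compatibility hypotheses on $\rho$ are indispensable.
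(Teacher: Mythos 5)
Your proposal is correct and follows essentially the same route as the paper: extend the action to all of $\lr{\Vzero{\psi}}$ by inserting the central prefactors $\rho(\bdt^{\m-\mathbf{q}})$, use the two compatibility hypotheses on $\rho$ to match up with $Y_W$ on $\Image\tilde{\gamma}$, and verify the Jacobi identity by pulling the $\rho(\Lr)$-factors out of all three terms. The only cosmetic difference is that the paper fixes the canonical representative (your $\mathbf{q}=\n$) and carries out the computation at the level of the one-variable vertex algebra structures via Theorem \ref{ZeroModCateProp} --- exactly the alternative you mention parenthetically --- which sidesteps both the well-definedness check and the bookkeeping of $\x$-monomials.
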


\begin{proof} Recall from Proposition \ref{prop:ZrGradedVAtoETVA} that
$\T{\psi}$ and $\lr{\Vzero{\psi}}$ are naturally $\Zr$-graded vertex algebras. Denote by
$Y^{(1)}$ and $Y^{(2)}$ the vertex operator maps of $\T{\psi}$ and
$\lr{\Vzero{\psi}}$, respectively.
From Theorem \ref{ZeroModCateProp},  $W$ is a module for $\T{\psi}$ viewed as a vertex algebra.
Denote by $Y_W^{(1)}$ the $\T{\psi}$-module structure on $W$.
Recall that
\begin{eqnarray*}
\T{\psi}= U(L_r(\g\otimes t_0^{-1}\C[t_0^{-1}]))\otimes
\Image\psi\  \mbox{ and }\
\Vzero{\psi}= U(L_r(\g\otimes t_0^{-1}\C[t_0^{-1}]))
\end{eqnarray*}
as vector spaces.
Then $\Vzero{\psi}$ is a $\Zr$-graded vector space.
For $\m\in \Z^r$, let $U(\m)$ denote the homogeneous subspace of $\Vzero{\psi}$ of degree $\m$.
Define a linear map $Y_W^{(2)}(\cdot,x_0):\lr{\Vzero{\psi}}\rightarrow \Hom (W,W((x_0)))$ by
\begin{eqnarray*}
Y_W^{(2)}(u\otimes \bdt^\n,x_0)=\rho(\bdt^{\n-\m})Y_W^{(1)}(u,x_0)\ \ \ \ \mbox{for }u\in U(\m),\ \m, \n\in\Zr.
\end{eqnarray*}
From definition, for $u\in U(\m),\ \n\in \Lambda(\psi)$, we have
\begin{eqnarray*}
Y_W^{(2)}(\tilde{\gamma}(u\otimes \bdt^\n),x_0)=Y_W^{(2)}(u\otimes \bdt^{\m+\n},x_0)
=\rho(\bdt^\n)Y_W^{(1)}(u,x_0)=Y_W^{(1)}(u\otimes\bdt^\n,x_0).
\end{eqnarray*}
That is,
\begin{eqnarray*}
Y_W^{(2)}(\tilde{\gamma}(v),x_0)=Y_W^{(1)}(v,x_0)\ \ \ \ \mbox{for }v\in \T{\psi}.
\end{eqnarray*}

We now show that $Y_W^{(2)}$ satisfies the Jacobi identity.
Let $u\in U(\m)$, $v\in U(\n),\ \m,\n,\ \rsymbol{p},\rsymbol{q}\in \Zr$. We have
\begin{eqnarray*}
&&\dfunc{z}{x-y}{z}Y_W^{(2)}(u\otimes \bdt^{\m+\rsymbol{p}},x)Y_W^{(2)}(v\otimes\bdt^{\n+\rsymbol{q}},y)\\
&&\quad\quad	-\dfunc{z}{y-x}{-z}Y_W^{(2)}(v\otimes\bdt^{\n+\rsymbol{q}},y)Y_W^{(2)}(u\otimes \bdt^{\m+\rsymbol{p}},x)\\
&=&\dfunc{z}{x-y}{z}\rho(\bdt^{\rsymbol{p}})Y_W^{(1)}(u,x)\rho(\bdt^{\rsymbol{q}})Y_W^{(1)}(v,y)\\
&&\quad\quad	-\dfunc{z}{y-x}{-z}\rho(\bdt^{\rsymbol{q}})Y_W^{(1)}(v,y)\rho(\bdt^{\rsymbol{p}})Y_W^{(1)}(u,x)\\
&=&\dfunc{x}{y+z}{x}\rho(\bdt^{\rsymbol{p}+\rsymbol{q}})Y_W^{(1)}(Y^{(1)}(u,z)v,y).
\end{eqnarray*}
Note that
\begin{eqnarray*}
&&Y_W^{(1)}(Y^{(1)}(u,z)v,y)=Y_W^{(2)}\left(\tilde{\gamma}\left( Y^{(1)}(u,z)v \right),y\right)
=Y_W^{(2)}\left( Y^{(2)}(\tilde{\gamma}(u),z)\tilde{\gamma}(v ),y\right)\\
&&\quad=Y_W^{(2)}\left( Y^{(2)}(u\otimes\bdt^\m,z)(v\otimes\bdt^\n) ,y\right)
=Y_W^{(2)}\left( Y^{(2)}(u,z)v\otimes\bdt^{\m+\n} ,y\right)
\end{eqnarray*}
and
\begin{eqnarray*}
&&\rho(\bdt^\n)Y_W^{(2)}(u\otimes \bdt^{\rsymbol{p}},x)
=\rho(\bdt^{\n+\rsymbol{p}-\m})Y_W^{(1)}(u,x)=Y_W^{(2)}(u\otimes\bdt^{\n+\rsymbol{p}},x).
\end{eqnarray*}
Then
\begin{eqnarray*}
&&\rho(\bdt^{\rsymbol{p}+\rsymbol{q}})Y_W^{(1)}(Y^{(1)}(u,z)v,y)
=\rho(\bdt^{\rsymbol{p}+\rsymbol{q}})Y_W^{(2)}\left( Y^{(2)}(u,z)v\otimes\bdt^{\m+\n} ,y\right)\\
&&\quad=Y_W^{(2)}\left( Y^{(2)}(u,z)v\otimes\bdt^{\m+\rsymbol{p}+\n+\rsymbol{q}} ,y\right)
=Y_W^{(2)}\left( Y^{(2)}(u\otimes\bdt^{\m+\rsymbol{p}},z)(v\otimes\bdt^{\n+\rsymbol{q}}) ,y\right).
\end{eqnarray*}
Now the Jacobi identity follows. Thus, $(W,Y_W^{(2)})$ is an $\lr{\Vzero{\psi}}$-module.
By Theorem \ref{ZeroModCateProp}, $Y_W^{(2)}$ gives rises to a module structure $Y_W'$ on $W$
for $\lr{\Vzero{\psi}}$ viewed as an \rtva{}, as desired.
\end{proof}

Furthermore, we have:

\begin{prop}\label{prop:SimpleModLiftingProp}
Let $(W,Y_W)$ be any irreducible $\T{\psi}$-module.
Then there exists an (irreducible)  $\lr{\Vzero{\psi}}$-module structure $Y_W'$ on $W$ such that
\begin{eqnarray}\label{eext-rest}
Y_W'(\tilde{\gamma}(v);x_0,\x)=Y_W(v;x_0,\x) \  \   \mbox{ for } v\in \T{\psi}.
\end{eqnarray}
\end{prop}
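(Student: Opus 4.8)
The strategy is to equip $W$ with a compatible $\Lr$-module structure and then invoke Lemma~\ref{lem:ModLiftingLem}. First I would observe that, by Theorem~\ref{TvaModThm} together with Remark~\ref{explaination}, an irreducible $\T{\psi}$-module is the same thing as an irreducible restricted $\Toro$-module on which $\ker\psi$ acts as $0$; in particular $W$ has countable dimension over $\C$. Since $\Sc=U(\lr{\C\cent})$ lies in the center of $U(\Toro)$, it commutes with all the defining operators $\eltAbs{a}{x_0}{x}$ ($a\in\g$) and $\cent(\x)$, hence acts on $W$ by $\T{\psi}$-module endomorphisms; by the generalized Schur lemma it therefore acts by scalars, and because $(\ker\psi)W=0$ this scalar action factors through $\Sc/\ker\psi\cong\Image\psi$, yielding an algebra homomorphism $\chi:\Image\psi\to\C$. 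As $\Image\psi$ is the graded span of the $\bdt^\m$ with $\m\in\Lambda(\psi)$, and each such $\bdt^\m$ is invertible in $\Image\psi$, the map $\m\mapsto\chi(\bdt^\m)$ is a group homomorphism $\Lambda(\psi)\to\C^\times$.

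Next I would extend $\m\mapsto\chi(\bdt^\m)$ to a group homomorphism $\bar\chi:\Zr\to\C^\times$; this is possible since $\C^\times$ is a divisible abelian group (hence an injective $\Z$-module) and $\Lambda(\psi)$ is a subgroup of $\Zr$, of finite index by Corollary~\ref{coro:ImPsiGp}. I then let $\Lr$ act on $W$ via $\rho(\bdt^\n)=\bar\chi(\n)\,1_W$ for $\n\in\Zr$, so that $(W,\rho)$ becomes an $\Lr$-module. By construction, for $\m\in\Lambda(\psi)$ the scalar $\bar\chi(\m)$ coincides with the scalar by which $\bdt^\m\in\Image\psi\subseteq\T{\psi}$ acts on $W$; using Lemma~\ref{Sc-structure} and formula~(\ref{etva-m}) one then checks that $(W,Y_W)$ and $(W,\rho)$ satisfy the hypotheses of Lemma~\ref{lem:ModLiftingLem} — the commutation requirement there being automatic since every $\rho(\bdt^\n)$ is a scalar operator. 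Lemma~\ref{lem:ModLiftingLem} now produces an $\lr{\Vzero{\psi}}$-module structure $Y_W'$ on $W$ with $Y_W'(\tilde{\gamma}(v);x_0,\x)=Y_W(v;x_0,\x)$ for all $v\in\T{\psi}$, which is exactly~(\ref{eext-rest}).

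For irreducibility of $(W,Y_W')$: any $\lr{\Vzero{\psi}}$-submodule $U$ of $W$ is, by~(\ref{eext-rest}), stable under every operator $Y_W(v;x_0,\x)$ with $v\in\T{\psi}$, hence is a $\T{\psi}$-submodule of $(W,Y_W)$; since $W$ is irreducible as a $\T{\psi}$-module, $U=0$ or $U=W$.

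The step I expect to need the most care is the verification that the chosen $\rho$ matches $Y_W$ in precisely the form demanded by Lemma~\ref{lem:ModLiftingLem}: this amounts to carefully tracking the identification $\T{\psi}\cong U(\lr{\g\otimes t_0^{-1}\C[t_0^{-1}]})\otimes\Image\psi$, the $\Zr$-grading conventions on $V(\Sc,0)$ and its quotients, and the translation between $\cent\otimes\bdt^\m\in\Sc$ and the corresponding element of $A_\psi=\Sc/\ker\psi$. Everything else — the reduction to a countable-dimensional irreducible $\Toro$-module, the application of the generalized Schur lemma, the extension of the character by divisibility of $\C^\times$, and the inheritance of irreducibility — is essentially formal.
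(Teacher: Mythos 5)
Your proposal is correct and follows essentially the same route as the paper: reduce to producing an $\Lr$-action extending the scalar action of $\Image\psi$ (which exists by the countable-dimension Schur lemma) and then apply Lemma \ref{lem:ModLiftingLem}. The only difference is cosmetic: the paper extends the character from $\Lambda(\psi)$ to $\Zr$ concretely via a Smith-normal-form basis and explicit $d_i$-th roots, whereas you invoke divisibility of $\C^\times$; these are the same argument.
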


\begin{proof} Recall that any $\T{\psi}$-module is naturally an
$\Image\psi$-module. Denote by $\rho$ the representation map of $\Image\psi$ on $W$.
In view of Lemma \ref{lem:ModLiftingLem}, it suffices to prove that there exists a representation  $\rho'$ of $\Lr$ on $W$ such that
\begin{eqnarray*}
&&\rho'(\bdt^\m)=\rho(\bdt^\m) \  \   \mbox{ for } \m\in\Lambda(\psi),\\
&&Y_W(v;x_0,\x)\rho'(\bdt^\n)=\rho'(\bdt^\n)Y_W(v;x_0,\x)
\end{eqnarray*}
for $v\in \T{\psi}$ and $\n\in\Zr$.
As an irreducible $\T{\psi}$-module, $W$ is naturally an irreducible
$\tau$-module. Since $\tau$ is of countable dimension (over $\C$), it follows that $W$ is of countable dimension.
 As $\Sc$ lies in the center of $U(\tau)$, each element of $\Sc$ acts on $W$ as a scalar.
Consequently, each element of $\Image\psi$ acts on $W$ as a scalar.
Since $\Lambda(\psi)$ is a cofinite subgroup of $\Zr$ (see Corollary \ref{coro:ImPsiGp}),
there exist $\m_1,\dots,\m_r\in \Zr$ and positive integers $d_1,\dots,d_r$ such that
\begin{eqnarray*}
&&\Zr=\Z\m_1\oplus \cdots\oplus \Z\m_r,\\
&&\Lambda(\psi)=d_1\Z\m_1\oplus \cdots \oplus d_r\Z\m_r.
\end{eqnarray*}
For $1\leq i\leq r$, assume that $\bdt^{d_i\m_i}$ acts as scalar $c_i$ on $W$ and
choose a $d_i$-th root $(c_i)^{\frac{1}{d_i}}$ of $c_i$.
We then define an $\Lr$-module structure on $W$ by
\begin{eqnarray*}
\bdt^{\m_i}\cdot w=(c_i)^{\frac{1}{d_i}}w \   \   \   \mbox{ for }1\leq i\leq r,\ w\in W.
\end{eqnarray*}
This action of $L_r$ clearly extends that of $\Image\psi$. The other assertion is also clear.
\end{proof}

Recall that $\tilde{\pi}$ is an \rtva{} homomorphism from $\Stva{\psi}$ to $\lr{\VSzero{\psi}}$.
We have the following analogue of Proposition \ref{prop:SimpleModLiftingProp}:

\begin{thm}\label{SimpleModClassifyThm10001}
Let $(W,Y_W)$ be any irreducible $\Stva{\psi}$-module.
Then there exists an irreducible $\lr{\VSzero{\psi}}$-module structure $Y_W''$ on $W$ such that
\begin{eqnarray*}
Y_W''(\tilde{\pi}(u);x_0,\x)=Y_W(u;x_0,\x) \  \   \mbox{ for } u\in \Stva{\psi}.
\end{eqnarray*}
\end{thm}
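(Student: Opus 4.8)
The plan is to deduce Theorem~\ref{SimpleModClassifyThm10001} from Proposition~\ref{prop:SimpleModLiftingProp} by "dividing out" the ideal $J$ of Proposition~\ref{GenProp} on both sides. Let $(W,Y_W)$ be an irreducible $\Stva{\psi}$-module. Since $\Stva{\psi}$ is a quotient \rtva{} of $\T{\psi}$, $W$ is in particular an irreducible $\T{\psi}$-module, so Proposition~\ref{prop:SimpleModLiftingProp} applies: there is an irreducible $\lr{\Vzero{\psi}}$-module structure $Y_W'$ on $W$ with $Y_W'(\tilde\gamma(v);x_0,\x)=Y_W(v;x_0,\x)$ for $v\in \T{\psi}$. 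The target algebra is $\lr{\VSzero{\psi}}=\lr{\Vzero{\psi}/I}$, where $I=\ker(\alpha^0)$ is the ideal of $\Vzero{\psi}$ with $\Vzero{\psi}/I=\VSzero{\psi}$ (using the characterization via $e_\theta(-1,\m)^{\ell+1}\vac$ and $(u\otimes t_0^{-1}f(\bdt))\vac$). Concretely, $\lr{I}=I\otimes L_r$ is an ideal of $\lr{\Vzero{\psi}}$ and $\lr{\VSzero{\psi}}\cong \lr{\Vzero{\psi}}/\lr{I}$ as \rtva{}s. So the crux is to show that $\lr{I}$ annihilates $W$ under $Y_W'$; then $Y_W'$ descends to a module structure $Y_W''$ over $\lr{\VSzero{\psi}}$, which is automatically irreducible since $Y_W'$ was, and the compatibility with $\tilde\pi$ will follow from the commutative diagram~(\ref{ecomm-diagram2}).

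The key step, then, is the annihilation claim. First I would recall from the proof of Proposition~\ref{GenProp} that $\ker\alpha=\tilde\gamma^{-1}(\lr{I})$, i.e. $J=\tilde\gamma^{-1}(\lr{I})$, and that $\gamma(J)=I$, with $J$ the ideal of $\T{\psi}$ generated by $e_\theta(-1,\m)^{\ell+1}\vac$ and the elements $X(a,\n,f(\bdt))$. Because $W$ is an $\Stva{\psi}$-module, Theorem~\ref{ModCorrespThm10086} tells us $W$ is restricted, integrable, with $(\ker\psi)W=0$ and $p_j(x_j/t_j)\eltAbs{a}{x_0}{\x}=0$ on $W$ for all $j$ and $a\in\g$; equivalently, all the generators of $J$ act as zero under $Y_W$. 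Hence $Y_W$ factors through $\T{\psi}/J = L(\psi,0) = \Stva{\psi}$ — which is exactly the hypothesis — and so $Y_W(v;x_0,\x)=0$ for every $v\in J$. Translating through $Y_W'(\tilde\gamma(v);\cdot)=Y_W(v;\cdot)$, we get $Y_W'(\tilde\gamma(J);x_0,\x)=0$. Now $\tilde\gamma(J)$ generates $\lr{I}$ as an ideal of $\lr{\Vzero{\psi}}$ — this was observed in the proof of Proposition~\ref{GenProp} (the ideal of $\lr{\Vzero{\psi}}$ generated by $\tilde\gamma(J)$ equals $\lr{I}$). Since the set of $u\in\lr{\Vzero{\psi}}$ with $Y_W'(u;x_0,\x)=0$ is $\ky{\cdot}$-type, hence (by Lemma~\ref{Kylemma} applied to $\lr{\Vzero{\psi}}$) an ideal, and it contains $\tilde\gamma(J)$, it contains $\lr{I}$. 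Therefore $Y_W'$ annihilates $\lr{I}$.

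With that in hand, $Y_W'$ induces a well-defined linear map $Y_W'': \lr{\VSzero{\psi}} = \lr{\Vzero{\psi}}/\lr{I} \to \Espaceall{W}$ satisfying the Jacobi identity and $Y_W''(\vac;x_0,\x)=1_W$, so $(W,Y_W'')$ is an $\lr{\VSzero{\psi}}$-module. It is irreducible: any $\lr{\VSzero{\psi}}$-submodule of $W$ is also an $\lr{\Vzero{\psi}}$-submodule via the quotient map, and $W$ is irreducible over $\lr{\Vzero{\psi}}$. Finally, for $u\in\Stva{\psi}$, lift $u$ to $\hat u\in\T{\psi}$ with $\alpha(\hat u)=u$; chasing diagram~(\ref{ecomm-diagram2}), $\tilde\pi(u)=\tilde\pi(\alpha(\hat u))=(\alpha^0\otimes 1)(\tilde\gamma(\hat u))$, which is the image of $\tilde\gamma(\hat u)$ in $\lr{\VSzero{\psi}}$, so $Y_W''(\tilde\pi(u);x_0,\x)=Y_W'(\tilde\gamma(\hat u);x_0,\x)=Y_W(\hat u;x_0,\x)=Y_W(u;x_0,\x)$, using that $Y_W$ factors through $\alpha$. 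This gives the desired identity.

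I expect the main obstacle to be the careful bookkeeping that $\tilde\gamma(J)$ really generates $\lr{I}$ as an ideal (not merely that its image is contained in $\lr{I}$): one must use that $\gamma(J)=I$, that $J$ is $\Z^r$-graded and $\tau$-generated by its homogeneous generators (Corollary~\ref{IdealCriLem}), and the explicit form of $\tilde\gamma$ on homogeneous elements together with $(\Image\psi)L_r=L_r$, to see that the $\tilde\gamma$-images of the $\tau$-generators of $J$, upon multiplying by all $\bdt^{\n}$ with $\n\in\Z^r$, span (a $\tau$-submodule whose $\Z^r$-saturation is) all of $\lr{I}$. This is morally the same computation already carried out inside the proof of Proposition~\ref{GenProp}, so it can be cited rather than redone; the rest is a routine descent-of-module-structure argument combined with Lemma~\ref{Kylemma} and Theorems~\ref{ZeroModCateProp} and~\ref{ModCorrespThm10086}.
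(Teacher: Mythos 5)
Your proposal is correct and follows essentially the same route as the paper: apply Proposition \ref{prop:SimpleModLiftingProp} to get the $\lr{\Vzero{\psi}}$-module structure $Y_W'$, invoke the observation from the proof of Proposition \ref{GenProp} that $\lr{I}$ is the ideal generated by $\tilde{\gamma}(J)$, and then descend $Y_W'$ to $\lr{\VSzero{\psi}}$, with irreducibility inherited from that of $W$ over $\Stva{\psi}$. The only quibble is that the ideal property of the annihilator of a module is not literally Lemma \ref{Kylemma} (which concerns the adjoint module), but that fact is standard and your filling-in of the descent and diagram-chase details is sound.
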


\begin{proof} Recall that $\VSzero{\psi}=\Vzero{\psi}/I$ and $\Stva{\psi}=\T{\psi}/J$.
Note that $W$ is naturally an irreducible $V(\psi,0)$-module.
By Proposition \ref{prop:SimpleModLiftingProp}, there exists an $\lr{\Vzero{\psi}}$-module structure
 $Y_W'$ on $W$ satisfying (\ref{eext-rest}).
From the proof of Proposition \ref{GenProp}, we see that $\lr{I}$ is generated by $\tilde{\gamma}(J)$.
It follows that $Y_{W}'$ gives rise to an $\lr{\VSzero{\psi}}$-module structure $Y_W''$ on $W$ such that
\begin{eqnarray*}
Y_W''(\tilde{\pi}(v);x_0,\x)=Y_W(v;x_0,\x)\ \ \ \mbox{ for } v\in\Stva{\psi}.
\end{eqnarray*}
Since $W$ is an irreducible $\Stva{\psi}$-module, $(W,Y_W'')$ must be also
irreducible.
\end{proof}

Note that Theorem \ref{SimpleModClassifyThm10001} gives
a classification of irreducible $\Stva{\psi}$-modules
 in terms of irreducible $\lr{\VSzero{\psi}}$-modules, while
 Propositions \ref{IrrTensorProp} and \ref{SzeroStructProp}  give a classification of
 irreducible $\lr{\VSzero{\psi}}$-modules in terms of integrable highest weight $\hat{\g}$-modules.
On the other hand, using Theorems \ref{ModCorrespThm10086} and \ref{SimpleModClassifyThm10001}
we immediately have:

\begin{coro}
Let $W$ be an irreducible restricted and integrable $\Toro$-module satisfying the conditions that
 $(\ker \psi)W=0$ and that
 for all  $1\le j\le r,\  a\in \g$,
 $$ p_j(x_j/t_j)\eltAbs{a}{x_0}{\x}=0\   \mbox{ on }  W.$$
 Then $W$ is isomorphic to a $\tau$-module of the form
$\left(W_1\otimes\cdots\otimes W_s\right)_{\rsymbol{c}}$,
 where $W_i$ are integrable highest weight $\hat{\g}$-modules
of level $\ell_i$ $(1\le i\le s)$,  $\rsymbol{c}\in (\C^\times)^r$, and
\begin{eqnarray*}
&&(u\otimes \bdt^\m)\cdot(w_1\otimes \cdots \otimes w_s)=\rsymbol{c}^\m\sum_{i=1}^s  \bda_i^\m
	\left(w_1\otimes \cdots \otimes u w_i\otimes \cdots \otimes w_s\right)
\end{eqnarray*}
for $u\in \hat{\g}$, $\m\in \Z^r$, $w_i\in W_i$ with $1\leq i\leq s$.
\end{coro}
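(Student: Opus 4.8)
The plan is to combine the structural reductions established earlier in the paper. The key inputs are: Theorem \ref{SimpleModClassifyThm10001}, which says that any irreducible $\Stva{\psi}$-module $W$ underlies an irreducible $\lr{\VSzero{\psi}}$-module $(W,Y_W'')$ with $Y_W''(\tilde{\pi}(u);x_0,\x)=Y_W(u;x_0,\x)$; Proposition \ref{SzeroStructProp}, which identifies $\VSzero{\psi}$ with the tensor product vertex algebra $L_{\hat{\g}}(\ell_1,0)\otimes\cdots\otimes L_{\hat{\g}}(\ell_s,0)$; and Proposition \ref{IrrTensorProp}, which says that since this tensor product has countable dimension over $\C$, every irreducible $\lr{\VSzero{\psi}}$-module is isomorphic to $U_{\rsymbol{c}}$ for some irreducible $\VSzero{\psi}$-module $U$ and some $\rsymbol{c}\in(\C^\times)^r$, where $U_{\rsymbol{c}}$ is the evaluation module of Lemma \ref{IrrModDefLem}. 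Finally, since each $L_{\hat{\g}}(\ell_i,0)$ with $\ell_i$ a positive integer has semisimple module category whose irreducibles are exactly the integrable highest weight $\hat{\g}$-modules of level $\ell_i$ (Corollary \ref{SzeroModCateCoro1} and the references cited before it), an irreducible $\VSzero{\psi}$-module is a tensor product $W_1\otimes\cdots\otimes W_s$ with $W_i$ integrable highest weight of level $\ell_i$.

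First I would invoke the hypotheses together with Theorem \ref{ModCorrespThm10086}: the stated conditions on $W$ (restricted, integrable $\Toro$-module with $(\ker\psi)W=0$ and $p_j(x_j/t_j)\eltAbs{a}{x_0}{\x}=0$) are precisely what makes $W$ an $\Stva{\psi}$-module, and irreducibility as a $\Toro$-module is equivalent to irreducibility as an $\Stva{\psi}$-module. Then I would apply Theorem \ref{SimpleModClassifyThm10001} to get the irreducible $\lr{\VSzero{\psi}}$-module structure $Y_W''$ on $W$. Next I would apply Proposition \ref{IrrTensorProp} to write $(W,Y_W'')\cong U_{\rsymbol{c}}$ for some irreducible $\VSzero{\psi}$-module $U$ and $\rsymbol{c}\in(\C^\times)^r$, and Proposition \ref{SzeroStructProp} together with the complete reducibility for tensor products of the $L_{\hat{\g}}(\ell_i,0)$ to write $U\cong W_1\otimes\cdots\otimes W_s$ with each $W_i$ an integrable highest weight $\hat{\g}$-module of level $\ell_i$.

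The remaining work is to unwind all these identifications down to the explicit formula for the $\Toro$-action on $W_1\otimes\cdots\otimes W_s$. Here I would trace through: on $\VSzero{\psi}$, Proposition \ref{SzeroStructProp} gives $\eta(a_{-1,\m}\vac)=\sum_{i=1}^s\bda_i^\m(\vac\otimes\cdots\otimes a_{-1}\vac\otimes\cdots\otimes\vac)$, so that the vertex operator $Y(a;x_0,\x)$ on $\VSzero{\psi}$, i.e. $a(x_0,\x)$, corresponds under $\eta$ to $\sum_i \delta(\bda_i/\x)\Delta_i(a(x_0))$ as in the proof of that proposition; passing to the $r$-loop vertex algebra $\lr{\VSzero{\psi}}$ via $\tilde\pi$ and to the evaluation module $U_{\rsymbol{c}}$ via Lemma \ref{IrrModDefLem} multiplies the $\bdt^\m$-component by $\rsymbol{c}^\m$; and reading off the coefficient of $\x^{-\m}$ in $Y_W(a\otimes\bdt^\m;x_0,\x)$, which is the generating function for $a\otimes t_0^{m_0}\bdt^\m$ acting on $W$, yields
\begin{eqnarray*}
(u\otimes\bdt^\m)\cdot(w_1\otimes\cdots\otimes w_s)=\rsymbol{c}^\m\sum_{i=1}^s\bda_i^\m\left(w_1\otimes\cdots\otimes uw_i\otimes\cdots\otimes w_s\right)
\end{eqnarray*}
for $u\in\hat{\g}$; the central element $\cent$ is handled the same way, acting as $\rsymbol{c}^\m\sum_i\ell_i\bda_i^\m=\rsymbol{c}^\m\bar\psi(\cent\otimes\bdt^\m)$, which is consistent since in each $W_i$ the element $\cent$ acts as $\ell_i$.

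I expect the main obstacle to be purely bookkeeping: keeping the three layers of identifications (vertex-algebra iso $\eta$, the loop-algebra extension $\tilde\pi$ with its $\bdt$-grading shift, and the evaluation twist by $\rsymbol{c}$) straight so that the exponents of $\bda_i$, $\rsymbol{c}$, and $\bdt$ come out correctly and the formula genuinely matches the $\Toro$-action rather than some relabeled version of it. There is no new mathematical difficulty — every substantive fact has already been proved — but care is needed to verify that the module structure transported back through $Y_W''(\tilde{\pi}(u);x_0,\x)=Y_W(u;x_0,\x)$ on generators $u=a\otimes\bdt^\m$ really recovers the original $\Toro$-module $W$, and that the $W_i$ obtained are independent of choices up to isomorphism and permutation only as far as the statement asserts.
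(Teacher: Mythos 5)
Your proposal is correct and follows the paper's own route exactly: the paper derives this corollary immediately from Theorem \ref{ModCorrespThm10086} (which converts the hypotheses into an irreducible $\Stva{\psi}$-module structure) and Theorem \ref{SimpleModClassifyThm10001}, combined with Propositions \ref{IrrTensorProp} and \ref{SzeroStructProp} to identify the irreducible $\lr{\VSzero{\psi}}$-modules as evaluation twists of tensor products of integrable highest weight modules. The unwinding of the identifications to obtain the explicit action formula is exactly the bookkeeping you describe.
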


\end{document}